\documentclass[11pt]{amsart}
\usepackage{amssymb,amsmath, amsthm, amsfonts}
\usepackage{graphicx}
\usepackage{tikz-cd}
\usepackage{listings}
\usepackage{lstautogobble}
\usepackage{enumerate}
\usepackage[shortlabels]{enumitem}
\usepackage{thmtools}
\usepackage{thm-restate}
\usepackage{amsthm}
\usepackage{verbatim}
\usepackage[a4paper, portrait, margin=0.75in]{geometry}
\usepackage{mathtools}
\usepackage{physics}
\usepackage{hyperref}
\usepackage[capitalise]{cleveref}
\usepackage[bottom]{footmisc}
\crefformat{equation}{(#2#1#3)}

\usepackage[utf8]{inputenc}

\usepackage{xcolor} %% Loads \textcolor commands
\usepackage{ esint } %% Loads \fint command 
\usepackage{comment}
\usepackage{hyperref}
\usepackage{mathrsfs}
\usepackage{bm}
%%%%%%%%%%%%%%%%%%%%%%%%%%%%%%%

\theoremstyle{plain}
\newtheorem{theorem}{Theorem}[section]
\newtheorem{lemma}[theorem]{Lemma}
\newtheorem{proposition}[theorem]{Proposition}

\theoremstyle{definition}
\newtheorem{definition}[theorem]{Definition}
\newtheorem{remark}[theorem]{Remark}

\newcommand{\G}{\mathfrak{G}}
\newcommand{\g}{\mathfrak{g}}
\newcommand{\ep}{\epsilon}

%%%%%%%%%%%%%%%%%%%%%%%%%%%%%%%

\usepackage{float}
\restylefloat{table}

%Cleverref
\Crefname{theorem}{Theorem}{Theorems}
\Crefname{proposition}{Proposition}{Propositions}
\Crefname{table}{Table}{Tables}
\Crefname{figure}{Figure}{Figures}
\Crefname{tikzpicture}{Diagram}{Diagrams}

\numberwithin{equation}{section} %Equation numbering

\DeclarePairedDelimiter\ipp{\langle}{\rangle}

\DeclarePairedDelimiter{\paren}{\lparen}{\rparen}

\newcommand{\p}{{\partial}}

%Color commands

%Lie algebra operators

%Algebraic Structures
\newcommand{\R}{{\mathbb{R}}}

\newcommand{\N}{{\mathbb{N}}}

\newcommand{\Ss}{{\mathbb{S}}}

\newcommand{\Sc}{{\mathcal{S}}}
\newcommand{\A}{{\mathcal{A}}}

\newcommand{\Ec}{\mathcal{E}}

\newcommand{\tl}{\tilde}

\newcommand{\ul}{\underline}

\newcommand{\uz}{\underline{z}}

\newcommand{\uw}{\underline{w}}

\newcommand{\ab}{\mathbf{a}}
\newcommand{\bp}{\mathbf{p}}

%Matrices
\newcommand{\J}{\mathbb{J}}
\newcommand{\I}{\mathbb{I}}

%Greeks
\newcommand{\Ga}{\Gamma}
\newcommand{\ga}{\gamma}
\newcommand{\om}{\omega}
\newcommand{\al}{\alpha}
\newcommand{\be}{\beta}
\renewcommand{\d}{\delta}

%Cals
\newcommand{\Cc}{\mathcal{C}}

%Indicators
\newcommand{\indic}{\mathbf{1}}

%Functional Observables
\newcommand{\Fc}{\mathcal{F}}
\newcommand{\Gc}{\mathcal{G}}
\newcommand{\Hc}{\mathcal{H}}
\newcommand{\Hcl}{\mathcal{H}_{Lio}}
\newcommand{\Hvlh}{\mathcal{H}_{VlH}}
\newcommand{\Hvl}{\mathcal{H}_{Vl}}
\newcommand{\Hbb}{\mathcal{H}_{BBGKY}}

%Alignment
\newcommand{\nn}{\nonumber}

%Derivatives
\let\div\relax
\DeclareMathOperator{\div}{\mathrm{div}}

%Operators
\newcommand{\W}{{\mathbf{W}}}

%Symmetrization
\DeclareMathOperator{\Sym}{Sym}

%Frechet derivative

\newcommand{\Rd}{(\R^{2d})}
%Fourier Transform

\newcommand{\ds}{\mathsf{d}}
%Wave front

 %Probability

\title[A rigorous derivation of the Hamiltonian structure for the Vlasov Equation]{A rigorous derivation of the Hamiltonian structure for the Vlasov Equation}

\author[J.K. Miller]{Joseph K. Miller}
\email{jkmiller@utexas.edu}
\thanks{J.M. was partially supported by the NSF under grants No. DMS-1840314, DMS-2009549, DMS-2052789 and by the University of Texas at Austin through a Provost’s Graduate Excellence Fellowship.}
\author[A.R. Nahmod]{Andrea R. Nahmod}
\email{nahmod@math.umass.edu}
\thanks{A.N. was partially supported by the NSF under grants No. DMS-2101381, DMS-2052740 and by the Simons Foundation Collaboration Grant on Wave Turbulence (Nahmod’s Award ID 651469).}
\author[N. Pavlovi\'{c}]{Nata\v{s}a Pavlovi\'c}
\email{natasa@math.utexas.edu}
\thanks{N.P. was partially supported by the NSF under grants No. DMS-1840314, DMS-2009549, DMS-2052789.}
\author[M. Rosenzweig]{Matthew Rosenzweig}
\email{mrosenzw@mit.edu}
\thanks{M.R. was partially supported by the NSF under grant No. DMS-2052651 and by the Simons Foundation Collaboration Grant on Wave Turbulence (Staffilani's Award ID 6941649).}
\author[G. Staffilani]{Gigliola Staffilani}
\email{gigliola@math.mit.edu}
\thanks{G.S. was partially supported by the NSF under grant No. DMS-2052651 and by the Simons Foundation Collaboration Grant on Wave Turbulence (Staffilani's Award ID 6941649).}

\begin{document}
\begin{abstract}
We consider the Vlasov equation in any spatial dimension, which has long been known \cite{IT1976, Morrison1980, Gibbons1981, MW1982}  to be an infinite-dimensional Hamiltonian system whose bracket structure is of \emph{Lie-Poisson type}. In parallel, it is classical that the Vlasov equation is a \emph{mean-field limit} for a pairwise interacting Newtonian system. Motivated by this knowledge, we provide a rigorous derivation of the Hamiltonian structure of the Vlasov equation, both the Hamiltonian functional and Poisson bracket, directly from the many-body problem. One may view this work as a classical counterpart to \cite{MNPRS2020}, which provided a rigorous derivation of the Hamiltonian structure of the cubic nonlinear Schr\"odinger equation from the many-body problem for interacting bosons in a certain infinite particle number limit, the first result of its kind. In particular, our work settles a question of Marsden, Morrison, and Weinstein \cite{MMW1984} on providing a ``statistical basis'' for the bracket structure of the Vlasov equation.
\end{abstract}

\maketitle

\section{Introduction}\label{sec:intro}

\subsection{Motivation}\label{ssec:intromot}

Several decades ago, Marsden, Morrison, Weinstein, and others initiated a program on understanding the geometric structure of common partial differential equations (PDEs) in mathematical physics. A key question of this program is the passage or ``contraction,'' to use the language of \cite{MW1982}, of one Hamiltonian system to another through scaling limits. In the present article, we consider this question in the context of the \emph{Vlasov equation}, which is the nonlinear PDE 
\begin{equation}\label{eq:Vl}
\begin{cases}
\p_t\ga + v\cdot\nabla_x \ga  -2(\nabla W\ast\rho)\cdot\nabla_v \ga =0\\
\rho = \int_{\R^d}d\ga(\cdot,v) \\
\ga|_{t=0} = \ga^0,
\end{cases}
\qquad (t,x,v)\in \R\times\R^{2d}.
\end{equation}
The unknown $\ga$ models the distribution of the particles in the position-velocity phase space $(x,v)\in\R^{2d}$, with $d\geq 1$. Assuming $\ga$ is normalized to have unit integral, one can interpret $\ga^t(x,v)dxdv$ as approximately the probability at time $t$ of finding a particle in a phase space box of area $dxdv$  around the position $x$ and velocity $v$. The function $\rho$ is the spatial density of the particles, obtained by integrating out velocity. We use the same notation for a density and its associated measure. The function $W:\R^d\rightarrow\R$ is a potential governing the interactions between the particles, which for simplicity we will always assume is even, though this assumption is not essential. In physics, one typically chooses $W$ to be a multiple of the Coulomb/Newton potential in $\R^d$. The sign of $W$ determines whether the potential is repulsive (+), which is relevant for electrostatic interactions, or attractive (-), which is relevant for gravitational interactions. For such a $W$, equation \eqref{eq:Vl} is commonly referred to in the literature as \emph{Vlasov-Poisson}. This specific form of the equation was first proposed by Jeans \cite{Jeans1915} as model for galaxies; its use in plasma physics originates in work of its eponym Vlasov \cite{Vlasov1938}.

While not the primary subject of this article, we mention that the Vlasov equation as a PDE has been actively studied over the years, with basic questions of well-posedness now well understood. When $W$ is regular (e.g., $\nabla W$ is Lipschitz), well-posedness of measure-valued weak solutions is classical \cite{BH1977,Dobrushin1979}. In the case when $W$ is not regular, for instance as in Vlasov-Poisson, well-posedness is not known in the class of measures, but global well-posedness is known for solution classes at higher regularities \cite{Iordanskii1961, Arsenev1975, HH1984, Batt1977, UO1978, Wollman1980, BD1985, Pfaffelmoser1992, Schaeffer1991, Horst1993, LP1991, Pallard2012}. Subsequent work has investigated sufficient conditions for the uniqueness of solutions \cite{Robert1997, Loeper2006, Miot2016, Iacobelli2022} and well-posedness when $W$ is even more singular at the origin than the Coulomb potential (e.g., general Riesz potentials) \cite{CJ2022vr}. An active topic of current research concerns the long-time dynamics of Vlasov equations; for example, see \cite{MV2011, BMM2016, FR2016, CK2016, GNR2020, HkNR2021, PW2021, FOPW2021, IPW2022, GNR2022} and references therein.

Iwinski and Turski \cite{IT1976} and Morrison \cite{Morrison1980} independently made the formal observation that there is a Poisson bracket structure with respect to which the Vlasov equation is Hamiltonian. We remind the reader that the Hamiltonian formulation of an equation consists of a Hamiltonian functional and an underlying manifold equipped with a Poisson bracket, which serves as the phase space. Marsden and Weinstein \cite{MW1982} and Gibbons \cite{Gibbons1981} later observed that this bracket is of \emph{Lie-Poisson} type, which we briefly outline ignoring any functional-analytic difficulties. There is a Lie algebra $(\g,\comm{\cdot}{\cdot}_\g)$, elements of which are functions $f(x,v)$ corresponding to observables. On the dual $\g^*$, elements of which correspond to states (think measures, more generally distributions $\ga$ on $\R^{2d}$), there is a Poisson bracket $\pb{\cdot}{\cdot}_{\g^*}$ canonically obtained from the Lie algebra $(\g, \comm{\cdot}{\cdot}_{\g})$ through
\begin{equation}
\ipp{\comm{\ds\Fc[\ga]}{\ds\Gc[\ga]}_{\g},\ga}_{\g-\g^*}, \qquad \Fc,\Gc\in \Cc^\infty(\g^*), \ \ga\in\g^*.
\end{equation}
Here, $\Fc,\Gc$ are smooth (using the G\^ateaux differential calculus) real-valued functions on $\g^*$; and using the isomorphism $(\g^*)^* \cong \g$ (assuming the space $\g$ is chosen appropriately), the G\^ateaux derivatives $\ds\Fc[\ga],\ds\Gc[\ga]$, which are linear functionals, may be identified as elements of the Lie algebra $\g$. The notation $\ipp{\cdot,\cdot}_{\g-\g^*}$ denotes the duality pairing between $\g$ and $\g^*$. The \emph{Vlasov Hamiltonian functional} is 
\begin{equation}
\Hvl(\gamma) = \frac{1}{2}\int_{(\R^d)^2}d\ga(x,v)|v|^2+\int_{(\R^d)^2}d\rho^{\otimes 2}(x,y)W(x-y).
\end{equation}
For any sufficiently nice functional $\Fc\in\Cc^\infty(\g^*)$, there exists a unique Hamiltonian vector field $X_{\Fc}$ on $\g^*$ characterized by the property that
\begin{equation}
\forall \Gc\in \Cc^\infty(\g^*), \qquad X_{\Fc}(\Gc) = \pb{\Gc}{\Fc}_{\g^*},
\end{equation}
where the vector field $X_{\Fc}$ is understood as a derivation in the left-hand side. By direct computation of $X_{\Hvl}$, one sees that the Vlasov equation is equivalent to the infinite-dimensional ODE
\begin{equation}
\dot{\ga} = X_{\Hvl}(\ga).
\end{equation}
%Equivalently in dual form, $\ga$ is a solution of the Vlasov equation if and only if
%\begin{equation}
%\forall \Fc\in\Cc^\infty(\g^*), \qquad \frac{d}{dt}\Fc(\ga^t) = \pb{\Fc}{\Hvl}_{\g^*}(\ga^t).
%\end{equation}

\medskip
The physical significance of the Vlasov equation is as a macroscopic limit of a system of indistinguishable Newtonian particles with pairwise interactions. The starting point for the description of this limit is the system of $N$ ordinary differential equations
\begin{equation}\label{eq:New}
\begin{cases}
\dot{x}_i^t = v_i^t\\
\dot{v}_i^t = \displaystyle-\frac{2}{N}\sum_{\substack{1\leq j\leq N :j\neq i}}\nabla W(x_i-x_j),
\end{cases}
\qquad \forall i\in\{1,\ldots,N\},
\end{equation}
where $i$ is the particle index. We adopt the convention that $\nabla W(0):= 0$, which allows for singular $W$ and is consistent with the even assumption if $W$ is regular. This allows us to add back into the summation $j=i$.

As is well-known, the system \eqref{eq:New} can be rewritten in the form of Hamilton's equations. Introducing the total energy of the system
\begin{equation}\label{eq:HNdef}
H_N(\uz_N) \coloneqq \frac{1}{2}\sum_{i=1}^N|v_i|^2 + \frac{1}{N}\sum_{1\leq i\neq j\leq N} W(x_i-x_j),
\end{equation}
and writing $\uz_N = (z_1,\ldots,z_N)$ with $z_i = (x_i,v_i)$, \eqref{eq:New} is equivalent to
\begin{equation}\label{eq:VlasJ}
\dot{z}_N^t = \J_N\nabla_{\uz_N}H_N(\uz_N)
\end{equation}
where $\nabla_{\uz_N} = (\nabla_{z_1},\ldots,\nabla_{z_N})$ with $\nabla_{z_i} = (\nabla_{x_i},\nabla_{v_i})$ and $\J_N$ is the block-diagonal matrix whose diagonal entries are the rotation matrix $\J(x,v) = (-v,x)$. %The formulation \eqref{eq:VlasJ} is in turn equivalent to the dual formulation,
%\begin{equation}
%\forall F\in \Cc^\infty(\Rd^N), \qquad \frac{d}{dt}F(\uz_N^t) = \pb{F}{H_N}_{\Rd^N}(\uz_N^t),
%\end{equation}
%where $\pb{f}{g}_{\Rd^N} \coloneqq \sum_{i=1}^N \nabla_{x_i}f\cdot\nabla_{v_i}g - \nabla_{v_i}f\cdot\nabla_{x_i}g$ is (up to a choice of sign) the standard Poisson bracket on $(\R^{2d})^N$. 

Given a solution $\uz_N^t$ of \eqref{eq:New}, one can associate to it a probability measure $\mu_{N}^t \coloneqq \frac{1}{N}\sum_{i=1}^N \d_{z_i^t}(z)$ on $\R^{2d}$ called the \emph{empirical measure}. By integrating $\mu_{N}^t$ against a test function, it is a straightforward calculation to show that $\mu_{N}^t$ is a weak solution to the Vlasov equation if and only if $\uz_N^t$ is a solution of \eqref{eq:New}. Accordingly, if the initial empirical measures $\mu_{N}^0$ converge weakly as $N\rightarrow\infty$ to an expected or \emph{mean-field} measure $\mu^0$ with regular density, then one expects---or hopes---that $\mu_N^t$ converges weakly to a solution $\mu^t$ of the Vlasov equation with initial datum $\mu^0$ for all time, a law of large numbers type result. The Vlasov equation \eqref{eq:Vl} is then referred to as the \emph{mean-field limit} of the system \eqref{eq:New}.

Alternatively, one may adopt a statistical point of view and suppose that the initial position-velocities $z_1,\ldots,z_N$ are themselves random variables with some exchangeable (i.e., invariant under permutations of particle labels) law $\ga_N^0$. The starting point is now the \emph{Liouville equation}
\begin{equation}\label{eq:Lio}
\p_t\ga_N + \sum_{i=1}^N v_i\cdot\nabla_{x_i}\ga_N -\frac{2}{N}\sum_{i=1}^N\sum_{\substack{1\leq j\leq N}}\nabla W(x_i-x_j)\cdot\nabla_{v_i}\ga_N = 0.
\end{equation}
Given a solution $\ga_N$ of the Liouville equation \eqref{eq:Lio}, we form the sequence of marginals
\begin{equation}
\ga_N^{(k)} \coloneqq \int_{\Rd^{N-k}}d\ga_N(\cdot,z_{k+1},\ldots,z_N), \qquad 1\leq k\leq N,
\end{equation}
where by convention $\ga_N^{(N)} \coloneqq \ga_N$. The marginals $(\ga_N^{(k)})_{k=1}^N$ satisfy the \emph{(classical) BBGKY hierarchy} of equations
\begin{multline}\label{eq:BBGKY}
\partial_t \ga^{(k)}_N + \sum_{i=1}^k v_i \cdot \nabla_{x_i} \ga_N^{(k)} = \frac{2}{N} \sum_{1\leq i,j\leq k} \nabla W(x_i - x_j) \cdot \nabla_{v_i} \ga_N^{(k)} \\
+ \frac{2(N-k)}{N} \sum_{i=1}^k \int_{\R^{2d}}dz_{k+1} \nabla W(x_i - x_{k+1}) \cdot \nabla_{v_i} \ga_{N}^{(k+1)}.
\end{multline}
Letting $N\rightarrow\infty$, the first term on the right-hand side of \eqref{eq:BBGKY} is formally $O(1/N)$ and therefore vanishes, while the prefactor of the second term becomes $2$, leading to the \emph{Vlasov hierarchy}
\begin{equation}\label{eq:VlH}
\p_t \ga^{(k)} + \sum_{i=1}^k v_i\cdot\nabla_{x_i}\ga^{(k)} = 2\sum_{i=1}^k \int_{\R^{2d}}dz_{k+1}\nabla W(x_i-x_{k+1})\cdot\nabla_{v_i}\ga^{(k+1)}, \qquad k\geq 1.
\end{equation}
The equations \eqref{eq:VlH} form an infinite coupled system of linear equations, where the coupling of the $k$-th marginal to the $(k+1)$-th marginal reflects that there are only binary interactions in \eqref{eq:New}. Making the ansatz that there exists a $\ga^t$ such that $\ga^{(k),t} = (\ga^t)^{\otimes k}$ for every $k\in\N$ and $t\geq 0$, one computes that $(\ga^{(k)})_{k=1}^\infty$ is a solution of the Vlasov hierarchy if and only if $\ga$ is a solution of the Vlasov equation. Thus, if for each $k\in\N$, the marginals $\ga_N^{(k),0}$ of the initial $N$-particle laws converge to $(\ga^0)^{\otimes k}$ as $N\rightarrow\infty$, then one expects---or hopes---that the time evolutions $\ga_N^{(k),t}$ converge to $(\ga^t)^{\otimes k}$ as $N\rightarrow\infty$. This asymptotic factorization is referred to as \emph{propagation of molecular chaos}. 

One can make the formal derivation sketched in the preceding paragraphs rigorous in the sense that the empirical measure is shown to converge weakly to a solution of the Vlasov equation as $N\rightarrow\infty$ under suitable assumptions on $W$. This in turn implies propagation of chaos in a certain topology (see \cite{GMR2013, HM2014} and references therein). The convergence when $W$ is regular (e.g., $\nabla W$ Lipschitz) is classical \cite{NW1974, BH1977, Dobrushin1979, Duerinckx2021gl}. However, the situation when the force $\nabla W$ fails to be Lipschitz is much less understood. In particular, it is an outstanding problem to prove the mean-field limit for Vlasov-Poisson, except in dimension one \cite{Trocheris1986, Hauray2014}. Some results have been obtained for forces $\nabla W$ which are bounded \cite{JW2016} or even mildly singular (e.g., $|x|^{-\alpha}$ for $\alpha<1$) \cite{HJ2007, HJ2015}. In other directions, mean-field convergence has been shown for Coulomb-type potentials which are regularized at some small scale $\ep_N$ vanishing as $N\rightarrow\infty$ \cite{BP2016, Lazarovici2016, LP2017, Grass2021} or when the initial data is of so-called monokinetic type \cite{Serfaty2020}. For reviews of Vlasov mean-field limits, the reader may consult \cite{Jabin2014, Golse2016ln} and, in particular, the recent lecture notes \cite{Golse2022ln}.

\medskip

The formal derivation from above, let alone any of the just cited mathematical results, does not give any information on how the Hamiltonian structure of the Vlasov equation itself arises from that of Newton's second law. To the best of our knowledge, a detailed description of the Hamiltonian structure for the Vlasov equation as itself a ``mean-field limit'' (in other words, a derivation of the Vlasov Hamiltonian structure) remains an unanswered question. Some partial progress has been made: Marsden, Morrison, and Weinstein \cite{MMW1984} formally showed that the BBGKY hierarchy equations \eqref{eq:BBGKY} are Lie-Poisson (i.e., they are Hamiltonian with respect to the canonical Poisson bracket on the dual of a Lie algebra) and that this hierarchy bracket is such that its pullback under the map corresponding to taking marginals equals the Poisson bracket for the Liouville equation. However, Marsden \emph{et al.}'s expressed goal of showing ``how this structure is inherited by truncated systems, providing a statistical basis for recently discovered bracket structures for plasma systems,'' such as those identified in \cite{IT1976, Morrison1980, Gibbons1981, MW1982} for the Vlasov equation and \cite{MG1980, Morrison1982pois, MRW1984} for other related equations, has not been realized prior to this paper.

\subsection{Informal description of main results}\label{ssec:intromr}
In this article, we settle the question of \cite{MMW1984} on providing a statistical foundation for the Poisson structure underlying the Vlasov equation, by giving a rigorous derivation of the Hamiltonian structure, both the underlying Poisson vector space and Hamiltonian functional, directly from Newtonian mechanics in the limit as $N\rightarrow\infty$. Our results parallel the previous subsection's discussion of the formal derivation of the Vlasov equation, but from a perspective focused on geometric structure, in particular morphisms between different Lie algebras and Lie-Poisson spaces, as well as limits of such structures as the number of particles $N\rightarrow\infty$. In addition to placing the formal calculations of \cite{MMW1984} on firm functional-analytic footing by identifying appropriate spaces of functions and distributions, corresponding to observables and states, respectively, on which all brackets are well-defined, we show that operations in the formal derivation, such as taking the marginals of an $N$-particle distribution or forming the empirical measure from a position-velocity configuration, are Poisson morphisms (i.e., they preserve Poisson brackets). Moreover---and most importantly---we show that the Hamiltonian structure of the Vlasov equation, both the Lie-Poisson bracket and the Hamiltonian functional, may be interpreted as a \emph{``geometric mean-field limit,''} which is directly obtainable as the pullback of the Hamiltonian structure of the Vlasov hierarchy, both novel observations.

\cref{thm:maininf} stated below is an informal description of the main results of this paper. Of course, \cref{thm:maininf} is a gross caricature. The reader will forgive us for not being more precise at this stage, so as to maintain the accessibility of the introduction. A detailed description of the results, with all background material and notation explained, is given in \cref{sec:Out}, which is the technical introduction to the paper. It is important for the reader to understand that there is not a single main result, but a chain of connected results that should be considered in their totality.

\begin{theorem}[Informal statement of the main result]\label{thm:maininf}
\phantom{}
\begin{description}
\item[$N$-particle Liouville] Let $N\in\N$ denote the number of particles.
\begin{itemize}[leftmargin=*]
\item
There exists a Lie algebra $\g_N$ of symmetric $\Cc^\infty$ functions on $\Rd^N$, constituting $N$-particle observables. Scaling the standard Poisson bracket by $N$, yields a Lie bracket $\comm{\cdot}{\cdot}_{\g_N}$.
\item
Consequently, the strong dual $\g_N^*$, consisting of symmetric distributions with compact support on $\Rd^N$, has a Lie-Poisson bracket $\pb{\cdot}{\cdot}_{\g_N^*}$, with respect to which the Liouville equation \eqref{eq:Lio} admits a Hamiltonian formulation.
\item
Additionally, there is a Poisson morphism $\iota_{Lio}: \Rd^N \rightarrow \g_N^*$ sending a position-velocity configuration $\uz_N$ to a symmetric probably measure (the law) on $\Rd^N$, in particular mapping solutions of Newton's equations \eqref{eq:New} to solutions of the Liouville equation.
\end{itemize}
\item[$N$-particle BBGKY]
\phantom{}
\begin{itemize}[leftmargin=*]
\item
The Lie algebras $(\g_k,\comm{\cdot}{\cdot}_{\g_k})_{k=1}^N$ collectivize into a Lie algebra $(\G_N,\comm{\cdot}{\cdot}_{\G_N})$ of $N$-hierarchies of observables $F=(f^{(k)})_{k=1}^N \in \G_N = \bigoplus_{k=1}^N\g_k$.
\item
On the strong dual space $\G_N^* = \prod_{k=1}^N \g_k^*$ consisting of $N$-hierarchies of states $\Ga=(\ga^{(k)})_{k=1}^N$, there is an associated Lie-Poisson bracket $\pb{\cdot}{\cdot}_{\G_N^*}$, with respect to which the BBGKY hierarchy \eqref{eq:BBGKY} admits a Hamiltonian formulation.
\item
Additionally, the map $\iota_{mar}: \g_N^*\rightarrow \G_N^*$ formed from taking $k$-particle marginals is a Poisson morphism, mapping solutions of the Liouville equation to solutions of the BBGKY hierarchy.
\end{itemize}
\item[Vlasov hierarchy]
\phantom{}
\begin{itemize}[leftmargin=*]
\item
The spaces $\G_N$ ordered by inclusion form an increasing sequence with limit $\G_\infty=\bigoplus_{k=1}^\infty \g_k$. Any $F,G\in\G_\infty$ also must belong to $\G_N$ for $N$ sufficiently large, therefore one can compute the limit of $\comm{F}{G}_{\G_N}$ as $N\rightarrow\infty$, which acquires a simpler form due to vanishing of $O(1/N)$ terms in the expansion. This limit, denoted $\comm{F}{G}_{\G_\infty}$, defines a Lie bracket for $\G_\infty^*$.
\item
On the strong dual $\G_\infty^* = \prod_{k=1}^\infty \g_k^*$, there is an associated Lie-Poisson bracket $\pb{\cdot}{\cdot}_{\G_\infty^*}$ well defined for any $\Fc,\Gc\in\Cc^\infty(\G_\infty^*)$. Restricting to a unital subalgebra $\A_\infty$ generated by expectation and constant functionals, $\G_\infty^*$ acquires a weak Poisson structure, with respect to which the Vlasov hierarchy \eqref{eq:VlH} is Hamiltonian.
\end{itemize}
\item[From Vlasov hierarchy to Vlasov]
\phantom{}
\begin{itemize}[leftmargin=*]
\item The factorization map $\iota: \g_1^*\rightarrow \G_\infty^*$ defined by $\ga\mapsto (\ga^{\otimes k})_{k=1}^\infty$ is a Poisson morphism.
\item The pullback of the Vlasov hierarchy Hamiltonian under $\iota$ equals the Vlasov Hamiltonian.
\item In this sense, the Hamiltonian structure of the Vlasov equation \eqref{eq:Vl} is the pullback of the Hamiltonian structure of the Vlasov hierarchy, and the map $\iota$ sends solutions of the Vlasov equation to the Vlasov hierarchy.
\end{itemize}
\item[From Newton to Vlasov]
\phantom{=}
\begin{itemize}[leftmargin=*]
\item 
Finally, one can connect the $N$-particle Poisson space to the Vlasov Poisson space through the empirical measure assignment $\iota_{EM}: \Rd^N\rightarrow \g_1^*$, which is a Poisson morphism.
\item
The pullback under $\iota_{EM}$ of the Vlasov Hamiltonian equals the energy per particle of \eqref{eq:New}, and therefore $\iota_{EM}$ sends solutions of the Newtonian system to weak solutions of the Vlasov equation.
\end{itemize}
\end{description}
\end{theorem}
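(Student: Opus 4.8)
The plan is to isolate two reusable ingredients and apply them mechanically along the chain Newton $\to$ Liouville $\to$ BBGKY $\to$ Vlasov hierarchy $\to$ Vlasov. The first is \emph{Lie--Poisson functoriality}: if $\tau\colon(\mathfrak a,\comm{\cdot}{\cdot}_{\mathfrak a})\to(\mathfrak b,\comm{\cdot}{\cdot}_{\mathfrak b})$ is a continuous homomorphism of topological Lie algebras, the spaces are chosen reflexive so that $(\mathfrak a^*)^*\cong\mathfrak a$ and likewise for $\mathfrak b$, and $\tau^*$ maps the state space $\mathfrak b^*$ into $\mathfrak a^*$, then $\tau^*\colon\mathfrak b^*\to\mathfrak a^*$ is a Poisson morphism for the canonical Lie--Poisson brackets; the proof is the chain rule $\ds(\Fc\circ\tau^*)[\ga]=\tau\big(\ds\Fc[\tau^*\ga]\big)$ followed by the homomorphism property of $\tau$, after checking that $\tau^*$ pulls admissible functionals back to admissible functionals. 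For maps issuing from a symplectic manifold $P$ one uses instead the \emph{momentum-map criterion}: a smooth $J\colon P\to\g^*$ is Poisson exactly when $\xi\mapsto J_\xi:=\ipp{J(\cdot),\xi}$ is a Lie-algebra homomorphism $\g\to\Cc^\infty(P)$, after which the bracket identity for general functionals again follows by the chain rule. The second ingredient is purely computational: for a Lie--Poisson space $\g^*$, the Hamiltonian vector field of $\Hc$ is $\ga\mapsto\ad^*_{\ds\Hc[\ga]}\ga$, so verifying that a given PDE ``is Hamiltonian'' amounts to computing one coadjoint action and matching it against the equation. Granting these, the remaining work is (i) building one coherent scale of function and distribution spaces on which everything is defined, and (ii) a handful of explicit computations.

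First I would fix the Liouville level: let $\g_N$ be the symmetric $\Cc^\infty$ functions on $\Rd^N$ with the decay and support control on derivatives needed to make the standard Poisson bracket close and satisfy Jacobi, and put $\comm{\cdot}{\cdot}_{\g_N}:=N\pb{\cdot}{\cdot}$; let $\g_N^*$ be its strong dual, the symmetric distributions of compact support, which then carries the canonical bracket $\pb{\cdot}{\cdot}_{\g_N^*}$ because $\g_N$ is reflexive. Computing $\ad^*$ for the energy functional (a fixed rescaling of $\ipp{\cdot,H_N}$, with $H_N$ as in \eqref{eq:HNdef}, the rescaling chosen so the factor $N$ in the bracket cancels) reproduces \eqref{eq:Lio}; and $\iota_{Lio}(\uz_N):=\tfrac1{N!}\sum_{\sigma}\d_{(z_{\sigma(1)},\dots,z_{\sigma(N)})}$ satisfies the momentum-map identity tautologically, since $J_f(\uz_N)=f(\uz_N)$ for symmetric $f$ forces $N\pb{f}{g}(\uz_N)=\comm{f}{g}_{\g_N}(\uz_N)=J_{\comm{f}{g}_{\g_N}}(\uz_N)$; hence $\iota_{Lio}$ is Poisson, and as $\iota_{Lio}^*\Hcl=\tfrac1NH_N$ it carries \eqref{eq:New} to \eqref{eq:Lio}. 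At the BBGKY level, set $\G_N=\bigoplus_{k=1}^N\g_k$ and \emph{define} $\comm{\cdot}{\cdot}_{\G_N}$ to be the bracket that makes the assembly map $\tau_N\colon\G_N\to\g_N$, $(f^{(k)})_k\mapsto\sum_k c_{N,k}\,\Sym_N\!\big(f^{(k)}\otimes 1^{\otimes(N-k)}\big)$, a homomorphism, with $c_{N,k}$ the combinatorial weights dictated by the marginal normalization; one checks directly that this bracket is well defined and satisfies Jacobi, and that $\tau_N^*=\iota_{mar}$ is precisely the map taking $k$-particle marginals. Functoriality then gives that $\iota_{mar}$ is Poisson, $X_{\Hbb}$ recovers \eqref{eq:BBGKY}, and $\iota_{mar}^*\Hbb=\Hcl$, so $\iota_{mar}$ carries Liouville solutions to BBGKY solutions.

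Next comes the infinite hierarchy. The inclusions $\G_N\hookrightarrow\G_{N+1}$ are bracket-compatible modulo $O(1/N)$, so for fixed $F,G\in\G_\infty=\bigoplus_k\g_k$ the limit $\comm{F}{G}_{\G_\infty}:=\lim_{N\to\infty}\comm{F}{G}_{\G_N}$ exists (the ``gain'' contributions, matching the $O(1/N)$ first sum on the right of \eqref{eq:BBGKY}, drop out), is bilinear and antisymmetric, and satisfies Jacobi by passage to the limit. On $\G_\infty^*=\prod_k\g_k^*$ the naive Lie--Poisson formula need not converge on all of $\Cc^\infty(\G_\infty^*)$, so one restricts to the unital subalgebra $\A_\infty$ generated by the expectation functionals $\Ga=(\ga^{(k)})_k\mapsto\ipp{\ga^{(k)},f^{(k)}}$ and the constants; there the bracket closes, since each bracket of generators is again a finite sum of generators, yielding a \emph{weak} Poisson structure, and a coadjoint computation identifies the flow of $\Hvlh$ with \eqref{eq:VlH}. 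For the factorization map $\iota\colon\ga\mapsto(\ga^{\otimes k})_k$, the product rule for $\ga\mapsto\ga^{\otimes k}$ together with the symmetry of the $f^{(k)}$ shows that $\ds(\iota^*\Fc)[\ga]\in\g_1$ equals the partial contraction $\sum_k k\,\ipp{f^{(k)},\ga^{\otimes(k-1)}\otimes(\cdot)}$, where $(f^{(k)})_k=\ds\Fc[\iota\ga]$; inserting this into $\pb{\cdot}{\cdot}_{\g_1^*}$ and into $\pb{\cdot}{\cdot}_{\G_\infty^*}$ evaluated at $\iota(\ga)$, and using the explicit contracted form of $\comm{\cdot}{\cdot}_{\G_\infty}$, shows the two agree, so $\iota$ is a Poisson morphism, while a direct substitution gives $\iota^*\Hvlh=\Hvl$. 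Finally, for $\iota_{EM}(\uz_N)=\tfrac1N\sum_i\d_{z_i}\in\g_1^*$ the momentum-map criterion reduces to $N\pb{\tfrac1N\sum_i f(z_i)}{\tfrac1N\sum_j g(z_j)}=\tfrac1N\sum_i\pb{f}{g}(z_i)=J_{\comm{f}{g}_{\g_1}}(\uz_N)$, the cross terms vanishing because $f(z_i)$ and $g(z_j)$ for $i\ne j$ involve disjoint variables; together with $\iota_{EM}^*\Hvl=\tfrac1NH_N$ up to an additive constant, functoriality yields that the empirical measure of any Newtonian trajectory is a weak solution of \eqref{eq:Vl}.

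The main obstacle is not any single computation but the functional-analytic bookkeeping: one must commit to \emph{one} coherent scale of spaces in which every observable algebra is closed under its rescaled Poisson bracket and is reflexive with the chosen state space as its dual, in which all of $\iota_{Lio},\iota_{mar},\iota,\iota_{EM},\tau_N$ are continuous and smooth, and---most delicately---in which the admissible algebra $\A_\infty$ is simultaneously small enough to be bracket-closed and large enough to contain $\Hvlh$ and to express the dynamics, so that $\G_\infty^*$ is a genuine weak Poisson manifold rather than a formal bracket. A secondary difficulty is combinatorial: proving that $\comm{\cdot}{\cdot}_{\G_N}$ is well defined and satisfies Jacobi, and controlling the $N\to\infty$ expansion so that $\comm{\cdot}{\cdot}_{\G_\infty}$ is again a Lie bracket. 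I expect the bosonic NLS derivation of \cite{MNPRS2020} to serve as the template for both the framework and the factorization identity, the present classical phase-space setting being technically cleaner, since one manipulates honest functions and distributions rather than operators and Weyl quantizations.
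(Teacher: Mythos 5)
Your overall architecture matches the paper's: the same scale of spaces $\g_k,\g_k^*,\G_N,\G_N^*,\G_\infty,\G_\infty^*$, the same maps $\iota_{Lio},\iota_{mar},\iota,\iota_{EM}$, the same restriction to the expectation-generated subalgebra $\A_\infty$ at the infinite-particle level, and the same strategy of computing coadjoint actions to match each equation. The momentum-map shortcut for $\iota_{Lio}$ and $\iota_{EM}$ is legitimate (the paper proves these by direct chain-rule computations and itself invokes the momentum-map principle for $\iota_{mar}$), and your contraction formula for $\ds(\iota^*\Fc)[\ga]$ agrees with the paper's proof of the factorization morphism.

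There is, however, a genuine gap at the BBGKY level. You propose to \emph{define} $\comm{\cdot}{\cdot}_{\G_N}$ as ``the bracket that makes the assembly map $\tau_N$ a homomorphism,'' where $\tau_N(F)=\sum_{k}\Sym_N\big(f^{(k)}\otimes 1^{\otimes(N-k)}\big)=\sum_k \ep_{k,N}(f^{(k)})$. This does not define anything: $\tau_N$ is not injective (already for $N=2$, take $f^{(2)}=-\ep_{1,2}(f^{(1)})$ with $f^{(1)}\neq 0$, so $F\neq 0$ but $\tau_N(F)=0$), so the requirement $\tau_N\big(\comm{F}{G}_{\G_N}\big)=\comm{\tau_N F}{\tau_N G}_{\g_N}$ determines the bracket only up to $\ker\tau_N$, and existence of a preimage is not automatic either. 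The paper's construction uses instead that each \emph{individual} $\ep_{k,N}$ is injective, together with the filtration lemma: $\comm{\ep_{\ell,N}(f^{(\ell)})}{\ep_{j,N}(g^{(j)})}_{\g_N}$ lies in the image of $\ep_{k,N}$ with $k=\min(\ell+j-1,N)$, with an explicit combinatorial formula in terms of the contractions $f^{(\ell)}\wedge_r g^{(j)}$ and the constants $C_{\ell jNr}$; the $k$-th component of the bracket is then $\ep_{k,N}^{-1}$ applied to the corresponding sum, and the homomorphism property of $\tau_N=\iota_{\ep}$ (hence that its dual $\iota_{mar}$ is Poisson) is a theorem, not a definition. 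That lemma is the combinatorial heart of the $N$-particle section and is also what produces $C_{\ell jNr}\to\indic_{r=1}$, i.e.\ the very $N\to\infty$ limit you invoke for $\comm{\cdot}{\cdot}_{\G_\infty}$. A secondary weakness: your functoriality argument presupposes that $\g_N^*$ and $\G_N^*$ are honest Poisson vector spaces with \emph{smooth} Hamiltonian vector fields for all $\Cc^\infty$ functionals; in this locally convex setting that is not a consequence of reflexivity alone — the paper earns it by showing the duals are (DF) Montel, hence sequential and $k^\infty$, and then applying Gl\"ockner's theorem, while at the level of $\G_\infty^*$ this route fails and is replaced by the weak Poisson structure on $\A_\infty$, which you do identify correctly.
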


\begin{remark}
The reader might wonder about the relevance of  \cref{thm:maininf} for the Vlasov-Poisson equation since the Coulomb potential is not in $\g_1$, failing to be smooth at the origin. While this observation is correct, it is not of great importance, since at the $N$-particle level, one can always regularize the potential $W$ at some small scale, such as the typical interparticle distance $N^{-1/d}$.\footnote{In fact, Vlasov-Poisson dynamics have been derived as the mean-field limit of Newtonian $N$-particle dynamics with such a regularization \cite{LP2017}.} Similarly, it is classical that the Cauchy problem for Vlasov-Poisson is stable with respect to regularizations of $W$ (e.g., see \cite{Hauray2014}). Furthermore, the primary significance of \cref{thm:maininf} is not at the level of Hamiltonian functionals, which depend on the potential $W$, but rather at the level of the underlying Lie algebras and Lie-Poisson brackets, which are completely independent of $W$. If one wishes to have a formalism that directly allows for singular $W$, then one should work with scales of function spaces on $\Rd^{k}$ (e.g., Sobolev) and their duals. In which case, the notion of a Hamiltonian vector field must be modified to allow for mappings from a higher regularity index of the scale to a lower regularity index. Additionally, the states in \cref{thm:maininf} are assumed to have compact support in phase space. This a qualitative, technical assumption stemming from the isomorphism between $\Cc^\infty(\R^n)^*$ and the space $\Ec'(\R^n)$ of distributions with compact support. It is harmless from the perspective of the Vlasov equation due to finite speed of propagation and stability with respect to compact approximation of the initial data.

\end{remark}

Let us state clearly that \cref{thm:maininf} does not address the derivation of \emph{dynamics} of the Vlasov equation from Newton's second law or the Liouville equation in the vein of the works on Vlasov mean-field limits mentioned in \cref{ssec:intromot}. Instead, our work is \emph{complementary}, answering the question of \cite{MMW1984} on a derivation of the Vlasov bracket from $N$-particle brackets, which we argue is both independent of and unaddressed by these prior works on Vlasov mean-field limits. A worthwhile goal for the future is to unify this perspective of derivation of geometric structure with the traditional perspective of derivation of dynamics, using the former to say new things about the latter. In other contexts, the geometric structure of an equation has played an important role in understanding its well-posedness or long-time dynamics. As an example of this interplay to which we aspire, we mention the seminal work of Arnold \cite{Arnold1966, Arnold1969} and Ebin and Marsden \cite{EM1970} for the incompressible Euler equation.

\subsection{Method of proof}\label{ssec:intropf}
Our method for proving \cref{thm:maininf} is heavily inspired by the work \cite{MNPRS2020} of the last four co-authors together with D. Mendelson. This cited work gave a complete, mathematically rigorous description of how the Hamiltonian structure of the nonlinear Schr\"odinger equation emerges in the limit as $N\rightarrow\infty$ from the Hamiltonian structure of the linear Schr\"odinger equation describing the many-body problem for $N$ interacting bosons. The approach of \cite{MNPRS2020} in turn was motivated by the use of the BBGKY hierarchy to derive the dynamics of nonlinear Schr\"odinger-type equations from the $N$-body Schr\"odinger problem \cite{Spohn1980, ABGT2004, AGT2007, ESY2006, ESY2007, ESY2009, ESY2010, KM2008, CP2014, CH2019}.\footnote{We also mention that the BBGKY hierarchy has been a tool \cite{NS1981, Spohn1980, Spohn1981, GMR2013}, though not as powerful, in the derivation of Vlasov dynamics.} While the results obtained in the present paper demonstrate the robustness of the hierarchy formalism developed in \cite{MNPRS2020}, in the sense that there are algebraic parts to our work for which the computations of \cite{MNPRS2020} transfer with little modification, there are important analytic differences between the quantum setting and the classical setting of this work, as well as new challenges encountered here.

The first obvious difference with \cite{MNPRS2020} we highlight is the nature of observables, states, and brackets in classical mechanics vs quantum. Here, the observables (for $k$ particles) are $\Cc^\infty$ functions $f:\Rd^k \rightarrow\R$ invariant under permutation of particle labels, while in the quantum setting, they are continuous linear operators $A\in \mathcal{L}(\Sc_s((\R^d)^k), \Sc_s'((\R^d)^k))$ from the symmetric Schwartz space to the space of symmetric tempered distributions. Similarly, the states here (again for $k$ particles) are distributions $\ga$ on $\Rd^k$ with compact support and with a dually defined permutation symmetry, while in the quantum setting, they are continuous linear operators $A\in \mathcal{L}(\Sc_s'((\R^d)^k),\Sc_s((\R^d)^k))$ from the space of symmetric tempered distributions to the symmetric Schwartz space. The fact that we do not need to consider very irregular distribution-valued operators is a technical advantage of the classical setting over the quantum. It is an interesting observation that the observables are irregular while the states are regular, in terms of Schwartz kernels, in the quantum setting, while in the classical setting the opposite is true. Lastly, the Poisson structures here are all built from the standard Poisson structure on Euclidean space, whereas in the quantum case, they are built from the commutator of two operators on an $L^2$ space.

The next difference with \cite{MNPRS2020} is that the results of the present paper are stronger and the overall proof is significantly less \emph{ad hoc}. Namely, in \cite{MNPRS2020}, we relied on the notion of a weak Poisson vector space (see \cref{def:WPVS}), originally introduced in \cite{NST2014}, at all stages of the derivation. The adjective ``weak'' here refers to the fact that the Poisson bracket is no longer assumed to admit a Hamiltonian vector field for every $\Cc^\infty$ functional, but only for functionals in a unital subalgebra $\A$, which itself is part of the data specifying a weak Poisson vector space. Much of the difficulty throughout \cite{MNPRS2020} boils down to identifying an $\A$ which is large enough to contain all functionals of interest (e.g., BBGKY, GP Hamiltonians) but still small enough so that the brackets can actually be defined. In contrast, the present article works with a notion of strong Poisson vector spaces (see \cref{def:PVS}) at the $N$-particle level, in which the Poisson bracket is assumed to admit a Hamiltonian vector field for every $\Cc^\infty$ functional, omitting the need to restrict to a subalgebra. We then show that our dual spaces $\g_k^*, \G_N^*$ satisfy certain topological conditions (in particular, they are $k^\infty$ spaces; see \cref{def:kinfty}) and that our Lie brackets are jointly continuous, allowing us to use an abstract theorem of Gl\"ockner \cite{Glockner2009} (see \cref{thm:Glockner} for a review) to obtain a well-defined Lie-Poisson structure. To the best of our knowledge, our work is the first application of Gl\"ockner's theorem for problems involving Hamiltonian PDE. Unfortunately, we run into a technical issue at the infinite-particle level when attempting to verify the conditions to apply Gl\"ockner's theorem for $\G_\infty^*$---namely, showing that this is a $k^\infty$ space, given the $k^\infty$ property is not necessarily preserved under countable products. To overcome this issue, we resort to directly verifying that for the subalgebra $\A_\infty$ generated by constants and expectation functionals (see \eqref{eq:Ainfdef}), which are the classical analogue of the trace functionals from \cite{MNPRS2020}, there is a weak Poisson structure for $\G_\infty^*$. Importantly, this algebra $\A_\infty$ contains the Vlasov hierarchy Hamiltonian.

\subsection{Future directions}\label{ssec:introfd}
This article and the prior work \cite{MNPRS2020} raise the interesting question of how to connect the classical and quantum worlds through the limit $\hbar\rightarrow 0$. We believe that by combining geometric structures from each of these papers and relating them through the Wigner transform, which is a Poisson morphism, the combined mean-field limit $N\rightarrow \infty$ and $\hbar\rightarrow 0$ can be handled to obtain a rigorous derivation of the Hamiltonian structure of the Vlasov equation directly from the $N$-body Schr\"odinger equation. In other words, the diagram in Figure 1 commutes in terms of geometric structure. We plan to investigate this direction in future work.

\begin{figure}[H]\label{fig}
\caption{Mean Field and Classical Limits}
\begin{tikzpicture}[scale=.6]
\node (A) at (-12,3) {\tiny $N$-Schr\"odinger/Quantum};
\node (B) at (5,3) {\tiny Hartree/Quantum} ;
\node (C) at (-12,-3) {\tiny $N$-Liouville/Classical};
\node (D) at (5,-3) {\tiny Vlasov/Classical};
\path[->,font=\scriptsize]
(A) edge node[above]{\tiny $N\rightarrow \infty$} (B)
(A) edge node[right]{\tiny $\hbar\rightarrow 0$} (C)
(A) edge node[above]{\tiny \hspace{15mm}$N\rightarrow\infty,\ \hbar\rightarrow 0$} (D)
(B) edge node[right]{\tiny $\hbar\rightarrow 0$} (D)
(C) edge node[above]{\tiny $N\rightarrow \infty$} (D);
\end{tikzpicture}
\end{figure}
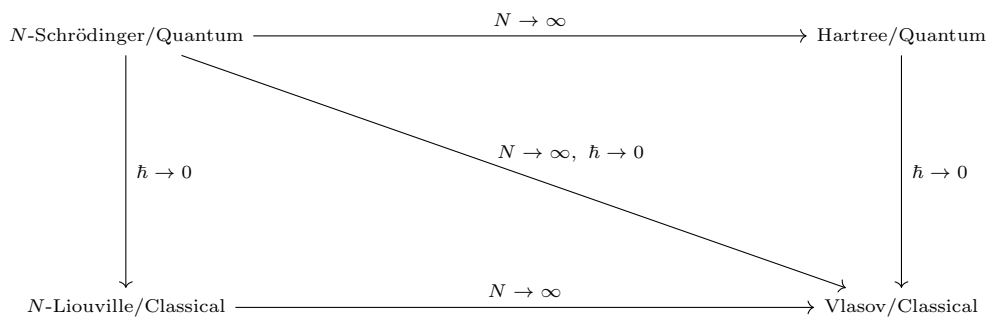

It is appropriate to conclude this subsection by mentioning some works that are related to the spirit of our paper in terms of understanding the role of the Hamiltonian formulation of PDE in mathematical physics. We first mention some recent work of Chong \cite{Chong2022} which exhibits a Poisson map from the Poisson manifold underlying the Vlasov equation to the Poisson manifold underlying the compressible Euler equation. We also mention impressive work of Khesin \emph{et al.} \cite{KMM2019, KMM2019, KMM2021} which shows that the Madelung transformation from wave functions to hydrodynamic variables is a K\"ahler morphism and which develops a geometric framework for Newton's equations on groups of diffeomorphisms and spaces of probability densities, covering a number of equations, including (in)compressible fluid and (non)linear Schr\"odinger equations. Finally, we mention the interesting work of Fr\"ohlich \emph{et al.} \cite{FTY2000, FKP2007, FKS2009} on the relationship between quantization, mean-field theory, and the dynamics of the Hartree and Vlasov equations, which are informed by the Hamiltonian perspective.

\subsection{Acknowledgments}
This material is based upon work supported by both the National Science Foundation under Grant No. DMS-1929284 and the Simons Foundation Institute Grant Award ID 507536 while the authors were in residence at the Institute for Computational and Experimental Research in Mathematics in Providence, RI, during the fall 2021 semester. The last four named co-authors thank Dana Mendelson for a number of discussions from which this project has benefited.

\section{Blueprint of the paper}\label{sec:Out}
We now present an outline of our main results and discuss their proofs. This section is intended as a complete blueprint or schematic of the entire paper. We recommend that one reads through this section in its entirety before proceeding to \Cref{sec:pre,sec:Ngeom,sec:infgeom,sec:Ham} and then regularly refer back to it during the reading of these subsequent sections.  At the end of this section (see \cref{ssec:Outorg}), we elaborate on the organization of the remaining portion of the paper. Finally, there are some abstract notions, which may already be familiar to the reader, that are referenced in \Cref{ssec:OutNew,ssec:OutN,ssec:Outinf,ssec:OutVl}, but whose definitions are deferred to \Cref{sec:pre}. This choice of ordering is so as not to get bogged down in material that is not the central focus of the paper.

For the reader's benefit, we include Table \ref{tab:not}, located at the end of the paper, as a guide to the various notation appearing in this work. In the table, we either provide an explanation of the notation or a reference for where the notation is first introduced and defined. Some of the notation in the table has already appeared in the introduction. In such cases, we give references to where the notation first appears in subsequent sections.

%As outlined in the introduction, our first main result establishes that the BBGKY \eqref{eq:BBGKY} and Vlasov \eqref{eq:VH} are Hamiltonian flows on certain Lie-Poisson and weak Lie-Poisson vector spaces, respectively. This requires our defining a suitable phase space for the Hamiltonian evolution in both the finite- and infinite-particle setting. The former was known from \cite{MMW} but without attention to functional-analytic details. We will also show that the aforedescribed procedure for obtaining the BBGKY hierarchy from the Liouville equation, in fact BBGKY hierarchy from Newton's second law \eqref{eq:New}, can be written in terms of a Poisson morphism.  

\subsection{Newton/Liouville equations}\label{ssec:OutNew}
Consider the function $H_N$ from \eqref{eq:HNdef} with $W\in C^\infty(\mathbb{R}^d)$ satisfying $W(-x) = W(x)$. We recall from the introduction the rotation matrix $\J(x,v) = (-v,x)$ and the block-diagonal matrix $\J_N$ with diagonal entries $\J$. The \emph{standard symplectic structure} on $\Rd^N$ is given by the form
\begin{equation}
\om_N(\uz_N, \uw_N) \coloneqq -\J_N\uz_N\cdot \uw_N, \qquad \forall \uz_N,\uw_N\in \Rd^N,
\end{equation}
where $\cdot$ denotes the Euclidean inner product on $\Rd^N$. We recall that the \emph{Hamiltonian vector field} $X_{H_N}$ associated to $H_N$ is uniquely defined by the formula
\begin{equation}
\ds H_N[\uz_N](\d\uz_N) = \om_N(X_{H_N}(\uz_N), \d\uz_N), \qquad \forall \uz_N,\d\uz_N\in \Rd^N.
\end{equation}
Set $\nabla_{z_j} = (\nabla_{x_j},\nabla_{v_j})$, where $\nabla_{x_j} = (\p_{x_j^1},\ldots,\p_{x_j^d})$ and $\nabla_{v_j}=(\p_{v_j^1},\ldots,\p_{v_j^d})$. Writing \\$\nabla_{\uz_N} = (\nabla_{z_1},\ldots,\nabla_{z_N})$, we compute from the property $\J^2=-\I$ together with the definition of the gradient that
\begin{align}
\ds H_N[\uz_N](\d\uz_N) = \nabla_{\uz_N}H_N(\uz_N) \cdot \d\uz_N = - \J_N^2\nabla_{\uz_N}H_N(\uz_N) \cdot \d\uz_N = -\J_N\Big(\J_N\nabla_{\uz_N}H_N(\uz_N)\Big)\cdot\d\uz_N,
\end{align}
which implies that $X_{H_N}(\uz_N) = \J_N\nabla_{\uz_N}H_N(\uz_N)$. Thus, the functional $H_N$ and the symplectic form $\om_N$ together define the \emph{Hamiltonian equation of motion}
\begin{equation}\label{eq:Newsym}
\dot{\uz}_N^t = X_{H_N}(\uz_N^t),
\end{equation}
which is equivalent to \eqref{eq:New}. As is well-known, the symplectic form $\omega_N$ induces a \emph{canonical Poisson bracket} on $\Rd^N$ by
\begin{equation}\label{eq:pbN}
\pb{F}{G}_{\Rd^N}(\uz_N) \coloneqq \om_N(X_F(\uz_N),X_G(\uz_N)), \qquad \forall F,G\in \Cc^\infty(\Rd^N), \ \uz_N \in \Rd^N,
\end{equation}
referred to as the \emph{standard Poisson structure} on $\Rd^N$. Thus, the symplectic formulation \eqref{eq:Newsym} of Newton's second law of motion can be equivalently written in Poisson form as
\begin{equation}\label{eq:Newpb}
\frac{d}{dt}F(\uz_N^t) = \pb{F}{H_N}_{\Rd^N}(\uz_N^t), \qquad \forall F\in \Cc^\infty(\Rd^N).
\end{equation}
To evaluate $N\rightarrow\infty$ limits, it is convenient to rescale the Poisson bracket and modify the Hamiltonian $H_N$ as follows:
\begin{equation}\label{eq:NewHamdef}
\Hc_{New} \coloneqq \frac{1}{N}\paren*{H_N+W(0)} \qquad \text{and} \qquad \pb{\cdot}{\cdot}_N \coloneqq N\pb{\cdot}{\cdot}_{\Rd^N},
\end{equation}
with the subscript ``New'' abbreviating Newton. Evidently, $\Hc_{New}$ depends on $N$, but we omit this dependence from our notation, as it will be clear from context. The addition of the term $W(0)$ in the Hamiltonian is harmless: it is a constant and so it does not change the Hamiltonian vector field. Its inclusion reflects the fact that we do not need to exclude self-interaction since $W$ is continuous at the origin. With these rescalings and translation, the Poisson formulation \eqref{eq:Newpb} becomes
\begin{equation}
\frac{d}{dt}F(\uz_N^t) = \pb{F}{\Hc_{New}}_N(\uz_N^t), \qquad \forall F\in \Cc^\infty(\Rd^N).
\end{equation}

For each $k \in \N$, we define the set 
\begin{equation}\label{eq:gkdef}
\g_k\coloneqq \mathcal{C}^\infty_s(\Rd^k) \coloneqq \left\{ f \in \mathcal{C}^\infty( \Rd^k) :  f(z_{\pi(1)},\ldots,z_{\pi(k)}) = f(\uz_k), \ \forall \pi \in \mathbb{S}_k\right\}. 
\end{equation}
In the sequel, we will use the shorthand $(f\circ\pi)(\uz_k)\coloneqq f(z_{\pi(1)},\ldots,z_{\pi(k)})$. In other words, the space $\g_k$ consists of smooth real-valued functions which are invariant under permutations of particle labels. We endow the set $\mathfrak{g}_k$ with the locally convex topology induced by the semi-norms
\begin{equation}\label{seminorm def}
\rho_{K, n}: \mathfrak{g}_k \rightarrow [0,\infty) , \qquad \rho_{K,n} (f) \coloneqq  \sup_{|\alpha | \leq n} \Vert  \partial^\alpha f \Vert_{L^\infty(K)}, \qquad K\subset \Rd^k, \ n \in \N,
\end{equation}
where $K$ above is compact and the supremum is taken over all multi-indices $\alpha\in(\N_0)^{2dk}$ with order at most $n$. We then regard $\g_k$ as a real topological vector space, elements of which are our \emph{$k$-particle observables}. We introduce a bracket on $\mathfrak{g}_k$ which will give the space the structure of a Lie algebra. For each $k \in \N$, we define
\begin{equation}\label{eq:gkLBdef}
\comm{\cdot}{\cdot}_{\g_k} : \g_k\times\g_k \rightarrow \g_k, \qquad  \comm{f}{g}_{\g_k}\coloneqq k\pb{f}{g}_{\Rd^k},
\end{equation}
where $\pb{\cdot}{\cdot}_{\Rd^k}$ is the standard Poisson bracket on $\Rd^k$.

\begin{restatable}{proposition}{gkLA}\label{prop:gkLA}
For each $k \in \N$, the pair $(\mathfrak{g}_k, \comm{\cdot}{\cdot}_{\g_k})$ is a Lie algebra in the sense of \cref{def:LA} below. Furthermore, the bracket $\comm{\cdot}{\cdot}_{\g_k}$ is a continuous bilinear map. 
\end{restatable}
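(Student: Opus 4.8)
The plan is to verify the two assertions of Proposition~\ref{prop:gkLA} in turn: first that $\comm{\cdot}{\cdot}_{\g_k}$ is a well-defined bilinear map $\g_k\times\g_k\to\g_k$ satisfying antisymmetry and the Jacobi identity (so that $(\g_k,\comm{\cdot}{\cdot}_{\g_k})$ is a Lie algebra in the sense of the forthcoming Definition~\ref{def:LA}), and second that this bracket is jointly continuous with respect to the locally convex topology generated by the seminorms $\rho_{K,n}$ in \eqref{seminorm def}. Since $\comm{f}{g}_{\g_k} = k\,\pb{f}{g}_{\Rd^k}$ differs from the standard Poisson bracket only by the positive constant $k$, all algebraic properties reduce immediately to the corresponding properties of $\pb{\cdot}{\cdot}_{\Rd^k}$, which in the standard coordinates $z_j=(x_j,v_j)$ is the classical canonical bracket $\pb{f}{g}_{\Rd^k} = \sum_{j=1}^{k}\big(\nabla_{x_j}f\cdot\nabla_{v_j}g - \nabla_{v_j}f\cdot\nabla_{x_j}g\big)$.

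First I would check closure: if $f,g\in\g_k$ then $\pb{f}{g}_{\Rd^k}$ is again $\Cc^\infty$ on $\Rd^k$ (it is a polynomial combination of first-order partial derivatives of $f$ and $g$), and it is $\Ss_k$-invariant because the symplectic form $\om_k$ is invariant under the diagonal permutation action on $\Rd^k$; concretely, $(f\circ\pi,g\circ\pi)\mapsto \pb{f\circ\pi}{g\circ\pi}_{\Rd^k} = \pb{f}{g}_{\Rd^k}\circ\pi = \pb{f}{g}_{\Rd^k}$ for every $\pi\in\Ss_k$, using that permuting the labels simultaneously relabels the gradient blocks. Hence $\comm{f}{g}_{\g_k}\in\g_k$. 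Bilinearity is clear, antisymmetry $\comm{f}{g}_{\g_k}=-\comm{g}{f}_{\g_k}$ is inherited from the standard bracket, and the Jacobi identity $\comm{f}{\comm{g}{h}_{\g_k}}_{\g_k}+\text{(cyclic)}=0$ holds because multiplying a Poisson bracket by a constant preserves Jacobi: each term picks up exactly two factors of $k$, so the identity for $\pb{\cdot}{\cdot}_{\Rd^k}$ transfers verbatim.

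For continuity I would argue directly from the seminorm estimates. Fix a compact $K\subset\Rd^k$ and $n\in\N$. For a multi-index $\alpha$ with $|\alpha|\le n$, the Leibniz rule applied to $\partial^\alpha\pb{f}{g}_{\Rd^k}$ expresses it as a finite sum of products $\partial^{\beta}(\partial_{z_j}f)\cdot\partial^{\gamma}(\partial_{z_j}g)$ with $|\beta|,|\gamma|\le n$; taking $L^\infty(K)$ norms and using $\|uv\|_{L^\infty(K)}\le\|u\|_{L^\infty(K)}\|v\|_{L^\infty(K)}$ yields a bound of the form $\rho_{K,n}(\comm{f}{g}_{\g_k})\le C_{k,n}\,\rho_{K,n+1}(f)\,\rho_{K,n+1}(g)$ for an explicit constant $C_{k,n}$ (absorbing the factor $k$ and the combinatorial count of terms). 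This is precisely the estimate needed for joint continuity of a bilinear map on the locally convex space $\g_k$: for any continuous seminorm $\rho_{K,n}$ on the target there is a continuous seminorm $\rho_{K,n+1}$ on the source controlling it quadratically, which by standard locally convex functional analysis (e.g.\ the characterization of continuous multilinear maps) gives continuity of $\comm{\cdot}{\cdot}_{\g_k}:\g_k\times\g_k\to\g_k$.

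The only genuinely nonroutine point is the bookkeeping in the last step: one must make sure the $L^\infty$-on-compacts estimate is phrased so that a \emph{single} seminorm on the source (same compact $K$, order $n+1$) dominates, rather than a sum over varying compacts, so that it fits the standard criterion for continuity of bilinear maps between locally convex spaces. This is straightforward here because differentiation and pointwise multiplication do not enlarge the support, so $K$ is unchanged; the ``hard part,'' such as it is, is simply being careful with the Leibniz expansion and invoking the correct abstract continuity criterion. Everything else is a direct transfer of the classical facts about the canonical Poisson bracket on Euclidean space.
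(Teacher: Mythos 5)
Your proposal is correct, and the algebraic half coincides with the paper's (the properties of \cref{def:LA} are inherited from the canonical bracket on $\Rd^k$, the factor $k$ being harmless; your explicit check that $\Ss_k$-invariance is preserved, so the bracket really lands in $\g_k$, is left implicit in the paper but is a welcome addition). The continuity half is organized differently. The paper factors the bracket through the pointwise multiplication map $M(f,g)=fg$ and the differentiation operators $\partial^\alpha$, proves that $\partial^\alpha$ is continuous via the bound $\rho_{K,n}(\partial^\alpha f)\le \rho_{K,n+|\alpha|}(f)$, and proves that $M$ is continuous by showing it is \emph{sequentially} continuous, which suffices because $\g_k$ (hence $\g_k\times\g_k$) is Fr\'echet, so metrizable; continuity of the bracket then follows by composition. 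You instead prove a single quantitative estimate $\rho_{K,n}(\comm{f}{g}_{\g_k})\le C_{k,n,d}\,\rho_{K,n+1}(f)\,\rho_{K,n+1}(g)$ via the Leibniz rule and invoke the standard characterization of jointly continuous bilinear maps between locally convex spaces (a bilinear map is continuous iff every continuous seminorm on the target is dominated by a product of continuous seminorms on the factors). Both arguments rest on the same Leibniz-rule bookkeeping on compacta; your route has the advantage of not using metrizability at all and of producing an explicit seminorm bound (which would also immediately give hypocontinuity-type statements), while the paper's route avoids citing the abstract bilinear continuity criterion and reuses the same elementary sequential scheme it employs elsewhere. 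Your worry about needing a \emph{single} compact $K$ and order $n+1$ on the source is correctly resolved: differentiation and multiplication do not enlarge supports, so the estimate is indeed of the required form.
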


Next, for each $k\in\N$, we define the real topological vector space $\mathfrak{g}_k^*$ to be the strong dual of $\mathfrak{g}_k$. It can be characterized as follows:
\begin{equation}\label{eq:gk*def}
\g_k^* = \{\ga\in  \mathcal{C}^\infty(\Rd^k)^* : \pi\# \ga = \ga,\ \forall \pi\in\Ss_k\},
\end{equation}
where $\pi \#\ga (f) = \ga (f \circ \pi)$ for $f \in \mathcal{C}^\infty(\Rd^k)$. Using the isomorphism $\mathcal{C}^\infty(\Rd^k)^*\cong \Ec'(\Rd^k)$, elements of $\g_k^*$, which we call \emph{$k$-particle states}, are distributions on $\Rd^k$ with compact support and which are invariant under the action of $\Ss_k$ (i.e., the permutation of particle labels). The space $\g_k^*$ has the desirable property of being a reflexive, (DF) Montel space (see \cref{lem:gkremon}).

The canonical Lie-Poisson bracket induced by the Lie bracket $\comm{\cdot}{\cdot}_{\g_k}$ gives $\g_k^*$ the structure of a Poisson vector space in the precise sense of \cref{def:PVS}. In fact, the space $\g_k^*$ has stronger topological properties, namely it is a $k^\infty$ space (see \cref{def:kinfty}) that make it an example of a reflexive, locally convex Poisson vector space as defined in \cref{def:rlcps}. We will use these stronger topological properties to prove this Lie-Poisson assertion by appealing to the aforementioned ``black box'' theorem of Gl\"ockner recalled in \cref{thm:Glockner} below.

Before stating the result, we record the following important observation. For any $\Gc\in \mathcal{C}^\infty(\g_k^*)$, we have by definition of the G\^ateaux derivative that $\ds\mathcal{G} \in \mathcal{C}^\infty(\g_k^*; \g_k^{**})$. So, for any $\mu\in \g_k^*$ we have that $\ds\mathcal{G}[\mu]\in \mathfrak{g}_k^{**}$, that is $\ds\mathcal{G}[\mu]$ is a continuous linear functional on $\mathfrak{g}_k^*$. Since we have the isomorphism $\mathfrak{g}_k^{**}\cong \mathfrak{g}_k$, we are justified in making the identification
\begin{equation}
\ds\mathcal{G}[\mu]\eqqcolon g_\mu \quad \text{and} \quad  \ds\mathcal{H}[\mu] \eqqcolon h_\mu, \qquad \text{where} \ g_\mu, h_\mu \in\mathfrak{g}_k.
\end{equation}
We then regard $\comm{g_\mu}{h_\mu}_{\mathfrak{g}_k} $ as an element in $\mathfrak{g}_k $, and we denote the pairing of $\mu$ and $\comm{g_\mu}{h_\mu}_{\mathfrak{g}_k} $ as the ``integral'' 
\begin{equation}\label{eq:brid}
\int_{(\R^{2d})^k}d\mu \comm{g_\mu}{h_\mu}_{\mathfrak{g}_k}.
\end{equation}
This identification will be made throughout this paper.

\begin{restatable}{proposition}{gkLP}\label{prop:gkLP}
For observables $\mathcal{G}, \mathcal{H} \in \mathcal{C}^\infty( \g_k^*)$ and $k$-particle state $\gamma \in \g_k^*$, we define the bracket
\begin{equation}\label{eq:gk*LPdef}
\pb{\mathcal{G}}{\mathcal{H}}_{\g_k^*}(\ga) \coloneqq \ipp*{\comm{\ds\mathcal{G}[\ga]}{\ds\mathcal{H}[\ga]}_{\g_k},\ga}_{\mathfrak{g}_k-\mathfrak{g}_k^*}.
\end{equation}
Then $(\g_k^*,\pb{\cdot}{\cdot}_{\g_k^*})$ is a reflexive, locally convex Lie-Poisson space in the sense of \cref{def:rlcps}. 
\end{restatable}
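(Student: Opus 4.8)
The plan is to deduce the proposition from the abstract ``black box'' result of Gl\"ockner (\cref{thm:Glockner}), which guarantees that whenever one has a suitable locally convex Lie algebra whose strong dual is topologically well-behaved, the canonical formula \eqref{eq:gk*LPdef} automatically endows that dual with a genuine Lie-Poisson structure in which \emph{every} smooth functional admits a Hamiltonian vector field. Accordingly, the work is essentially to verify the hypotheses of \cref{thm:Glockner} for the pair $(\g_k,\comm{\cdot}{\cdot}_{\g_k})$ and then invoke it; the algebraic identities (antisymmetry, Leibniz, Jacobi) are then free.

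First I would assemble the input data. By \cref{prop:gkLA}, $(\g_k,\comm{\cdot}{\cdot}_{\g_k})$ is a Lie algebra whose bracket is a continuous bilinear map, which supplies the continuity/smoothness requirement that Gl\"ockner's theorem places on the Lie bracket. Next I would record the topological properties of the dual: by \cref{lem:gkremon}, $\g_k^*$ is a reflexive (DF) Montel space, and—crucially—it is a $k^\infty$ space in the sense of \cref{def:kinfty}. The reflexivity isomorphism $\g_k^{**}\cong\g_k$ is what lets one identify the G\^ateaux derivatives $\ds\Gc[\ga],\ds\Hc[\ga]$ with elements of $\g_k$, so that $\comm{\ds\Gc[\ga]}{\ds\Hc[\ga]}_{\g_k}$ makes sense in $\g_k$ and the pairing in \eqref{eq:gk*LPdef} is the ``integral'' \eqref{eq:brid}; this confirms that the bracket $\pb{\cdot}{\cdot}_{\g_k^*}$ is at least well-defined on $\Cc^\infty(\g_k^*)\times\Cc^\infty(\g_k^*)$. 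The $k^\infty$ verification is the step I expect to demand the most care: it is a point-set statement about the topology of $\g_k^*$ being generated by its compact subsets, and it is precisely the property that will later fail to pass to the countable product $\G_\infty^*$, forcing the weaker framework there. Here it should be obtained from the Montel property (on a Montel space bounded and compact sets coincide, so sequential/compact arguments are available) together with the (DF) property; I would isolate this as the main technical lemma feeding into the proposition.

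With the hypotheses in hand, I would apply \cref{thm:Glockner} to conclude that \eqref{eq:gk*LPdef} defines a Lie-Poisson bracket on $\g_k^*$: it is bilinear and antisymmetric, satisfies the Leibniz rule and the Jacobi identity, and for each $\Gc\in\Cc^\infty(\g_k^*)$ there is a (unique) Hamiltonian vector field $X_\Gc$ with $X_\Gc(\Hc)=\pb{\Hc}{\Gc}_{\g_k^*}$. This is exactly the content of $(\g_k^*,\pb{\cdot}{\cdot}_{\g_k^*})$ being a Poisson vector space in the sense of \cref{def:PVS}. Finally, combining this with the reflexivity and local convexity of $\g_k^*$ already established in \cref{lem:gkremon}, the space satisfies \cref{def:rlcps}, i.e.\ it is a reflexive, locally convex Lie-Poisson space, which completes the proof. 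In short: all the genuinely nontrivial content is the topological preparation—the $k^\infty$ property of $\g_k^*$ and matching the continuity of $\comm{\cdot}{\cdot}_{\g_k}$ to Gl\"ockner's hypotheses—after which the proposition is a direct citation.
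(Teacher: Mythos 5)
Your proposal matches the paper's proof: it applies Gl\"ockner's theorem (\cref{thm:Glockner}) with $E=\g_k^*$, using the continuity of $\comm{\cdot}{\cdot}_{\g_k}$ from \cref{prop:gkLA} and the reflexive (DF) Montel structure from \cref{lem:gkremon}, with the only real work being the $k^\infty$ property of $\g_k^*$. The paper settles that step exactly along the lines you anticipate, namely that finite products $(\g_k^*)^n$ are (DF) Montel, hence sequential by Webb's theorem (\cref{thm:Webb}) and therefore $k$-spaces by \cref{prop:seqk}.
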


The reader may check that for any $N\geq 2$, the function $H_N\in\g_N$, hence $\Hc_{New}\in\g_N$. Therefore, it makes sense to introduce the \emph{Liouville Hamiltonian functional}
\begin{equation}\label{eq:LioHam}
\mathcal{H}_{Lio}(\ga) \coloneqq \ipp{\Hc_{New}, \ga}_{\g_N-\g_N^*}, \qquad \forall \ga\in\g_N^*.
\end{equation}
Evidently, $\Hc_{Lio}$ depends on $N$, though we omit this dependence from our notation. Being linear and continuous (by consequence of the separate continuity of the distributional pairing), $\mathcal{H}_{Lio} \in\Cc^\infty(\g_N^*)$. With $\Hc_{Lio}$ and \cref{prop:gkLP}, the Liouville equation may be written in Hamiltonian form. \cref{prop:LioHam} stated below is the classical counterpart to the fact that the von Neumann equation from quantum mechanics is Hamiltonian (see \cite[pp. 17-18]{MNPRS2020}).

\begin{restatable}{proposition}{LioHam}\label{prop:LioHam}
Let $I \subset \R$ be a compact interval and $N \in \N$. Then $\ga \in \mathcal{C}^\infty(I, \mathfrak{g}_N^*)$ is a solution to the Liouville equation \eqref{eq:Lio} if and only if 
\begin{equation}
\dot{\ga} = X_{\mathcal{H}_{Lio}}(\ga),
\end{equation}
where $X_{\mathcal{H}_{Lio}}$ is the unique Hamiltonian vector field generated by the Hamiltonian $\mathcal{H}_{Lio}$ with respect to the Lie-Poisson vector space $(\g_N^*, \pb{\cdot}{\cdot}_{\g_N^*})$. 
\end{restatable}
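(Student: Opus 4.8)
The plan is to compute the Hamiltonian vector field $X_{\Hc_{Lio}}$ explicitly using the Lie-Poisson bracket formula \eqref{eq:gk*LPdef} and then recognize the resulting evolution as the weak formulation of the Liouville equation \eqref{eq:Lio}. First I would note that since $\Hc_{Lio}(\ga) = \ipp{\Hc_{New},\ga}_{\g_N-\g_N^*}$ is linear and continuous in $\ga$, its G\^ateaux derivative is constant: $\ds\Hc_{Lio}[\ga] = \Hc_{New} \in \g_N$ for every $\ga \in \g_N^*$, under the identification $\g_N^{**}\cong\g_N$. Hence for any test functional $\Gc \in \Cc^\infty(\g_N^*)$, writing $g_\ga \coloneqq \ds\Gc[\ga] \in \g_N$, the defining property of the Hamiltonian vector field (which exists and is unique by \cref{prop:gkLP}, since $(\g_N^*,\pb{\cdot}{\cdot}_{\g_N^*})$ is a reflexive locally convex Lie-Poisson space) gives
\begin{equation}
\ds\Gc[\ga]\big(X_{\Hc_{Lio}}(\ga)\big) = \pb{\Gc}{\Hc_{Lio}}_{\g_N^*}(\ga) = \ipp*{\comm{g_\ga}{\Hc_{New}}_{\g_N},\ga}_{\g_N-\g_N^*}.
\end{equation}
Taking in particular the linear functionals $\Gc = \Gc_f$ given by $\Gc_f(\ga) = \ipp{f,\ga}$ for $f\in\g_N$ (for which $\ds\Gc_f[\ga] = f$ for all $\ga$), this identifies $X_{\Hc_{Lio}}(\ga) \in \g_N^*$ through the pairing
\begin{equation}
\ipp{f, X_{\Hc_{Lio}}(\ga)}_{\g_N-\g_N^*} = \ipp*{\comm{f}{\Hc_{New}}_{\g_N},\ga}_{\g_N-\g_N^*}, \qquad \forall f\in\g_N;
\end{equation}
since such linear functionals separate points of $\g_N^*$ (by reflexivity), this determines $X_{\Hc_{Lio}}(\ga)$ uniquely.

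Next I would carry out the bracket computation $\comm{f}{\Hc_{New}}_{\g_N} = N\pb{f}{\Hc_{New}}_{\Rd^N}$. Using $\Hc_{New} = \frac1N(H_N + W(0))$ and the explicit form \eqref{eq:HNdef} of $H_N$, together with the standard Poisson bracket on $\Rd^N$ built from $\om_N$ (equivalently, $\pb{f}{g}_{\Rd^N} = \sum_{i=1}^N(\nabla_{v_i}f\cdot\nabla_{x_i}g - \nabla_{x_i}f\cdot\nabla_{v_i}g)$ from the convention $\J(x,v)=(-v,x)$), the factor of $N$ cancels and one gets
\begin{equation}
\comm{f}{\Hc_{New}}_{\g_N}(\uz_N) = \sum_{i=1}^N v_i\cdot\nabla_{x_i}f - \frac{2}{N}\sum_{i=1}^N\sum_{1\leq j\leq N}\nabla W(x_i-x_j)\cdot\nabla_{v_i}f,
\end{equation}
where the convention $\nabla W(0)=0$ lets us include $j=i$. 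I would then pair this against $\ga$ and, for a curve $\ga^t \in \Cc^\infty(I,\g_N^*)$, compute $\frac{d}{dt}\ipp{f,\ga^t}$ and match it against $\ipp{f, X_{\Hc_{Lio}}(\ga^t)}$ for every $f\in\g_N$. Moving all derivatives off of $f$ and onto $\ga$ via the definition of distributional derivatives (integration by parts, legitimate since $\ga$ is a compactly supported distribution and $f$ is smooth, and $W\in\Cc^\infty$), the identity $\frac{d}{dt}\ipp{f,\ga^t} = \ipp{f,X_{\Hc_{Lio}}(\ga^t)}$ for all $f$ becomes exactly the statement that $\ga^t$ solves \eqref{eq:Lio} in the distributional sense. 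The equivalence in both directions then follows because the class of test functions $f\in\g_N$ is rich enough to characterize the distribution $\p_t\ga_N + \sum v_i\cdot\nabla_{x_i}\ga_N - \frac{2}{N}\sum_{i,j}\nabla W(x_i-x_j)\cdot\nabla_{v_i}\ga_N$ as zero, and because $\dot\ga = X_{\Hc_{Lio}}(\ga)$ is by definition equivalent to $\frac{d}{dt}\Gc(\ga^t) = \pb{\Gc}{\Hc_{Lio}}_{\g_N^*}(\ga^t)$ for all $\Gc\in\Cc^\infty(\g_N^*)$, which reduces to the linear functionals by the chain rule.

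The main obstacle I anticipate is not any single computation but rather the bookkeeping needed to make the duality manipulations rigorous: one must be careful that the permutation-symmetrized test functions $f\in\g_N = \Cc^\infty_s(\Rd^N)$ suffice to detect vanishing of a symmetric distribution (here the symmetry of $\ga$ and of the Liouville operator rescues us, since the expression being tested is already symmetric), that the G\^ateaux-differential identification $\g_N^{**}\cong\g_N$ used implicitly in \cref{prop:gkLP} is invoked consistently, and that integration by parts against $\nabla W$ is valid — which it is because $W\in\Cc^\infty(\R^d)$ by the standing assumption of this subsection and $\ga^t$ has compact support in $\Rd^N$. A secondary point requiring care is verifying that the curve regularity $\ga\in\Cc^\infty(I,\g_N^*)$ is exactly what makes $t\mapsto\Gc(\ga^t)$ differentiable with derivative computed by the chain rule, so that the ODE $\dot\ga = X_{\Hc_{Lio}}(\ga)$ in $\g_N^*$ is well-posed as a statement; this is where the $k^\infty$/Montel topological properties of $\g_N^*$ recorded earlier get used.
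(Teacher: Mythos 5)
Your proposal is correct and essentially reproduces the paper's argument: identify $\ds\Hcl[\ga]=\Hc_{New}$ by linearity, unpack $\comm{\cdot}{\Hc_{New}}_{\g_N}$, integrate by parts to move the derivatives onto $\ga$, and identify $X_{\Hcl}(\ga)$ with the Liouville operator applied to $\ga$ (the paper tests against arbitrary $\Fc\in\Cc^\infty(\g_N^*)$ and invokes uniqueness of the Hamiltonian vector field, whereas you test against linear functionals and use that $\g_N$ separates points of $\g_N^*$ --- an immaterial difference). Only watch the sign convention: the bracket used throughout the paper is $\pb{f}{g}_{\Rd^N}=\sum_{i=1}^N\paren*{\nabla_{x_i}f\cdot\nabla_{v_i}g-\nabla_{v_i}f\cdot\nabla_{x_i}g}$, opposite to the parenthetical formula you quote, although your displayed expression for $\comm{f}{\Hc_{New}}_{\g_N}$ already carries the sign that makes the resulting evolution match \eqref{eq:Lio}.
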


Given a position-velocity configuration $\uz_N = (z_1,\ldots,z_N) \in \Rd^N$, we can associate a symmetric probability measure on $\Rd^N$ by defining
\begin{equation}\label{eq:iotaLiodef}
\ga_N \coloneqq \frac{1}{N!}\sum_{\pi \in \Ss_N} \d_{z_{\pi(1)}}\otimes\cdots\otimes\d_{z_{\pi(N)}}.
\end{equation}
Evidently, the right-hand side is an element of $\g_N^*$. We call this assignment $\iota_{Lio}:\Rd^N\rightarrow\g_N^*$ the \emph{Liouville map}. The reader may check from the invariance of $\Hc_{New}$ under the action of $\Ss_N$ that
\begin{equation}
\Hcl(\iota_{Lio}(\uz_N)) = \frac{1}{N!}\sum_{\pi\in\Ss_N} \Hc_{New}(z_{\pi(1)},\ldots,z_{\pi(N)}) = \Hc_{New}(\uz_N).
\end{equation}
Moreover, the map $\iota_{Lio}$ is a morphism of Poisson vector spaces, implying that $\iota_{Lio}$ maps solutions of the Newtonian system \eqref{eq:New} to solutions of the Liouville equation \eqref{eq:Lio} (see \cref{rem:NewLio}).

\begin{restatable}{proposition}{LioPM}\label{prop:LioPM}
The map $\iota_{Lio} \in \Cc^\infty(\Rd^N, \g_N^*)$ defines a morphism of the Poisson vector space $(\Rd^N, \pb{\cdot}{\cdot}_{N})$ into the Lie-Poisson space $(\g_N^*,\pb{\cdot}{\cdot}_{\g_N^*})$:
\begin{equation}
\forall \Fc,\Gc\in \Cc^\infty(\g_N^*), \qquad \pb{\iota_{Lio}^*\Fc}{\iota_{Lio}^*\Gc}_{N} = \iota_{Lio}^*\pb{\Fc}{\Gc}_{\g_N^*},
\end{equation}
where $\iota_{Lio}^*$ denotes the pullback under $\iota_{Lio}$.
\end{restatable}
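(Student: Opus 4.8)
The plan is to verify the pullback identity by unwinding both sides of the claimed Poisson morphism property and reducing them to a pointwise computation on $\Rd^N$. Fix $\Fc, \Gc \in \Cc^\infty(\g_N^*)$ and a configuration $\uz_N \in \Rd^N$. First I would observe that $\iota_{Lio}$ is smooth as a map into $\g_N^*$: since $\g_N^*$ carries the strong dual topology and $\iota_{Lio}(\uz_N)$ is, for each fixed test function $f \in \g_N$, the expression $\frac{1}{N!}\sum_{\pi} f(z_{\pi(1)},\dots,z_{\pi(N)})$, which is manifestly $\Cc^\infty$ in $\uz_N$; one then upgrades this weak smoothness to smoothness into the strong dual using that $\g_N^*$ is a Montel (hence in particular $k^\infty$) space, so that $\iota_{Lio}\in\Cc^\infty(\Rd^N,\g_N^*)$ as asserted. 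Having this, the pullbacks $\iota_{Lio}^*\Fc = \Fc\circ\iota_{Lio}$ etc. are genuine elements of $\Cc^\infty(\Rd^N)$, so both sides of the displayed identity make sense.

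The core of the argument is the chain rule for the G\^ateaux derivative. For $\d\uz_N \in \Rd^N$ we have
\begin{equation}
\ds(\iota_{Lio}^*\Fc)[\uz_N](\d\uz_N) = \ds\Fc[\iota_{Lio}(\uz_N)]\big(\ds\iota_{Lio}[\uz_N](\d\uz_N)\big),
\end{equation}
and under the identification $\ds\Fc[\iota_{Lio}(\uz_N)] =: f_{\uz_N} \in \g_N$ (using $\g_N^{**}\cong\g_N$), the right-hand side is the pairing of $\ds\iota_{Lio}[\uz_N](\d\uz_N) \in \g_N^*$ against $f_{\uz_N}$. A direct computation of $\ds\iota_{Lio}[\uz_N](\d\uz_N)$ from \eqref{eq:iotaLiodef} shows it equals $\frac{1}{N!}\sum_\pi \sum_{i=1}^N \d_{z_{\pi(1)}}\otimes\cdots\otimes(\nabla_{z}\d_{z_{\pi(i)}}\cdot\d z_{\pi(i)})\otimes\cdots\otimes\d_{z_{\pi(N)}}$, so pairing against $f_{\uz_N}$ and using its $\Ss_N$-invariance collapses the symmetrization and yields
\begin{equation}
\ds(\iota_{Lio}^*\Fc)[\uz_N](\d\uz_N) = \nabla_{\uz_N}\big(f_{\uz_N}(\uz_N)\big)\cdot\d\uz_N \big|_{\text{frozen }\uz_N\text{ in the subscript}},
\end{equation}
i.e. the Euclidean gradient (in the explicit $\uz_N$ slot only) of $\uz_N \mapsto \langle f_{\uz_N}, \iota_{Lio}(\uz_N)\rangle$; hence $X_{\iota_{Lio}^*\Fc}(\uz_N) = \J_N\nabla_{\uz_N}(\cdots)$. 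Plugging this into the definition \eqref{eq:pbN}--\eqref{eq:NewHamdef} of $\pb{\cdot}{\cdot}_N = N\pb{\cdot}{\cdot}_{\Rd^N}$ and comparing against the definition \eqref{eq:gk*LPdef} of $\pb{\Fc}{\Gc}_{\g_N^*}$ evaluated at $\iota_{Lio}(\uz_N)$ — which by the same $\Ss_N$-invariance reduces to $N\pb{f_{\uz_N}}{g_{\uz_N}}_{\Rd^N}(\uz_N)$ — the two expressions agree, giving the claimed identity. The factor $N$ from the rescaling of the bracket on $\Rd^N$ and the factor $k=N$ built into $\comm{\cdot}{\cdot}_{\g_N}$ in \eqref{eq:gkLBdef} match up exactly, which is the point of the rescaling in \eqref{eq:NewHamdef}; I would take care that this bookkeeping is correct.

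The main obstacle, I expect, is not the algebra of the Poisson brackets — which is essentially the classical Leibniz-rule identity that the canonical Poisson bracket of $\delta$-measure superpositions reproduces the finite-dimensional bracket of the ``reduced'' observables — but rather the functional-analytic care needed to justify the chain rule and the identification $\g_N^{**}\cong\g_N$ in the strong-dual setting, and to verify that $f_{\uz_N}$, $g_{\uz_N}$ genuinely lie in $\g_N$ (smoothness and $\Ss_N$-symmetry) so that the $\g_N$-$\g_N^*$ pairing and the Lie bracket $\comm{\cdot}{\cdot}_{\g_N}$ are applicable. This is where the reflexivity and Montel properties recorded in \cref{lem:gkremon} and the machinery behind \cref{prop:gkLP} do the work, so I would invoke those rather than reprove them. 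Finally, the statement about solutions — that $\iota_{Lio}$ carries solutions of \eqref{eq:New} to solutions of \eqref{eq:Lio} — follows formally from the morphism property together with \cref{prop:LioHam} and the fact that $\Hcl\circ\iota_{Lio} = \Hc_{New}$ already recorded above, modulo checking that $\iota_{Lio}$ intertwines the respective Hamiltonian vector fields, which is the standard consequence of a Poisson morphism pulling back the Hamiltonian; I would state this in the remark referenced as \cref{rem:NewLio} rather than in the proof of the proposition proper.
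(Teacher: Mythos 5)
Your proposal is correct and follows essentially the same route as the paper: chain rule for the G\^ateaux derivative, the explicit formula for $\ds\iota_{Lio}[\uz_N]$ as a symmetrized sum of differentiated deltas, collapse of the symmetrization against the $\Ss_N$-invariant test function $\ds\Fc[\iota_{Lio}(\uz_N)]$ to identify $\nabla_{z_i}(\iota_{Lio}^*\Fc)$ with the frozen-base-point gradient, and then matching $N\pb{\cdot}{\cdot}_{\Rd^N}$ against $\comm{\cdot}{\cdot}_{\g_N}$ paired with $\iota_{Lio}(\uz_N)$. The only cosmetic differences are your sign convention for $\nabla\delta_z\cdot w$ (the paper carries an explicit minus from the distributional convention, which cancels in the pairing) and your suggested Montel-space upgrade for smoothness of $\iota_{Lio}$, which the paper instead handles by the direct computation in \cref{lem:diotaL}.
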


\subsection{The Lie algebra $\G_N$ and Lie-Poisson space $\G_N^*$}\label{ssec:OutN}
Building on the previous subsection, we transition to discussing finite hierarchies of observables and states.

For $N\in\N$, we define the algebraic direct sum
\begin{equation}\label{eq:GNdef}
\mathfrak{G}_N \coloneqq \bigoplus_{k=1}^N \mathfrak{g}_k
\end{equation}
and endow this vector space with the product topology (note that the direct sum is a direct product since the number of summands is finite). This turns $\G_N$ into a locally convex real topological vector space. We refer to elements of $\G_N$ as \emph{$N$-hierarchies of observables}, alternatively \emph{observable $N$-hierarchies}. The Lie brackets $\comm{\cdot}{\cdot}_{\g_1},\ldots,\comm{\cdot}{\cdot}_{\g_N}$ induce a Lie algebra structure on $\G_N$ as follows.

For $N\in\N$ and $1\leq k\leq N$, consider the map
\begin{equation}\label{eq:epkNdef}
\ep_{k,N}: \g_k\rightarrow\g_N, \qquad   \epsilon_{k,N}(f^{(k)})(\uz_N) \coloneqq  \frac{1}{|P_k^N|}\sum_{(j_1, \dots, j_k) \in P_k^N} f_{(j_1,\ldots,j_k)}^{(k)}(\uz_N),
\end{equation}
where
\begin{equation}\label{eq:fj1jkdef}
f_{(j_1,\ldots,j_k)}^{(k)}(\uz_N) \coloneqq f^{(k)}(\uz_{(j_1,\ldots,j_k)}), \qquad \uz_{(j_1,\ldots,j_k)} \coloneqq (z_{j_1},\ldots,z_{j_k}),
\end{equation}
and we are defining the set of length-$k$ tuples drawn from $\{1,\ldots,N\}$ by
\begin{equation}\label{eq:PkNdef}
P_k^N \coloneqq \{ (j_1, \dots, j_k) : \text{$1 \leq j_i \leq N$ and $j_i$ distinct} \}.
\end{equation}
In the sequel, we will use the tuple shorthand $\bm{j}_k$ and write $f_{\bm{j}_k}^{(k)}$ and $f^{(k)}(\uz_{\bm{j}_k})$. One can show that the maps $\ep_{k,N}$ are continuous, linear, and therefore $\Cc^\infty$, and that they are injective (see \Cref{lem:epcont,lem:epinj} respectively). In words, the map $\ep_{k,N}$ embeds a $k$-particle observable in the space of $N$-particle observables. The maps $\ep_{k,N}$ have a filtration property (see \cref{lem:epfil}) asserting that $\comm{\ep_{\ell,N}(f^{(\ell)})}{\ep_{j,N}(g^{(j)})}_{\g_N}$ lies in the image of $\ep_{k,N}$; and using this filtration property together with the injectivity of $\ep_{k,N}$, we can define a Lie bracket on $\G_N$ by
\begin{equation}\label{eq:GNLBdef}
\comm{F}{G}^{(k)}_{\mathfrak{G}_N} \coloneqq \epsilon^{-1}_{k,N}\left(\sum_{\substack{1\leq \ell,j\leq N \\ \min( \ell + j -1, N) =k} } \comm{\epsilon_{\ell, N} (f^{(\ell)})}{\epsilon_{j,N}( g^{(j)})}_{\mathfrak{g}_N}\right), \qquad 1\leq k\leq N.
\end{equation}
In fact, there is an explicit formula for $\comm{F}{G}_{\G_N}^{(k)}$ (see \eqref{eq:GNLBform}), which we do not state here. For $N$ fixed, the maps $\{\ep_{k,N}\}_{k=1}^N$ also have the interesting property that they induce a Lie algebra homomorphism (see \cref{prop:LAhomsum})
\begin{equation}\label{eq:iotaepNdef}
\iota_{\ep}:\G_N\rightarrow\g_N, \qquad \iota_{\ep}(F) \coloneqq \sum_{k=1}^N \ep_{k,N}(f^{(k)}), \qquad \forall F = (f^{(k)})_{k=1}^N,
\end{equation}
the dependence of $\iota_{\ep}$ on $N$ being implicit. The map $\iota_{\ep}$ sends an $N$-hierarchy of observables to a single $N$-particle observable. After a series of lemmas establishing properties of these embedding maps $\ep_{k,N}$, we arrive at our main result for the $N$-particle hierarchy Lie algebra. 

\begin{restatable}{theorem}{GNLA}\label{prop:GNLA}
For any $N\in\N$, the pair $(\mathfrak{G}_N , \comm{\cdot}{\cdot}_{\G_N})$ is a Lie algebra in the sense of \cref{def:LA}. Furthermore, the bracket $\comm{\cdot}{\cdot}_{\G_N}$ is continuous.
\end{restatable}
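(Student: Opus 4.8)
The plan is to work one component at a time, exploiting that $\G_N = \bigoplus_{k=1}^N \g_k$ is a \emph{finite} product. Recalling \eqref{eq:GNLBdef}, set
\[
B_k(F,G) \coloneqq \sum_{\substack{1\le \ell,j\le N\\ \min(\ell+j-1,N)=k}} \comm{\ep_{\ell,N}(f^{(\ell)})}{\ep_{j,N}(g^{(j)})}_{\g_N}\in\g_N,
\]
so that $\comm{F}{G}^{(k)}_{\G_N} = \ep_{k,N}^{-1}\big(B_k(F,G)\big)$. This is well defined: by the filtration property (\cref{lem:epfil}) each summand lies in $\ep_{k,N}(\g_k)$, hence so does $B_k(F,G)$, while $\ep_{k,N}$ is a linear injection (\cref{lem:epinj}), so $\ep_{k,N}^{-1}$ is a well-defined linear map on $\ep_{k,N}(\g_k)$. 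Bilinearity of $\comm{\cdot}{\cdot}_{\G_N}$ is then inherited from that of $\comm{\cdot}{\cdot}_{\g_N}$ together with linearity of the $\ep_{\ell,N}$ and of $\ep_{k,N}^{-1}$, and antisymmetry follows after relabeling $\ell\leftrightarrow j$ (the index constraint is symmetric in $\ell,j$) and invoking antisymmetry of $\comm{\cdot}{\cdot}_{\g_N}$. Thus the only genuinely algebraic content is the Jacobi identity.

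For the Jacobi identity, fix $1\le k\le N$; since $\ep_{k,N}$ is injective it suffices to show that $\ep_{k,N}$ applied to the $k$-th component of the Jacobiator vanishes in $\g_N$. Write $\hat f_p\coloneqq\ep_{p,N}(f^{(p)})$, $\hat g_q\coloneqq\ep_{q,N}(g^{(q)})$, $\hat h_r\coloneqq\ep_{r,N}(h^{(r)})$, all elements of $\g_N$. Using $\ep_{m,N}\big(\comm{F}{G}^{(m)}_{\G_N}\big)=B_m(F,G)$ and bilinearity, one unwinds the definitions to
\[
\ep_{k,N}\Big(\comm{\comm{F}{G}_{\G_N}}{H}^{(k)}_{\G_N}\Big)=\sum_{\substack{1\le p,q,r\le N\\ \min(\min(p+q-1,N)+r-1,N)=k}}\comm{\comm{\hat f_p}{\hat g_q}_{\g_N}}{\hat h_r}_{\g_N},
\]
and likewise for the two cyclic terms. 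The decisive point is the elementary identity $\min\big(\min(a+b-1,N)+c-1,N\big)=\min(a+b+c-2,N)$ for integers $a,b,c\ge1$ (checked by splitting into $a+b-1\le N$ and $a+b-1> N$): the index constraint becomes $\min(p+q+r-2,N)=k$, which is \emph{symmetric} in $p,q,r$. Summing the three cyclic terms therefore regroups, for each triple $(p,q,r)$ with $\min(p+q+r-2,N)=k$, into $\comm{\comm{\hat f_p}{\hat g_q}}{\hat h_r}+\comm{\comm{\hat g_q}{\hat h_r}}{\hat f_p}+\comm{\comm{\hat h_r}{\hat f_p}}{\hat g_q}$, which vanishes by the Jacobi identity in the Lie algebra $(\g_N,\comm{\cdot}{\cdot}_{\g_N})$ from \cref{prop:gkLA}. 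Hence the $k$-th component of the Jacobiator is killed by $\ep_{k,N}$, so it is $0$; since $k$ was arbitrary, the Jacobi identity holds. (One can phrase this by saying $\iota_\ep$ intertwines the two brackets, but since $\iota_\ep$ is not injective this by itself is not enough — the symmetry of $\min(p+q+r-2,N)$ is exactly what makes the finer, level-by-level statement go through.)

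For continuity, the finite product topology on $\G_N$ reduces the claim to continuity of each bilinear map $(F,G)\mapsto\comm{F}{G}^{(k)}_{\G_N}\in\g_k$. This is immediate from the explicit formula \eqref{eq:GNLBform}, in which $\comm{F}{G}^{(k)}_{\G_N}$ is assembled from the components $f^{(\ell)},g^{(j)}$ — on which $F,G$ depend continuously through the coordinate projections — by the continuous bilinear Poisson bracket (continuity of $\comm{\cdot}{\cdot}_{\g_k}$ is part of \cref{prop:gkLA}), together with pullbacks along smooth coordinate maps and finite averages over permutations, all continuous operations. Equivalently, $(F,G)\mapsto B_k(F,G)$ is continuous by \cref{lem:epcont} and \cref{prop:gkLA}, and $\ep_{k,N}^{-1}$ is continuous on the (closed) subspace $\ep_{k,N}(\g_k)\subseteq\g_N$ by the open mapping theorem, both $\g_k$ and $\g_N$ being Fréchet. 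Joint (rather than merely separate) continuity is automatic since everything is built from the jointly continuous bracket $\comm{\cdot}{\cdot}_{\g_N}$. Combined with the algebraic properties established above, this shows $(\G_N,\comm{\cdot}{\cdot}_{\G_N})$ is a Lie algebra in the sense of \cref{def:LA}.

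I expect the Jacobi identity to be the only nontrivial step — concretely, reconciling the truncation $\min(\cdot,N)$ in \eqref{eq:GNLBdef} with the nested-bracket structure, which the identity above resolves. Everything else is bookkeeping on top of the earlier lemmas about the embeddings $\ep_{k,N}$; the single place where the locally convex/Fréchet structure is genuinely used (if one does not simply cite \eqref{eq:GNLBform}) is the continuity of $\ep_{k,N}^{-1}$ on its domain, i.e. that $\ep_{k,N}$ is a topological embedding with closed range.
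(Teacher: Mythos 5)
Your proof is correct and follows essentially the same route as the paper's: bilinearity and antisymmetry read off from the definition, the Jacobi identity verified componentwise by applying the injective map $\ep_{k,N}$, unwinding to nested brackets in $\g_N$, and invoking the Jacobi identity of $(\g_N,\comm{\cdot}{\cdot}_{\g_N})$, and continuity reduced via \eqref{eq:GNLBform} to continuity of $\wedge_r$ and of the $\ep$ maps. The only differences are cosmetic: your $\min(\min(p+q-1,N)+r-1,N)=\min(p+q+r-2,N)$ identity treats the truncated case $k=N$ uniformly (which the paper leaves as an exercise), while your parenthetical alternative continuity argument via the open mapping theorem asserts without proof that $\ep_{k,N}(\g_k)$ is closed in $\g_N$ — that claim would need justification, but it is dispensable since your primary argument does not use it.
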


If we define the real topological vector space $\mathfrak{G}_N^*$ as the strong dual of $\mathfrak{G}_N = \bigoplus_{k=1}^N \mathfrak{g}_k$, then using the duality of direct sums and products \cite[Proposition 2, \S 14, Chapter 3]{Kothe1969I}, we see that
\begin{equation}\label{eq:GN*def}
\mathfrak{G}_N^* = \left( \bigoplus_{k=1}^N \mathfrak{g}_k\right)^* \cong \prod_{k=1}^N \mathfrak{g}_k^*,
\end{equation}
where the right-hand side is endowed with the product topology. The canonical Lie-Poisson bracket induced by the Lie bracket $\comm{\cdot}{\cdot}_{\G_N}$ gives $\G_N^*$ the structure of a Poisson vector space in the precise sense of \cref{def:PVS}. Similar to $\g_N^*$, the space $\G_N^*$ is a reflexive, locally convex Poisson vector space as defined in \cref{def:rlcps}. We will use Gl\"{o}ckner's black box \cref{thm:Glockner} to prove this assertion.

\begin{restatable}{theorem}{GNLP}\label{prop:GNLP}
For functionals $\mathcal{G}, \mathcal{H} \in \mathcal{C}^\infty( \mathfrak{G}_N^*)$ and state $N$-hierarchy $\Gamma = (\ga^{(k)})_{k=1}^N\in\G_N^*$, we define the bracket
\begin{equation}\label{eq:GN*LPdef}
\pb{\mathcal{G}}{\mathcal{H}}_{\mathfrak{G}_N^*}(\Ga)\coloneqq \ipp*{\comm{\ds\Gc[\Ga]}{\ds\Hc[\Ga]}_{\G_N},\Ga}_{\G_N-\G_N^*} = \sum_{k=1}^N \ipp*{\comm{\ds\mathcal{G}[\Ga]}{\ds\mathcal{H}[\Ga]}_{\G_N}^{(k)},\ga^{(k)}}_{\mathfrak{g}_k-\mathfrak{g}_k^*}.
\end{equation}
Then $(\G_N^*,\pb{\cdot}{\cdot}_{\G_N^*})$ is a reflexive, locally convex Lie-Poisson space in the sense of \cref{def:rlcps}. 
\end{restatable}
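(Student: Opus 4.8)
The plan is to mirror the proof of \cref{prop:gkLP}, deducing the Lie--Poisson structure on $\G_N^*$ from the abstract ``black box'' of Gl\"ockner recalled in \cref{thm:Glockner}. Concretely, Gl\"ockner's theorem asserts that if $(\mathfrak{l},\comm{\cdot}{\cdot}_{\mathfrak{l}})$ is a locally convex Lie algebra which is reflexive, whose strong dual $\mathfrak{l}^*$ is a $k^\infty$ space (\cref{def:kinfty}), and whose bracket is continuous, then $\mathfrak{l}^*$ equipped with the canonical bracket $\pb{\Fc}{\Gc}(\ell) = \ipp{\comm{\ds\Fc[\ell]}{\ds\Gc[\ell]}_{\mathfrak{l}},\ell}$ is a reflexive, locally convex Lie--Poisson space in the sense of \cref{def:rlcps} (in particular a Poisson vector space in the sense of \cref{def:PVS}). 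I would verify these hypotheses for $\mathfrak{l} = \G_N$ and then identify the resulting abstract bracket with the explicit formula \eqref{eq:GN*LPdef}.

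First, the Lie algebra axioms and the continuity of $\comm{\cdot}{\cdot}_{\G_N}$ are exactly the content of \cref{prop:GNLA}, so nothing new is required there. Second, $\G_N = \bigoplus_{k=1}^N \g_k$ is a \emph{finite} direct sum; since each $\g_k$ is a reflexive (DF) Montel space (\cref{lem:gkremon}) and finite direct sums of reflexive locally convex spaces are reflexive, $\G_N$ is reflexive, with strong dual $\G_N^* \cong \prod_{k=1}^N \g_k^*$ as recorded in \eqref{eq:GN*def}. Third --- and this is the step that genuinely uses $N < \infty$ --- I would show that $\G_N^*$ is a $k^\infty$ space: each factor $\g_k^*$ is a $k^\infty$ space (established in the course of proving \cref{prop:gkLP}), and a \emph{finite} product of $k^\infty$ spaces is again $k^\infty$. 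This is precisely the closure property that fails for countable products and that forces the weak-Poisson detour at the infinite-particle level; here it is harmless.

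With all hypotheses of \cref{thm:Glockner} in hand, we conclude that $\G_N^*$ carries a reflexive, locally convex Lie--Poisson structure whose bracket is $\pb{\Gc}{\Hc}(\Ga) = \ipp{\comm{\ds\Gc[\Ga]}{\ds\Hc[\Ga]}_{\G_N},\Ga}_{\G_N - \G_N^*}$, using the isomorphism $\G_N^{**} \cong \G_N$ to view the G\^ateaux differentials as observable $N$-hierarchies. It then remains to check that this agrees with \eqref{eq:GN*LPdef}: writing $\ds\Gc[\Ga] = (g_\Ga^{(k)})_{k=1}^N$ and $\ds\Hc[\Ga] = (h_\Ga^{(k)})_{k=1}^N$, the duality pairing between $\bigoplus_k \g_k$ and $\prod_k \g_k^*$ is the sum $\sum_{k=1}^N \ipp{\cdot,\cdot}_{\g_k - \g_k^*}$, and by the componentwise definition \eqref{eq:GNLBdef} the $k$-th slot of $\comm{\ds\Gc[\Ga]}{\ds\Hc[\Ga]}_{\G_N}$ pairs only against $\ga^{(k)}$, which yields the stated decomposition.

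I expect the main obstacle to be the topological bookkeeping: verifying that the $k^\infty$ property survives finite products, and more generally that all the adjectives packaged in \cref{def:rlcps} are inherited from the factors $\g_k^*$ through the finite-direct-sum construction. This is the one place where the argument is not a verbatim transplant of the $\g_k^*$ case, but because $N$ is finite it stays routine; the Lie algebra structure, continuity, reflexivity, and the identification of the bracket all transfer from \cref{prop:gkLP,prop:GNLA} with only the extra accounting of the direct-sum indices.
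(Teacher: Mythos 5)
Your overall route is the same as the paper's: invoke Gl\"ockner's black box (\cref{thm:Glockner}) with $E=\G_N^*$, feed in the continuity of $\comm{\cdot}{\cdot}_{\G_N}$ from \cref{prop:GNLA} and the reflexivity of $\G_N$ (the paper records this as \cref{lem:GNrefl}, proved exactly as you suggest via the finite direct sum/product duality), and then identify the abstract bracket with the componentwise formula \eqref{eq:GN*LPdef}. The one place where your argument has a genuine gap is the step you yourself flag as the ``main obstacle'': you justify the $k^\infty$ property of $\G_N^*$ by asserting that a finite product of $k^\infty$ spaces is again $k^\infty$. That is not a standard closure property, and it is not obviously true: the product of two $k$-spaces need not be a $k$-space, and knowing that each factor's finite powers $(\g_k^*)^n$ are $k$-spaces gives no direct control on mixed products such as $\g_1^*\times\g_2^*$, which is exactly what $(\G_N^*)^n$ is built from. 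So as written, the key topological hypothesis of \cref{thm:Glockner} is left unverified.

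The repair is immediate with the tools already used for \cref{prop:gkLP}, and it is what the paper does: do not pass through an abstract product theorem for $k^\infty$ spaces at all, but note that for each $n\in\N$,
\begin{equation}
(\G_N^*)^n \cong \prod_{j=1}^n\prod_{k=1}^N \g_k^*,
\end{equation}
which is a \emph{finite} product of (DF) Montel spaces by \cref{lem:gkremon}, hence itself (DF) Montel; Webb's theorem (\cref{thm:Webb}) then gives that $(\G_N^*)^n$ is sequential, and \cref{prop:seqk} upgrades this to the $k$-space property. Since $n$ was arbitrary, $\G_N^*$ is $k^\infty$ in the sense of \cref{def:kinfty}, and Gl\"ockner's theorem applies. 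In other words, the underlying fact is ``finite products of (DF) Montel spaces are (DF) Montel, hence sequential, hence $k$,'' not a closure property of the $k^\infty$ class itself; with that substitution your proof matches the paper's.
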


To show that the BBGKY hierarchy \eqref{eq:BBGKY} is a Hamiltonian equation on the Poisson vector space $(\G_\infty^*,\pb{\cdot}{\cdot}_{\G_\infty^*})$, we introduce the $N$-particle \emph{BBGKY Hamiltonian functional}
\begin{equation}\label{eq:BBGKYham}
\mathcal{H}_{BBGKY}(\Ga) \coloneqq \ipp*{\W_{BBGKY},\Ga}_{\mathfrak{G}_N-\mathfrak{G}_N^*}, \qquad \forall \Ga\in\G_\infty^*,
\end{equation}
where
\begin{equation}\label{eq:BBGKYW}
\W_{BBGKY} \coloneqq \left(\frac{1}{2} |v_1|^2 , \frac{(N-1)}{N}W(x_1 - x_2) + \frac{W(0)}{N}, 0 ,\dots, 0\right)  \in \mathfrak{G}_N,
\end{equation}
the dependence on $N$ being implicit. Here, $|v_1|^2$ and $W(x_1-x_2), W(0)$ are viewed as functions on $\Rd$ and $\Rd^2$, respectively. Note that $\W_{BBGKY}$ is indeed an element of $\G_N$ by the assumption that $W \in \Cc^\infty(\R^d)$. Tautologically, $\Hbb$ is linear, and it is continuous by the separate continuity of the duality pairing; hence, $\Hbb\in \Cc^\infty(\G_N^*)$. Interpreting the integrals as distributional pairings, we have
\begin{equation}
\Hbb(\Ga) =\frac{1}{2}\int_{\R^{2d}}d\ga^{(1)}(z_1)|v_1|^2 + \frac{1}{N}\int_{(\R^{2d})^2}d\ga^{(2)}(z_1,z_2)\paren*{(N-1)W(x_1-x_2) + W(0)}.
\end{equation}
The following theorem, our main result for the BBGKY hierarchy, is the classical counterpart to \cite[Theorem 2.3]{MNPRS2020} for the quantum BBGKY hierarchy.

\begin{restatable}{theorem}{bbgky}\label{thm:bbgky}
Let $I \subset \R$ be a compact interval and $N \in \N$. Then $\Ga \in \mathcal{C}^\infty(I, \mathfrak{G}_N^*)$ is a solution to the BBGKY hierarchy \eqref{eq:BBGKY} if and only if 
\begin{equation}
    \dot{\Ga} = X_{\mathcal{H}_{BBGKY}}(\Ga),
\end{equation}
where $X_{\mathcal{H}_{BBGKY}}$ is the unique Hamiltonian vector field generated by the Hamiltonian $\mathcal{H}_{BBGKY}$ with respect to the Poisson vector space $(\mathfrak{G}_N^*, \pb{\cdot}{\cdot}_{\G_N^*})$. 
\end{restatable}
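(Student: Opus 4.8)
The plan is to establish the equivalence by directly computing the Hamiltonian vector field $X_{\mathcal{H}_{BBGKY}}$ and matching it term-by-term with the right-hand side of the BBGKY hierarchy \eqref{eq:BBGKY}. First I would recall that, since $\mathcal{H}_{BBGKY}$ is linear and continuous on $\mathfrak{G}_N^*$, its G\^ateaux derivative $\ds\mathcal{H}_{BBGKY}[\Ga]$ is constant in $\Ga$ and, under the identification $\mathfrak{G}_N^{**}\cong\mathfrak{G}_N$, equals $\W_{BBGKY}\in\mathfrak{G}_N$. Hence, for any test functional $\mathcal{G}\in\Cc^\infty(\mathfrak{G}_N^*)$ and any state $\Ga=(\ga^{(k)})_{k=1}^N$, the defining relation $X_{\mathcal{H}_{BBGKY}}(\mathcal{G}) = \pb{\mathcal{G}}{\mathcal{H}_{BBGKY}}_{\mathfrak{G}_N^*}$ together with \eqref{eq:GN*LPdef} gives
\begin{equation}
\ipp{\ds\mathcal{G}[\Ga], X_{\mathcal{H}_{BBGKY}}(\Ga)}_{\mathfrak{G}_N-\mathfrak{G}_N^*} = \sum_{k=1}^N \ipp*{\comm{\ds\mathcal{G}[\Ga]}{\W_{BBGKY}}_{\G_N}^{(k)}, \ga^{(k)}}_{\mathfrak{g}_k-\mathfrak{g}_k^*}.
\end{equation}
Existence and uniqueness of the Hamiltonian vector field is guaranteed by \cref{prop:GNLP}, since $(\mathfrak{G}_N^*,\pb{\cdot}{\cdot}_{\G_N^*})$ is a reflexive, locally convex Lie-Poisson space; so the task reduces to identifying the right-hand side with the pairing of $\ds\mathcal{G}[\Ga]$ against the hierarchy vector field.

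The key step is to unwind the explicit formula for $\comm{F}{G}_{\G_N}^{(k)}$ (the formula \eqref{eq:GNLBform} referenced but not displayed in the excerpt) with $G=\W_{BBGKY}$. Because $\W_{BBGKY}$ has only a $1$-particle component $\tfrac12|v_1|^2$ and a $2$-particle component $\tfrac{N-1}{N}W(x_1-x_2)+\tfrac{W(0)}{N}$, the bracket $\comm{f^{(\ell)}}{\,\cdot\,}$ with these two pieces produces, via $\min(\ell+1-1,N)=\ell$ and $\min(\ell+2-1,N)=\min(\ell+1,N)$, exactly two kinds of contributions to the $k$-th component: a "transport" term from the kinetic piece and a "collision/coupling" term from the interaction piece, with the $(N-1)/N$ and $(N-k)/N$-type combinatorial prefactors emerging from the normalization $|P_k^N|^{-1}$ in \eqref{eq:epkNdef} and the counting of tuples. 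I would carry out this computation by testing against observables $\mathcal{G}$ of the form $\mathcal{G}(\Ga)=\ipp{f^{(k)},\ga^{(k)}}$ (linear evaluation functionals), for which $\ds\mathcal{G}[\Ga]$ is the hierarchy $(0,\dots,f^{(k)},\dots,0)$; by reflexivity and the fact that such functionals separate points of $\mathfrak{G}_N^*$, matching the resulting scalar identities for all $f^{(k)}\in\mathfrak{g}_k$ pins down each component of $X_{\mathcal{H}_{BBGKY}}(\Ga)$ as a distribution. Integration by parts (transferring derivatives off the test function onto the state, legitimate since states have compact support) then converts the standard Poisson bracket expressions $k\pb{f^{(k)}}{\cdot}_{\Rd^k}$ into the differential operators $-\sum_{i=1}^k v_i\cdot\nabla_{x_i}$ and $+2\sum_i\int dz_{k+1}\nabla W(x_i-x_{k+1})\cdot\nabla_{v_i}$ (plus the $O(1/N)$ intra-marginal interaction term), which is precisely \eqref{eq:BBGKY}.

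The main obstacle I anticipate is purely bookkeeping: correctly tracking the combinatorial prefactors so that the $\tfrac{2(N-k)}{N}$ factor on the coupling term and the $\tfrac{2}{N}$ factor on the intra-$k$ interaction term come out exactly right. This requires careful use of the filtration property (\cref{lem:epfil}) and the explicit bracket formula, in particular understanding how a $2$-particle observable bracketed against a $k$-particle observable distributes over the $\min(\ell+j-1,N)=k$ condition: one gets a genuinely $(k+1)$-particle output (the coupling term) when $\ell+2-1=k+1\le N$ and a $k$-particle output from the "overlap" terms where the two tuples share an index. The residual term $\min(k+1,N)=k$ when $k=N$ accounts for why the $N$-th marginal equation closes. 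Everything else—continuity of all pairings, smoothness of $\mathcal{H}_{BBGKY}$, and the $\Cc^\infty(I,\mathfrak{G}_N^*)$ regularity of solutions making $\tfrac{d}{dt}\mathcal{G}(\Ga^t)=\ipp{\ds\mathcal{G}[\Ga^t],\dot\Ga^t}$ legitimate—follows from the reflexive $k^\infty$ structure already established, so the proof is a finite, if intricate, computation with no analytic subtleties beyond those already handled in \cref{prop:GNLP}.
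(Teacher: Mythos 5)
Your proposal is correct and follows essentially the same route as the paper: the paper first derives the general explicit formula for Hamiltonian vector fields on $(\G_N^*,\pb{\cdot}{\cdot}_{\G_N^*})$ (\cref{prop:NhamVF}) by exactly the pairing, bracket-unpacking, and integration-by-parts computation you describe, and the proof of \cref{thm:bbgky} then specializes it to $\ds\Hc_{BBGKY}[\Ga]=\W_{BBGKY}$, working out the constants $C_{\ell 1 N 1}=1$, $C_{\ell 2 N 1}=\tfrac{N-\ell}{N-1}$, $C_{\ell 2 N 2}=\tfrac{1}{N-1}$ in the three cases $\ell=1$, $2\leq\ell\leq N-1$, $\ell=N$ to recover the $\tfrac{2(N-\ell)}{N}$ and $\tfrac{1}{N}$ prefactors of \eqref{eq:BBGKY}. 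Your variant of pinning down $X_{\Hc_{BBGKY}}(\Ga)$ by testing against linear evaluation functionals (which separate points of $\G_N^*$) is the same calculation in slightly different packaging and is sound.
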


Returning to the homomorphism $\iota_{\ep}$ from \eqref{eq:iotaepNdef}, we can take its dual $\iota_{\ep}^*:\g_N^*\rightarrow \G_N^*$. Analogous to the quantum setting (cf. \cite[Proposition 5.29]{MNPRS2020}), this dual map is nothing but the marginal map $\ga \mapsto (\ga^{(k)})_{k=1}^N$ and is Poisson morphism, facts shown in \cref{prop:marPo}.

\subsection{The Lie algebra $\G_\infty$ and Lie-Poisson space $\G_\infty^*$}\label{ssec:Outinf}
Having built up the necessary structure at the $N$-particle level, we transition to addressing the infinite-particle limit of our constructions. The natural inclusion map $\G_N\subset\G_M$ for any integers $M\geq N$ implies that one has the limiting topological vector space (a co-limit of topological spaces ordered by inclusion)
\begin{equation}\label{eq:Ginfdef}
\G_\infty \coloneqq \bigoplus_{k=1}^\infty \g_k.
\end{equation}
Elements of $\G_\infty$ are called \emph{observable $\infty$-hierarchies}, alternatively \emph{$\infty$-hierarchies of observables}. They take the form $F=(f^{(k)})_{k=1}^\infty$, where $f^{(k)} \in \g_k$ is the zero element for all $k\geq N+1$, for some $N\in\N$. Thus, given any $F,G\in\G_\infty$, by taking $N$ sufficiently large, it makes sense to consider the Lie bracket $\comm{F}{G}_{\G_N}$. Our next result computes the limit $\comm{F}{G}_{\G_\infty}$ of this expression as $N\rightarrow\infty$ and shows that $(\G_\infty, \comm{\cdot}{\cdot}_{\G_\infty})$ is indeed a Lie algebra. Notably, the Lie bracket $\comm{\cdot}{\cdot}_{\G_\infty}$ acquires a much simpler form than $\comm{\cdot}{\cdot}_{\G_N}$, as certain terms vanish as $N\rightarrow\infty$. In contrast to the quantum setting of \cite{MNPRS2020}, there are no technical difficulties involving compositions of distribution-valued operators to give meaning to $\comm{F}{G}_{\G_\infty}$. 

\begin{restatable}{theorem}{GinfLA}\label{prop:GinfLA}
Let $F = (f^{(\ell)})_{\ell=1}^\infty, G = (g^{(j)})_{j=1}^\infty \in \mathfrak{G}_{\infty}$. For each $k\in\N$, define
\begin{equation}\label{eq:GinfLB}
\comm{F}{G}_{\G_\infty}^{(k)} \coloneqq \lim_{N\rightarrow \infty} \comm{F}{G}_{\mathfrak{G}_N}^{(k)} = \sum_{\substack{\ell,j\geq 1 \\ \ell+j-1=k}} \mathrm{Sym}_k\Big(f^{(\ell)}\wedge_1 g^{(j)}\Big),
\end{equation}
where the limit is in the topology of $\mathfrak{G}_\infty$, the wedge product $\wedge_1$ is defined by
\begin{equation}
f^{(\ell)}\wedge_1 g^{(j)}(\uz_k) \coloneqq \ell j \paren*{\nabla_{x_1} f^{(\ell)}(\uz_\ell)\cdot\nabla_{v_1}g^{(j)}(z_1,\uz_{\ell+1;k}) - \nabla_{x_1} g^{(j)}(\uz_j)\cdot\nabla_{v_1} f^{(\ell)}(z_1,\uz_{j+1;k})},
\end{equation}
and the $k$-particle symmetrization operator $\Sym_k$ is defined by\footnote{By duality, $\Sym_k$ is also well-defined for distributions on $\Rd^k$.}
\begin{equation}\label{eq:Symdef}
\forall h^{(k)}\in \Cc^\infty(\Rd^k), \qquad \Sym_k(h^{(k)})\coloneqq \frac{1}{k!}\sum_{\pi\in\Ss_k} h^{(k)}\circ\pi.
\end{equation}
Moreover, $(\G_\infty,\comm{\cdot}{\cdot}_{\G_\infty})$ is a Lie algebra in the sense of \cref{def:LA}, and the bracket $\comm{\cdot}{\cdot}_{\G_\infty}$ is boundedly hypocontinuous.
\end{restatable}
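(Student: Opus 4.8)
The plan is to prove \cref{prop:GinfLA} in three stages: first establish the limit formula \eqref{eq:GinfLB}, then verify the Lie algebra axioms, and finally check bounded hypocontinuity of the bracket.

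\textbf{Stage 1: The limit formula.} Fix $F=(f^{(\ell)})_{\ell=1}^\infty$ and $G=(g^{(j)})_{j=1}^\infty$ in $\G_\infty$; choose $N_0$ so that $f^{(\ell)}=g^{(\ell)}=0$ for $\ell>N_0$. For any $N\geq N_0$, the bracket $\comm{F}{G}_{\G_N}$ is defined by \eqref{eq:GNLBdef}, and the starting point is the explicit formula \eqref{eq:GNLBform} (which I may invoke as already established). The key observation is that $\comm{\epsilon_{\ell,N}(f^{(\ell)})}{\epsilon_{j,N}(g^{(j)})}_{\g_N}$ is a sum of terms indexed by pairs of tuples $(\bm{j}_\ell,\bm{j}_j)\in P_\ell^N\times P_j^N$, and the standard Poisson bracket $\pb{f^{(\ell)}_{\bm{j}_\ell}}{g^{(j)}_{\bm{j}_j}}_{\Rd^N}$ is nonzero only when the tuples share at least one common index (since $\nabla_{x_a}f^{(\ell)}_{\bm{j}_\ell}$ and $\nabla_{v_a}g^{(j)}_{\bm{j}_j}$ must both be nonzero for the same slot $a$). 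Combinatorially, the number of pairs of tuples sharing exactly $r$ indices scales like $N^{\ell+j-r}$, while the normalization $\frac{1}{|P_\ell^N||P_j^N|}\sim N^{-\ell-j}$, and each such term, when viewed in $\g_k$ with $k=\min(\ell+j-r',N)$ for the appropriate overlap count, contributes only when it lands in the $k$-th slot with $\ell+j-1=k$, i.e. exactly $r=1$ shared index. After pushing forward through $\epsilon_{k,N}^{-1}$ and carefully tracking the symmetrization that arises from averaging over which index is shared and over the remaining free indices, the surviving contribution converges to $\sum_{\ell+j-1=k}\Sym_k(f^{(\ell)}\wedge_1 g^{(j)})$, with all $r\geq 2$ terms being $O(1/N)$. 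Since $\G_\infty$ is the direct sum (strict inductive limit) of the $\g_k$ and only finitely many slots $k\leq 2N_0-1$ are ever nonzero, convergence slot-by-slot in $\g_k$ (which is itself controlled by the seminorms $\rho_{K,n}$ on compacts) gives convergence in $\G_\infty$.

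\textbf{Stage 2: Lie algebra axioms.} Bilinearity is immediate from the formula. Antisymmetry follows from the manifest antisymmetry of $f^{(\ell)}\wedge_1 g^{(j)}$ in $(f^{(\ell)},g^{(j)})$ together with reindexing the sum $\ell\leftrightarrow j$. For the Jacobi identity, the cleanest route is \emph{not} to expand $\wedge_1$ directly but to transfer it from the finite level: by \cref{prop:GNLA}, $(\G_N,\comm{\cdot}{\cdot}_{\G_N})$ satisfies Jacobi for every $N$; given $F,G,H\in\G_\infty$, pick $N$ large enough that all three lie in $\G_N$, so that $\comm{F}{\comm{G}{H}_{\G_N}}_{\G_N}+\text{(cyclic)}=0$; then take $N\to\infty$ using Stage 1 twice (the inner bracket $\comm{G}{H}_{\G_N}\to\comm{G}{H}_{\G_\infty}$ in $\G_\infty$, and since this limit still lies in some fixed $\G_{N_1}$, the outer bracket also converges by continuity/the limit formula). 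This yields the Jacobi identity for $\comm{\cdot}{\cdot}_{\G_\infty}$ for free. One should double-check the compatibility point that $\comm{G}{H}_{\G_\infty}^{(k)} = \comm{G}{H}_{\G_M}^{(k)}$ for all $M$ sufficiently large (not merely in the limit), which is true because all the $O(1/N)$ corrections literally vanish once $N$ exceeds the combined support $2N_0-1$ — the formula \eqref{eq:GNLBform} stabilizes. This stabilization is in fact the honest reason the limit exists and is the natural bracket.

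\textbf{Stage 3: Bounded hypocontinuity.} Since $\G_\infty=\bigoplus_k\g_k$ carries the locally convex direct sum (inductive limit) topology, a bilinear map out of $\G_\infty\times\G_\infty$ is rarely jointly continuous, so one settles for hypocontinuity on bounded sets. A bounded subset $B\subset\G_\infty$ is contained in some $\bigoplus_{k=1}^{N_1}\g_k$ and is bounded there; on that subspace the bracket agrees with $\comm{\cdot}{\cdot}_{\G_{N_1}}$ restricted, which is continuous by \cref{prop:GNLA}, hence in particular hypocontinuous. So for fixed $G\in B$ (or $G$ ranging over a bounded set), $F\mapsto\comm{F}{G}_{\G_\infty}$ is continuous $\G_{N_1}\to\G_{\max(N_1,2N_1-1)}\hookrightarrow\G_\infty$, and one checks the target embedding is continuous. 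Assembling: given a bounded set $B$ and a neighborhood $U$ of $0$ in $\G_\infty$, use the explicit wedge formula plus the product estimate for derivatives $\|\partial^\alpha(f\wedge_1 g)\|_{L^\infty(K)}\lesssim \rho_{K,|\alpha|+1}(f)\,\rho_{K,|\alpha|+1}(g)$ on compacts to produce the required neighborhood of $0$ for the other variable.

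\textbf{Main obstacle.} The genuinely delicate part is Stage 1: correctly carrying out the combinatorics of overlapping index tuples under the composition $\epsilon_{k,N}^{-1}\circ(\text{sum over }\ell,j)\circ(\epsilon_{\ell,N}\times\epsilon_{j,N})$, so that the symmetrization $\Sym_k$ and the prefactor $\ell j$ in the definition of $\wedge_1$ come out with exactly the right constants, and simultaneously showing every term with two or more shared indices is $O(1/N)$ and drops out. This is bookkeeping-heavy but not conceptually hard; the formula \eqref{eq:GNLBform} does most of the work, and the $1/N$ decay is transparent once the $N$-powers in $|P_\ell^N|$, $|P_j^N|$ and the overlap counts are matched. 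Everything downstream (Jacobi, hypocontinuity) then reduces to invoking the already-proven finite-$N$ statements plus the stabilization observation, so no new analytic input is needed there.
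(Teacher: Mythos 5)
Your Stage 1 is correct and is essentially the paper's argument: invoke \eqref{eq:GNLBform}, note that only finitely many slots $k\leq 2N_0$ are ever nonzero, and use $C_{\ell j N r}\to\indic_{r=1}$ (\cref{rem:Nconlim}) to pass to the limit componentwise. The genuine problem is your repeated claim that the formula \emph{stabilizes}, i.e.\ that $\comm{G}{H}_{\G_N}^{(k)}=\comm{G}{H}_{\G_\infty}^{(k)}$ once $N$ exceeds the combined support. This is false. For fixed $\ell,j,r$ the coefficient
\begin{equation}
C_{\ell j N r}=\frac{(N-\ell)!(N-j)!}{(N-1)!\,(N-\ell-j+r)!}
\end{equation}
is never eventually equal to $\indic_{r=1}$: the $r\geq 2$ terms survive at every finite $N$ with coefficient of size $N^{1-r}\neq 0$, and even the $r=1$ coefficient differs from $1$ whenever $\min(\ell,j)\geq 2$. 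The sequence $\comm{F}{G}_{\G_N}$ converges but is not eventually constant, so ``the honest reason the limit exists'' is the convergence of these coefficients, not stabilization.

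This defeats your Stage 2 Jacobi argument as written. The finite-$N$ Jacobi identity involves the $N$-dependent bracket in both slots, and since $\comm{G}{H}_{\G_N}\neq\comm{G}{H}_{\G_\infty}$ for every finite $N$, you cannot conclude Jacobi for the limit bracket ``for free'' from a single large $N$. To transfer it by a limit you must control the nested expression $\comm{F}{\comm{G}{H}_{\G_N}}_{\G_N}$ as \emph{both} the inner argument and the bracket change with $N$; this can be repaired (the coefficients $C_{\ell j N r}$ are bounded uniformly in $N$, all components live in finitely many slots, and the maps $\wedge_r$ are continuous, so one splits the difference and estimates each piece), but that uniform-in-$N$ step is exactly what is missing from your sketch. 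The paper instead verifies Jacobi directly for the limiting bracket (deferring the algebra to the quantum analogue in \cite{MNPRS2020}), which avoids the double limit altogether. The same misconception reappears in Stage 3, where you assert that on $\bigoplus_{k\leq N_1}\g_k$ the bracket $\comm{\cdot}{\cdot}_{\G_\infty}$ agrees with the restriction of $\comm{\cdot}{\cdot}_{\G_{N_1}}$; it does not (different coefficients, only $r=1$ terms, and the infinite bracket populates slots up to $2N_1-1$ rather than truncating at $N_1$). Your closing direct estimate---bounded sets project into finitely many $\g_k$'s, plus the seminorm bound for $\wedge_1$ on compacts---is the correct route and coincides with the paper's equicontinuity argument, so Stage 3 survives once the faulty identification is discarded.
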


As with the $N$-particle setting, the next step is the dual problem of constructing a Lie-Poisson space from $(\G_\infty,\comm{\cdot}{\cdot}_{\G_\infty})$. We define the real topological vector space
\begin{equation}\label{eq:Ginf*def}
\G_\infty^* \coloneqq \prod_{k=1}^\infty \g_k^*
\end{equation}
equipped with the usual product topology, which is the strong dual of $\G_\infty$. Elements of $\G_\infty^*$ are called \emph{state $\infty$-hierarchies}, alternatively \emph{$\infty$-hierarchies of states}. 

We want to construct a Lie-Poisson bracket over $\G_\infty^*$ similarly to as done in \cref{prop:GNLP}. Given any $\Fc,\Gc\in \Cc^\infty(\G_\infty^*)$ and $\Ga=(\ga^{(k)})_{k=1}^\infty \in\G_\infty^*$, the continuous linear functionals $\ds\Fc[\Ga],\ds\Gc[\Ga]$ may be identified as elements of $\G_\infty$ since $\G_\infty^{**} \cong \G_\infty$. Hence, $\comm{\ds\Fc[\Ga]}{\ds\Gc[\Ga]}_{\G_\infty}$ is an element of $\G_\infty$, in particular only finitely many of its components are nonzero, and we are justified in defining
\begin{equation}\label{eq:Ginf*LPdef}
\pb{\Fc}{\Gc}_{\G_\infty^*}(\Ga) \coloneqq \ipp{\comm{\ds\Fc[\Ga]}{\ds\Gc[\Ga]}_{\G_\infty}, \Ga}_{\G_\infty-\G_\infty^*} = \sum_{k=1}^\infty \ipp*{\comm{\ds\Fc[\Ga]}{\ds\Gc[\Ga]}_{\G_\infty}^{(k)},\ga^{(k)}}_{\g_k-\g_k^*}.
\end{equation}
Here, we come to one of the main technical difficulties of the paper: we are unable to prove that $\pb{\Fc}{\Gc}_{\G_\infty^*}\in \Cc^\infty(\G_\infty^*)$. An essentially equivalent issue is that while we are able to show that a Hamiltonian vector field $X_{\Gc}$ exists, we are unable to show it is $\Cc^\infty$ as a map $\G_\infty^*\rightarrow\G_\infty^*$. As remarked in \cref{ssec:intropf}, we cannot rely on \cref{thm:Glockner}, as done in the proof of \cref{prop:GNLP}, because we are unable to verify that $\G_\infty^*$ satisfies certain topological conditions, namely that it is a $k^\infty$ space (see \cref{def:kinfty}). Accordingly, we instead directly show that $\G_\infty^*$ admits a weak Lie-Poisson structure in the sense of \cref{def:WPVS}.

The key difference between a weak Lie-Poisson structure and Lie-Poisson structure is that in the former, one specifies a unital subalgebra (with respect to pointwise product) $\A_\infty\subset\Cc^\infty(\G_\infty^*)$, which must satisfy certain nondegeneracy conditions, as the ``admissible'' functionals, in contrast to working with all the functionals in $\Cc^\infty(\G_\infty^*)$.  To this end, we choose $\A_{\infty}\subset \Cc^\infty(\G_\infty^*)$ to be the algebra generated with respect to pointwise product by the set
\begin{equation}\label{eq:Ainfdef}
\{\Fc\in \Cc^\infty(\G_\infty^*) : \Fc(\cdot) =\ipp{F,\cdot}_{\G_\infty-\G_\infty^*} \, \enspace F\in \G_\infty\} \cup \{\Fc\in \Cc^\infty(\G_\infty^*) : \Fc(\cdot)\equiv C\in\R\}.
\end{equation}
Heuristically viewing the components of $\Ga=(\ga^{(k)})_{k=1}^\infty$ as measures on $\Rd^k$, we call functionals of the form $\Fc(\cdot)=\ipp{F,\cdot}_{\G_\infty-\G_\infty^*}$ \emph{expectations}. They are analogous to the ``trace functionals'' of \cite{MNPRS2020}. In other words, the subalgebra $\A_{\infty}$ is generated by expectations and the constant functionals. The work \cite{MNPRS2020} employs the notion of a weak Poisson vector space at both the $N$-particle level for $\G_N^*$ and the infinite-particle level for $\G_\infty^*$; while here, we only need this notion at the infinite-particle level. This is an advantage of the present work compared to \cite{MNPRS2020}. The motivation for this choice of algebra $\A_\infty$ is that expectation functionals have constant G\^ateaux derivatives (see \cref{rem:cgd} below). Since for fixed expectations $\Fc,\Gc$, the G\^ateaux derivatives $\ds\Fc[\Ga],\ds\Gc[\Ga]$ have only finitely many nonzero components as elements in $\G_\infty$, \emph{uniformly} in $\Ga$, this allows us then to directly check that the bracket $\pb{\Fc}{\Gc}_{\G_\infty^*}$ is $\Cc^\infty$, in fact it belongs to the subalgebra $\A_\infty$, and also show that the the vector field $X_{\Gc}$ is $\Cc^\infty$. This direct verification relies heavily on explicit formulae for the Poisson bracket $\pb{\cdot}{\cdot}_{\G_\infty^*}$ and for the Hamiltonian vector field with respect to the bracket $\pb{\cdot}{\cdot}_{\G_\infty^*}$ to show that these expressions reduce to finite sums of compositions of $\Cc^\infty$ maps.

\begin{remark}
\label{rem:AH_can}
Our definition of $\A_{\infty}$ is not canonical in the sense that one could, in principle, include functionals beyond those generated by expectations and constants. However, doing so comes at the cost of added complexity in verifying that $(\G_\infty^*,\A_{\infty},\pb{\cdot}{\cdot}_{\G_\infty^*})$ is a weak Poisson vector space, and therefore we will not do so in this work.
\end{remark}

\begin{remark}\label{rem:cgd}
By the bilinearity of the duality pairing and the definition of the G\^ateaux derivative, an expectation functional $\Fc$ has constant G\^ateaux derivative, that is $\ds\Fc[\Ga] = \ds\Fc[0]$ for all $\Ga\in \G_\infty^*$. Similarly, a constant functional has zero G\^ateaux derivative.
\end{remark}

\begin{restatable}{theorem}{GinfLP}\label{prop:GinfLP}
Let $\mathfrak{G}_\infty^*$ be the strong dual of $\mathfrak{G}_\infty$ as given in \eqref{eq:Ginf*def}. Define the bracket
\begin{equation}\label{eq:GinfPBdef}
\pb{\Fc}{\Gc}_{\mathfrak{G}_\infty^*}(\Gamma) \coloneqq \ipp*{\comm{\ds\Fc[\Ga]}{\ds\Gc[\Ga]}_{\G_\infty}, \Ga}_{\G_\infty-\G_\infty^*}, \qquad \forall \Fc,\Gc \in \Cc^\infty(\mathfrak{G}_\infty^*), \ \Ga \in \G_\infty^*,
\end{equation}
and let $\A_\infty$ be as in \eqref{eq:Ainfdef}. Then the triple $(\G_\infty^*,\A_\infty,\pb{\cdot}{\cdot}_{\G_\infty^*})$ is a weak Poisson vector space in the sense of \cref{def:WPVS}.
\end{restatable}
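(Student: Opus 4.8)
The plan is to verify the axioms of \cref{def:WPVS} by reducing every statement to the two families of generators of $\A_\infty$ --- the expectation functionals $\Fc=\ipp{F,\cdot}_{\G_\infty-\G_\infty^*}$ with $F\in\G_\infty$ and the constant functionals --- and then propagating to the whole algebra by the Leibniz rule. The one enabling fact, used repeatedly, is that every $\Fc\in\A_\infty$ has a G\^ateaux derivative of uniformly bounded ``length'': by \cref{rem:cgd} each expectation generator has constant derivative $\ds\ipp{F_i,\cdot}[\Ga]=F_i$, so the product rule for G\^ateaux derivatives shows that if $\Fc$ is a polynomial in generators $\ipp{F_1,\cdot},\dots,\ipp{F_r,\cdot}$, then $\ds\Fc[\Ga]$ is, for every $\Ga$, a linear combination of $F_1,\dots,F_r$ whose coefficients are polynomials in $\Ga$. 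In particular $\ds\Fc[\Ga]$ lies in a fixed sub-sum $\bigoplus_{k=1}^{K}\g_k\subset\G_\infty$ with $K$ independent of $\Ga$.

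First I would pin down the algebraic structure on $\A_\infty$. For expectations $\Fc=\ipp{F,\cdot}$, $\Gc=\ipp{G,\cdot}$, formula \eqref{eq:GinfPBdef} and \cref{rem:cgd} give $\pb{\Fc}{\Gc}_{\G_\infty^*}(\Ga)=\ipp{\comm{F}{G}_{\G_\infty},\Ga}_{\G_\infty-\G_\infty^*}$, which is again an expectation functional with datum $\comm{F}{G}_{\G_\infty}\in\G_\infty$ (\cref{prop:GinfLA}), hence in $\A_\infty$; and $\pb{\Fc}{C}_{\G_\infty^*}\equiv0$ for $C$ a constant. The product rule shows $\pb{\cdot}{\cdot}_{\G_\infty^*}$ is a derivation in each argument for pointwise multiplication, so it maps $\A_\infty\times\A_\infty$ into $\A_\infty$ and is determined by its values on generators. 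Bilinearity and antisymmetry are inherited from $\comm{\cdot}{\cdot}_{\G_\infty}$; the Jacobi identity holds on generators because there $\pb{\pb{\Fc}{\Gc}}{\Hc}_{\G_\infty^*}(\Ga)=\ipp{\comm{\comm{F}{G}_{\G_\infty}}{H}_{\G_\infty},\Ga}$, so the cyclic sum vanishes by the Jacobi identity in $\G_\infty$, and it extends to $\A_\infty$ by the derivation property. That $\pb{\Fc}{\Gc}_{\G_\infty^*}$ actually lies in $\Cc^\infty(\G_\infty^*)$ uses the uniform-length fact: $\comm{\ds\Fc[\Ga]}{\ds\Gc[\Ga]}_{\G_\infty}$ is supported in a fixed finite range of indices by \eqref{eq:GinfLB}, so the a priori infinite sum in \eqref{eq:Ginf*LPdef} is a finite sum of terms $\ipp{(\text{polynomial in }\Ga)^{(k)},\ga^{(k)}}_{\g_k-\g_k^*}$, each visibly a smooth functional --- indeed an element of $\A_\infty$.

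Next I would construct Hamiltonian vector fields. For an expectation $\Hc=\ipp{H,\cdot}$, define $X_\Hc(\Ga)\in\G_\infty^*$ by $\ipp{F,X_\Hc(\Ga)}_{\G_\infty-\G_\infty^*}:=\ipp{\comm{F}{H}_{\G_\infty},\Ga}_{\G_\infty-\G_\infty^*}$ for all $F\in\G_\infty$; since $\comm{\cdot}{H}_{\G_\infty}:\G_\infty\to\G_\infty$ is linear and continuous for fixed $H$ (\cref{prop:GinfLA}), $X_\Hc$ is exactly its transpose, hence linear and continuous $\G_\infty^*\to\G_\infty^*$ and so $\Cc^\infty$. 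Put $X_\Hc=0$ for $\Hc$ constant. For a general generator product $\Hc=\Hc_1\cdots\Hc_m$, set $X_\Hc(\Ga):=\sum_{i=1}^m\paren*{\prod_{j\ne i}\Hc_j(\Ga)}X_{\Hc_i}(\Ga)$, a finite sum of products of smooth scalar functions with smooth $\G_\infty^*$-valued maps, hence $\Cc^\infty$; the defining identity $\ds\Gc[\Ga](X_\Hc(\Ga))=\pb{\Gc}{\Hc}_{\G_\infty^*}(\Ga)$ holds for all $\Gc\in\A_\infty$ --- true on generators by construction and \cref{rem:cgd}, then extended by applying the product rule to both sides. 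Uniqueness of $X_\Hc$, and hence the nondegeneracy hypothesis of \cref{def:WPVS}, follows because the differentials of the expectation functionals separate the (constant) tangent spaces of $\G_\infty^*$: $\ipp{F,Y}_{\G_\infty-\G_\infty^*}=0$ for all $F\in\G_\infty$ forces $Y=0$.

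The main obstacle --- and the reason $\G_\infty^*$ carries only a \emph{weak} Poisson structure --- is precisely the two smoothness statements above, $\pb{\Fc}{\Gc}_{\G_\infty^*}\in\A_\infty\subset\Cc^\infty(\G_\infty^*)$ and $X_\Hc\in\Cc^\infty(\G_\infty^*,\G_\infty^*)$, for \emph{all} of $\A_\infty$ rather than just for generators. This is a bookkeeping problem: one must track, uniformly in $\Ga$, how many hierarchy components of $\comm{\ds\Fc[\Ga]}{\ds\Gc[\Ga]}_{\G_\infty}$ and of the coadjoint expression $\comm{\cdot}{\ds\Hc[\Ga]}_{\G_\infty}$ are nonzero, using the explicit formula \eqref{eq:GinfLB}, and it is exactly here that restriction to $\A_\infty$ is essential --- for a general $\Fc\in\Cc^\infty(\G_\infty^*)$ the derivative $\ds\Fc[\Ga]$ can have nonzero components of arbitrarily high order depending on $\Ga$, the series in \eqref{eq:Ginf*LPdef} need no longer terminate, and \cref{thm:Glockner} cannot be invoked since $\G_\infty^*$ is not known to be $k^\infty$. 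The remaining verifications are routine and parallel, with the $O(1/N)$ simplifications recorded in \cref{prop:GinfLA}, the $N$-particle arguments underlying \cref{prop:GNLA,prop:GNLP}.
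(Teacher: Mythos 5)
Your proposal is correct in substance and rests on the same enabling observation as the paper (\cref{rem:cgd}, \cref{rem:Ainfform}: elements of $\A_\infty$ have G\^ateaux derivatives that are combinations of finitely many fixed elements of $\G_\infty$ with scalar coefficients, uniformly in $\Ga$, so all sums terminate), but it departs from the paper's proof in two places, both in a genuinely different and leaner direction. For the Jacobi identity, the paper verifies it by brute force: it expands $\ds\pb{\Gc}{\Hc}_{\G_\infty^*}[\Ga]$ for products of expectations (\cref{lem:trgen}) and checks the cancellation of all the cross terms explicitly in \cref{lem:GinfWP1}; you instead check Jacobi only on expectation generators, where it reduces to the Jacobi identity of $\comm{\cdot}{\cdot}_{\G_\infty}$ from \cref{prop:GinfLA}, and extend to $\A_\infty$ via the standard fact that the Jacobiator of an antisymmetric biderivation is itself a derivation in each slot --- a fact you should state and verify in one short display (it is a two-line computation using antisymmetry), since it is the whole content of the extension step. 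For \ref{assWP3}, the paper obtains $X_{\Gc}$ from the explicit marginal formula \eqref{eq:infhamVF}, derived as an $N\to\infty$ limit of the $N$-particle formula of \cref{prop:NhamVF}, and then checks smoothness on $\A_\infty$; you instead define $X_{\Hc}$ for an expectation $\Hc=\ipp{H,\cdot}_{\G_\infty-\G_\infty^*}$ as the adjoint of the continuous linear map $\comm{\cdot}{H}_{\G_\infty}$ (continuity for fixed $H$ follows from the bounded hypocontinuity in \cref{prop:GinfLA}, and continuity of the adjoint from \cref{prop:contadj}), hence linear, continuous and $\Cc^\infty$, and extend to products by the Leibniz formula, with well-definedness and uniqueness supplied by the nondegeneracy \ref{assWP2}. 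This coadjoint-transpose construction is shorter and avoids the combinatorial formula entirely; what it gives up is precisely that formula, which the paper needs later to identify the Vlasov hierarchy as the Hamiltonian flow of $\Hvlh$ in \cref{thm:Vlh} (and which also covers existence, though not smoothness, of $X_{\Gc}$ for general $\Gc\in\Cc^\infty(\G_\infty^*)$). For the statement of \cref{prop:GinfLP} itself your route suffices; if you intend to prove \cref{thm:Vlh} afterwards you would still have to compute $X_{\Hvlh}$, e.g.\ by specializing your adjoint definition and integrating by parts, which essentially reproduces \eqref{eq:infhamVF} in the case of an expectation.
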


Having constructed a weak Poisson vector space for the infinite-particle setting, it makes sense to discuss Hamiltonian flows for $\infty$-hierarchies. Our final result of this subsection is that the Vlasov hierarchy \eqref{eq:VlH} is itself Hamiltonian, which is a new observation. The \emph{Vlasov hierarchy Hamiltonian functional} is the expectation (cf. \eqref{eq:BBGKYham}, \eqref{eq:BBGKYW} for the BBGKY Hamiltonian)
\begin{equation}\label{eq:VlHham}
\Hvlh(\Ga) = \ipp{\W_{VlH}, \Ga}_{\G_\infty-\G_\infty^*}
\end{equation}
generated by the observable $\infty$-hierarchy
\begin{equation}\label{eq:VlHW}
\W_{VlH}\coloneqq \paren*{\frac{1}{2}|v|^2,W(x_1-x_2),0,\ldots}.
\end{equation}
One immediately recognizes that $\W_{VlH}$ is the $N\rightarrow\infty$ limit of $\W_{BBGKY}$ in the topology of $\G_\infty$. Interpreting the integrals as distributional pairings, we can write, for $\Ga=(\ga^{(k)})_{k=1}^\infty$,
\begin{equation}
\Hvlh(\Ga) =\frac{1}{2}\int_{\R^{2d}}d\ga^{(1)}(z)|v|^2 + \int_{(\R^{2d})^2}d\ga^{(2)}(z_1,z_2) W(x_1-x_2).
\end{equation}
In particular, the functional $\Hc_{VlH}$ belongs to the admissible algebra $\A_\infty$ introduced in \eqref{eq:Ainfdef}. The next theorem asserts that the Vlasov hierarchy \eqref{eq:VlH} is a Hamiltonian flow on $(\G_\infty^*,\A_\infty,\pb{\cdot}{\cdot}_{\G_\infty^*})$, and it is the classical analogue of \cite[Theorem 2.10]{MNPRS2020} for the Gross-Pitaevskii hierarchy.

\begin{restatable}{theorem}{Vlh}\label{thm:Vlh}
Let $I \subset \R$ be a compact interval. Then $\Ga=(\ga^{(k)})_{k=1}^\infty \in \mathcal{C}^\infty(I, \mathfrak{G}_\infty^*)$ is a solution to the Vlasov hierarchy \eqref{eq:VlH} if and only if 
\begin{equation}
\dot{\Ga}= X_{\Hvlh}(\Ga),
\end{equation}
where $X_{\Hvlh}$ is the unique Hamiltonian vector field generated by the Hamiltonian $\Hvlh$ with respect to the weak Lie-Poisson space $(\mathfrak{G}_\infty^*, \A_\infty,\pb{\cdot}{\cdot}_{\G_\infty^*})$. 
\end{restatable}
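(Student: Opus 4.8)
The plan is to take advantage of the fact that the Vlasov hierarchy Hamiltonian is an expectation, so that by \cref{rem:cgd} it has the constant G\^ateaux derivative $\ds\Hvlh[\Ga] = \Wv$ for every $\Ga \in \G_\infty^*$. Since $\Hvlh \in \A_\infty$ and, by \cref{prop:GinfLP}, $(\G_\infty^*,\A_\infty,\pb{\cdot}{\cdot}_{\G_\infty^*})$ is a weak Poisson vector space, the Hamiltonian vector field $X_{\Hvlh}$ exists and is unique; the whole task is to compute it explicitly and recognize it as the right-hand side of \eqref{eq:VlH}. Define the candidate $V(\Ga) = (V(\Ga)^{(k)})_{k=1}^\infty$ by
\[
V(\Ga)^{(k)} \coloneqq -\sum_{i=1}^k v_i\cdot\nabla_{x_i}\ga^{(k)} + 2\sum_{i=1}^k\int_{\R^{2d}}dz_{k+1}\,\nabla W(x_i-x_{k+1})\cdot\nabla_{v_i}\ga^{(k+1)},
\]
and observe that $V(\Ga)\in\G_\infty^*$: each component is a compactly supported distribution, symmetric because the sum over $i$ restores permutation invariance.

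To identify $X_{\Hvlh} = V$ it suffices to test the defining relation $\ds\Gc[\Ga]\big(X_{\Hvlh}(\Ga)\big) = \pb{\Gc}{\Hvlh}_{\G_\infty^*}(\Ga)$ against expectation functionals $\Gc = \ipp{G,\cdot}_{\G_\infty-\G_\infty^*}$ with $G = (g^{(k)})_{k=1}^\infty\in\G_\infty$, since then $\ds\Gc[\Ga] = G$ ranges over all of $\G_\infty$ and the duality pairing separates points of $\G_\infty^*$. Thus it remains to prove that, for all such $G$ and all $\Ga=(\ga^{(k)})_{k=1}^\infty\in\G_\infty^*$,
\[
\ipp{G,V(\Ga)}_{\G_\infty-\G_\infty^*} = \pb{\Gc}{\Hvlh}_{\G_\infty^*}(\Ga) = \ipp{\comm{G}{\Wv}_{\G_\infty},\Ga}_{\G_\infty-\G_\infty^*}.
\]
Plugging $\Wv = (\tfrac12|v|^2,\, W(x_1-x_2),\, 0,\ldots)$ into the explicit bracket formula of \cref{prop:GinfLA}, the sum over $\ell+j-1=k$ collapses to the two terms $j\in\{1,2\}$, so $\comm{G}{\Wv}_{\G_\infty}^{(k)} = \Sym_k\!\big(g^{(k)}\wedge_1 w^{(1)}\big) + \Sym_k\!\big(g^{(k-1)}\wedge_1 w^{(2)}\big)$ with $w^{(1)} = \tfrac12|v_1|^2$, $w^{(2)} = W(x_1-x_2)$, the second term being absent for $k=1$. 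Pairing against $\ga^{(k)}$ and using $\ipp{\Sym_k h,\ga^{(k)}} = \ipp{h,\ga^{(k)}}$ by symmetry of $\ga^{(k)}$, then reindexing $k\mapsto k+1$ in the interaction sum, reduces the right-hand side to $\sum_{k\geq 1}\big(\ipp{g^{(k)}\wedge_1 w^{(1)},\ga^{(k)}} + \ipp{g^{(k)}\wedge_1 w^{(2)},\ga^{(k+1)}}\big)$.

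Now evaluate the two wedge products from the definition in \cref{prop:GinfLA}. Since $w^{(1)}$ depends only on $v_1$, one has $\nabla_{x_1}w^{(1)}=0$ and $\nabla_{v_1}w^{(1)} = v_1$, hence $g^{(k)}\wedge_1 w^{(1)}(\uz_k) = k\,v_1\cdot\nabla_{x_1}g^{(k)}(\uz_k)$; integrating by parts in $x_1$ — legitimate since $g^{(k)}\in\Cc^\infty$ and $\ga^{(k)}$ has compact support — and then using the symmetry of $g^{(k)}$ and of $\ga^{(k)}$ to restore the sum over the $k$ indices yields $\ipp{g^{(k)}\wedge_1 w^{(1)},\ga^{(k)}} = -\ipp{g^{(k)},\, \sum_{i=1}^k v_i\cdot\nabla_{x_i}\ga^{(k)}}$. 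Since $w^{(2)} = W(x_1-x_2)$ has no velocity dependence, the first term of $g^{(k)}\wedge_1 w^{(2)}$ vanishes, leaving $g^{(k)}\wedge_1 w^{(2)}(\uz_{k+1}) = -2k\,\nabla W(x_1-x_2)\cdot\nabla_{v_1}g^{(k)}(z_1,z_3,\ldots,z_{k+1})$; integrating by parts in $v_1$, integrating out the variable $z_2$ (on which $g^{(k)}$ does not depend) and using the symmetry of $\ga^{(k+1)}$ to relabel it as $z_{k+1}$, and symmetrizing over the $k$ relevant indices gives $\ipp{g^{(k)}\wedge_1 w^{(2)},\ga^{(k+1)}} = 2\ipp{g^{(k)},\, \sum_{i=1}^k\int dz_{k+1}\,\nabla W(x_i-x_{k+1})\cdot\nabla_{v_i}\ga^{(k+1)}}$. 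Summing over $k$ shows $\ipp{\comm{G}{\Wv}_{\G_\infty},\Ga} = \ipp{G,V(\Ga)}$, hence $X_{\Hvlh}(\Ga) = V(\Ga)$. Finally, because the topology on $\G_\infty^* = \prod_k\g_k^*$ is the product topology, for $\Ga\in\Cc^\infty(I,\G_\infty^*)$ the equation $\dot\Ga = X_{\Hvlh}(\Ga)$ holds if and only if $\dot{\ga}^{(k)} = V(\Ga)^{(k)}$ in $\g_k^*$ for every $k$, which is precisely the Vlasov hierarchy \eqref{eq:VlH} with the transport term moved to the right-hand side; this gives the asserted equivalence.

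The main obstacle is the bookkeeping in the two middle steps: collapsing the $\comm{\cdot}{\cdot}_{\G_\infty}$ sum against the two-component hierarchy $\Wv$, keeping track of the combinatorial prefactors (the factor $\ell j$ in the wedge product and the $1/k!$ in $\Sym_k$), and carefully justifying each integration by parts and each relabeling by invoking the compact support of the states and the permutation symmetry needed to reintroduce the symmetrizations that were dropped when pairing against symmetric distributions. Everything else — the existence and uniqueness of $X_{\Hvlh}$, the reduction to testing against expectation functionals, and the componentwise passage to \eqref{eq:VlH} — is immediate from the framework already built. (Alternatively, the same computation can be carried out at finite $N$ using $\Hbb$ from \cref{thm:bbgky} and then passed to the limit, exploiting that $\Wv = \lim_{N\to\infty}\W_{BBGKY}$ in $\G_\infty$, but the direct route above is shorter.)
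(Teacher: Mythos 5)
Your proposal is correct, and it reaches the conclusion by a somewhat different route than the paper. The paper's proof is a direct application of the explicit Hamiltonian vector field formula of \cref{lem:infhamVF}: since $\ds\Hvlh[\Ga]=\Wv$ is constant with only two nonzero components, the sum in \eqref{eq:infhamVF} collapses to $j\in\{1,2\}$, and the two Poisson brackets $\pb{(\Wv^{(1)})_{(a)}}{\ga^{(\ell)}}_{(\R^{2d})^\ell}=-v_a\cdot\nabla_{x_a}\ga^{(\ell)}$ and $\pb{(\Wv^{(2)})_{(a,\ell+1)}}{\ga^{(\ell+1)}}_{(\R^{2d})^{\ell+1}}=\nabla W(x_a-x_{\ell+1})\cdot\nabla_{v_a}\ga^{(\ell+1)}$ immediately produce the Vlasov hierarchy generator. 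You instead bypass \eqref{eq:infhamVF}: you posit the candidate field $V$, compute $\pb{\Gc}{\Hvlh}_{\G_\infty^*}$ for expectation functionals $\Gc=\ipp{G,\cdot}_{\G_\infty-\G_\infty^*}$ via the explicit limiting bracket formula of \cref{prop:GinfLA}, and then identify $X_{\Hvlh}=V$ from the nondegeneracy of the pairing (\cref{lem:GinfWP2}, \cref{rem:HamVFuniq}), using \cref{prop:GinfLP} only for existence and smoothness of $X_{\Hvlh}$. Your wedge-product evaluations, signs, combinatorial factors, and the relabeling/integration-by-parts steps (justified by compact support and permutation symmetry) all check out, as does the restriction to testing against expectations, since their derivatives exhaust $\G_\infty$ and separate points of $\G_\infty^*$. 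What your route buys is a self-contained verification at the level of \cref{thm:Vlh} that never quotes the general vector field formula; what it costs is that you essentially re-derive, in the special case $\ds\Gc[\Ga]=\Wv$, the same integration-by-parts and symmetrization bookkeeping that the paper has already packaged once and for all into \cref{prop:NhamVF} and \cref{lem:infhamVF}, which is why the paper's proof is shorter given the machinery in place.
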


\subsection{From Vlasov hierarchy to Vlasov equation}\label{ssec:OutVl}
Finally, we tie together the constituent results of the previous subsections to connect the Hamiltonian structure of the Vlasov hierarchy \eqref{eq:VlH} to the Vlasov equation \eqref{eq:Vl}. This necessitates elaborating on the rigorous formulation of the Hamiltonian structure of the Vlasov equation (cf. \cite[p. 329, 10.1(e)]{MR2013}). The \emph{Vlasov Hamiltonian functional} is
\begin{equation}\label{eq:HVl}
\Hvl(\gamma) \coloneqq \ipp*{\frac{1}{2}|v|^2, \ga}_{\g_1-\g_1^*} + \ipp*{W(x_1-x_2), \ga^{\otimes 2}}_{\g_2-\g_2^*}.
\end{equation}
In terms of ``integrals'' (as before, understood rigorously as distributional pairings),
\begin{equation}
\Hvl(\gamma) = \frac{1}{2}\int_{(\R^d)^2}d\ga(x,v)|v|^2+\int_{(\R^d)^2}d\rho^{\otimes 2}(x_1,x_2)W(x_1-x_2),
\end{equation}
where $\rho \coloneqq \int_{\R^d}d\ga(\cdot,v)$ is the density associated to $\ga$. Note that $\rho$ is well-defined as a distribution, since for any test function $f\in \Cc^\infty(\R^d)$, we can set
\begin{equation}
\ipp{f,\rho}_{\Cc^\infty(\R^d)-\mathcal{E}'(\R^d)}\coloneqq \ipp{f\otimes 1,\ga}_{\Cc^\infty(\R^{2d})-\mathcal{E}'(\R^{2d})},
\end{equation}
where $(f\otimes 1)(x,v) = f(x)$ for every $(x,v)\in (\R^d)^2$.  In contrast to the other Hamiltonian functionals we have seen so far, $\Hvl$ is \emph{nonlinear}, in fact quadratic, in the potential energy. Since $\Hvl$ is multilinear in its argument $\ga$ and continuous as a map from $\g_1^*\rightarrow\R$, it is straightforward to check that $\Hvl\in\Cc^\infty(\g_1^*)$.

\begin{restatable}{proposition}{Vlas}\label{prop:Vlas}
Let $I \subset \R$ be a compact interval. Then $\ga\in \mathcal{C}^\infty(I, \g_1^*)$ is a solution to the Vlasov equation \eqref{eq:Vl} if and only if 
\begin{equation}
\dot{\ga}= X_{\Hvl}(\ga),
\end{equation}
where $X_{\Hvl}$ is the unique Hamiltonian vector field generated by the Hamiltonian $\Hvl$ with respect to the Lie-Poisson space $(\mathfrak{g}_1^*,\pb{\cdot}{\cdot}_{\g_1^*})$. 
\end{restatable}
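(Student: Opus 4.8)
The plan is to verify, by direct computation, that the Hamiltonian vector field $X_{\Hvl}$ on the Lie–Poisson space $(\g_1^*, \pb{\cdot}{\cdot}_{\g_1^*})$ — which exists and is unique by \cref{prop:gkLP} applied with $k=1$ together with the fact that $\Hvl\in\Cc^\infty(\g_1^*)$ — coincides, when written out, with the right-hand side of the Vlasov equation \eqref{eq:Vl}. The first step is to compute the G\^ateaux derivative $\ds\Hvl[\ga]\in\g_1^{**}\cong\g_1$. Since $\Hvl$ is the sum of a linear term $\ipp{\frac12|v|^2,\ga}$ and a quadratic term $\ipp{W(x_1-x_2),\ga^{\otimes 2}}$, we get $\ds\Hvl[\ga] = \frac12|v|^2 + 2(W\ast\rho)(x)$, where the factor $2$ comes from differentiating the quadratic term and symmetry/evenness of $W$; here $W\ast\rho$ is understood via the distributional pairing defining $\rho$, and one should note this is indeed a smooth function of $(x,v)$ (constant in $v$) so it lies in $\g_1$. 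The potential subtlety that $\rho$ is only a compactly supported distribution while $W$ is merely smooth is handled because the pairing $\ipp{W(x_1-x_2),\ga^{\otimes 2}}$ is a pairing of $\Cc^\infty$ against $\Ec'$ on $\Rd^2$, and $W\ast\rho$ inherits smoothness from $W$.

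The second step is to unwind the definition \eqref{eq:gk*LPdef} of the Lie–Poisson bracket at $k=1$ to get an explicit formula for $X_{\Hvl}$. Recall $\comm{f}{g}_{\g_1} = \pb{f}{g}_{\R^{2d}} = \nabla_x f\cdot\nabla_v g - \nabla_v f\cdot\nabla_x g$ (no rescaling since $k=1$). For any $\Gc\in\Cc^\infty(\g_1^*)$ with $g_\ga := \ds\Gc[\ga]\in\g_1$, the defining property $X_{\Hvl}(\Gc) = \pb{\Gc}{\Hvl}_{\g_1^*}$ reads
\[
\ipp{g_\ga, X_{\Hvl}(\ga)}_{\g_1-\g_1^*} = \ipp*{\comm{g_\ga}{\ds\Hvl[\ga]}_{\g_1},\ga}_{\g_1-\g_1^*}.
\]
Expanding the right side with $h := \ds\Hvl[\ga] = \frac12|v|^2 + 2W\ast\rho$ and integrating by parts (moving derivatives off $g_\ga$ onto $\ga$, legitimate since $\ga\in\Ec'$ and $h\in\Cc^\infty$), one identifies $X_{\Hvl}(\ga)$ as the distribution $-\nabla_x\cdot(\ga\,\nabla_v h) + \nabla_v\cdot(\ga\,\nabla_x h)$. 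Substituting $\nabla_v h = v$ and $\nabla_x h = 2\nabla W\ast\rho$ yields $X_{\Hvl}(\ga) = -v\cdot\nabla_x\ga + 2(\nabla W\ast\rho)\cdot\nabla_v\ga$, which is exactly the right-hand side of \eqref{eq:Vl}. Hence $\dot\ga = X_{\Hvl}(\ga)$ in $\g_1^*$ is equivalent to $\ga$ solving \eqref{eq:Vl}, and the equivalence of regularity classes ($\ga\in\Cc^\infty(I,\g_1^*)$ versus being a weak solution in the stated sense) follows since the equation is first-order in $t$ with the vector field mapping $\g_1^*\to\g_1^*$ continuously.

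The main obstacle I anticipate is purely bookkeeping rather than conceptual: one must be careful about (i) the symmetry factor in differentiating the quadratic potential term and the role of the evenness of $W$, and (ii) justifying the integration by parts at the level of compactly supported distributions paired against smooth functions — in particular checking that products like $\ga\,\nabla_v h$ make sense in $\Ec'(\R^{2d})$ and that $\nabla_x\cdot(\ga\,\nabla_v h)$ defines a legitimate element of $\g_1^*$. Both are routine given that $h\in\Cc^\infty$ and $\ga\in\Ec'$, so multiplication $\Cc^\infty\times\Ec'\to\Ec'$ and distributional differentiation are well-defined and the support stays compact. A secondary point is to confirm that $X_{\Hvl}$ so computed is the \emph{unique} Hamiltonian vector field, but this is already guaranteed by the reflexive locally convex Lie–Poisson structure of \cref{prop:gkLP}, so once we exhibit a vector field satisfying the defining identity for all test functionals $\Gc$ (it suffices to test against expectation functionals $\Gc(\cdot)=\ipp{g,\cdot}$, whose G\^ateaux derivatives are the constants $g\in\g_1$ and which separate points), we are done.
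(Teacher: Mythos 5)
Your proposal is correct and follows essentially the same route as the paper's proof: compute $\ds\Hvl[\ga]=\tfrac12|v|^2+2W\ast\rho$ from linearity of the kinetic term and bilinearity (plus evenness of $W$) of the potential term, unpack the Lie–Poisson bracket \eqref{eq:gk*LPdef} with $\comm{\cdot}{\cdot}_{\g_1}$ the standard bracket, integrate by parts against the compactly supported distribution $\ga$, and read off $X_{\Hvl}(\ga)=-v\cdot\nabla_x\ga+2(\nabla W\ast\rho)\cdot\nabla_v\ga$, which matches \eqref{eq:Vl}. The only cosmetic difference is that you phrase the vector field in divergence form and invoke testing against expectation functionals, whereas the paper tests against arbitrary $\Fc\in\Cc^\infty(\g_1^*)$ and cancels the mixed-derivative terms via $\nabla_x\nabla_v h=\nabla_v\nabla_x h$; these are equivalent.
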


We connect the Vlasov hierarchy to the Vlasov equation, each as infinite-dimensional Hamiltonian systems, through the embedding
\begin{equation}\label{eq:iotadef}
\iota: \g_1^*\rightarrow \G_\infty^*, \qquad \iota(\ga) \coloneqq (\ga^{\otimes k})_{k=1}^\infty, \qquad \forall \ga\in\g_1^*.
\end{equation}
Here, $\ga^{\otimes k}$ denotes the usual $k$-fold tensor product of the distribution $\ga$. The geometric content of the map $\iota$, which we call the \emph{trivial embedding} or \emph{factorization map}, is that it preserves the Poisson structures on $\g_1^*$ and $\G_\infty^*$, i.e. it is a Poisson morphism in the sense of \cref{def:PM}.

\begin{restatable}{theorem}{PM}\label{thm:PM}
The map $\iota \in \Cc^\infty(\g_1^*,\G_\infty^*)$ is a morphism of the Lie-Poisson space $(\g_1^*,\pb{\cdot}{\cdot}_{\g_1^*})$ into the weak Lie-Poisson space $(\G_\infty^*,\A_\infty,\pb{\cdot}{\cdot}_{\G_\infty^*})$:
\begin{equation}
\forall \Fc,\Gc\in \A_\infty, \qquad \pb{\iota^*\Fc}{\iota^*\Gc}_{\g_1^*} = \iota^*\pb{\Fc}{\Gc}_{\G_\infty^*}.
\end{equation}
\end{restatable}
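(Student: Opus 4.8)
The plan is to prove the identity by reducing it to a generating set of $\A_\infty$ and then carrying out an explicit bracket computation on expectation functionals. First I would record that $\iota\in\Cc^\infty(\g_1^*,\G_\infty^*)$: since $\G_\infty^*=\prod_{k\geq1}\g_k^*$ carries the product topology, it suffices to check that each component $\ga\mapsto\ga^{\otimes k}$ is $\Cc^\infty$, and this holds because it is the diagonal restriction of the continuous $k$-linear tensor product map, hence a continuous homogeneous polynomial. In particular $\iota^*\Fc=\Fc\circ\iota$ is $\Cc^\infty$ for every $\Fc\in\Cc^\infty(\G_\infty^*)$, so both sides of the asserted identity are well defined. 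Set $\Phi(\Fc,\Gc)\coloneqq\pb{\iota^*\Fc}{\iota^*\Gc}_{\g_1^*}-\iota^*\pb{\Fc}{\Gc}_{\G_\infty^*}$ for $\Fc,\Gc\in\A_\infty$. Since $\pb{\cdot}{\cdot}_{\g_1^*}$ is a Poisson bracket (\cref{prop:gkLP}), $\pb{\cdot}{\cdot}_{\G_\infty^*}$ restricted to $\A_\infty$ obeys the Leibniz rule (\cref{prop:GinfLP}), and $\iota^*$ is an algebra homomorphism for pointwise products, the map $\Phi$ is bilinear, antisymmetric, and satisfies $\Phi(\Fc_1\Fc_2,\Gc)=(\iota^*\Fc_1)\,\Phi(\Fc_2,\Gc)+(\iota^*\Fc_2)\,\Phi(\Fc_1,\Gc)$. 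Because $\A_\infty$ is generated as an algebra by expectations and constants, and $\Phi$ vanishes whenever either argument is a constant (brackets of constants vanish), an induction on the number of generating factors, applied in each slot in turn using antisymmetry, reduces the claim to the case where $\Fc$ and $\Gc$ are both expectation functionals.

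So fix expectations $\Fc(\cdot)=\ipp{F,\cdot}_{\G_\infty-\G_\infty^*}$ and $\Gc(\cdot)=\ipp{G,\cdot}_{\G_\infty-\G_\infty^*}$ with $F=(f^{(\ell)})_{\ell\geq1}$, $G=(g^{(j)})_{j\geq1}\in\G_\infty$, only finitely many components nonzero. Using the chain rule together with $\ds\iota[\ga](\mu)=\big(\sum_{i=1}^k\ga^{\otimes(i-1)}\otimes\mu\otimes\ga^{\otimes(k-i)}\big)_{k\geq1}$ and the constancy of the G\^ateaux derivatives of expectations (\cref{rem:cgd}), I would compute
\begin{equation}
\ds(\iota^*\Fc)[\ga](\mu)=\ipp{F,\ds\iota[\ga](\mu)}_{\G_\infty-\G_\infty^*}=\sum_{\ell\geq1}\ell\,\ipp{f^{(\ell)},\,\mu\otimes\ga^{\otimes(\ell-1)}}_{\g_\ell-\g_\ell^*},
\end{equation}
where the permutation-invariance of $f^{(\ell)}$ and of $\ga^{\otimes(\ell-1)}$ collapses the $\ell$ terms produced by differentiating $\ga^{\otimes\ell}$ into a single one. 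Identifying $\ds(\iota^*\Fc)[\ga]\in\g_1^{**}\cong\g_1$, this says that $f_\ga\coloneqq\ds(\iota^*\Fc)[\ga]$ is the function $z\mapsto\sum_{\ell\geq1}\ell\,\ipp{f^{(\ell)}(z,\cdot),\ga^{\otimes(\ell-1)}}_{\g_{\ell-1}-\g_{\ell-1}^*}$, i.e. $f^{(\ell)}$ contracted against $\ga^{\otimes(\ell-1)}$ in its last $\ell-1$ slots and weighted by $\ell$; smoothness and membership in $\g_1$ follow from differentiation under the distributional pairing, legitimate because $\ga^{\otimes(\ell-1)}$ has compact support. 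Write $g_\ga\coloneqq\ds(\iota^*\Gc)[\ga]$ analogously.

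It remains to match the two brackets. On the left, since $\comm{\cdot}{\cdot}_{\g_1}$ is the standard Poisson bracket on $\R^{2d}$,
\begin{equation}
\pb{\iota^*\Fc}{\iota^*\Gc}_{\g_1^*}(\ga)=\ipp{\comm{f_\ga}{g_\ga}_{\g_1},\ga}_{\g_1-\g_1^*}=\int d\ga(z)\,\big(\nabla_x f_\ga\cdot\nabla_v g_\ga-\nabla_v f_\ga\cdot\nabla_x g_\ga\big)(z).
\end{equation}
On the right, applying \cref{rem:cgd} once more, then using the symmetry of $\ga^{\otimes k}$ to drop $\Sym_k$ against it, the explicit formula of \cref{prop:GinfLA} gives
\begin{equation}
\iota^*\pb{\Fc}{\Gc}_{\G_\infty^*}(\ga)=\sum_{k\geq1}\ipp{\comm{F}{G}_{\G_\infty}^{(k)},\ga^{\otimes k}}_{\g_k-\g_k^*}=\sum_{\ell,j\geq1}\ipp{f^{(\ell)}\wedge_1 g^{(j)},\,\ga^{\otimes(\ell+j-1)}},
\end{equation}
a finite sum. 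Expanding $\wedge_1$ and using that, for each $(\ell,j)$, the variables of $f^{(\ell)}$ other than the first and those of $g^{(j)}$ other than the first are disjoint, the pairing of the $\nabla_{x_1}f^{(\ell)}\cdot\nabla_{v_1}g^{(j)}$ term against $\ga^{\otimes(\ell+j-1)}$ factorizes as $\ell j\int d\ga(z_1)\,\big(\int\nabla_{x_1}f^{(\ell)}\,d\ga^{\otimes(\ell-1)}\big)\cdot\big(\int\nabla_{v_1}g^{(j)}\,d\ga^{\otimes(j-1)}\big)$, and symmetrically for the $f\leftrightarrow g$ term. Since gradients commute with the partial contractions, summing over $\ell$ and $j$ recognizes the first factor as $\nabla_x f_\ga(z_1)$ and the second as $\nabla_v g_\ga(z_1)$ (and likewise for the other term), so the right-hand side equals the left-hand side, i.e. $\Phi(\Fc,\Gc)=0$.

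The step I expect to be the main obstacle is this last matching: correctly tracking the combinatorial weights --- the $\ell j$ factors and the constraint $\ell+j-1=k$ coming from the Lie bracket of \cref{prop:GinfLA} against the weights $\ell$ appearing in $f_\ga$ --- and rigorously justifying the interchanges of differentiation, summation, and distributional pairing. These interchanges are permissible because only finitely many components of $F$ and $G$ are nonzero and every distribution involved has compact support, but writing this carefully takes some bookkeeping. The reduction in the first step also quietly uses that $\iota^*$ preserves pointwise products and that $\pb{\cdot}{\cdot}_{\G_\infty^*}$ is a genuine derivation on $\A_\infty$, both guaranteed by \cref{prop:GinfLP}.
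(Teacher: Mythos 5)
Your proposal is correct, but it takes a genuinely different route from the paper. The paper does not reduce to generators of $\A_\infty$: it proves the pullback identity for arbitrary $\Fc_\infty,\Gc_\infty\in\Cc^\infty(\G_\infty^*)$ in one stroke, by writing $\pb{\Fc_\infty}{\Gc_\infty}_{\G_\infty^*}(\iota(\mu))=\ds\Fc_\infty[\iota(\mu)]\big(X_{\Gc_\infty}(\iota(\mu))\big)$ and feeding in the explicit Hamiltonian vector field formula of \cref{lem:infhamVF}; the marginal integrals are then resummed via auxiliary contraction functions $\phi_{\Gc,j}$ (defined by $\ipp{\phi_{\Gc,j},\nu}_{\g_1-\g_1^*}=\ipp{\ds\Gc_\infty[\iota(\mu)]^{(j)},\nu\otimes\mu^{\otimes j-1}}$), and the chain rule identifies $\sum_j j\phi_{\Gc,j}$ with $\ds(\iota^*\Gc_\infty)[\mu]$ --- exactly the partial contraction you call $g_\ga$. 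You instead exploit the Leibniz rules guaranteed by \cref{prop:gkLP} and \cref{prop:GinfLP} together with multiplicativity of $\iota^*$ to reduce to expectation functionals, where the constancy of G\^ateaux derivatives (\cref{rem:cgd}) lets you compute both sides directly from the explicit $\comm{\cdot}{\cdot}_{\G_\infty}$ formula of \cref{prop:GinfLA} and match the $\ell j$ weights against the weights $\ell$, $j$ in $f_\ga$, $g_\ga$; the only analytic inputs are $\ds\iota$ (the paper's \cref{lem:diota} with $n=1$), differentiation under compactly supported pairings, and the distributional Fubini factorization, all legitimate since $F,G$ have finitely many nonzero components. What each approach buys: yours avoids invoking the infinite-particle Hamiltonian vector field machinery of \cref{lem:infhamVF} entirely and is more elementary on the generator case, at the cost of the reduction step and of establishing the identity only on $\A_\infty$ (which is all the theorem asserts); the paper's argument is heavier but yields the morphism property on all of $\Cc^\infty(\G_\infty^*)$ and reuses a formula it needs anyway for \cref{thm:Vlh}. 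The core combinatorial identity --- the $\ell j$ factor in $\wedge_1$ matching the product of the contraction weights --- is handled correctly in your sketch, and the interchanges of sum, derivative, and pairing you flag as the delicate point are indeed justified by finiteness and compact supports, just as in the paper's own manipulations.
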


Let us now explain why the results of this section constitute a rigorous derivation of the Hamiltonian structure for the Vlasov equation, as claimed in the title of the paper. The reader may check that (see also \cref{rem:VlHVl})
\begin{equation}\label{eq:VHhampback}
\iota^* \Hvlh = \Hvl,
\end{equation}
i.e. the pullback of the Vlasov hierarchy Hamiltonian equals the Vlasov Hamiltonian. The identity \eqref{eq:VHhampback} together with \Cref{thm:PM,thm:Vlh} then show that the Hamiltonian functional and Poisson bracket for the Vlasov equation are obtained via the pullback under the trivial embedding $\iota$ of the Hamiltonian functional and Poisson bracket for the Vlasov hierarchy; moreover, $\iota$ sends solutions of the Vlasov equation to special factorized solutions of the Vlasov hierarchy. Combined with the results of \cref{ssec:OutN}, which provide a geometric correspondence between Newton's equations/Liouville equation and the BBGKY hierarchy, and \cref{prop:GinfLA}, which allows us to take the infinite-particle limit of our $N$-particle geometric constructions, we arrive at a rigorous derivation of the Hamiltonian structure of the Vlasov equation directly from the Hamiltonian formulation of Newtonian mechanics.

Finally, as mentioned in \cref{ssec:intromot}, there is another way to derive the Vlasov equation from the Newtonian $N$-body problem \eqref{eq:New} via the empirical measure. It is an interesting fact, which to our knowledge has not been previously observed, that the map $\iota_{EM}$ assigning a position-velocity configuration $\uz_N \in \Rd^N$ to its empirical measure on $\R^{2d}$ is, in fact, a Poisson morphism (see \cref{prop:EMpm} below). Since one also has $\iota_{EM}^*\Hvl=\Hc_{New}$ (see \cref{rem:EMVlHam}), this implies the previously mentioned fact that if $\uz_N^t$ is a solution to \eqref{eq:New}, then the associated empirical measure $\mu_N^t$ is a weak solution to the Vlasov equation.

\begin{restatable}{proposition}{EMpm}\label{prop:EMpm}
The map
\begin{equation}\label{eq:iotaEMdef}
\iota_{EM}: \Rd^N \rightarrow \g_1^*, \qquad \iota_{EM}(\uz_N) \coloneqq \frac{1}{N}\sum_{i=1}^N \d_{z_i}, \qquad \forall \uz_N \in\Rd^N
\end{equation}
belongs to $\Cc^\infty(\Rd^N, \g_1^*)$ and defines a morphism of Poisson vector spaces. 
\end{restatable}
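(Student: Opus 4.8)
The plan is to establish the two assertions of \cref{prop:EMpm} separately: first that $\iota_{EM}\in\Cc^\infty(\Rd^N,\g_1^*)$, and then the bracket identity $\pb{\iota_{EM}^*\Fc}{\iota_{EM}^*\Gc}_N = \iota_{EM}^*\pb{\Fc}{\Gc}_{\g_1^*}$. For smoothness, I would write $\iota_{EM} = \frac1N\sum_{i=1}^N(\delta\circ p_i)$, where $p_i\colon\Rd^N\to\R^{2d}$, $\uz_N\mapsto z_i$, is the (linear, hence $\Cc^\infty$) coordinate projection and $\delta\colon\R^{2d}\to\g_1^*$ is the map $z\mapsto\delta_z$; since finite linear combinations and compositions of $\Cc^\infty$ maps are $\Cc^\infty$, it suffices to show $\delta$ is $\Cc^\infty$ as a map into $\g_1^*\cong\Ec'(\R^{2d})$. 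I would prove this by exhibiting the iterated G\^ateaux differentials directly: from $\ipp*{f,\delta_z} = f(z)$ for $f\in\g_1$, the natural candidate for the $n$-th differential of $\delta$ at $z$ is the continuous symmetric $n$-linear map $(w_1,\dots,w_n)\mapsto(-1)^n(w_1\cdot\nabla)\cdots(w_n\cdot\nabla)\delta_z\in\g_1^*$, and one checks it really is the differential by estimating the relevant difference quotients against the semi-norms \eqref{seminorm def} of $\g_1$, using that $\g_1^*$ is a reflexive (DF) Montel space (\cref{lem:gkremon}). As a cross-check, $\iota_{EM}$ also equals the composition of $\iota_{Lio}$, the marginal map $\iota_{mar}$, and the projection $\G_N^*\to\g_1^*$ onto the first factor, so its smoothness likewise follows from \cref{prop:LioPM} and the smoothness of those maps.

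For the morphism identity, fix $\Fc,\Gc\in\Cc^\infty(\g_1^*)$ and $\uz_N\in\Rd^N$, put $\mu\coloneqq\iota_{EM}(\uz_N)$, and---using $\g_1^{**}\cong\g_1$---identify the G\^ateaux derivatives as $\ds\Fc[\mu]\eqqcolon f_{\uz_N}\in\g_1$ and $\ds\Gc[\mu]\eqqcolon g_{\uz_N}\in\g_1$. First I would compute $\ds\iota_{EM}[\uz_N]$: it is the continuous linear map $\delta\uz_N = (\delta z_1,\dots,\delta z_N)\mapsto -\frac1N\sum_{i=1}^N(\delta z_i\cdot\nabla)\delta_{z_i}$, characterized most cleanly by $\ipp*{f,\ds\iota_{EM}[\uz_N](\delta\uz_N)} = \frac1N\sum_{i=1}^N\nabla f(z_i)\cdot\delta z_i$ for all $f\in\g_1$. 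The chain rule then gives
\begin{equation*}
\ds(\iota_{EM}^*\Fc)[\uz_N](\delta\uz_N) = \ds\Fc[\mu]\bigl(\ds\iota_{EM}[\uz_N](\delta\uz_N)\bigr) = \ipp*{f_{\uz_N},\ds\iota_{EM}[\uz_N](\delta\uz_N)} = \frac1N\sum_{i=1}^N\nabla f_{\uz_N}(z_i)\cdot\delta z_i,
\end{equation*}
so reading off the Euclidean gradient component by component gives $\nabla_{z_i}(\iota_{EM}^*\Fc)(\uz_N) = \frac1N\,(\nabla f_{\uz_N})(z_i)$, and likewise $\nabla_{z_i}(\iota_{EM}^*\Gc)(\uz_N) = \frac1N\,(\nabla g_{\uz_N})(z_i)$, for every $i$.

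The final step is to substitute these gradients into the rescaled Poisson bracket. Since $\pb{\cdot}{\cdot}_N = N\,\pb{\cdot}{\cdot}_{\Rd^N}$ and $\om_N$ is block-diagonal, the standard bracket on $\Rd^N$ splits over particles, $\pb{A}{B}_{\Rd^N}(\uz_N) = \sum_{i=1}^N\pb{A}{B}_{z_i}$, with $\pb{\cdot}{\cdot}_{z_i}$ the standard single-particle bracket in the variable $z_i$; and since this single-particle bracket, evaluated at a point, is a bilinear function of the two gradients there, the previous step yields $\pb{\iota_{EM}^*\Fc}{\iota_{EM}^*\Gc}_{z_i}(\uz_N) = \frac1{N^2}\,\pb{f_{\uz_N}}{g_{\uz_N}}_{\R^{2d}}(z_i)$. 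Therefore
\begin{equation*}
\pb{\iota_{EM}^*\Fc}{\iota_{EM}^*\Gc}_N(\uz_N) = N\sum_{i=1}^N\frac1{N^2}\,\pb{f_{\uz_N}}{g_{\uz_N}}_{\R^{2d}}(z_i) = \frac1N\sum_{i=1}^N\pb{f_{\uz_N}}{g_{\uz_N}}_{\R^{2d}}(z_i) = \ipp*{\pb{f_{\uz_N}}{g_{\uz_N}}_{\R^{2d}},\mu}_{\g_1-\g_1^*},
\end{equation*}
the last equality being the definition of $\mu = \iota_{EM}(\uz_N)$ as a distribution. On the other hand, by the definitions of the pullback and of $\pb{\cdot}{\cdot}_{\g_1^*}$ in \eqref{eq:gk*LPdef},
\begin{equation*}
\bigl(\iota_{EM}^*\pb{\Fc}{\Gc}_{\g_1^*}\bigr)(\uz_N) = \ipp*{\comm{\ds\Fc[\mu]}{\ds\Gc[\mu]}_{\g_1},\mu}_{\g_1-\g_1^*} = \ipp*{\comm{f_{\uz_N}}{g_{\uz_N}}_{\g_1},\mu}_{\g_1-\g_1^*},
\end{equation*}
and since $\comm{\cdot}{\cdot}_{\g_1} = 1\cdot\pb{\cdot}{\cdot}_{\R^{2d}} = \pb{\cdot}{\cdot}_{\R^{2d}}$ by the $k=1$ case of \eqref{eq:gkLBdef}, the two right-hand sides agree, which is the claimed identity.

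The hard part will not be this computation, which is short, but the functional-analytic bookkeeping feeding into it: that $\iota_{EM}$ is genuinely $\Cc^\infty$ as a map into the locally convex space $\g_1^*$ (and not merely scalarly smooth), that the G\^ateaux/Bastiani chain rule applies in this setting, and that $\ds\Fc[\mu]$ may legitimately be treated as an element of $\g_1$ via reflexivity---all of which rely on the topological properties of $\g_1$ and $\g_1^*$ recorded in \cref{lem:gkremon}. The conceptual content is simply that the factor $N$ built into $\pb{\cdot}{\cdot}_N$ in \eqref{eq:NewHamdef} exactly cancels one of the two factors of $1/N$ produced by the chain rule through $\ds\iota_{EM}$, while the surviving $\frac1N\sum_i\delta_{z_i}$ is precisely the empirical measure---which is why $\iota_{EM}$, rather than the unnormalized $\sum_i\delta_{z_i}$, is the Poisson-natural assignment.
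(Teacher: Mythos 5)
Your proposal is correct and follows essentially the same route as the paper: compute $\ds\iota_{EM}[\uz_N]$ (the paper's \cref{lem:diotaEM}, whose proof is likewise left as a routine verification), use the chain rule and uniqueness of the gradient to obtain $\nabla_{z_i}(\iota_{EM}^*\Fc)(\uz_N)=\frac1N\big(\nabla\,\ds\Fc[\iota_{EM}(\uz_N)]\big)(z_i)$, and then substitute into the rescaled bracket $\pb{\cdot}{\cdot}_N$ so that the factor $N$ cancels one of the two factors $1/N$ and the remaining sum is exactly the pairing of $\pb{\ds\Fc[\mu]}{\ds\Gc[\mu]}_{\R^{2d}}=\comm{\ds\Fc[\mu]}{\ds\Gc[\mu]}_{\g_1}$ against the empirical measure. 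This is precisely the paper's argument, so no further comment is needed.
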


\subsection{Organization of paper}\label{ssec:Outorg}
Let us close \cref{sec:Out} with some comments on the organization of the remaining body of the article.

\cref{sec:pre} contains background material on topological vector spaces, Lie algebras, and (weak) Lie-Poisson vector spaces. The reader may wish to skip this section upon first reading and instead consult it as necessary during the reading of \Cref{sec:Ngeom,sec:infgeom,sec:Ham}.

\cref{sec:Ngeom} contains the $N$-particle setting results. The section is divided into several subsections, each building upon the previous one. \cref{ssec:NgeomLio} concerns the setting of the Newtonian system \eqref{eq:New} and Liouville equation \eqref{eq:Lio}, proving \cref{prop:gkLA,prop:gkLP} for $\g_k$ and $\g_k^*$, respectively, \cref{prop:LioPM} for $\iota_{Lio}$, and \cref{prop:EMpm} for $\iota_{EM}$. \Cref{ssec:NgeomLie,ssec:NgeomLP} concern the setting of the BBGKY hierarchy \eqref{eq:BBGKY}, proving \Cref{prop:GNLA,prop:GNLP} for $\G_N,\G_N^*$, respectively. Finally, \cref{ssec:Ngeommor} concerns the operation of taking marginals, proving \cref{prop:marPo} for $\iota_{mar}$. 

\cref{sec:infgeom} contains the infinite-particle setting results. As with \cref{sec:Ngeom}, the section is divided into several subsections, each intended to build upon the previous one. \Cref{ssec:GinfLA,ssec:GinfLP} are devoted  to the proofs of \Cref{prop:GinfLA,prop:GinfLP} for $\G_\infty,\G_\infty^*$, respectively. \cref{ssec:infgeomPM} contains the proof of \cref{thm:PM} for the map $\iota$.

Lastly, \cref{sec:Ham} contains the proofs of the Hamiltonian flows results \cref{prop:Vlas} and \Cref{thm:bbgky,thm:Vlh}, which assert that that the Vlasov equation, BBGKY hierarchy, and Vlasov hierarchy, respectively, are Hamiltonian flows on their respective Lie-Poisson spaces given by \cref{prop:Vlas} and \Cref{prop:GNLP,prop:GinfLP}. The section is broken into three subsections with \cref{ssec:HamVl} corresponding to the Vlasov equation, \cref{ssec:HamBBGKY} to the BBGKY hierarchy, and \cref{ssec:HamVH} to the Vlasov hierarchy.

\section{Background material}\label{sec:pre}
The purpose of this section is to collect in one place all the necessary preliminary facts---some rather elementary---from functional analysis concerning topological vector spaces, function spaces and distributions, and Lie algebras and Lie-Poisson vector spaces. There is some overlap with \cite[Section 4, Appendices A-B]{MNPRS2020}, but this section also contains notions new to the present work, such as Gl\"ockner's aforementioned formalism of Poisson vector spaces. Moreover, our spaces of functions and distributions are not comparable to \cite{MNPRS2020}, as here we deal with test functions and distributions over $(\R^{2d})^k$, as opposed to operators between spaces of test functions and spaces of distributions. This difference is, of course, a reflection of the classical physics setting of the present work in contrast to the quantum setting of the cited work, as explained in \cref{ssec:intropf}.

\subsection{Some function analysis facts}
In this subsection, we review functional analytic notions which will be used throughout the rest of the paper. We begin by reviewing duality in topological vector spaces.

\begin{definition}
Let $X$ be a topological vector space. We define $X^*$ to be the set of continuous linear functionals on $X$, and endow it with the \emph{strong dual topology}, which is given as follows. Let $\mathcal{A}$ be the set of bounded subsets of $X$. For each $A \in \mathcal{A}$, we define the semi-norm 
\begin{equation}
    \rho_{A} : X^* \rightarrow [0,\infty), \qquad \rho_{A} (T) \coloneqq \sup_{f \in A} | T(f)|.
\end{equation}
Note that this is indeed a semi-norm, since continuous linear operators are bounded. We define the topology of $X^*$ to be the one generated by the above semi-norms. If the cannonical embedding 
\begin{equation} 
X \hookrightarrow (X^*)^* \eqqcolon X^{**}
\end{equation}
is an isomorphism between topological vector spaces, then we say that $X$ is \emph{reflexive}. 
\end{definition}

\begin{definition}
Let $X, Y$ be topological vector spaces, and let $F: X \rightarrow Y$ be a continuous linear map. We define the adjoint of $F$ to be $F^* : Y^* \rightarrow X^*$ with
\begin{equation}
    F^*(T)(x) \coloneqq (T \circ F)(x), \qquad \forall T \in Y^*, \ x\in X. 
\end{equation}
\end{definition}
\begin{proposition}\label{prop:contadj}
Let $X, Y$ be topological vector spaces, and let $F: X \rightarrow Y$ be a continuous linear map. Then $F^*: Y^* \rightarrow X^*$ is a continuous linear map.
\end{proposition}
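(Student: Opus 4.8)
The plan is to unwind the definitions and show that, for each bounded set $A\subset X$, the semi-norm $\rho_A$ applied to $F^*(T)$ is controlled by one of the defining semi-norms of $Y^*$. Concretely, linearity of $F^*$ is immediate: for $T_1,T_2\in Y^*$, $\lambda,\mu\in\R$, and $x\in X$, one has $F^*(\lambda T_1+\mu T_2)(x)=((\lambda T_1+\mu T_2)\circ F)(x)=\lambda T_1(Fx)+\mu T_2(Fx)=\lambda F^*(T_1)(x)+\mu F^*(T_2)(x)$, so $F^*$ is linear. Also $F^*(T)=T\circ F$ is a composition of continuous linear maps, hence genuinely lands in $X^*$; so $F^*$ is well defined as a map $Y^*\to X^*$.

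For continuity, recall that the topology on $X^*$ is generated by the semi-norms $\rho_A(S)=\sup_{x\in A}|S(x)|$ for $A$ ranging over bounded subsets of $X$, and similarly the topology on $Y^*$ is generated by $\rho_B(T)=\sup_{y\in B}|T(y)|$ for $B$ bounded in $Y$. Since $F^*$ is linear, it suffices to show that each semi-norm $\rho_A\circ F^*$ on $Y^*$ is continuous, i.e. dominated by (a constant times) some continuous semi-norm on $Y^*$. Given a bounded set $A\subset X$, the key observation is that $F(A)$ is a bounded subset of $Y$: a continuous linear map between topological vector spaces sends bounded sets to bounded sets (bounded sets are absorbed by every neighborhood of $0$, and preimages of neighborhoods of $0$ under $F$ are neighborhoods of $0$). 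Therefore, for any $T\in Y^*$,
\begin{equation}
\rho_A(F^*(T)) = \sup_{x\in A}|F^*(T)(x)| = \sup_{x\in A}|T(F(x))| = \sup_{y\in F(A)}|T(y)| = \rho_{F(A)}(T),
\end{equation}
and $\rho_{F(A)}$ is one of the defining semi-norms of the strong dual topology on $Y^*$ precisely because $F(A)$ is bounded. Hence $\rho_A\circ F^* = \rho_{F(A)}$ is continuous on $Y^*$, and since this holds for every bounded $A\subset X$, the linear map $F^*$ is continuous.

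I do not anticipate a genuine obstacle here; the only point requiring care is the claim that $F$ maps bounded sets to bounded sets, which is where continuity and linearity of $F$ are both used, and this is the crux of the argument. Everything else is bookkeeping with the definitions of the strong dual topology and of $F^*$. If one wanted to be maximally economical, one could also phrase it via the standard fact that a linear map between locally convex spaces is continuous iff each semi-norm of the target pulls back to a continuous semi-norm on the source, and then simply exhibit $\rho_A\circ F^*=\rho_{F(A)}$ as above.
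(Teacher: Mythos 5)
Your proof is correct, and it is the standard argument: linearity of $F^*$ is immediate, and continuity follows from the identity $\rho_A\circ F^*=\rho_{F(A)}$ once one notes that a continuous linear map between topological vector spaces carries bounded sets to bounded sets. The paper states this proposition without proof as a routine functional-analytic fact, so there is nothing to compare beyond noting that your argument is exactly the expected one.
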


We continue with the necessary background in functional analysis by reviewing the concepts of barrelled, Montel, and (DF) spaces following the presentation of \cite{Kothe1969I,Kothe1979II}.

\begin{definition}[Barrelled space]\label{def:barsp}
Let $X$ be a locally convex topological vector space. We say that $X$ is \emph{barrelled} if every closed absorbent, absolutely convex subset of $X$ is a neighborhood of $0 \in X$.
\end{definition}

In the above definition, a subset $M$ of $X$ is said to be \emph{absorbent} if for every $x \in X$, there exists a $\rho > 0$ such that $x \in \rho M$; it is said to be \emph{absolutely convex} if for every $x,y \in M$ and $\alpha,\be\in\R$ with $|\alpha| + |\beta| \leq 1$, the point $\alpha x + \beta y \in M$. For the following, we recall that a locally convex topological vector space is Fr\'echet if it is metrizable and complete. 

\begin{lemma}\label{lem:frbar}
Fr\'echet spaces are barrelled.
\end{lemma}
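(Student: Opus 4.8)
The plan is to prove \cref{lem:frbar}, namely that every Fréchet space is barrelled, by invoking the Baire category theorem, which applies since a Fréchet space is by definition metrizable and complete.

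\medskip

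First I would let $X$ be a Fréchet space and let $T \subseteq X$ be a closed, absorbent, absolutely convex set (a \emph{barrel}); the goal is to show $T$ is a neighborhood of $0$. Since $T$ is absorbent, for every $x \in X$ there is $\rho_x > 0$ with $x \in \rho_x T$, so in particular $X = \bigcup_{n=1}^\infty nT$. Because $X$ is a complete metric space, the Baire category theorem guarantees that $X$ is not a countable union of nowhere dense sets; hence some $nT$ has nonempty interior, and since $x \mapsto nx$ is a homeomorphism, $T$ itself has nonempty interior. Let $U$ be a nonempty open subset of $T$ and pick $x_0 \in U$.

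\medskip

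The second step is to exploit absolute convexity (in particular, symmetry and convexity) to move the interior point to the origin. Since $T$ is absolutely convex, $-x_0 \in -T = T$, and $-U$ is an open subset of $T$. Then $V \coloneqq \tfrac12 U + \tfrac12(-U)$ is open (being a sum of an open set with another set), contains $\tfrac12 x_0 + \tfrac12(-x_0) = 0$, and by convexity of $T$ satisfies $V \subseteq \tfrac12 T + \tfrac12 T \subseteq T$. Thus $V$ is an open neighborhood of $0$ contained in $T$, so $T$ is a neighborhood of $0$, which is exactly the barrelledness condition.

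\medskip

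I do not expect any serious obstacle here; the only point requiring a little care is the passage from ``$X$ is covered by the countable family $\{nT\}$'' to ``$T$ has interior,'' which is the standard Baire-category argument and uses completeness essentially (a non-complete metrizable locally convex space need not be barrelled). The rest is a routine manipulation with the absolute convexity axioms recalled just before the lemma. One could alternatively cite this as a classical fact from \cite{Kothe1969I,Kothe1979II}, but the argument above is short enough to include in full.
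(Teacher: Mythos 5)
Your argument is correct. Note, though, that the paper does not prove \cref{lem:frbar} at all: it simply cites \cite[\S 21.6 (3)]{Kothe1969I}, so your contribution is to make explicit the standard Baire-category proof lying behind that citation. The two steps you give are exactly the classical ones: completeness plus metrizability give Baire, the cover $X=\bigcup_n nT$ (which uses that $T$ is balanced, so $\rho T\subseteq nT$ for $n\geq\rho$) forces some $nT$, and hence $T$, to have nonempty interior, and then the symmetry and convexity in the absolute-convexity hypothesis translate an interior point into a neighborhood of $0$ via $V=\tfrac12 U+\tfrac12(-U)\subseteq T$. Two small points worth stating explicitly if you include this in full: each $nT$ is closed (being the image of the closed set $T$ under the homeomorphism $x\mapsto nx$), which is what lets you pass from ``not all $nT$ are nowhere dense'' to ``some $nT$ has nonempty interior''; and $0\in T$ because $T$ is nonempty and absolutely convex. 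Whether one prefers your self-contained paragraph or the paper's one-line citation is a matter of exposition; the mathematics is the same, and your version has the advantage of showing exactly where completeness is used.
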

\begin{proof}
See \cite[\S 21.6 (3)]{Kothe1969I}.
\end{proof}

\begin{definition}[Montel space]\label{def:Monsp}
We say $X$ is a \emph{Montel space} if it is barrelled and every bounded subset of $X$ is relatively compact. 
\end{definition}

\begin{lemma}\label{lem:Monrfl} Montel spaces are reflexive, and the strong dual of a Montel space is Montel.
\end{lemma}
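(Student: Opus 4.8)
The plan is to deduce reflexivity of a Montel space $X$ from the classical criterion that a Hausdorff locally convex space is reflexive precisely when it is both \emph{semi-reflexive} (the canonical map $X\to X^{**}$ is surjective) and barrelled, and to deduce that the strong dual $X^*$ is again Montel by verifying separately that it is barrelled and that it has the Heine--Borel property, i.e.\ its bounded subsets are relatively compact. Both assertions are also recorded in K\"othe \cite{Kothe1969I} and, in the spirit of \cref{lem:frbar}, could simply be cited; I sketch the argument for completeness.

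\emph{Reflexivity.} Since $X$ is Montel, every bounded subset of $X$ is relatively compact in the given topology, hence a fortiori relatively $\sigma(X,X^*)$-compact. By the standard characterization of semi-reflexivity via relative weak compactness of bounded sets (see \cite{Kothe1969I}), $X$ is semi-reflexive, so the canonical embedding $X\hookrightarrow X^{**}$ is an algebraic isomorphism; combined with the hypothesis that $X$ is barrelled, the equivalence ``semi-reflexive $+$ barrelled $\iff$ reflexive'' upgrades this to a topological isomorphism. Hence $X$ is reflexive, and therefore $X^*$ is reflexive as well, in particular barrelled.

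\emph{The strong dual is Montel.} It remains to check the Heine--Borel property for $X^*$. Let $B\subseteq X^*$ be bounded. Because $X$ is barrelled, every $\sigma(X^*,X)$-bounded subset of $X^*$ is equicontinuous (Banach--Steinhaus), so $B$ is equicontinuous, and by Alaoglu--Bourbaki its weak-$*$ closure $\overline{B}$ is $\sigma(X^*,X)$-compact and still equicontinuous. Now on an equicontinuous subset of $X^*$ the topology $\sigma(X^*,X)$ coincides with the topology of uniform convergence on precompact subsets of $X$; and since $X$ is Montel, its precompact and its bounded subsets form the same family, so this latter topology is exactly the strong topology $\beta(X^*,X)$. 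Thus $\sigma(X^*,X)$ and $\beta(X^*,X)$ agree on $\overline{B}$, making $\overline{B}$ compact in $X^*$. So $X^*$ is barrelled with the Heine--Borel property, i.e.\ Montel. The one genuinely non-formal ingredient is the coincidence of the weak-$*$ and strong topologies on equicontinuous sets once bounded $=$ precompact in $X$; every other step is bookkeeping with the standard permanence properties of semi-reflexive, barrelled, and reflexive spaces, for which I would lean on \cite{Kothe1969I}.
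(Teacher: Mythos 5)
Your argument is correct: the paper itself disposes of this lemma with a bare citation to K\"othe (\S 27.2 (1)--(2)), and what you have written out is precisely the standard proof behind that citation — semi-reflexivity from relative weak compactness of bounded sets plus barrelledness for reflexivity, and for the dual, equicontinuity of bounded sets via barrelledness together with the coincidence of $\sigma(X^*,X)$ with the topology of precompact (hence, in the Montel case, bounded) convergence on equicontinuous sets. No gaps; every nontrivial step you invoke is a standard permanence result available in the cited reference.
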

\begin{proof}
See \cite[\S 27.2 (1)-(2)]{Kothe1969I}.
\end{proof}

\begin{definition}[(DF) Spaces]\label{def:DFsp}
Let $X$ be a locally convex topological vector space. We say that $X$ is a \emph{dual Frech\'{e}t (DF) space} if the following conditions hold: 
\begin{enumerate}[(i)]
    \item The space $X$ has a fundamental sequence of bounded sets, i.e. there exists a countable sequence of bounded sets $\{ B_i \}_{i \in \N}$ such that any bounded set in $X$ is contained in some $B_i$.
    \item Every bounded subset of $X^*$ (in the strong topology) which is the countable union of equicontinuous sets is equicontinuous. 
\end{enumerate}
\end{definition}

\begin{lemma}\label{frechet dual is df}
The strong dual of a Fr\'echet space is a (DF) space.
\end{lemma}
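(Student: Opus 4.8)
The plan is to verify directly the two conditions of \cref{def:DFsp} for the strong dual $X\coloneqq E^{*}$ of a Fréchet space $E$; this is a classical fact, for which one may also consult \cite[\S 29.3]{Kothe1969I}. Since $E$ is metrizable and locally convex, fix a decreasing base $(U_{n})_{n\in\N}$ of closed, absolutely convex neighborhoods of $0$ in $E$, and write $K_{n}\coloneqq U_{n}^{\circ}\subseteq X$ for their polars, which form a decreasing sequence.

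First I would check condition (i): that $(K_{n})_{n\in\N}$ is a fundamental sequence of bounded sets in $X$. Each $K_{n}$ is equicontinuous, hence strongly bounded. Conversely, if $B\subseteq X$ is bounded, then $E$ being barrelled (\cref{lem:frbar}) forces $B$ to be equicontinuous, so its polar $B^{\circ}$ in $E$ is a neighborhood of $0$; picking $n$ with $U_{n}\subseteq B^{\circ}$ and applying the bipolar theorem gives $B\subseteq B^{\circ\circ}\subseteq U_{n}^{\circ}=K_{n}$. A consequence worth recording for the next step is that the polars $K_{n}^{\circ}$ form a base of $0$-neighborhoods for the strong topology of $X^{*}=E^{**}$, so that a set $M\subseteq X^{*}$ is strongly bounded precisely when $\sup_{\xi\in M}\sup_{\eta\in K_{n}}|\langle\xi,\eta\rangle|<\infty$ for every $n$.

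Next, condition (ii): let $M=\bigcup_{k}M_{k}\subseteq X^{*}$ be strongly bounded with each $M_{k}$ equicontinuous on $X$; the goal is to show $M$ is equicontinuous, i.e.\ that $M^{\circ}$ is a $0$-neighborhood in $X=E^{*}$. Passing to polars, $M^{\circ}=\bigcap_{k}M_{k}^{\circ}$ is a countable intersection of absolutely convex $0$-neighborhoods of $X$, and by the consequence just recorded the strong boundedness of $M$ says exactly that this intersection is \emph{bornivorous} (it absorbs each fundamental bounded set $K_{n}$). The remaining point is the ``(DF)-criterion'' for the concrete space $E^{*}$: a bornivorous countable intersection of absolutely convex $0$-neighborhoods of $E^{*}$ is again a $0$-neighborhood. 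I would prove this by polar duality --- using that the $U_{n}^{\circ}$ are weak-$*$ compact (Alaoglu--Bourbaki) and increasing --- reducing it to the elementary statement about the metrizable space $E$ that any sequence of bounded subsets of $E$ whose union is absorbed by every $U_{n}$ can be diagonally rescaled, with the help of the countable base $(U_{n})$, into a single bounded subset $C$ of $E$; then $C^{\circ}$ is the sought $0$-neighborhood contained in $M^{\circ}$. This gives (ii), and together with (i) it follows that $E^{*}$ is a (DF) space.

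The hard part is condition (ii), and within it the ``(DF)-criterion'' together with its reduction to the rescaling fact in $E$: one is working simultaneously with the dual systems $\langle E,E^{*}\rangle$ and $\langle E^{*},E^{**}\rangle$ and must be scrupulous about which system each polar and each closure is taken in, since the strong and weak-$*$ closures of an absolutely convex set in $E^{*}$ need not coincide. It is exactly here that the metrizability of $E$ --- the countable base $(U_{n})$ --- is indispensable and cannot be weakened to barrelledness; barrelledness of $E$ is needed only for the routine condition (i).
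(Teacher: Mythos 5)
Your treatment of condition (i) of \cref{def:DFsp} is correct and standard: the polars $K_n=U_n^{\circ}$ of a countable base of neighborhoods form a fundamental sequence of bounded sets in $E^{*}$, since barrelledness of the Fr\'echet space $E$ (\cref{lem:frbar}, Banach--Steinhaus) makes every strongly bounded subset of $E^{*}$ equicontinuous and the bipolar theorem then places it inside some $K_n$. Your reduction of condition (ii) is also correct as far as it goes: for $M=\bigcup_k M_k$ strongly bounded with each $M_k$ equicontinuous, $M^{\circ}=\bigcap_k M_k^{\circ}$ is a countable intersection of absolutely convex strong $0$-neighborhoods of $E^{*}$ that absorbs every $K_n$, so everything hinges on the assertion that such a bornivorous countable intersection of $0$-neighborhoods is again a $0$-neighborhood. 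Bear in mind that the paper does not prove the lemma but simply cites \cite[\S 29.3]{Kothe1969I}; the assertion you have isolated \emph{is} the content of that citation (Grothendieck's theorem that strong duals of metrizable spaces are (DF)), so at this stage you have reformulated the problem rather than solved it.

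The gap is in the one-sentence argument offered for that key assertion. The diagonal-rescaling fact for a metrizable $E$ (given bounded $B_k\subseteq E$ there are $\lambda_k>0$ with $\bigcup_k\lambda_k B_k$ bounded) points the wrong way: rescaling shrinks the $B_k$ and therefore enlarges their polars, so for the resulting bounded set $C$ one only gets $C^{\circ}\subseteq\lambda_k^{-1}B_k^{\circ}\subseteq\lambda_k^{-1}M_k^{\circ}$ with constants $\lambda_k^{-1}\to\infty$, which does not yield $C^{\circ}\subseteq M^{\circ}$. The alternative route suggested by your appeal to Alaoglu--Bourbaki --- choose $\epsilon_n>0$ with $\epsilon_nK_n\subseteq M^{\circ}$ and set $C=\bigcap_n\epsilon_n^{-1}U_n$, a bounded subset of $E$ whose polar $C^{\circ}$ is the $\sigma(E^{*},E)$-closed absolutely convex hull of $\bigcup_n\epsilon_nK_n$ --- founders on precisely the closure discrepancy you flag as the hard part but never resolve: the hull itself lies in $M^{\circ}$ by convexity, but $M^{\circ}$ is only $\sigma(E^{*},E^{**})$-closed (it is an intersection of polars of subsets of $E^{**}$), so one cannot pass to the $\sigma(E^{*},E)$-closure. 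Closing this hole is exactly where the equicontinuity of the individual pieces ($M_k\subseteq B_k^{\circ\circ}$ with $B_k\subseteq E$ bounded) has to be played off against the bounds expressing strong boundedness of $M$ on each $K_n$, through a bipolar argument in $E^{**}$ with carefully chosen constants (Grothendieck's construction, e.g.\ a set of the form $\bigcap_n(B_n+\epsilon_nU_n)$); some essential use of this extra structure is unavoidable, since a proof relying on bornivorousness of $M^{\circ}$ alone would show that every strongly bounded subset of $E^{**}$ is equicontinuous, i.e.\ that every Fr\'echet space is distinguished, which is false. So either carry out that bipolar argument in detail or do what the paper does and quote \cite[\S 29.3]{Kothe1969I} for condition (ii).
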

\begin{proof}
See \cite[\S 29.3]{Kothe1969I}. 
\end{proof}

Next, we recall the notions of sequential spaces and $k$-spaces as presented in \cite[pp. 53, 152]{Engelking1989}.

\begin{definition}[Sequential Spaces]
Let $(X,\tau)$ be a topological space. We say a set $S \subset X$ is \emph{sequentially closed} if for any sequence $(x_i)_{i=1}^\infty$ in $S$ that converges to $x$ implies that $x \in S$. We say the space $X$ is a sequential space if every sequentially closed set is closed in $X$. 
\end{definition}

\begin{definition}[$k$-space]\label{def:ksp}
Let $(X,\tau)$ be a topological space. We say $X$ is a $k$-space if the following condition holds: for every set $A \subset X$, $K \cap A$ is closed in $A$, endowed with the subspace topology, for every compact $K$ if and only if $A$ is closed in $X$. 
\end{definition}

That a sequential space is \emph{a fortiori} a k-space is, perhaps well known. For the sake of completeness, we present a proof of this fact in the next proposition, which will be crucially used in \cref{ssec:NgeomLP}.

\begin{proposition}[Sequential $\Rightarrow$ $k$-space]\label{prop:seqk}
Let $(X,\tau)$ be a sequential space. Then $(X,\tau)$ is a $k$-space.
\end{proposition}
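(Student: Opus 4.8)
The plan is to reduce the assertion ``$A$ closed in $X$'' to ``$A$ sequentially closed in $X$,'' which is legitimate precisely because $X$ is a sequential space. Recall that being a $k$-space amounts to a biconditional: for every $A\subset X$, one has $A$ closed in $X$ if and only if $A\cap K$ is closed in $K$ for every compact $K\subset X$. One implication is free: if $A$ is closed in $X$, then $A\cap K$ is closed in the subspace $K$ for \emph{every} subspace $K$, compact or not, directly from the definition of the subspace topology. So all the work is in the converse implication.

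For the converse, fix $A\subset X$ and assume that $A\cap K$ is closed in $K$ for every compact $K\subset X$; I would then show $A$ is sequentially closed. Take a sequence $(x_n)_{n\in\N}$ in $A$ with $x_n\to x$ in $X$. The first key step is the observation that $S\coloneqq\{x_n:n\in\N\}\cup\{x\}$ is a compact subspace of $X$: given an open cover of $S$, one of its members contains $x$, hence contains all but finitely many of the $x_n$ by convergence, and the finitely many remaining points are covered by finitely many further members, yielding a finite subcover. I would emphasize that this argument uses no separation axiom, which matters since $X$ is not assumed Hausdorff.

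The second key step is to apply the hypothesis with $K=S$: then $A\cap S$ is closed in the subspace $S$. Since $x_n\in A\cap S$ for all $n$ and $x_n\to x$ inside $S$ (convergence is inherited by any subspace containing the sequence and its limit), and since closed subsets of a topological space contain the limits of their convergent sequences, we conclude $x\in A\cap S\subset A$. Hence $A$ is sequentially closed; because $X$ is sequential, $A$ is closed in $X$. This establishes the converse implication and completes the proof.

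The only real subtlety, and the step I would be most careful to spell out, is the pair of non-Hausdorff-safe observations in the previous paragraphs: that a convergent sequence together with its limit is compact, and that such a sequence still converges when viewed in that subspace. Everything else is a direct unwinding of the definitions of sequential space, $k$-space, and subspace topology recalled earlier in this section.
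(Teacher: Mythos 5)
Your proof is correct and follows essentially the same route as the paper's: both arguments hinge on the observation that a convergent sequence together with its limit forms a compact set, and then use the hypothesis on that compact set to upgrade sequential closedness to closedness via the sequential-space assumption. The only differences are cosmetic—you argue the nontrivial implication directly (and spell out the trivial direction and the non-Hausdorff points), whereas the paper runs the same idea as a proof by contradiction.
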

\begin{proof}
Assume that there exists a non-closed set $A \subset X$ which satisfies $K \cap A$ is closed in $A$ for every compact $K$. Since $A$ is not closed, and $X$ is a sequential space, we must have that $A$ is not sequentially closed. So, there exists some sequence $(x_i)_{i=1}^\infty$ in $A$ that converges to a point $x \in X \setminus A$. Note that the set $ \{ x_i : i \in \N \} \cup \{x\}$ is compact, and so 
\begin{equation}
A \cap (\{ x_i : i \in \N \} \cup \{x\}) =\{ x_i : i \in \N \}
\end{equation}
is closed in $A$. However, since closed sets are sequentially closed and the sequence $x_i$ converges to $x$, it must be the case that $x \in A$. This is a contradiction, so $X$ is a $k$-space. 
\end{proof}

We are now ready to state a result of Webb \cite{Webb1968} which gives sufficient conditions for a topological vector space to be a sequential space. 

\begin{theorem}[{\cite[Proposition 5.7]{Webb1968}}]\label{thm:Webb}
Let $X$ be an infinite-dimensional Montel (DF) space. Then $X$ is a sequential space. 
\end{theorem}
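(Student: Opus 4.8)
The plan is to deduce the theorem from two facts about $X$: every compact subset of $X$ is metrizable, and the closure operation on $X$ is detected on a fixed sequence of bounded sets. I begin with structural reductions. Since $X$ is Montel it is reflexive and its strong dual $X^*$ is again Montel (\cref{lem:Monrfl}); since $X$ is a (DF) space, $X^*$ is Fr\'echet (the converse is \cref{frechet dual is df}; this direction is classical, \cite{Kothe1969I}). Thus $X^*$ is a separable Fr\'echet--Montel space, barrelled as a Fr\'echet space (\cref{lem:frbar}), and $X\cong(X^*)^*$. By the definition of a (DF) space, fix a fundamental sequence $\{C_n\}_{n\in\N}$ of bounded subsets of $X$; replacing $C_n$ by the closed absolutely convex hull of $C_1\cup\cdots\cup C_n\cup(nC_n)$, we may assume the $C_n$ are absolutely convex, closed, increasing, exhaust $X$, and --- by the Montel property --- compact.

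Next I would show that every compact subset of $X$ is metrizable. Let $\{V_j\}_{j\in\N}$ be a decreasing basis of absolutely convex neighborhoods of $0$ in $X^*$. Since $X^*$ is barrelled, every bounded subset of $X=(X^*)^*$ is $\sigma(X,X^*)$-bounded, hence equicontinuous, hence contained in some polar $V_j^\circ$. On an equicontinuous subset of $X$, the strong topology of $X$ --- uniform convergence on bounded subsets of $X^*$ --- coincides with $\sigma(X,X^*)$: indeed $X^*$ is Montel, so its bounded and precompact subsets coincide, and on an equicontinuous set the topologies of pointwise, precompact, and compact convergence agree. Since $X^*$ is separable, $\sigma(X,X^*)$ restricted to the $\sigma(X,X^*)$-compact set $V_j^\circ$ is metrizable. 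As the compact subsets of $X$ are precisely its bounded closed subsets, every compact subset of $X$ is metrizable, hence (being metrizable) a sequential space.

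Now I reduce to the crux. Let $A\subseteq X$ be sequentially closed; we must show $A$ is closed. For each $n$, $A\cap C_n$ is sequentially closed in $C_n$ (a sequence in $A\cap C_n$ converging in $C_n$ converges in $X$, so its limit lies in $A$, and in the closed set $C_n$), and since $C_n$ is metrizable, hence sequential, $A\cap C_n$ is closed in $C_n$, therefore closed in $X$. It thus suffices to prove property $(\star)$: if $B\subseteq X$ has $B\cap C_n$ closed for every $n$, then $B$ is closed (this in particular makes $X$ a $k$-space in the sense of \cref{def:ksp}, since every compact set lies in some $C_n$). Applying $(\star)$ to $B=A$ finishes the proof.

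The main obstacle is $(\star)$: this is where the defining property of (DF) spaces enters essentially, it must be proved directly and not deduced from sequentiality, and the Montel hypothesis --- which so far appeared only through compactness of the $C_n$ --- is again indispensable, as the analogous statement fails for general locally convex spaces. I would argue by contraposition: if $B\cap C_n$ is closed for every $n$ but $x_0\in\overline{B}\setminus B$, translate so $x_0=0$; then $0\notin\overline{B\cap C_n}$ for every $n$, so one may choose a closed absolutely convex neighborhood $W_n$ of $0$ with $W_n\cap B\cap C_n=\emptyset$, and the goal reduces to manufacturing a single neighborhood $W$ of $0$ disjoint from every $B\cap C_n$ --- whence $W\cap B=\emptyset$ because $\bigcup_nC_n=X$, contradicting $0\in\overline{B}$. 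Passing to polars, each $W_n^\circ\subseteq X^*$ is equicontinuous and each $C_n^\circ\subseteq X^*$ is a neighborhood of $0$ since the $C_n$ are bounded; the desired $W$ is extracted from these two sequences using condition (ii) in \cref{def:DFsp} --- that a strongly bounded countable union of equicontinuous subsets of $X^*$ is equicontinuous --- which is exactly the input that permits the diagonalization. The subtle point is that the $W_n$ cannot be chosen nested, so this step genuinely uses the (DF) structure and not merely metrizability; this is the content of Webb's Proposition 5.7 \cite{Webb1968}, whose argument I would follow. (For the spaces actually used in this paper one can bypass the general statement: $\g_k$ and $\G_N$ are Fr\'echet--Schwartz, so $\g_k^*$ and $\G_N^*$ are Silva (DFS) spaces --- countable inductive limits of Banach spaces with compact linking maps --- for which $(\star)$, hence sequentiality, is standard; this shortcut is unavailable for the countable product $\G_\infty^*$, which need not even be a (DF) space, exactly the obstruction discussed in \cref{ssec:intropf}.)
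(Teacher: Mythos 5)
The paper itself offers no proof of this statement: it is imported verbatim from Webb and used as a black box, so the comparison is really with Webb's argument. Your reductions are correct and constitute the standard first half of such a proof: $X^*$ is Fr\'echet--Montel, bounded subsets of $X$ are equicontinuous over $X^*$ and hence contained in polars $V_j^\circ$ on which the strong and weak-$*$ topologies coincide and are metrizable, so compact subsets of $X$ are metrizable; and the reduction of sequentiality to the property $(\star)$ via a fundamental sequence of compact metrizable sets $C_n$ is sound. One small caveat: the separability of $X^*$, which your metrizability argument needs, does not follow from the lemmas you cite; it is the (true, classical) theorem that Fr\'echet--Montel spaces are separable, and it should be cited (e.g.\ K\"othe, \S 27.2) rather than asserted.

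The genuine gap is exactly where you flag it: $(\star)$ is never proved. The sentence ``the desired $W$ is extracted from these two sequences using condition (ii) in \cref{def:DFsp}'' is a statement of intent, not an argument: you do not say how $W_n^\circ$ and $C_n^\circ$ are combined, and the naive candidates fail --- $\bigcap_n W_n$ need not absorb bounded sets, while enlarging each $W_n$ so that the intersection becomes bornivorous risks re-intersecting $B\cap C_n$; making that induction work (using compactness of the $C_n$) is precisely the content of Webb's Proposition 5.7, which you then cite. So as a self-contained proof the decisive step is missing; relative to the paper you have only relocated the citation from the whole statement to its crux. Two standard ways to close it: (i) the Banach--Dieudonn\'e theorem --- since $X^*$ is metrizable, the topology of uniform convergence on precompact subsets of $X^*$, which by Montelness and reflexivity is the original topology of $X$, is the finest topology agreeing with $\sigma(X,X^*)$ on equicontinuous sets; as every bounded set of $X$ is equicontinuous and the polars $V_j^\circ$ are compact metrizable, this exhibits $X$ as the topological inductive limit of an increasing sequence of metrizable compacta, which is $(\star)$, and sequentiality then follows by your reduction; or (ii) the standard (DF) lemma that an absolutely convex set which absorbs all bounded sets and is a countable intersection of closed absolutely convex neighborhoods of $0$ is itself a neighborhood --- this is the precise form in which condition (ii) of \cref{def:DFsp} should be invoked in your contrapositive scheme, with the induction arranged so that the intersection is bornivorous yet still misses each $B\cap C_n$.
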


We close this subsection by stating the notions of derivative and smooth function for infinite-dimensional spaces used in this work, which is that of the G\^{a}teaux derivative. For more on calculus in the setting of topological vector spaces, we refer to the lecture notes of Milnor \cite{Milnor1984}.

\begin{definition}[G\^{a}teaux derivative]
Let $X,Y$ be topological vector spaces and let $f: X \rightarrow Y$. 
\begin{enumerate}
    \item The function $f$ is called $\mathcal{C}^0(X,Y)$ if it is continuous. 
    \item The function $f$ is called $\mathcal{C}^1(X,Y)$ if for every $x,x' \in X$, the limit 
\begin{equation}\label{eq:gddef}
    \ds f[x](x') \coloneqq \lim_{h \rightarrow 0}\frac{1}{h} \left[f(x + h x') - f(x)\right]
\end{equation}
exists in $Y$, and the mapping $\ds f: X \times X\rightarrow Y$ is continuous with respect to the product topology. The function $\ds f$ is called the G\^{a}teaux derivative of $f$. 
\item For $n\in\N$, the function $f$ is called $\mathcal{C}^n(X,Y)$ if $\ds^n f: X\times X^n  \rightarrow Y $ exists and is continuous. 
\item The function $f$ is called $\mathcal{C}^\infty(X,Y)$ if it is $\mathcal{C}^n(X,Y)$ for every $n \in \N$. 
\end{enumerate}
\end{definition}

In the remainder of the paper, we write $\mathcal{C}(X)$ (similarly, $\mathcal{C}^n(X),\mathcal{C}^\infty(X)$) when the codomain is $\R$, i.e. the maps are real-valued functionals.

\subsection{Lie algebras and Poisson vector spaces}\label{ssec:preLALP}
We start this subsection by giving a precise definition of Lie algebra and Poisson vector space that we use in this paper. With these definitions in hand, we then present a result due to Gl\"ockner \cite{Glockner2009} which allows one to canonically construct a Lie-Poisson vector space from a Lie algebra, assuming certain topological conditions are met, as mentioned in \cref{ssec:intropf}. The use of Gl\"ockner's machinery is new to the present work compared to \cite{MNPRS2020}. 

\begin{definition}[Lie algebra]\label{def:LA}
Let $\mathfrak{g}$ be a locally convex topological vector space over $\R$, and $\comm{\cdot}{\cdot}_{\mathfrak{g}} : \mathfrak{g} \times \mathfrak{g} \rightarrow \mathfrak{g}$. We say the pair $(\mathfrak{g}, \comm{\cdot}{\cdot}_{\mathfrak{g}})$ is a Lie algebra if the following conditions hold:
\begin{enumerate}[(L1)]
\item\label{LA1} The bracket $\comm{\cdot}{\cdot}_{\mathfrak{g}}$ is bilinear. 
    \item\label{LA2} For all $x,y\in \mathfrak{g}$, $\comm{x}{y}_{\mathfrak{g}} = -\comm{y}{x}_{\mathfrak{g}}$. 
    \item\label{LA3} For all $x,y,z \in \mathfrak{g}$, the Jacobi identity is satisfied:
    \begin{equation}
        \comm{x}{\comm{y}{z}_{\mathfrak{g}}}_{\mathfrak{g}} +  \comm{y}{\comm{z}{x}_{\mathfrak{g}}}_{\mathfrak{g}} + \comm{z}{\comm{x}{y}_{\mathfrak{g}}}_{\mathfrak{g}} =0.
    \end{equation}
\end{enumerate}
\end{definition}

\begin{remark}
Note that in this work, a continuity requirement is not assumed in \cref{def:LA}. This definition is consistent with the standard algebraic definition of a Lie algebra. In practice, all of our Lie brackets will be at a minimum separately continuous.
\end{remark}

The next definition introduces the notion of a possibly infinite-dimensional Poisson vector space, which is a natural extension of the finite-dimensional notion of a Poisson vector space, more generally Poisson manifold (e.g., see \cite{Weinstein1998}). Our usage is consistent with that of Gl\"ockner \cite[Definition 4.2]{Glockner2009}. For other possible notions of a infinite-dimensional Poisson vector spaces, which are not appropriate for our purposes due to being restricted to the Banach category, we refer to \cite{OR2003,OR2004}.

\begin{definition}[Poisson vector space]\label{def:PVS}
Let $X$ be a locally convex topological vector space, and 
\begin{equation} 
\pb{\cdot}{\cdot} : \mathcal{C}^\infty(X) \times \mathcal{C}^\infty(X) \rightarrow \mathcal{C}^\infty(X)
\end{equation} 
be a bilinear map. We say the pair $(X, \pb{\cdot}{\cdot})$ is a Poisson vector space if it satisfies the following properties:
\begin{enumerate}[(PVS1)]
\item $(\mathcal{C}^\infty(X),\pb{\cdot}{\cdot})$ is a Lie algebra in the sense of \cref{def:LA} obeying the Leibniz rule:
\begin{equation}\label{eq:LR}
\forall \mathcal{F},\mathcal{G},\mathcal{H} \in \mathcal{C}^\infty(X),\qquad  \pb{\mathcal{F}}{\mathcal{G}\mathcal{H}} = \mathcal{H}\pb{\mathcal{F}}{\mathcal{G}}  + \mathcal{G}\pb{\mathcal{F}}{\mathcal{H}}.
\end{equation}
\item For every $\mathcal{F} \in \mathcal{C}^\infty( X)$, there exists a smooth Hamiltonian vector field $X_\mathcal{F} : X \rightarrow X$ such that
\begin{equation}
\forall\Gc\in\Cc^\infty(X),\qquad    X_\mathcal{F}\mathcal{G} = \pb{\mathcal{G}}{\mathcal{F}}.
\end{equation}
\end{enumerate}
\end{definition}

We now state the theorem of Gl\"ockner \cite[Theorem 4.10]{Glockner2009} which will allow us to construct a Poisson vector space from a given Lie algebra in the $N$-particle setting (see \cref{ssec:NgeomLie}). For the purposes of this paper, the reader may view this theorem as a ``black box.'' But to use this black box,  certain topological conditions need to be satisfied. Namely, \cite{Glockner2009} works in the context of $k^\infty$ spaces, a class of topological vector spaces introduced in that paper. Accordingly, we shall start this portion of the exposition by recalling the definition of this class of spaces, as well as the notion of reflexive locally convex Poisson vector spaces which we also need.

\begin{definition}[$k^\infty$-spaces]\label{def:kinfty}
Let $(X,\tau)$ be a topological space. We say that $X$ is a $k^\infty$ space if, for every $n \in \N$, the space $X^n$ endowed with the product topology is a $k$-space (recall \cref{def:ksp}).
\end{definition}

\begin{definition}[Reflexive locally convex Poisson space] \label{def:rlcps}
A \emph{reflexive locally convex Poisson space} is a reflexive locally convex $k^\infty$ space $E$, together with a hypocontinuous\footnote{The condition of hypocontinuity is a weaker condition than continuity, but stronger condition than separate continuity. Here, hypocontinuity is always defined with respect to the set $\mathscr{A}$ of bounded subsets of $E$. See \cite[p. 155]{Kothe1979II} for a precise definition of hypocontinuity.} map $\comm{\cdot}{\cdot} : E ^* \times E^* \rightarrow E^*$ which makes $(E^*, \comm{\cdot}{\cdot})$ into a Lie algebra in the sense of \cref{def:LA}. 
\end{definition}

Equipped with \Cref{def:kinfty,def:rlcps}, we are now prepared to state the following result from \cite{Glockner2009}, the statement of which has been tailored to our setting.

\begin{theorem}[{\cite[Theorem 4.10]{Glockner2009}}]\label{thm:Glockner}
Let $E$ be a reflexive locally convex Poisson space in the sense of \cref{def:rlcps} such that its dual $E^*$ is equipped with a hypocontinuous bracket $\comm{\cdot}{\cdot}: E^* \times E^* \rightarrow E^*$. For $\mathcal{F}, \mathcal{G} \in \mathcal{C}^\infty( E)$, the \emph{Lie-Poisson bracket} $\pb{\mathcal{F}}{\mathcal{G}} : E \rightarrow \R$ is defined by the expression 
\begin{equation}\label{eq:LPdef}
\pb{\mathcal{F}}{\mathcal{G}}(\Gamma) \coloneqq \ipp{\comm{\ds\mathcal{F}[\Gamma]}{\ds\mathcal{G}[\Gamma]}, \Gamma}_{E^* - E}, \qquad \forall \Gamma \in E,
\end{equation}
where $\ipp{\cdot,\cdot}_{E^*-E}$ denotes the duality pairing. The pair $(E, \pb{\cdot}{\cdot})$, called a \emph{Lie-Poisson space}, is a Poisson vector space in the sense of \cref{def:PVS}.
\end{theorem}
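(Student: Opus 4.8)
The plan is to verify directly the three defining properties of a Poisson vector space (\cref{def:PVS}) for $(E,\pb{\cdot}{\cdot})$, reducing everything to the Lie algebra axioms of $(E^*,\comm{\cdot}{\cdot})$ and to the infinite-dimensional calculus on $k^\infty$ spaces. It is convenient to abbreviate $\g\coloneqq E^*$; reflexivity then identifies $E$ with $\g^*$, the pairing $\ipp{\cdot,\cdot}_{E^*-E}$ with $\ipp{\cdot,\cdot}_{\g-\g^*}$, and, for $\Fc\in\Cc^1(E)$ and $\Gamma\in E$, the G\^ateaux derivative $\ds\Fc[\Gamma]$ with an element of $\g$; similarly, for $\Fc\in\Cc^2(E)$ the second derivative $\ds^2\Fc[\Gamma]$ is a continuous bilinear form on $E\times E$, symmetric by $\Cc^2$ regularity, whose contraction $\ds^2\Fc[\Gamma](\nu,\cdot)$ with a single argument $\nu\in E$ again lies in $\g$. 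With these identifications $\comm{\ds\Fc[\Gamma]}{\ds\Gc[\Gamma]}_\g\in\g$ is paired with $\Gamma\in\g^*$, so \eqref{eq:LPdef} makes sense pointwise. That $\pb{\Fc}{\Gc}\in\Cc^\infty(E)$ is where the hypotheses are spent: (i) since $E$ is a reflexive $k^\infty$ space, the assignment $\Gamma\mapsto\ds\Fc[\Gamma]$ is a $\Cc^\infty$ map $E\to\g$ for every $\Fc\in\Cc^\infty(E)$ (taking G\^ateaux derivatives preserves smoothness on $k^\infty$ spaces); (ii) a hypocontinuous bilinear map between such spaces is $\Cc^\infty$, so both $\comm{\cdot}{\cdot}\colon\g\times\g\to\g$ and the duality pairing $\g\times\g^*\to\R$ are $\Cc^\infty$; and (iii) $\Cc^\infty$ maps carry bounded sets to bounded sets and obey the chain rule, so the maps in (ii) may be postcomposed with the $\Cc^\infty$ map $\Gamma\mapsto\bigl(\ds\Fc[\Gamma],\ds\Gc[\Gamma],\Gamma\bigr)$. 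Combining (i)--(iii) gives $\pb{\Fc}{\Gc}\in\Cc^\infty(E)$; bilinearity of $\pb{\cdot}{\cdot}$ is inherited from linearity of $\Fc\mapsto\ds\Fc$ together with bilinearity of $\comm{\cdot}{\cdot}$ and of the pairing.

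Next I turn to the Lie algebra axioms and the Leibniz rule for $\pb{\cdot}{\cdot}$ on $\Cc^\infty(E)$. Antisymmetry is inherited from that of $\comm{\cdot}{\cdot}$, and the Leibniz rule \eqref{eq:LR} follows by inserting $\ds(\Gc\Hc)[\Gamma]=\Hc(\Gamma)\ds\Gc[\Gamma]+\Gc(\Gamma)\ds\Hc[\Gamma]$ into \eqref{eq:LPdef} and using bilinearity of $\comm{\cdot}{\cdot}$ and of the pairing. The substantive point is the Jacobi identity. Differentiating \eqref{eq:LPdef} with the product rule identifies
\[
\ds\bigl(\pb{\Fc}{\Gc}\bigr)[\Gamma]=\comm{\ds\Fc[\Gamma]}{\ds\Gc[\Gamma]}_\g+R_{\Fc,\Gc}(\Gamma)\in\g,
\]
where the ``Hessian remainder'' $R_{\Fc,\Gc}(\Gamma)$ is characterized by $\ipp{R_{\Fc,\Gc}(\Gamma),\nu}=\ipp{\comm{\ds^2\Fc[\Gamma](\nu,\cdot)}{\ds\Gc[\Gamma]}_\g,\Gamma}+\ipp{\comm{\ds\Fc[\Gamma]}{\ds^2\Gc[\Gamma](\nu,\cdot)}_\g,\Gamma}$ for $\nu\in E$ (this is a continuous functional on $\g$, hence an element of $E=\g^*$, by separate continuity of $\comm{\cdot}{\cdot}$, continuity of the pairing, and reflexivity). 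Substituting into $\pb{\pb{\Fc}{\Gc}}{\Hc}(\Gamma)=\ipp{\comm{\ds(\pb{\Fc}{\Gc})[\Gamma]}{\ds\Hc[\Gamma]}_\g,\Gamma}$ and summing over the three cyclic permutations of $(\Fc,\Gc,\Hc)$, the ``pure bracket'' contributions sum to $\ipp{\comm{\comm{\ds\Fc[\Gamma]}{\ds\Gc[\Gamma]}_\g}{\ds\Hc[\Gamma]}_\g+\text{(cyclic)},\Gamma}=0$ by the Jacobi identity for $\comm{\cdot}{\cdot}_\g$, while the remainder contributions $\ipp{\comm{R_{\Fc,\Gc}(\Gamma)}{\ds\Hc[\Gamma]}_\g,\Gamma}+\text{(cyclic)}$, once rewritten through the coadjoint functional $\ad^*_{\ds\Hc[\Gamma]}\Gamma\in E$ (again an element of $E$ by reflexivity) and contracted against the Hessians $\ds^2\Fc[\Gamma],\ds^2\Gc[\Gamma],\ds^2\Hc[\Gamma]$, cancel in pairs by the symmetry of those second derivatives. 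This proves the Jacobi identity, so $(\Cc^\infty(E),\pb{\cdot}{\cdot})$ is a Lie algebra in the sense of \cref{def:LA} obeying the Leibniz rule, which is property (PVS1).

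It remains to produce Hamiltonian vector fields, property (PVS2). For $\Fc\in\Cc^\infty(E)$ define $X_{\Fc}(\Gamma)\in E=\g^*$ to be the functional $\g\ni a\mapsto\ipp{\comm{a}{\ds\Fc[\Gamma]}_\g,\Gamma}$; it is continuous by separate continuity of $\comm{\cdot}{\cdot}$ followed by the pairing, hence lies in $E$ by reflexivity. Writing $X_{\Fc}(\Gamma)=\Phi\bigl(\Gamma,\ds\Fc[\Gamma]\bigr)$ for the hypocontinuous bilinear map $\Phi\colon E\times\g\to E$ assembled from $\comm{\cdot}{\cdot}$ and the pairing, the reasoning of the first paragraph shows that $X_{\Fc}\colon E\to E$ is $\Cc^\infty$. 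By construction, for every $\Gc\in\Cc^\infty(E)$ and $\Gamma\in E$,
\[
(X_{\Fc}\Gc)(\Gamma)=\ipp{\ds\Gc[\Gamma],X_{\Fc}(\Gamma)}_{\g-\g^*}=\ipp{\comm{\ds\Gc[\Gamma]}{\ds\Fc[\Gamma]}_\g,\Gamma}_{\g-\g^*}=\pb{\Gc}{\Fc}(\Gamma),
\]
so $X_{\Fc}$ is a Hamiltonian vector field for $\Fc$; it is unique because testing this relation against the linear functionals $\ell_a\coloneqq\ipp{a,\cdot}_{\g-\g^*}$, whose G\^ateaux derivative is the constant $a\in\g$, forces any two such fields to agree at each $\Gamma$. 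This verifies \cref{def:PVS} and completes the proof.

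The only genuine obstacle is the cluster of smoothness assertions underlying the first paragraph: fact (i), that $\Gamma\mapsto\ds\Fc[\Gamma]$ is $\Cc^\infty$ as a map into $E^*$, and the companion statement that hypocontinuous (multi)linear maps between the spaces at hand are $\Cc^\infty$ and interact correctly with the chain rule. These are exactly what the $k^\infty$ and reflexivity hypotheses are there to provide, and they are the main technical content of \cite{Glockner2009}; granted them, the Jacobi identity is a routine if lengthy computation, and the Leibniz rule and the construction of the Hamiltonian vector field are immediate.
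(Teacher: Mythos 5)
You should know at the outset that the paper contains no proof of this statement to compare against: it is Gl\"ockner's Theorem 4.10, imported verbatim and explicitly used as a ``black box'' (the authors only verify its hypotheses for $\g_k^*$ and $\G_N^*$ elsewhere). Measured against the standard Lie--Poisson argument that Gl\"ockner's proof also follows, your algebraic skeleton is the right one: Leibniz from the product rule for $\ds$, Jacobi by splitting $\ds\pb{\Fc}{\Gc}[\Gamma]$ into the bracket term plus a Hessian remainder and cancelling the remainders pairwise via the symmetry of second G\^ateaux derivatives, and the Hamiltonian vector field obtained from the coadjoint-type functional $a\mapsto\ipp{\comm{a}{\ds\Fc[\Gamma]},\Gamma}$ together with reflexivity, with uniqueness tested against linear functionals. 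One slip: $R_{\Fc,\Gc}(\Gamma)$ is a continuous functional of $\nu\in E$, hence an element of $E^*=\g$ (as it must be for $\comm{R_{\Fc,\Gc}(\Gamma)}{\ds\Hc[\Gamma]}$ to parse, and as it is automatically, being the difference of two elements of $\g$), not ``an element of $E=\g^*$'' as you wrote; the subsequent use is nevertheless consistent with $R_{\Fc,\Gc}(\Gamma)\in\g$.

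The genuine gap is that the two analytic claims carrying all the weight are precisely the content of the theorem, and you do not prove them but ``grant'' them by appealing to \cite{Glockner2009} --- the very result under proof --- which is circular as a freestanding argument. Worse, claim (ii) is false as stated: a hypocontinuous bilinear map need not be jointly continuous (the duality pairing $E^*\times E\to\R$ for a non-normable barrelled space is the standard example, and it is exactly the pairing appearing in \eqref{eq:LPdef}), hence it is not even $\Cc^0$, let alone $\Cc^\infty$, in the calculus used here. What is true, and what actually requires proof, is that the composition of a hypocontinuous bilinear map with $\Cc^\infty$ maps defined on open subsets of a $k^\infty$ space is again $\Cc^\infty$; this is where the $k$-space property of finite powers is spent, continuity of the composed derivatives being tested on compact sets where hypocontinuity supplies equicontinuity. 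Likewise claim (i), smoothness of $\Gamma\mapsto\ds\Fc[\Gamma]$ as a map into $E^*$ with its strong topology, is not a formal consequence of $\Fc\in\Cc^\infty(E)$: continuity of $\ds\Fc$ on $E\times E$ only yields compact-open control of the derivative, and upgrading this to the strong dual topology is exactly where reflexivity (together with the $k^\infty$ hypothesis) enters. (Your auxiliary assertion in (iii) that $\Cc^\infty$ maps send bounded sets to bounded sets is also not true in general locally convex spaces, though it is harmless here.) In short, your proposal is an accurate road map of the proof --- and consistent with how the paper itself treats the theorem --- but all of the difficulty has been deferred to the citation rather than resolved.
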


\begin{remark}
The work \cite{Glockner2009} does not specifically use the term \emph{Lie-Poisson space}; however, we feel this bit of terminology is appropriate to emphasize that the bracket as defined in \eqref{eq:LPdef} is a Lie-Poisson construction, while in general a Poisson bracket---and therefore Poisson vector space---need not be of Lie-Poisson type. 
\end{remark}

\begin{remark}
For our purposes, we will apply \cref{thm:Glockner} with $E = \g_k^*, \mathfrak{G}_N^*$ (defined in \eqref{eq:gk*def} and \eqref{eq:GN*def}, respectively), which requires our proving that $\g_k^*,\mathfrak{G}_N^*$ satisfy the assumptions of the theorem. This will be shown in \Cref{ssec:NgeomLio,ssec:NgeomLP}. Note that since $\g_k^*,\G_N^*$ are reflexive, $E^*$ is identifiable with the Lie algebras $\g_k,\G_N$, respectively.
\end{remark}

The space $\mathfrak{G}_\infty^*$ is a non-trivial countably infinite product of (DF) spaces, and as such is not a (DF) space itself (see \cite[p. 196]{SW1999}). Therefore, \cref{thm:Webb} is not applicable,  which renders verification of the assumptions of \cref{thm:Glockner} out of reach. To overcome this obstacle, we need a weaker notion of a Poisson vector space than assumed in \cref{thm:Glockner}. Namely, we need to restrict to a proper subalgebra $\mathcal{A}$ of functionals in $\mathcal{C}^\infty(E)$ for which smoothness of the Poisson bracket and Hamiltonian vector field can be verified. To this end, we use, as in the previous work \cite{MNPRS2020}, the framework of weak Poisson vector spaces due to Neeb \emph{et al.} \cite{NST2014}. 

\begin{definition}[Weak Poisson vector space]\label{def:WPVS}
Let $X$ be a locally convex topological vector space, and let $\mathcal{A} \subset \mathcal{C}^\infty( X)$ be an unital subalgebra. We say the triple $(X, \mathcal{A}, \pb{\cdot}{\cdot})$ is a \emph{weak Poisson vector space} if the following properties hold:
\begin{enumerate}[(WP1)]
    \item\label{assWP1}
    The pair $(\mathcal{A} , \pb{\cdot}{\cdot})$ is a Lie algebra in the sense of \cref{def:LA} obeying the Leibniz rule:
\begin{equation}
    \forall \Fc,\Gc,\Hc\in\A, \qquad \pb{\mathcal{F}}{\mathcal{G}\mathcal{H}} = \mathcal{H}\pb{\mathcal{F}}{\mathcal{G}}  + \mathcal{G}\pb{\mathcal{F}}{\mathcal{H}}.
\end{equation}
\item\label{assWP2}
For each $x,v\in X$, if $\ds\mathcal{F}[x](v) = 0$ for every $\mathcal{F}\in \mathcal{A}$, then $v = 0$. 
\item\label{assWP3}
For every $\mathcal{F} \in \mathcal{A}$, there exists a $\Cc^\infty$ Hamiltonian vector field $X_\mathcal{F} : X \rightarrow X$ such that
\begin{equation}\label{eq:HamVFuniq}
\forall \Gc\in\A,\qquad   X_\mathcal{F}\mathcal{G} = \pb{\mathcal{G}}{\mathcal{F}}.
\end{equation}
\end{enumerate}
\end{definition}

\begin{remark}
When the Poisson bracket $\pb{\cdot}{\cdot}$ in \cref{def:WPVS} is of Lie-Poisson type, as in \eqref{eq:LPdef}, we shall use the terminology \emph{weak Lie-Poisson space}.
\end{remark}

\begin{remark}
As alluded to in the paragraph preceding \cref{def:WPVS}, a Poisson vector space in the sense of \cref{def:PVS} is \emph{a fortiori} a weak Poisson vector space.
\end{remark}

\begin{remark}\label{rem:HamVFuniq}
The property \eqref{eq:HamVFuniq} uniquely characterizes the Hamiltonian vector field. Indeed, if $X_{\Fc},\tl{X}_{\Fc}$ are two $\Cc^\infty$ vector fields obeying \eqref{eq:HamVFuniq}, then given $x\in X$,
\begin{equation}
\forall \Gc\in\A, \qquad \paren*{\paren*{X_{\Fc}-\tl{X}_{\Fc}}(\Gc)}(x) = \ds\Gc[x]\paren*{X_{\Fc}(x)-\tl{X}_{\Fc}(x)}.
\end{equation}
Applying \ref{assWP2} with $v= X_{\Fc}(x)-\tl{X}_{\Fc}(x)$, we conclude that $X_{\Fc}(x)=\tl{X}_{\Fc}(x)$.
\end{remark}

Finally, we need the notion of a morphism between (weak) Poisson vector spaces.

\begin{definition}\label{def:PM}
Let $(E_1,\pb{\cdot}{\cdot}_{E_1}), (E_2,\pb{\cdot}{\cdot}_{E_2})$ be Poisson vector spaces in the sense of \cref{def:PVS}. We say that a $\Cc^\infty$ map $T: E_1\rightarrow E_2$ is a \emph{morphism of Poisson vector spaces} if
\begin{equation}\label{eq:PM}
\forall \Fc,\Gc\in\Cc^\infty(E_2), \qquad \pb{T^*\Fc}{T^*\Gc}_{E_1} = T^*\pb{\Fc}{\Gc}_{E_2},
\end{equation}
where $T^*$ denotes the pullback under $T$. Suppose now that $(E_1,\A_1,\pb{\cdot}{\cdot}_{E_1}), (E_2,\A_2,\pb{\cdot}{\cdot}_{E_2})$ are weak Poisson vector spaces in the sense of \cref{def:WPVS}. We say that a $\Cc^\infty$ map $T:E_1\rightarrow E_2$ is a \emph{morphism of weak Poisson vector spaces} if for any $\Fc,\Gc\in \A_2$, $T^*\Fc, T^*\Gc\in \A_1$ and \eqref{eq:PM} holds with $\Cc^\infty(E_2)$ replaced by $\A_2$.
\end{definition}

\section{$N$-particle geometric structure}\label{sec:Ngeom}
In this section, we present the proofs of the results stated in \cref{ssec:OutNew,ssec:OutN}.

\subsection{$N$-particle Newton/Liouville equations}\label{ssec:NgeomLio}
The goal of this subsection is to establish the Hamiltonian structure of the Newtonian system \eqref{eq:New} and the Liouville equation \eqref{eq:Lio}, as well as to connect the two structures through a Poisson morphism. Since the Newtonian system is classical, we leave the proofs of the statements concerning it in \cref{ssec:OutNew} as simple exercises for the reader.

We recall from \eqref{eq:gkdef} and \eqref{eq:gkLBdef} the definitions of the space $\g_k$ and the bracket $\comm{\cdot}{\cdot}_{\g_k}$. Our first task is to prove \cref{prop:gkLA} asserting that $(\g_k,\comm{\cdot}{\cdot}_{\g_k})$ is a Lie algebra.

\begin{proof}[Proof of \cref{prop:gkLA}]
Since our bracket is defined as a scalar multiple of the standard Poisson bracket, the algebraic properties \ref{LA1}-\ref{LA3} are satisfied, so it only remains to check continuity. It suffices to show that the multiplication and differentiation maps
\begin{equation}
    M: \mathfrak{g}_k \times \mathfrak{g}_k \rightarrow \mathfrak{g}_k, \qquad (f,g) \mapsto fg
\end{equation}
and
\begin{equation}
    \partial^\alpha: \mathfrak{g}_k \rightarrow \mathfrak{g}_k, \qquad f \mapsto \partial^\alpha f
\end{equation}
are continuous, for each multi-index $\alpha \in \N^{2dk}$. But this follows because $\comm{\cdot}{\cdot}_{\g_k}$ is just a linear combination of compositions of $M,\partial^\alpha$. 

We first show that $M$ is continuous. Since the spaces $\mathfrak{g}_k$ are Fr\'echet, it suffices to show that $M$ is sequentially continuous. To this end, let $(f_j, g_j) \rightarrow (f,g)$ be a convergent sequence in $\mathfrak{g}_k \times \mathfrak{g}_k$. Note that for any compact set $K \subset \Rd^k$ and $n \in \N$, we have 
\begin{equation}\label{bounded sequence}
\max\paren*{\sup_{j\in \N} \rho_{K,n} (f_j), \sup_{j\in \N} \rho_{K,n} (g_j)} \leq C_{K,n},
\end{equation}
where the constant $C_{K,n}$ depends only on the set $K$ and the index $n$. Using the Leibniz rule and triangle inequality, we now estimate 
\begin{align}
    \rho_{K,n} (f_jg_j - fg)  & \leq \rho_{K,n} ( f_j(g_j-g)) + \rho_{K,n}( (f_j -f)g ) \nn\\
    & = \sup_{|\alpha|\leq n} \Vert \partial^\alpha ( f_j(g_j-g))\Vert_{L^\infty(K)} +\sup_{|\alpha|\leq n} \Vert \partial^\alpha( (f_j -f)g )\Vert_{L^\infty(K)} \nn\\
    & \leq \sup_{|\alpha|\leq n} \sum_{\beta \leq \alpha} \binom{\alpha}{\beta} \Vert \partial^\beta  f_j\Vert_{L^\infty(K)} \Vert \partial^{\alpha - \beta} (g_j-g)\Vert_{L^\infty(K)} \nn\\
    &\phantom{=}+\sup_{|\alpha|\leq n} \sum_{\beta \leq \alpha} \binom{\alpha}{\beta} \Vert \partial^\beta(f_j -f)\Vert_{L^\infty(K)} \Vert \partial^{\alpha -\beta} g \Vert_{L^\infty(K)} \nn\\
    & \leq C_{d,n} \left(\rho_{K,n}(f_j) \rho_{K,n}(g_j - g) +  \rho_{K,n}(f_j-f) \rho_{K,n}(g) \right)\nn\\
    & \leq C_{d,n,K} \paren*{\rho_{K,n}(g_j - g) + \rho_{K,n}(f_j - f)},
\end{align}
where in the last line we have used the bound \eqref{bounded sequence}. Since the last line converges to $0$ as $j \rightarrow \infty$ and $K,n$ were arbitrary, we have shown that $f_j g_j \rightarrow fg$ in $\mathfrak{g}_k$. Thus, $M$ is continuous.

Fix a multi-index $\alpha$. To show that $\partial^\alpha$ is continuous, let $f_j \rightarrow f$ in $\mathfrak{g}_k$ and calculate 
\begin{equation}
    \rho_{K,n}(\partial^\alpha (f_j - f)) = \sup_{|\gamma|\leq n} \Vert \partial^\gamma \partial^\alpha (f_j - f) \Vert_{L^\infty(K)} \leq \sup_{|\beta| \leq n + |\alpha|} \Vert \partial^\beta( f_j -f) \Vert_{L^\infty(k)} = \rho_{K,n+|\alpha|} (f_j -f).
\end{equation}
The right-hand side converges to $0$ as $j \rightarrow \infty$, which shows that the operator $\partial^\alpha$ is continuous. 
\end{proof}

Now recall the definitions of the space $\g_k^*$ and the bracket $\pb{\cdot}{\cdot}_{\g_k^*}$ from \eqref{eq:gk*def} and \eqref{eq:gk*LPdef}, respectively. Our next task is to prove \cref{prop:gkLP}, asserting that $(\g_k^*,\pb{\cdot}{\cdot}_{\g_k^*})$ is a Lie-Poisson space. To this end, we need the following technical lemma alluded to in \cref{ssec:OutNew}.

\begin{lemma}\label{lem:gkremon}
For each $k \in \N$, the space $\mathfrak{g}_k$ is reflexive, and $\mathfrak{g}_k^*$ is a (DF) Montel space.
\end{lemma}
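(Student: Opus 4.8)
The plan is to realize $\g_k$ as a closed linear subspace of the full space $\Cc^\infty(\R^{2dk})$ of smooth functions, topologized by uniform convergence of all partial derivatives on compact sets, and then to extract every stated property from classical structure theory together with \cref{lem:frbar}, \cref{lem:Monrfl}, and \cref{frechet dual is df}.

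First I would recall that $\Cc^\infty(\R^{2dk})$, endowed with the seminorms $\rho_{K,n}$ indexed by compact $K\subset\R^{2dk}$ and $n\in\N$, is a Fr\'echet space: it is metrizable because $\R^{2dk}$ admits a countable exhaustion by compact sets, so countably many of the $\rho_{K,n}$ suffice, and it is complete by the standard locally uniform limit argument for derivatives. Moreover it is a Montel space; this is classical (cf.\ \cite[\S 27]{Kothe1969I}), the point being that a subset of $\Cc^\infty(\R^{2dk})$ is bounded precisely when it is uniformly bounded together with all its derivatives on every compact set, hence equicontinuous on compacts, so by Arzel\`a--Ascoli and a diagonal argument every bounded sequence has a subsequence converging in the $\Cc^n$-topology for each $n$; thus bounded closed sets are compact.

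Next I would observe that $\g_k$ is closed in $\Cc^\infty(\R^{2dk})$: for each $\pi\in\Ss_k$ the coordinate-permutation operator $P_\pi f \coloneqq f\circ\pi$ is linear and continuous for the $\rho_{K,n}$ seminorms, so $\g_k = \bigcap_{\pi\in\Ss_k}\ker(\mathrm{id}-P_\pi)$ is a closed subspace, and the seminorms in \eqref{seminorm def} defining its topology are precisely the restrictions of the ambient ones, i.e.\ $\g_k$ carries the subspace topology. A closed subspace of a Fr\'echet space is Fr\'echet, hence barrelled by \cref{lem:frbar}; and it inherits the Montel property, since any bounded $B\subset\g_k$ is bounded in $\Cc^\infty(\R^{2dk})$, hence relatively compact there, and as $\g_k$ is closed the closure of $B$ remains in $\g_k$ and is compact in the subspace topology (compactness being an absolute property). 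Therefore $\g_k$ is a Fr\'echet--Montel space.

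From here the conclusions are immediate: \cref{lem:Monrfl} gives that the Montel space $\g_k$ is reflexive and that its strong dual $\g_k^*$ is again Montel, while \cref{frechet dual is df}, applied to the Fr\'echet space $\g_k$, gives that $\g_k^*$ is a (DF) space; together these yield that $\g_k^*$ is a (DF) Montel space. The one genuinely delicate point is the transfer of the Montel property to the closed subspace $\g_k$, which rests on $\g_k$ being \emph{closed} so that relatively compact subsets of the ambient space stay inside it; everything else is either a direct citation or routine bookkeeping, with the only other thing worth double-checking being the classical assertion in the second paragraph that $\Cc^\infty$ of a Euclidean space is Montel (should a self-contained account be preferred to a reference).
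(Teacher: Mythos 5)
Your argument is correct, and it reaches the conclusion by a slightly different route than the paper in the one step that carries analytic content. The paper proves that $\g_k$ is Montel \emph{directly}: it fixes a countable family of seminorms via a compact exhaustion, notes $\g_k$ is Fr\'echet hence barrelled (\cref{lem:frbar}), and then verifies the Heine--Borel property for $\g_k$ itself by running the Arzel\`a--Ascoli/diagonalization argument on a bounded closed set of symmetric functions — i.e.\ it adapts the classical proof that $\Cc^\infty(\R^n)$ is Montel to the symmetric setting. You instead take the Montel property of the ambient space $\Cc^\infty(\R^{2dk})$ as the classical input and transfer it to $\g_k$ by soft structure theory: $\g_k=\bigcap_{\pi\in\Ss_k}\ker(\mathrm{id}-P_\pi)$ is a closed subspace carrying the subspace topology (the seminorms in \eqref{seminorm def} are the restricted ones), a closed subspace of a Fr\'echet space is Fr\'echet hence barrelled, and bounded sets in $\g_k$ are relatively compact because their ambient closures are compact and stay inside the closed subspace. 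You correctly flag that closedness is the point on which this transfer hinges, and your remark that $\rho_{K,n}(P_\pi f)$ is controlled by $\rho_{\pi^{-1}(K),n}(f)$ is exactly what makes $P_\pi$ continuous. The endgame is identical in both treatments: \cref{lem:Monrfl} gives reflexivity of $\g_k$ and the Montel property of $\g_k^*$, and \cref{frechet dual is df} gives that $\g_k^*$ is (DF). What your packaging buys is modularity — the symmetry constraint does no analytic work and enters only through closedness, so the proof reduces to a citation plus routine bookkeeping; what the paper's version buys is self-containedness, since it reproduces the compactness argument explicitly rather than outsourcing it to the literature (or to the sketch you include in your second paragraph).
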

\begin{proof}
We first prove that $\mathfrak{g}_k$ is Montel. The proof is an adaptation to symmetric functions of the argument that the space $\mathcal{C}^\infty( \R^n)$, for any $n\in\N$, is a Montel space (see \cite[Theorem VII, \S 2, Chapter 3]{Schwartz1966}). We reproduce it here for the reader's convenience. 

First, we fix an equivalent sequence of semi-norms on $\mathfrak{g}_k$ which give the same topology. Let $K_n$ be a compact exhaustion of $\Rd^k$, i.e. let $\{K_n\}_{n=1}^\infty$ be an increasing sequence of compact sets such that $\bigcup_{n=1}^\infty K_n = \Rd^k$. Then define the semi-norms 
\begin{equation}
\tilde{\rho}_n(f) \coloneqq \sup_{ |\alpha|\leq n} \Vert \partial^\alpha f \Vert_{L^\infty(K_n)}.
\end{equation}
These semi-norms are equivalent to those given in \eqref{seminorm def}, as the reader may check. This implies that $\mathfrak{g}_k$ is indeed a Fr\'echet space and hence a barrelled space by \cref{lem:frbar}. We will now show that $\mathfrak{g}_k$ satisfies the Heine-Borel property, i.e. that bounded closed subsets are compact. Note that since $\mathfrak{g}_k$ is a metric space, it suffices to show that bounded closed sets are sequentially compact. 
To this end, let $B \subset \mathfrak{g}_k$ be a bounded, closed set, and let $\{f_k \}_{k=1}^\infty \subset B$. Then by definition of bounded, there exist constants $C_n > 0$ such that 
\begin{equation}\label{bounded set in gk}
   \sup_{k\in\N} \Vert \partial^\alpha f_k \Vert_{L^\infty(K_n)} \leq C_n, \quad \forall |\alpha |\leq n.
\end{equation}
We will be using the convention that  subsequences of $\{f_k\}$ are still denoted by $\{f_k\}$. We take subsequences and diagonalize in the following way:
\begin{enumerate}
    \item Apply Arzel\`a–Ascoli and diagonalize with respect to $K_n$ to get a subsequence $f_k \rightarrow f$ locally uniformly in $L^\infty$.
    \item Apply Arzel\`a–Ascoli again and diagonalize with respect to each $|\alpha| \leq n$ to get a further subsequence $\partial^\alpha f_k \rightarrow \partial^\alpha f$ locally uniformly in $L^\infty$.
    \item Now we can conclude that $f_k \rightarrow f$ in $\mathfrak{g}_k$. Hence, since $B$ is closed in $\mathfrak{g}_k$, we have proved that $f \in B$.
\end{enumerate}
Hence, the space $\mathfrak{g}_k$ satisfies the Heine-Borel property. Since we noted above that $\mathfrak{g}_k$ is also barrelled, we conclude that it is a Montel space (recall \cref{def:Monsp}). 

Finally, we are ready to conclude the proof of our lemma. By invoking \cref{lem:Monrfl}, we now have that $\mathfrak{g}_k^*$ is Montel and that $\mathfrak{g}_k$ is reflexive. The fact that  $\mathfrak{g}_k^*$ is $(DF)$ follows from Lemma \ref{frechet dual is df} since $\mathfrak{g}_k$ is a Fr\'echet space.

\end{proof}

We now have the necessary ingredients to prove \cref{prop:gkLP}.

\begin{proof}[Proof of \cref{prop:gkLP}]
The proof is an application of \cref{thm:Glockner} with $E=\g_k^*$. Indeed, \cref{prop:gkLA} tells us that $\comm{\cdot}{\cdot}_{\g_k}$ is continuous and using the canonical isomorphism $(\g_k^*)^* \cong \g_k$ given by \cref{lem:gkremon}, there is a continuous, \textit{a fortiori} hypocontinuous, bracket $\comm{\cdot}{\cdot}_{\g_k}: E^*\times E^*\rightarrow E^*$. It remains to check that $\g_k^*$ is a $k^\infty$-space.

To show this property, we need to check that for any $n\in\N$, the product $(\g_k^*)^n$ is a $k$-space. $(\g_k^*)^n$ is (DF) Montel, since $\g_k^*$ is (DF) Montel by \cref{lem:gkremon} and a finite product of (DF) Montel spaces is still (DF) Montel (see \cite[pp. 370, 403]{Kothe1969I}. By \cref{thm:Webb}, $(\g_k^*)^n$ is sequential, hence a $k$-space by \cref{prop:seqk}. This completes the proof of the proposition.
\end{proof}

We next turn to proving \cref{prop:LioPM}, asserting that the Liouville map $\iota_{Lio}:\Rd^N\rightarrow \g_N^*$ is a morphism of Poisson vector spaces. To do this, we need the following technical lemma computing the G\^ateaux derivatives of $\iota_{Lio}$. 

\begin{lemma}\label{lem:diotaL}
It holds that $\iota_{Lio}\in \Cc^\infty(\Rd^N, \g_N^*)$ and for every $n\in\N$, $\uz_N,\ul{w}_N^1,\ldots,\ul{w}_N^n\in \Rd^N$,
\begin{equation}\label{eq:diotaL}
\ds^n\iota_{Lio}[\uz_N](\ul{w}_N^1,\ldots,\ul{w}_N^n) = (-1)^n\frac{1}{N!}\sum_{\pi\in\Ss_N}\sum_{\substack{0\leq n_1,\ldots,n_N\leq n \\ n_1+\cdots+n_N = n}} \sum_{\mathcal{I}} \bigotimes_{j=1}^N \paren*{\nabla^{\otimes n_j}\d_{z_{\pi(j)}} :  \bigotimes_{k=1}^{n} w_{\pi(j)}^{i_k^j}}.
\end{equation}
where the summation $\sum_{\mathcal{I}}$ is over all tuples
\begin{equation}\label{eq:Itup}
\mathcal{I} = (\mathbf{i}^1,\ldots,\mathbf{i}^N), \qquad \mathbf{i}^j := (i_1^j,\ldots,i_n^j) \in \{0,1\}^n \ \text{with} \ i_1^j+\cdots+i_n^j = n_j
\end{equation}
and $w_{\pi(j)}^0$ denotes the factor in the tensor product is vacuous. Here, $\nabla^{\otimes n_j}\d_{z_{\pi(j)}} : \bigotimes_{k=1}^n w_{\pi(j)}^{i_k^j}$ is the distribution in $\Ec'(\R^{2d})$ defined
\begin{equation}
\forall \varphi\in \Cc^\infty(\R^{2d}), \qquad \ipp*{\varphi,\nabla^{\otimes n_j}\d_{z_{\pi(j)}} : \bigotimes_{k=1}^n w_{\pi(j)}^{i_k^j} } = (-1)^{n_j}\nabla^{\otimes n_j}\varphi(z_{\pi(j)}) : \bigotimes_{k=1}^n w_{\pi(j)}^{i_k^j},
\end{equation}
with $\nabla^{\otimes n_j}\varphi = (\p_{x^{\al_1}}\p_{v^{\be_1}}\cdots\p_{x^{\al_{n_j}}}\p_{v^{\be_{n_j}}}\varphi)_{\al_1,\be_1,\ldots,\al_{n_j},\be_{n_j}=1}^d$ and $:$ denoting the tensor inner product.
\end{lemma}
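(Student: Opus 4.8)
The plan is to proceed by induction on $n$, differentiating under the (finite) sum $\frac{1}{N!}\sum_{\pi\in\Ss_N}$ and reducing everything to the single elementary observation that for a Dirac mass, the map $z\mapsto \d_z\in\Ec'(\R^{2d})$ is $\Cc^\infty$ with $\ds\paren*{z\mapsto\d_z}[z](w) = -\nabla\d_z:w$. First I would record this base computation: for a fixed test function $\varphi\in\Cc^\infty(\R^{2d})$, the scalar function $z\mapsto\ipp{\varphi,\d_z} = \varphi(z)$ is smooth, and $\frac{1}{h}(\varphi(z+hw)-\varphi(z))\to \nabla\varphi(z)\cdot w$; since $\g_N^* = \Cc^\infty_s(\Rd^N)^*$ carries the strong dual topology and $\g_N$ is Montel (hence $\g_N^*$ reflexive and the bornology-based difference quotients converge in the strong topology once they converge pointwise and locally uniformly in the test function and its derivatives — which here is immediate because $\varphi$ ranges over a bounded set and Taylor's theorem with the integral remainder gives uniform control), the difference quotient converges in $\g_N^*$, not just weak-$*$. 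This identifies $\ds\iota_{Lio}[\uz_N](\uw_N)$ with $-\frac{1}{N!}\sum_\pi \sum_{j=1}^N \d_{z_{\pi(1)}}\otimes\cdots\otimes(\nabla\d_{z_{\pi(j)}}:w_{\pi(j)})\otimes\cdots\otimes\d_{z_{\pi(N)}}$, which is the $n=1$ case of \eqref{eq:diotaL} (the index set $\mathcal{I}$ for $n=1$ just records which tensor slot $j$ is being differentiated).

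Next I would carry out the inductive step. Assume \eqref{eq:diotaL} holds at order $n$; it expresses $\ds^n\iota_{Lio}[\uz_N](\uw_N^1,\dots,\uw_N^n)$ as a finite linear combination, indexed by $\pi$, by a composition $(n_1,\dots,n_N)$ of $n$, and by the tuple $\mathcal{I}$, of tensor products $\bigotimes_{j=1}^N\paren*{\nabla^{\otimes n_j}\d_{z_{\pi(j)}}:\bigotimes_k w_{\pi(j)}^{i_k^j}}$. To differentiate once more in the direction $\uw_N^{n+1}$, I apply the Leibniz/product rule for the $\Cc^\infty$ tensor product map $\Ec'(\R^{2d})^{\otimes N}\to\Ec'(\Rd^N)$ (continuous multilinear, hence smooth) together with the base computation applied to each factor $z\mapsto\nabla^{\otimes n_j}\d_z$, whose directional derivative is $-\nabla^{\otimes(n_j+1)}\d_z$ paired against the new vector $w^{n+1}_{\pi(j)}$. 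Each such differentiation (i) increments exactly one $n_j$ to $n_j+1$, producing the new composition of $n+1$, and (ii) appends a slot $i_{n+1}^j=1$ to $\mathbf{i}^j$ and $i_{n+1}^{j'}=0$ to all other $\mathbf{i}^{j'}$; summing over which factor $j$ receives the new derivative is exactly the sum over the extended index set $\mathcal{I}$ for order $n+1$, and the sign picks up one more factor of $-1$, giving $(-1)^{n+1}$. This is bookkeeping and combinatorics rather than analysis; the only point requiring care is that the resulting map $\ds^{n+1}\iota_{Lio}:\Rd^N\times(\Rd^N)^{n+1}\to\g_N^*$ is jointly continuous, which follows because it is a finite sum of compositions of the continuous tensor-product map with the continuous maps $\uz_N\mapsto\nabla^{\otimes m}\d_{z_{\pi(j)}}$ and the (bounded multilinear, hence continuous) contractions against the $w$'s.

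The main obstacle — and the step I would spend the most words on — is not the formula itself, which is a mechanical iteration, but justifying that each difference quotient converges in the \emph{strong} dual topology of $\g_N^*$ and that the limit maps are jointly continuous in all arguments, since the definition of $\Cc^\infty$ here (the G\^ateaux calculus of the excerpt) demands genuine continuity of $\ds^n\iota_{Lio}$ as a map on the product $\Rd^N\times(\Rd^N)^n$, not merely separate or weak-$*$ statements. I would handle this by fixing a bounded set $B\subset\g_N=\Cc^\infty_s(\Rd^N)$ — which, by the Montel property and the description of bounded sets via the seminorms $\rho_{K,n}$, means a set bounded in each $\rho_{K,n}$ — and using the multivariate Taylor theorem with integral remainder on $\varphi\in B$ over a fixed compact neighborhood of the relevant points $z_{\pi(j)}$ to bound the remainder uniformly over $\varphi\in B$ and over $\uz_N$ in a compact set, $\uw_N^i$ in a bounded set; this gives equicontinuity and hence convergence of the seminorms $\rho_B$ on $\g_N^*$. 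Once this uniform-control mechanism is in place, every induction step and the final continuity assertion follow by the same estimate, so the lemma reduces to one analytic input applied repeatedly plus the combinatorial identification of the index set $\mathcal{I}$.
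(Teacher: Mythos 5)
Your proposal is correct and follows essentially the same route as the paper, whose proof of this lemma is a one-line appeal to "the multilinearity of the tensor product and Taylor's theorem": your base Dirac computation with uniform Taylor-remainder control is the "Taylor's theorem" input, and your Leibniz-rule induction with the bookkeeping over $\mathcal{I}$ is the "multilinearity" input made explicit. You simply supply the details (strong-dual convergence via bounded sets of test functions, joint continuity of the derivative maps) that the paper leaves to the reader.
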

\begin{proof}
The proof follows from the multlinearity of the tensor product and Taylor's theorem.
\end{proof}

\begin{remark}
Specializing the identity \eqref{eq:diotaL} to $n=1$, we obtain
\begin{equation}\label{eq:d1iotaL}
\ds\iota_{Lio}[\uz_N](\ul{w}_N) = -\frac{1}{N!}\sum_{\pi\in\Ss_N}\sum_{i=1}^N \d_{z_{\pi(1)}} \otimes \cdots \otimes \d_{z_{\pi(i-1)}} \otimes \paren*{\nabla\d_{z_{\pi(i)}}\cdot w_{\pi(i)}} \otimes \d_{z_{\pi(i+1)}} \otimes \cdots \otimes \d_{z_{\pi(N)}}.
\end{equation}
\end{remark}

\begin{proof}[Proof of \cref{prop:LioPM}]
Let $\Fc,\Gc\in \Cc^\infty(\g_N^*)$, and set $F\coloneqq \Fc\circ\iota_{Lio}, G\coloneqq \Gc\circ\iota_{Lio}\in \Cc^\infty(\Rd^N)$. By the chain rule, we have the identity
\begin{equation}\label{eq:Liostrt}
\sum_{i=1}^N \nabla_{z_i} F(\uz_N) \cdot w_i =\ds F[\uz_N](\uw_N)=\ds\Fc[\iota_{Lio}(\uz_N)]\Big(\ds\iota_{Lio}[\uz_N](\uw_N)\Big), \qquad \forall \uw_N \in \Rd^N.
\end{equation}
Identifying $\ds\Fc[\iota_{Lio}(\uz_N)]$ as an element of $\g_N$ and using \eqref{eq:d1iotaL}, the preceding right-hand side equals
\begin{multline}
\ipp*{\ds\Fc[\iota_{Lio}(\uz_N)],\ds\iota_{Lio}[\uz_N](\uw_N)}_{\g_N-\g_N^*} \\
= \frac{1}{N!}\sum_{\pi\in\Ss_N}\sum_{i=1}^N w_{\pi(i)}\cdot\nabla_{z_\pi(i)}\ds\Fc[\iota_{Lio}(\uz_N)](z_{\pi(1)},\ldots,z_{\pi(N)}).
\end{multline}
Since $\ds\Fc[\iota_{Lio}(\uz_N)]$ is symmetric with respect to exchange of particle labels, the right-hand side simplifies to
\begin{equation}
\sum_{i=1}^N w_i\cdot\nabla_{z_i}\ds\Fc[\iota_{Lio}(\uz_N)](z_1,\ldots,z_N).
\end{equation}
Returning to our starting identity \eqref{eq:Liostrt}, the arbitrariness of $\uw_N$ and the uniqueness of the gradient field imply that
\begin{equation}
\nabla_{z_i}F(z_1,\ldots,z_N) = \nabla_{z_i}\ds\Fc[\iota_{Lio}(\uz_N)](z_1,\ldots,z_N), \qquad \forall 1\leq i\leq N.
\end{equation}
With this identity, we compute
\begin{align}
\pb{F}{G}_{N}(\uz_N) &= N\sum_{i=1}^N \Big(\nabla_{x_i}F\cdot\nabla_{v_i}G - \nabla_{v_i}F\cdot\nabla_{x_i}G\Big)(\uz_N)\nn\\
&=N\sum_{i=1}^N \Big(\nabla_{x_i}\ds\Fc[\iota_{Lio}(\uz_N)]\cdot\nabla_{v_i}\ds\Gc[\iota_{Lio}(\uz_N)] -\nabla_{v_i}\ds\Fc[\iota_{Lio}(\uz_N)]\cdot\nabla_{x_i}\ds\Gc[\iota_{Lio}(\uz_N)]\Big)(\uz_N)\nn\\
&=\pb{\ds\Fc[\iota_{Lio}(\uz_N)]}{\ds\Gc[\iota_{Lio}(\uz_N)]}_{N}(\uz_N).
\end{align}
Since $\pb{\ds\Fc[\iota_{Lio}(\uz_N)]}{\ds\Gc[\iota_{Lio}(\uz_N)]}_{N}$ is symmetric with respect to exchange of particle labels, the last line may be rewritten as
\begin{align}
&\frac{1}{N!}\sum_{\pi\in\Ss_N}\ipp*{\pb{\ds\Fc[\iota_{Lio}(\uz_N)]}{\ds\Gc[\iota_{Lio}(\uz_N)]}_{N},\d_{z_{\pi(1)}}\otimes\cdots\otimes\d_{z_{\pi(N)}} }_{\g_N-\g_N^*} \nn\\
&=\ipp*{\pb{\ds\Fc[\iota_{Lio}(\uz_N)]}{\ds\Gc[\iota_{Lio}(\uz_N)]}_{N},\iota_{Lio}(\uz_N)}_{\g_N-\g_N^*} \nn\\
&=\pb{\Fc}{\Gc}_{\g_N^*}(\iota_{Lio}(\uz_N)),
\end{align}
which is exactly what we needed to show.
\end{proof}

Using a similar argument, we can also prove that the empirical measure map $\iota_{EM}$ from \eqref{eq:iotaEMdef} is also a Poisson morphism. This then proves \cref{prop:EMpm}. First, a technical lemma, analogous to \cref{lem:diotaL}, for the G\^ateaux derivatives of $\iota_{EM}$. We leave the proof to the reader.

\begin{lemma}\label{lem:diotaEM}
It holds that $\iota_{EM} \in \Cc^\infty(\Rd^N, \g_1^*)$ and for every $n\in\N$, $\uz_N, \uw_N^1,\ldots,\uw_N^n\in\Rd^N$,
\begin{equation}\label{eq:diotaEM}
\ds^n\iota_{EM}[\uz_N](\uw_N^1,\ldots,\uw_N^n) = \frac1N \sum_{j=1}^N   (-1)^n\nabla^{\otimes n}\d_{z_j} : \bigotimes_{k=1}^{n} w_{j}^{k}.
\end{equation}
\end{lemma}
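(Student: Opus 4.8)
The plan is to reduce the whole statement to elementary finite-dimensional calculus by first testing against fixed functions, and then to upgrade the resulting scalar identities to ones valued in the strong dual $\g_1^*$. Fix $\varphi\in\g_1=\Cc^\infty(\R^{2d})$ and consider $g_\varphi:\Rd^N\to\R$, $g_\varphi(\uz_N):=\ipp{\varphi,\iota_{EM}(\uz_N)}_{\g_1-\g_1^*}=\tfrac1N\sum_{j=1}^N\varphi(z_j)$. This is $\Cc^\infty$ in the ordinary sense, and by the chain rule together with the multilinearity of the derivative tensors,
\[
\ds^n g_\varphi[\uz_N](\uw_N^1,\ldots,\uw_N^n)=\frac1N\sum_{j=1}^N\nabla^{\otimes n}\varphi(z_j):\bigotimes_{k=1}^n w_j^k .
\]
By the very definition of the distribution $(-1)^n\nabla^{\otimes n}\d_{z_j}:\bigotimes_k w_j^k$ (as in \cref{lem:diotaL}, now with a single particle and no symmetrization), the right-hand side equals $\ipp{\varphi,\,\tfrac1N\sum_{j=1}^N(-1)^n\nabla^{\otimes n}\d_{z_j}:\bigotimes_k w_j^k}$. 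Hence the only possible value of $\ds^n\iota_{EM}[\uz_N](\uw_N^1,\ldots,\uw_N^n)$ is the right-hand side of \eqref{eq:diotaEM}, and it only remains to verify that the defining G\^ateaux limits genuinely converge, and depend continuously on their arguments, in the topology of $\g_1^*$.

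First I would note that the right-hand side of \eqref{eq:diotaEM} really lies in $\g_1^*$: it is a finite sum of distributions supported at the points $z_j$, and the bound $|\nabla^{\otimes n}\varphi(z_j):\bigotimes_k w_j^k|\le\rho_{\{z_j\},n}(\varphi)\prod_{k=1}^n|w_j^k|$ shows continuity on $\g_1$, hence membership in $\Ec'(\R^{2d})\cong\Cc^\infty(\R^{2d})^*=\g_1^*$. For the $n=1$ derivative (higher $n$ being an identical induction, with base case the continuity of $\uz_N\mapsto\tfrac1N\sum_j\d_{z_j}$), I would control the difference quotient uniformly over an arbitrary bounded set $B\subset\g_1$. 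Writing $\uw_N=(w_1,\ldots,w_N)$, Taylor's theorem with integral remainder gives, for each $j$,
\[
\frac{\varphi(z_j+hw_j)-\varphi(z_j)}{h}-\nabla\varphi(z_j)\cdot w_j=h\int_0^1(1-s)\,\nabla^2\varphi(z_j+shw_j)[w_j,w_j]\,ds ,
\]
whose modulus is at most $\tfrac12|h|\,|w_j|^2\,\rho_{K,2}(\varphi)$ for $|h|\le1$, where $K$ is a fixed compact set containing all $z_j+shw_j$. Since $B$ is bounded in the Fr\'echet space $\g_1$, $\sup_{\varphi\in B}\rho_{K,2}(\varphi)<\infty$, so the difference quotient of $\iota_{EM}$ converges to $\tfrac1N\sum_j(-\nabla\d_{z_j}\cdot w_j)$ uniformly on $B$, which is exactly convergence in $\g_1^*$. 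Joint continuity of $(\uz_N,\uw_N)\mapsto\ds\iota_{EM}[\uz_N](\uw_N)$ into $\g_1^*$ is obtained the same way: the domain is finite-dimensional, so sequential continuity suffices, and for a convergent sequence one splits $\nabla\varphi(z_j^{(m)})\cdot w_j^{(m)}-\nabla\varphi(z_j)\cdot w_j$ into a term controlled by $\rho_{K,2}(\varphi)$ and one controlled by $\rho_{K,1}(\varphi)$, both uniform over $\varphi\in B$. Iterating $n$ times yields $\iota_{EM}\in\Cc^\infty(\Rd^N,\g_1^*)$ and \eqref{eq:diotaEM}.

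A cleaner packaging, which I would probably adopt in the write-up, is to first establish the single-particle statement that $e:\R^{2d}\to\g_1^*$, $e(z):=\d_z$, is $\Cc^\infty$ with $\ds^n e[z](w^1,\ldots,w^n)=(-1)^n\nabla^{\otimes n}\d_z:\bigotimes_k w^k$, proved exactly by the Taylor estimate above, and then to observe that $\iota_{EM}=\tfrac1N\sum_{j=1}^N e\circ\pi_j$, where $\pi_j:\Rd^N\to\R^{2d}$ is the linear (hence $\Cc^\infty$) coordinate projection with $\ds\pi_j[\uz_N](\uw_N)=w_j$ and vanishing higher derivatives, so that \eqref{eq:diotaEM} follows from the chain rule and linearity of the sum. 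In either route the argument is routine; the one step deserving attention --- and the only thing beyond the one-line ``multilinearity and Taylor'' proof of \cref{lem:diotaL} --- is the passage from convergence tested against individual $\varphi$ to convergence and continuity in the strong dual topology of $\g_1^*$ (which, unlike $\g_1$, is not metrizable), and this is furnished precisely by the boundedness of $B$ together with the defining seminorms $\rho_{K,n}$ of $\g_1$.
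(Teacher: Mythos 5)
Your argument is correct, and it is essentially the proof the paper has in mind: the statement is left to the reader as the analogue of \cref{lem:diotaL}, whose proof is exactly ``multilinearity of the tensor product and Taylor's theorem,'' which is what your Taylor-with-remainder computation carries out. The only addition is your careful upgrade from pointwise (tested against each $\varphi$) convergence to convergence in the strong dual topology via uniform bounds of the remainder by $\rho_{K,n}$-seminorms over bounded subsets of $\g_1$, which is a worthwhile but routine elaboration of the intended argument.
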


\begin{proof}[Proof of \cref{prop:EMpm}]
Let $\Fc,\Gc\in \Cc^\infty(\g_1^*)$, and set $F \coloneqq \Fc\circ\iota_{EM}$ and $G\coloneqq\Gc\circ\iota_{EM}$, which belong to $\Cc^\infty(\Rd^N)$. By the chain rule, we have the identity
\begin{equation}\label{eq:EMstrt}
\sum_{i=1}^N \nabla_{z_i} F(\uz_N) \cdot w_i =\ds\Fc[\iota_{EM}(\uz_N)]\Big(\ds\iota_{EM}[\uz_N](\uw_N)\Big), \qquad \forall \uw_N \in \Rd^N.
\end{equation}
Identifying $\ds\Fc[\iota_{EM}(\uz_N)]$ as an element of $\g_1$ and using the identity \eqref{eq:diotaEM} specialized to $n=1$, the preceding right-hand side equals
\begin{equation}
\ipp*{\ds\Fc[\iota_{EM}(\uz_N)],\ds\iota_{EM}[\uz_N](\uw_N)}_{\g_1-\g_1^*} = \frac{1}{N}\sum_{j=1}^N w_{j}\cdot\Big(\nabla_{z}\ds\Fc[\iota_{EM}(\uz_N)]\Big)(z_j).
\end{equation}
Returning to our starting identity \eqref{eq:EMstrt}, the arbitrariness of $\uw_N$ and the uniqueness of the gradient field imply
\begin{equation}
\nabla_{z_i}F(z_1,\ldots,z_N) = \frac1N\Big(\nabla_{z}\ds\Fc[\iota_{EM}(\uz_N)]\Big)(z_i), \qquad \forall 1\leq i\leq N.
\end{equation}
With this identity, we compute
\begin{align}
\pb{F}{G}_{N}(\uz_N) &= N\sum_{i=1}^N \Big(\nabla_{x_i}F\cdot\nabla_{v_i}G - \nabla_{v_i}F\cdot\nabla_{x_i}G\Big)(\uz_N)\nn\\
&=N\sum_{i=1}^N \Bigg(\frac1N\Big(\nabla_{x}\ds\Fc[\iota_{EM}(\uz_N)]\Big)(z_i)\cdot \frac1N\Big(\nabla_{v}\ds\Gc[\iota_{EM}(\uz_N)]\Big)(z_i) \nn\\
&\qquad -\frac1N\Big(\nabla_{v}\ds\Fc[\iota_{EM}(\uz_N)]\Big)(z_i)\cdot\frac1N\Big(\nabla_{x}\ds\Gc[\iota_{EM}(\uz_N)]\Big)(z_i)\Bigg)\nn\\
&= \frac1N\sum_{i=1}^N \Bigg(\Big(\nabla_{x}\ds\Fc[\iota_{EM}(\uz_N)]\Big)(z_i)\cdot \Big(\nabla_{v}\ds\Gc[\iota_{EM}(\uz_N)]\Big)(z_i)\nn\\
&\qquad - \Big(\nabla_{v}\ds\Fc[\iota_{EM}(\uz_N)]\Big)(z_i)\cdot\Big(\nabla_{x}\ds\Gc[\iota_{EM}(\uz_N)]\Big)(z_i)\Bigg). \label{eq:EMfinRHS}
\end{align}
But
\begin{align}
\pb{\Fc}{\Gc}_{\g_1^*}(\iota_{EM}(\uz_N)) &= \ipp*{\pb{\ds\Fc[\iota_{EM}(\uz_N)]}{\ds\Gc[\iota_{EM}(\uz_N)]}_{\Rd}, \frac1N\sum_{i=1}^N\d_{z_i}}_{\g_1-\g_1^*} \nn\\
&=\frac1N\sum_{i=1}^N\pb{\ds\Fc[\iota_{EM}(\uz_N)]}{\ds\Gc[\iota_{EM}(\uz_N)]}_{\Rd}(z_i),
\end{align}
which equals the final two lines of \eqref{eq:EMfinRHS}. Thus, the proof is complete.
\end{proof}

We close this subsection by proving the Hamiltonian formulation of the Liouville equation as given by \cref{prop:LioHam}. The reader will recall from \eqref{eq:LioHam} the definition of the Liouville Hamiltonian functional $\Hc_{Lio}$.

\begin{proof}[Proof of \cref{prop:LioHam}]
We remark that since $\Hcl$ is a linear functional, it is trivial that we have the identification $\ds\Hcl[\ga] = \Hc_{New}\in\g_N$ for every $\ga\in\g_N^*$. To find a formula for the Hamiltonian vector field $X_{\Hcl}$ with respect to the Poisson bracket $\pb{\cdot}{\cdot}_{\g_N^*}$, we compute for any $\Fc\in \Cc^\infty(\g_N^*)$ and $\ga\in \g_N^*$,
\begin{align}
\pb{\Fc}{\Hcl}_{\g_N^*}(\ga) &= \ipp*{\comm{\ds\Fc[\ga]}{\ds\Hcl[\ga]}_{\g_N},\ga}_{\g_N-\g_N^*} \nn\\
&= \ipp{\comm{\ds\Fc[\ga]}{\Hc_{New}}_{\g_N},\ga}_{\g_N-\g_N^*}  \nn\\
&= \ipp*{\ds\Fc[\ga], -N\sum_{i=1}^N\Bigg(\div_{x_i}\Big(\nabla_{v_i}\Hc_{New} \ga\Big) - \div_{v_i}\Big(\nabla_{x_i}\Hc_{New}\ga\Big)\Bigg)}_{\g_N-\g_N^*},
\end{align}
where the ultimate line follows from unpacking the definition of $\comm{\cdot}{\cdot}_{\g_N}$ and integration by parts. Since the second entry of the pairing $\ipp{\cdot,\cdot}_{\g_N-\g_N^*}$ in the last line satisfies the characterizing property of the Hamiltonian vector field (recall that $\Fc$ was arbitrary), the uniqueness of the vector field implies that
\begin{align}
X_{\Hcl}(\ga) &= -N\sum_{i=1}^N\Bigg(\div_{x_i}\Big(\nabla_{v_i}\Hc_{New} \ga\Big) - \div_{v_i}\Big(\nabla_{x_i}\Hc_{New}\ga\Big)\Bigg) \nn\\
&= -\sum_{i=1}^N \Bigg(v_i\cdot \nabla_{x_i}\ga - \frac{2}{N}\sum_{\substack{1\leq j\leq N}}\nabla W(x_i-x_j)\cdot\nabla_{v_i} \ga\Bigg),
\end{align}
where the second line follows from the product rule and the fact that $\nabla_{x_i}\nabla_{v_i}\Hc_{New} = \nabla_{v_i}\nabla_{x_i}\Hc_{New} = 0$. Thus, we have shown that equation \eqref{eq:Lio} is equivalent to 
\begin{equation}
\dot{\ga} = X_{\Hcl}(\ga),
\end{equation}
exactly as desired.
\end{proof}

\begin{remark}\label{rem:NewLio}
Together, \Cref{prop:EMpm,prop:LioHam} imply that the Liouville map $\iota_{Lio}$ sends solutions of the Newtonian $N$-particle system \eqref{eq:New} to solutions of the $N$-particle Liouville equation \eqref{eq:Lio}. Indeed, if $\uz_N \in \Cc^\infty(I; \Rd^N)$ is a solution to \eqref{eq:New} on some interval $I$, define $\ga^t \coloneqq \iota_{Lio}(\uz_N^t)$ for every $t\in I$. Using that $\iota_{Lio}^* \Hc_{Lio} = \Hc_{New}$, which is easy to check from the symmetry of $\Hc_{New}$, \cref{prop:LioPM} implies
\begin{equation}
\forall \Fc\in \Cc^\infty(\g_N^*), \qquad \pb{\Hc_{New}}{\iota_{Lio}^*\Fc}_{N}(\uz_N^t) = \pb{\Hc_{Lio}}{\Fc}_{\g_N^*}(\ga^t).
\end{equation}
Since $\iota_{Lio}^*\Fc\in \Cc^\infty(\Rd^N)$ by \cref{lem:diotaL} and the chain rule, one has that
\begin{equation}
\frac{d}{dt}\Fc(\ga^t) = \frac{d}{dt}(\iota_{Lio}^*\Fc)(\uz_N^t) = \pb{\Hc_{New}}{\iota_{Lio}^*\Fc}_{N}(\uz_N^t) = \pb{\Hc_{Lio}}{\Fc}_{\g_N^*}(\ga^t).
\end{equation}
Since $\Fc\in\Cc^\infty(\g_N^*)$ was arbitrary, the claim follows.
\end{remark}

\subsection{Lie algebra $\G_N$ of $N$ particle observables}\label{ssec:NgeomLie}
In this subsection, we transition to discussing $N$-hierarchies, with the goal of proving \cref{prop:GNLA}, which asserts that $(\G_N,\comm{\cdot}{\cdot}_{\G_N})$ is a Lie algebra in the sense of \cref{def:LA}. As sketched in \cref{ssec:OutN}, we accomplish this task through a series of lemmas.

The starting point is the introduction of the maps $\epsilon_{k,N}:\mathfrak{g}_k \rightarrow \mathfrak{g}_N$ for $N \geq k\geq 1$, which in turn will be used to define a Lie bracket on the space of $N$-particle hierarchies of observables. For the reader's benefit, we recall from \cref{ssec:OutN} the definition of $\ep_{k,N}$:
\begin{equation}\label{eq:epdef}
\forall \uz_N \in\Rd^N, \qquad \epsilon_{k,N}(f^{(k)})(\uz_N) \coloneqq  \frac{1}{|P_k^N|}\sum_{(j_1, \dots, j_k) \in P_k^N} f^{(k)}(\uz_{(j_1, \dots, j_k)}),
\end{equation}
where $\uz_{(j_1,\ldots,j_k)} \coloneqq (z_{j_1},\ldots,z_{j_k})$ and
\begin{equation}
P_k^N \coloneqq \{ (j_1, \dots, j_k) : \text{$1 \leq j_i \leq N$ and $j_i$ distinct} \}.
\end{equation}
For example, if $k =1$, $N =2$, and $f^{(1)} \in \mathcal{C}^\infty(\R^{2d})$, then as a function we have
\begin{equation}\label{example epsilon}
\epsilon_{1,2}(f^{(1)})(z_1,z_2) = \frac{1}{2} \left(f^{(1)}(z_1) + f^{(1)}(z_2)\right).
\end{equation}
In the sequel, it will be convenient to use the tuple shorthand $\mathbf{j}_{k} = (j_1,\ldots,j_k)\in P_k^N$, similarly $f_{\bm{j}_k}^{(k)}$ and $\uz_{\bm{j}_k}$.

The next two lemmas show that each map $\ep_{k,N}$ is continuous, linear, and injective (cf. \cite[Lemmas 5.3 and 5.4]{MNPRS2020}). In particular, the first two properties imply that $\ep_{k,N} \in \Cc^\infty(\g_k,\g_N)$.

\begin{lemma}[$\epsilon_{k,N}$ are continuous]\label{lem:epcont}
The maps $\epsilon_{k,N} : \mathfrak{g}_k \rightarrow \mathfrak{g}_N$ are continuous and linear.
\end{lemma}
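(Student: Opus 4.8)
The plan is to check linearity directly from the defining formula \eqref{eq:epdef}, and then reduce continuity to a seminorm estimate. Linearity is immediate: for each fixed tuple $\bm{j}_k\in P_k^N$, the map $f^{(k)}\mapsto f^{(k)}(\uz_{\bm{j}_k})$ is linear in $f^{(k)}$, and $\epsilon_{k,N}(f^{(k)})$ is a finite linear combination (with coefficients $1/|P_k^N|$) of such maps; hence $\epsilon_{k,N}$ is linear. It also genuinely lands in $\g_N$: each $f^{(k)}_{\bm{j}_k}$ is $\Cc^\infty$ on $\Rd^N$, and averaging over all tuples in $P_k^N$ produces a function invariant under $\Ss_N$ (relabeling the $z$'s permutes $P_k^N$ among itself), so $\epsilon_{k,N}(f^{(k)})\in\g_N$.

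For continuity, since $\g_k$ and $\g_N$ are both Fréchet (by \cref{lem:gkremon}), it suffices to show $\epsilon_{k,N}$ is sequentially continuous, or equivalently — because the map is linear — to bound each target seminorm $\rho_{K,n}$ on $\g_N$ by a source seminorm $\rho_{K',n'}$ on $\g_k$. Fix a compact $K\subset\Rd^N$ and $n\in\N$. For a tuple $\bm{j}_k=(j_1,\dots,j_k)$, write $\pi_{\bm{j}_k}:\Rd^N\to\Rd^k$ for the coordinate projection $\uz_N\mapsto\uz_{\bm{j}_k}$, so that $f^{(k)}_{\bm{j}_k}=f^{(k)}\circ\pi_{\bm{j}_k}$. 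Since $\pi_{\bm{j}_k}$ is linear, the chain rule gives $\partial^\alpha(f^{(k)}\circ\pi_{\bm{j}_k})(\uz_N)$ as a finite sum of terms of the form $(\partial^\beta f^{(k)})(\uz_{\bm{j}_k})$ with $|\beta|\le|\alpha|$, with bounded combinatorial coefficients depending only on $k,N,n$. Setting $K'\coloneqq \pi_{\bm{j}_k}(K)\subset\Rd^k$, which is compact, we get
\begin{equation}
\|\partial^\alpha f^{(k)}_{\bm{j}_k}\|_{L^\infty(K)}\le C_{k,N,n}\,\rho_{K',n}(f^{(k)}).
\end{equation}
Since there are only finitely many tuples $\bm{j}_k\in P_k^N$, we may take $\tilde K$ to be the (compact) union of all the projections $\pi_{\bm{j}_k}(K)$ over $\bm{j}_k\in P_k^N$, and then summing over $\bm{j}_k$ and over $|\alpha|\le n$ yields
\begin{equation}
\rho_{K,n}\bigl(\epsilon_{k,N}(f^{(k)})\bigr)\le \frac{1}{|P_k^N|}\sum_{\bm{j}_k\in P_k^N}\rho_{K,n}\bigl(f^{(k)}_{\bm{j}_k}\bigr)\le C_{k,N,n}\,\rho_{\tilde K,n}(f^{(k)}).
\end{equation}
This is exactly the estimate needed to conclude that $\epsilon_{k,N}$ is continuous.

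There is no serious obstacle here: the only mildly technical point is bookkeeping the chain rule through the linear projection $\pi_{\bm{j}_k}$ and keeping the constants uniform, but since all sums over tuples and over multi-indices of bounded order are finite, this is routine. Combined with linearity, continuity gives $\epsilon_{k,N}\in\Cc^\infty(\g_k,\g_N)$, as the $\Cc^\infty$ property for a continuous linear map is automatic from the definition of the Gâteaux derivative.
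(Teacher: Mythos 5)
Your proposal is correct and follows essentially the same route as the paper: the paper also reduces continuity to the elementary seminorm estimate $\rho_{K,n}(\epsilon_{k,N}(f))\le \rho_{K',n}(f)$ (phrased there via sequential continuity on Fr\'echet spaces), using that derivatives of $f^{(k)}_{\bm{j}_k}$ in the $\uz_N$-variables are just derivatives of $f^{(k)}$ evaluated at the projected point. Your only cosmetic difference is invoking a chain-rule expansion with combinatorial coefficients, which is unnecessary since $\pi_{\bm{j}_k}$ is a coordinate projection and each $\partial^\alpha f^{(k)}_{\bm{j}_k}$ is either zero or a single derivative of $f^{(k)}$, but this does not affect correctness.
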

\begin{proof}
Linearity follows directly from the definition. For continuity, note that the spaces $\mathfrak{g}_k, \mathfrak{g}_N$ are Fr\'echet, so it suffices to show that for any sequence $(f_j)_{j=1}^\infty \subset \mathfrak{g}_k$ with $f_j \rightarrow f \in \mathfrak{g}_k$, we have $\epsilon_{k,N} (f_j) \rightarrow \epsilon_{k,N}(f)$ in $\mathfrak{g}_N$. By linearity, we may assume that $f_j \rightarrow 0$. Now, for any compact set $K \subset \Rd^N$ and $j \in \N$, we estimate  using triangle inequality
\begin{equation}
    \rho_{K,n} (\epsilon_{k,N}(f_j)) \leq \frac{1}{|P_k^N|} \sum_{ \bm{p}_k \in P_k^N} \sup_{|\alpha| \leq n} \|\partial^\alpha (f_{j})_{ \bm{p}_k}\|_{L^\infty(K)} \leq  \rho_{K,n}(f_j),
\end{equation}
which converges to $0$ as $j \rightarrow \infty$. Since $K, n$ were arbitrary, we have that $\epsilon_{k,N} (f_j) \rightarrow 0$ in $\mathfrak{g}_N$. 
\end{proof}

\begin{lemma}($\epsilon_{k,N}$ is injective)\label{lem:epinj}
The maps $\epsilon_{k,N} : \mathfrak{g}_k \rightarrow \mathfrak{g}_N$ are injective, and hence have well defined inverses on their images. 
\end{lemma}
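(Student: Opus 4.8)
The plan is to prove injectivity of $\epsilon_{k,N}\colon\g_k\to\g_N$ by induction on $k$, for all $N\ge k$ simultaneously; the mechanism driving the induction is that a single derivative of $\epsilon_{k,N}(f^{(k)})$ in the last particle variable decouples that variable and reduces the claim to injectivity of $\epsilon_{k-1,N-1}$.

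First I would handle the base case $k=1$: if $\epsilon_{1,N}(f)(\uz_N)=\tfrac1N\sum_{i=1}^N f(z_i)$ vanishes identically, then evaluating at $z_1=\cdots=z_N=w$ gives $f(w)=0$ for every $w$. For the inductive step, fix $k\ge2$, assume $\epsilon_{k-1,M}$ is injective for all $M\ge k-1$, and take $N\ge k$. The case $N=k$ is immediate, since symmetry of $f^{(k)}$ forces $\epsilon_{k,k}(f^{(k)})=f^{(k)}$, so assume $N>k$ and suppose $\epsilon_{k,N}(f^{(k)})=0$. Fixing a unit vector $e\in\R^{2d}$, I would differentiate the identity $\tfrac{1}{|P_k^N|}\sum_{\bm{j}_k\in P_k^N}f^{(k)}(\uz_{\bm{j}_k})=0$ with respect to $z_N$ in the direction $e$. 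A summand $f^{(k)}(\uz_{\bm{j}_k})$ depends on $z_N$ only when $N\in\{j_1,\dots,j_k\}$; for each such tuple, moving the entry $N$ into the last slot (permissible by the full permutation symmetry of $f^{(k)}$) identifies the derivative of the summand with $h^{(k-1)}_{z_N}$ evaluated at the remaining $k-1$ coordinates, where for $w\in\R^{2d}$
\[
h^{(k-1)}_{w}(\uw_{k-1})\;\coloneqq\;\bigl(\partial_e^{(k)}f^{(k)}\bigr)(\uw_{k-1},w)
\]
denotes the directional derivative of $f^{(k)}$ along $e$ in its $k$-th argument, with that argument set equal to $w$; this $h^{(k-1)}_w$ lies in $\g_{k-1}$, its symmetry in $\uw_{k-1}$ descending from that of $f^{(k)}$. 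Collecting the surviving terms produces $0=\tfrac{k}{N}\,\epsilon_{k-1,N-1}\bigl(h^{(k-1)}_{z_N}\bigr)(z_1,\dots,z_{N-1})$, and since $N-1\ge k-1$ and $z_N$ is arbitrary, the inductive hypothesis forces $h^{(k-1)}_w=0$ for every $w$, i.e. $\partial_e^{(k)}f^{(k)}\equiv0$ on $\Rd^k$.

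Next I would let $e$ range over a basis of $\R^{2d}$ and invoke the symmetry of $f^{(k)}$ once more to conclude that every first-order partial derivative of $f^{(k)}$ vanishes identically; hence $f^{(k)}$ is constant on the connected space $\Rd^k$, say $f^{(k)}\equiv C$. Finally, because $\epsilon_{k,N}$ is the identity on constant functions, $\epsilon_{k,N}(f^{(k)})=0$ yields $C=0$, so $f^{(k)}=0$. This completes the induction, and the inverse of $\epsilon_{k,N}$ on $\mathrm{im}(\epsilon_{k,N})$ is therefore well-defined.

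The part requiring genuine care — and the only step beyond one-line observations — is the combinatorial bookkeeping in the differentiation: checking that the summands not involving $z_N$ really drop out, that the full symmetry of $f^{(k)}$ does legitimately recast the remaining sum as $\epsilon_{k-1,N-1}$ applied to a bona fide symmetric $(k-1)$-particle observable, and keeping track of the harmless nonzero prefactor $k/N$. Conceptually there is no obstruction: differentiating peels off the last particle so that the inductive hypothesis applies, and the residual ``$f^{(k)}$ is constant'' ambiguity is annihilated by the fact that $\epsilon_{k,N}$ fixes constants.
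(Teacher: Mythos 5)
Your argument is correct, but it proceeds by a genuinely different route than the paper. The paper's proof is a direct, calculus-free evaluation argument: assuming $f^{(k)}\neq 0$, it introduces the property $\mathbf{P}_n$ (nonvanishing at a point of the form $(z_0^{\times k-n},z_1,\ldots,z_n)$), takes the minimal such $n_{\min}$, and evaluates $\ep_{k,N}(f^{(k)})$ at $(z_0^{\times N-n_{\min}},z_1,\ldots,z_{n_{\min}})$; minimality forces every summand either to vanish or to equal the single nonzero value $f^{(k)}(z_0^{\times k-n_{\min}},z_1,\ldots,z_{n_{\min}})$, so the symmetrized sum cannot vanish. You instead induct on $k$, differentiating the identity $\ep_{k,N}(f^{(k)})=0$ in the last particle variable; your combinatorial bookkeeping is right (only tuples containing the index $N$ survive, symmetry moves that slot to the end, and the count $k\,|P_{k-1}^{N-1}|/|P_k^N|=k/N$ is correct), so the identity collapses to $\ep_{k-1,N-1}(h^{(k-1)}_{w})=0$ for every $w$, the inductive hypothesis kills $h^{(k-1)}_w$, symmetry upgrades $\nabla_{z_k}f^{(k)}\equiv 0$ to a vanishing full gradient, and the residual constant is annihilated because $\ep_{k,N}$ fixes constants (and $\ep_{k,k}=\mathrm{id}$ on $\g_k$ handles $N=k$). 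The trade-off: your proof exploits the smoothness built into $\g_k$ and exhibits a pleasant ``lowering'' relation between $\partial_{z_N}$ and the embeddings $\ep_{k,N}$, whereas the paper's argument uses only pointwise evaluations and hence would apply verbatim to merely continuous (indeed arbitrary) symmetric functions, and it avoids any induction; both are complete proofs of the lemma.
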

\begin{proof}
Fix $1\leq k\leq N$. To prove injectivity, we will show the contrapositive statement: if $f^{(k)} \neq 0$, then $\ep_{k,N}(f^{(k)}) \neq 0$. The argument presented below is a ``classical version'' of the argument used to prove \cite[Lemma 5.4]{MNPRS2020}.

We introduce a parameter $n\in\N_0$ with $n<k$. We say that $f^{(k)}$ has \emph{property $\mathbf{P}_n$} if the following holds: if $n=0$, then there exists a point $z_0\in\R^{2d}$ such that
\begin{equation}
f^{(k)}(z_0,\ldots,z_0) \neq 0
\end{equation}
and if $n\geq 1$, then there exist points $z_0,\ldots,z_n \in \R^{2d}$ such that
\begin{equation}
f^{(k)}(z_0^{\times k-n},z_1,\ldots,z_n) \neq 0,
\end{equation}
where $z_0^{\times k-n} \coloneqq (z_0,\ldots,z_0)\in \Rd^{k-n}$ and the Cartesian product is understood as vacuous when $n=k$. Observe that $f^{(k)}$ always has property $\mathbf{P}_{k-1}$, since $f^{(k)}$ is nonzero by assumption, therefore there exist points $z_0,\ldots,z_{k-1}\in\R^{2d}$ such that $f^{(k)}(z_0,\ldots,z_{k-1})\neq 0$. We define the integer $n_{\min}$ by
\begin{equation}
n_{\min} \coloneqq \min\{0\leq n<k : f^{(k)} \ \text{has property} \ \mathbf{P}_n\}.
\end{equation}

To avoid confusion over notation, we first dispense with the trivial case $n_{\min} = 0$. The definition of $\mathbf{P}_0$ implies that there exists a point $z_0\in \R^{2d}$ such that $f^{(k)}(z_0,\ldots,z_0) \neq 0$. It then follows from the definition of $f_{(j_1,\ldots,j_k)}^{(k)}$ that $f_{(j_1,\ldots,j_k)}^{(k)}(z_0^{\times N}) = f^{(k)}(z_0^{\times k})$ for each tuple $(j_1,\ldots,j_k)\in P_k^N$. Hence, 
\begin{equation}
\ep_{k,N}(f^{(k)})(z_0^{\times N}) = f^{(k)}(z_0^{\times k}) \neq 0.
\end{equation}

We next consider the case $1\leq n_{\min}<k$. The definition of $\mathbf{P}_{n_{\min}}$ implies that there exist points $z_0,\ldots,z_{n_{\min}} \in \R^{2d}$ such that
\begin{equation}
f^{(k)}(z_0^{\times k-n_{\min}}, z_1,\ldots,z_{n_{\min}}) \neq 0.
\end{equation}
We claim that $\ep_{k,N}(f^{(k)})(z_0^{\times N-n_{\min}},z_1,\ldots,z_{n_{\min}}) \neq 0$. To see this, we observe from unpacking the definition of $\ep_{k,N}(f^{(k)})$ that
\begin{equation}
\ep_{k,N}(f^{(k)})(z_0^{\times N-n_{\min}},z_1,\ldots,z_{n_{\min}}) = C_{k,N}\sum_{\mathbf{j}_k \in P_{k}^N} f_{(j_1,\ldots,j_k)}^{(k)}(z_0^{\times N-n_{\min}}, z_1,\ldots,z_{n_{\min}}).
\end{equation}
For each $\mathbf{j}_k=(j_1,\ldots,j_k) \in P_k^N$, we can use the symmetry of $f^{(k)}$ to write
\begin{equation}
f_{(j_1,\ldots,j_k)}^{(k)}(z_0^{\times N-n_{\min}}, z_1,\ldots,z_{n_{\min}}) = f^{(k)}(w_1,\ldots,w_k),
\end{equation}
where for some $r\leq k$, $w_1=\cdots=w_r = z_0$ and for $r<i\leq k$, the $w_i$ are distinct elements of $\{z_1,\ldots,z_{n_{\min}}\}$. If $r> k-n_{\min}$, then by definition of $n_{\min}$, $f^{(k)}(w_1,\ldots,w_k) = 0$. Since only $n_{\min}$ coordinates of $(z_0^{\times N-n_{\min}}, z_1,\ldots,z_{n_{\min}})$ are not equal to $z_0$, we must have $r=k-n_{\min}$. But in this case, the symmetry of $f^{(k)}$ implies
\begin{equation}
f^{(k)}(w_1,\ldots,w_k) = f^{(k)}(z_0^{\times k-n_{\min}},z_1,\ldots,z_{n_{\min}}) \neq 0,
\end{equation}
by choice of the points $z_0,z_1,\ldots,z_{n_{\min}}$. Therefore, we have shown that
\begin{align}
C_{k,N}\sum_{\mathbf{j}_k \in P_{k}^N} f_{(j_1,\ldots,j_k)}^{(k)}(z_0^{\times N-n_{\min}}, z_1,\ldots,z_{n_{\min}}) &= C_{k,N}' f^{(k)}(w_1,\ldots,w_k) \nonumber\\
&= C_{k,N}'f^{(k)}(z_0^{\times k-n_{\min}},z_1,\ldots,z_{n_{\min}}) \nonumber\\
&\neq 0,
\end{align}
where $C_{k,N}'$ is some other combinatorial factor depending on $k,N$. This then implies $\ep_{k,N}(f^{(k)}) \neq 0$, completing the proof of injectivity. 
\end{proof}

We now present a technical lemma which will be applied to prove \cref{lem:epfil} for the filtration property. It shows the commutativity of the following diagram.
\begin{center}
\begin{tikzcd}[column sep=3cm, row sep =3cm]
 \g_a \arrow[dr, dashrightarrow, "\ep_{b,N}\circ\ep_{a,b}", marking]  \arrow[r, "\ep_{a,b}"]
 	 & \g_b \arrow[d, "\ep_{b,N}"] \\
 	 & \g_N
\end{tikzcd}
\end{center}

\begin{lemma}\label{lem:epabbN}
Let $ 1 \leq a \leq b \leq N$. Then we have that $\epsilon_{a,N} = \epsilon_{b,N} \circ \epsilon_{a,b}$.
\end{lemma}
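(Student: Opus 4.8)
The plan is to prove the identity $\epsilon_{a,N} = \epsilon_{b,N}\circ\epsilon_{a,b}$ by a direct combinatorial computation, unpacking the definition \eqref{eq:epdef} on both sides and checking that the coefficient of each term $f^{(a)}(\uz_{\bm{j}_a})$ agrees. Fix $f^{(a)}\in\g_a$ and $\uz_N\in\Rd^N$. On the one hand, $\epsilon_{a,N}(f^{(a)})(\uz_N) = \frac{1}{|P_a^N|}\sum_{\bm{j}_a\in P_a^N} f^{(a)}(\uz_{\bm{j}_a})$. On the other hand, $\epsilon_{b,N}(\epsilon_{a,b}(f^{(a)}))(\uz_N) = \frac{1}{|P_b^N|}\sum_{\bm{m}_b\in P_b^N} \epsilon_{a,b}(f^{(a)})(\uz_{\bm{m}_b})$, and expanding the inner application gives $\epsilon_{a,b}(f^{(a)})(\uz_{\bm{m}_b}) = \frac{1}{|P_a^b|}\sum_{\bm{p}_a\in P_a^b} f^{(a)}(z_{m_{p_1}},\ldots,z_{m_{p_a}})$.

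First I would observe that each pair $(\bm{m}_b,\bm{p}_a)$ with $\bm{m}_b\in P_b^N$ and $\bm{p}_a\in P_a^b$ determines a tuple $\bm{j}_a = (m_{p_1},\ldots,m_{p_a})\in P_a^N$ (the entries are distinct because $\bm m_b$ has distinct entries and $\bm p_a$ has distinct entries), and conversely I would count, for a fixed $\bm{j}_a\in P_a^N$, the number of such pairs $(\bm m_b,\bm p_a)$ that produce it. Given $\bm j_a$, one must choose $\bm m_b\in P_b^N$ whose image contains all of $j_1,\ldots,j_a$, together with a length-$a$ tuple $\bm p_a\in P_a^b$ of positions selecting those entries in the correct order; the number of ways to do this is independent of which particular $\bm j_a$ we fixed, by symmetry of the index set $\{1,\ldots,N\}$ — call this number $C$. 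Hence the double sum equals $C\sum_{\bm j_a\in P_a^N} f^{(a)}(\uz_{\bm j_a})$, and it remains to check that $\frac{C}{|P_b^N|\,|P_a^b|} = \frac{1}{|P_a^N|}$, i.e. $C = \frac{|P_b^N|\,|P_a^b|}{|P_a^N|}$. Using $|P_k^M| = M!/(M-k)!$, the right-hand side simplifies to $\frac{(N-a)!}{(N-b)!}\cdot\frac{b!}{(b-a)!}\cdot\frac{1}{1} = \frac{(N-a)!\,b!}{(N-b)!\,(b-a)!}$, which one then recognizes as the count $C$: having fixed the $a$ values $j_1,\ldots,j_a$ and the $a$ slots they occupy (there are $b!/(b-a)!$ ordered choices of slots among $b$ positions) one fills the remaining $b-a$ slots of $\bm m_b$ with an ordered selection from the remaining $N-a$ indices, giving $(N-a)!/(N-b)!$ choices; the product is exactly $C$.

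A cleaner alternative I would actually prefer to write: note both $\epsilon_{a,N}(f^{(a)})$ and $(\epsilon_{b,N}\circ\epsilon_{a,b})(f^{(a)})$ are, by construction, symmetric functions on $\Rd^N$ that are averages of translates $f^{(a)}\circ(\text{coordinate projections})$, and both are affine in $f^{(a)}$, so it suffices to identify the two as the \emph{unique} symmetrization: namely $\epsilon_{a,N}(f^{(a)}) = \binom{N}{a}^{-1}(a!)^{-1}\sum_{\bm j_a\in P_a^N} f^{(a)}_{\bm j_a}$ and one shows $(\epsilon_{b,N}\circ\epsilon_{a,b})(f^{(a)})$ has the same expansion by the counting above. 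The only real content is the elementary counting identity $|P_a^N| = |P_b^N|\,|P_a^b|/|\text{(stabilizer count)}|$, equivalently the bookkeeping that composing "average over all ways to embed $a$ particles among $b$" with "average over all ways to embed $b$ particles among $N$" yields "average over all ways to embed $a$ particles among $N$."

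I do not expect any genuine obstacle here — the statement is a transitivity/associativity property of averaging over injections, and the proof is purely combinatorial with no analytic input (continuity of the $\epsilon$ maps is already established in \cref{lem:epcont} and is not needed). The one point requiring care is making the change-of-summation bijection precise: that the map $(\bm m_b,\bm p_a)\mapsto (m_{p_1},\ldots,m_{p_a})$ from $P_b^N\times P_a^b$ onto $P_a^N$ has constant fiber cardinality. I would state this explicitly and verify it by the slot-counting argument above, then conclude by comparing normalization constants.
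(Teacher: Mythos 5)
Your proof is correct and follows essentially the same route as the paper's: both unpack the two definitions, regroup the double sum over $(\bm{m}_b,\bm{n}_a)\in P_b^N\times P_a^b$ according to the induced tuple $(m_{n_1},\ldots,m_{n_a})\in P_a^N$, observe the fibers have constant cardinality $\frac{b!(N-a)!}{(N-b)!(b-a)!}$ (the paper counts $b$-tuples containing the fixed $a$ values and notes the position tuple is then unique, while you count positions first and then fill the remaining slots — the same count), and compare normalization constants. No gap.
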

\begin{proof}
Let $f \in \mathfrak{g}_a$. Then by definition of $\epsilon_{a,b}$ and $\epsilon_{b,N}$,
\begin{align}
   \epsilon_{b,N} (\epsilon_{a,b}(f)) (\uz_N) 
   & = \frac{1}{|P_b^N|} \sum_{\mathbf{m}_b \in P_b^N} \epsilon_{a,b}(f)({\uz}_{(m_1,m_2,\dots,m_b)}) \nonumber\\
   & = \frac{1}{|P_b^N||P_a^b|} \sum_{\mathbf{m}_b \in P_b^N}
   \sum_{\mathbf{n}_a \in P_a^b}
   f(\uz_{(m_{n_1},m_{n_2},\dots,m_{n_a})}).\label{eq:epabepbN}
\end{align}
Fix an $a$-tuple $\mathbf{l}_a = (l_1,\ldots,l_a) \in P_a^N$. Let $\mathscr{A}_{\mathbf{l}_a}$ denote the set of $b$-tuples $\mathbf{m}_b\in P_b^N$ such that $\{l_1,\ldots,l_a\}\subset \{m_1,\ldots,m_b\}$. Any element in $\mathbf{m}_b\in \mathscr{A}_{\mathbf{l}_a}$ is a permutation of $(l_1,\ldots,l_a,j_1,\ldots,j_{b-a})$ for some choice $1\leq j_1<\cdots < j_{b-a}\leq N$ not in $\{l_1,\ldots,l_a\}$. There are ${N-a \choose {b-a}}$ such choices $(j_1,\ldots,j_{b-a})$, and $b!$ permutations of $b$ letters, hence
\begin{equation}\label{eq:Alcard}
|\mathscr{A}_{\mathbf{l}_a}| = {N-a\choose b-a}b! = \frac{b!(N-a)!}{(N-b)!(b-a)!}.
\end{equation}
We also see that given $\mathbf{m}_b \in \mathscr{A}_{\mathbf{l}_a}$, there is a unique choice of indices $n_1,\ldots,n_a\in\{1,\ldots,b\}$ such that $(m_{n_1},\ldots,m_{n_a}) = (l_1,\ldots,l_a)$. Let us denote this unique choice by $\mathbf{n}_{a,\mathbf{l}_a}$. We write
\begin{equation}
\sum_{\mathbf{m}_b \in P_b^N}\sum_{\mathbf{n}_a \in P_a^b} (\cdots) = \sum_{\mathbf{l}_a \in P_a^N} \sum_{\substack{(\mathbf{m}_b,\mathbf{n}_a) \in P_b^N\times P_a^b \\ (m_{n_1},\ldots,m_{n_a}) = (l_1,\ldots,l_a)}} (\cdots) = \sum_{\mathbf{l}_a \in P_a^N} \sum_{(\mathbf{m}_b,\mathbf{n}_a) \in \mathscr{A}_{\mathbf{l}_a} \times \{\mathbf{n}_{a,\mathbf{l}_a}\}}(\cdots).
\end{equation}
For any $(\mathbf{m}_b,\mathbf{n}_a) \in \mathscr{A}_{\mathbf{l}_a} \times \{\mathbf{n}_{a,\mathbf{l}_a}\}$, we have $f(\uz_{(m_{n_1},\ldots,m_{n_a})})=f(\uz_{\mathbf{l}_a})$. Returning to \eqref{eq:epabepbN}, this identity and \eqref{eq:Alcard} imply
\begin{align}
\epsilon_{b,N} (\epsilon_{a,b}(f)) (\uz_N)  &= \frac{1}{|P_b^N||P_a^b|} \frac{b!(N-a)!}{(N-b)!(b-a)!}\sum_{\mathbf{l}_a \in P_a^N}  f(\uz_{\mathbf{l}_a}) \nn \\
&= \frac{(N-a)!}{N!}\sum_{\mathbf{l}_a \in P_a^N}  f(\uz_{\mathbf{l}_a}) \nn\\
&=\ep_{a,N}(f)(\uz_N),
\end{align}
where the penultimate line follows from simplification of the combinatorial factor in the first line and the ultimate line is tautological. Hence, the lemma is proved. 
\end{proof}

\begin{lemma}($\epsilon_{k,N}$ filters)\label{lem:epfil}
Let $N \in \N$, and let $1 \leq \ell, j \leq N$. Then for any $f^{(\ell)} \in \mathfrak{g}_\ell$ and $g^{(j)} \in \mathfrak{g}_j$, there exists a unique $h^{(k)} \in \mathfrak{g}_k$ such that 
\begin{equation}\label{eq:epfil}
\comm{\epsilon_{\ell,N}(f^{(\ell)})}{\epsilon_{j,N}(g^{(j)})}_{\mathfrak{g}_N} = \epsilon_{k,N} (h^{(k)}), \qquad \text{ with $k \coloneqq \min ( \ell + j -1 , N)$},
\end{equation}
given by
\begin{equation}
h^{(k)} = \sum_{r = r_0}^{\min( \ell,j)} \frac{(N-\ell)!(N-j)!}{(N-1)! (N-\ell -j +r)!} \epsilon_{\ell+j - r, k}\paren*{\Sym_{\ell+j-r}\paren*{f^{(\ell)} \wedge_r g^{(j)}}},
\end{equation}
where $r_0 \coloneqq \max (1 ,\ell+j-N)$ and for $1\leq r\leq \min(\ell, j)$,  $f^{(\ell)}\wedge_r g^{(j)} \in \Cc^\infty(\Rd^{\ell+j-r})$ is defined by
\begin{multline}\label{r wedge}
f^{(\ell)} \wedge_r g^{(j)}(\uz_{\ell+j-r}) \coloneqq \binom{\ell}{r}\binom{j}{r} r!\sum_{i=1}^r \Big(\nabla_{x_{i}} f^{(\ell)}(\uz_{\ell}) \cdot \nabla_{v_{i}} g^{(j)}(\uz_{(1,\ldots,r, \ell+1,\ldots,\ell+j-r)}) \\
-  \nabla_{x_i} g^{(j)}(\uz_{j})\cdot\nabla_{v_i} f^{(\ell)}(\uz_{(1,\ldots,r, j+1, \ldots , \ell+j-r)})
    \Big).
\end{multline}
\end{lemma}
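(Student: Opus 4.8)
The plan is to compute $\comm{\ep_{\ell,N}(f^{(\ell)})}{\ep_{j,N}(g^{(j)})}_{\g_N}$ explicitly by expanding both embeddings as sums over tuples and then applying the definition $\comm{\cdot}{\cdot}_{\g_N} = N\pb{\cdot}{\cdot}_{\Rd^N}$. Recall $\ep_{\ell,N}(f^{(\ell)})(\uz_N) = \frac{1}{|P_\ell^N|}\sum_{\bm{m}_\ell\in P_\ell^N}f^{(\ell)}(\uz_{\bm{m}_\ell})$ and similarly for $g^{(j)}$. The standard Poisson bracket on $\Rd^N$ is $\sum_{i=1}^N(\nabla_{x_i}\cdot\nabla_{v_i} - \nabla_{v_i}\cdot\nabla_{x_i})$ applied to the product structure, so $\pb{f^{(\ell)}_{\bm{m}_\ell}}{g^{(j)}_{\bm{n}_j}}_{\Rd^N}$ is nonzero only when the index sets $\{m_1,\dots,m_\ell\}$ and $\{n_1,\dots,n_j\}$ overlap; if they overlap in exactly $r$ indices, the bracket contributes a sum of $r$ derivative-pairing terms. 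This is precisely the combinatorial content encoded in the wedge product $\wedge_r$ defined in \eqref{r wedge}. Thus, the first step is to organize the double sum $\sum_{\bm{m}_\ell\in P_\ell^N}\sum_{\bm{n}_j\in P_j^N}$ according to the size $r$ of the overlap, which ranges from $r_0 = \max(1,\ell+j-N)$ (at least $1$ since a nonvanishing Poisson bracket requires a shared variable, and at least $\ell+j-N$ since only $N$ indices are available) to $\min(\ell,j)$.

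Second, for each fixed $r$, I would regroup the terms: a pair $(\bm{m}_\ell,\bm{n}_j)$ with overlap set of size $r$ determines an ordered tuple of $\ell+j-r$ distinct indices in $\{1,\dots,N\}$ (the union), and conversely each such tuple arises from a computable number of pairs $(\bm{m}_\ell,\bm{n}_j)$ together with a choice of which $r$ of those indices are shared and how they are distributed. Summing the corresponding derivative terms over all placements of the shared indices among the first $r$ slots, and symmetrizing to account for all orderings, produces exactly $\Sym_{\ell+j-r}(f^{(\ell)}\wedge_r g^{(j)})$ up to the binomial prefactors $\binom{\ell}{r}\binom{j}{r}r!$ already built into \eqref{r wedge}. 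Reassembling the sum over the union-tuple as an average over $P_{\ell+j-r}^N$ exhibits it as $\ep_{\ell+j-r,N}$ applied to that symmetrized function; using the composition identity $\ep_{\ell+j-r,N} = \ep_{k,N}\circ\ep_{\ell+j-r,k}$ from \cref{lem:epabbN} (valid since $\ell+j-r\le k$), one factors out a common $\ep_{k,N}$. Collecting the leftover combinatorial coefficients — which come from $|P_\ell^N|^{-1}$, $|P_j^N|^{-1}$, the overall factor $N$, the count of pairs with a given union-tuple and overlap pattern, and the normalization of $\ep_{\ell+j-r,N}$ — and simplifying the factorials yields the coefficient $\frac{(N-\ell)!(N-j)!}{(N-1)!(N-\ell-j+r)!}$ stated in the lemma. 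The candidate $h^{(k)}$ is then read off, and it lies in $\g_k$ because each $\ep_{\ell+j-r,k}(\Sym_{\ell+j-r}(\cdots))$ does; uniqueness of $h^{(k)}$ is immediate from the injectivity of $\ep_{k,N}$ established in \cref{lem:epinj}.

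The main obstacle will be the bookkeeping in the second step: carefully counting, for a fixed union-tuple of size $\ell+j-r$ and a fixed overlap pattern, how many pairs $(\bm{m}_\ell,\bm{n}_j)$ reproduce a given derivative term, and checking that after symmetrization all these contributions assemble cleanly into $\Sym_{\ell+j-r}(f^{(\ell)}\wedge_r g^{(j)})$ with no stray factors. One must be attentive to the asymmetry between $f^{(\ell)}$ and $g^{(j)}$ in $\wedge_r$ (the two terms with opposite sign), which reflects the antisymmetry of the Poisson bracket, and to the edge case $k = N$ (i.e.\ $\ell+j-1 \geq N$), where the $\ep_{\ell+j-r,k}$ with $\ell+j-r = N$ is just the identity and the range of $r$ is genuinely truncated from below by $\ell+j-N$. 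I expect that the cleanest route is to first verify the identity after pairing against an arbitrary test tuple of evaluation points — or equivalently to prove the function identity pointwise — rather than manipulating the formal sums abstractly, since the derivative structure makes the overlap analysis transparent. The continuity and linearity needed to regard everything inside $\g_k$ (rather than merely $\Cc^\infty$) follow from \cref{lem:epcont} and \cref{prop:gkLA}.
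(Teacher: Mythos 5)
Your proposal follows essentially the same route as the paper's proof: expand both embeddings as double sums over $P_\ell^N\times P_j^N$, organize by the overlap size $r$ (noting only shared indices contribute and that $r<\max(1,\ell+j-N)$ terms are vacuous), use the symmetry of $f^{(\ell)},g^{(j)}$ to collapse the count into $\Sym_{\ell+j-r}(f^{(\ell)}\wedge_r g^{(j)})$ summed over union-tuples, recognize this as $\ep_{\ell+j-r,N}$ applied to the symmetrization, factor through $\ep_{k,N}$ via \cref{lem:epabbN}, simplify the factorials, and get uniqueness from \cref{lem:epinj}. The remaining work is exactly the bookkeeping you flag, which the paper carries out via an explicit bijection between index pairs and overlap-pattern data, so your plan is sound and matches the paper's argument.
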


\begin{remark}
In the right-hand side of \eqref{r wedge}, the ranges in the tuple $(1,\ldots,r,\ell+1,\ldots,\ell+j-r)$ or $(1,\ldots,r,j+1,\ldots,j+\ell-r)$ are understood as vacuous whenever the lower and upper bounds do not make sense. For example, if $j=1$, then $r=1$ and $(1,\ldots,r,\ell+1,\ldots,\ell+j-r)$ is understood as just $(1)$. Singling out these exceptional cases would be tedious, and therefore we will not do so.
\end{remark}

\begin{proof}[Proof of \cref{lem:epfil}]
Injectivity of the $\epsilon_{k,N}$ operators shows uniqueness. The case where $\ell + j -1 \geq N$ is trivial, since the operator $\epsilon_{N,N}$ is the identity on $\mathfrak{g}_N$. So, assume that $ \ell + j -1 < N$, in which case $k=\ell+j-1$. From the definition \eqref{eq:epdef} of $\ep_{k,N}$, we obtain the following equality of functions on $\Rd^N$:

\begin{align}
     \comm{\epsilon_{\ell,N}(f^{(\ell)})}{\epsilon_{j,N}(g^{(j)})}_{\mathfrak{g}_N} & = N \pb{\epsilon_{\ell,N}(f^{(\ell)})}{\epsilon_{j,N}(g^{(j)})}_{\Rd^N}\nonumber \\
     & = \frac{N}{|P_\ell^N| |P_j^N|} \sum_{i =1}^N
     \sum_{\bm{m}_\ell \in P_\ell^N} \sum_{\bm{n}_j \in P_j^N}
     \left( \nabla_{x_i} f_{\bm{m}_\ell}^{(\ell)} \cdot\nabla_{v_i} g_{\bm{n}_j}^{(j)} - \nabla_{x_i} g_{\bm{n}_j}^{(j)} \cdot\nabla_{v_i} f_{\bm{m}_\ell}^{(\ell)}\right) \nn \\
     & =\frac{N}{|P_\ell^N| |P_j^N|} \sum_{r = 1}^{\min( \ell,j)}  
     \sum_{\substack{(\bm{m}_\ell,\bm{n}_j) \in P_\ell^N\times P_j^N \\  |\{m_1,\dots,m_\ell\} \cap \{n_1,\dots n_j\}|=r}} \sum_{i=1}^N
     \Big( \nabla_{x_i} f_{\bm{m}_\ell}^{(\ell)} \cdot\nabla_{v_i} g_{\bm{n}_j}^{(j)} \nonumber\\
     &\quad - \nabla_{x_i} g_{\bm{n}_j}^{(j)}  \cdot\nabla_{v_i} f_{\bm{m}_\ell}^{(\ell)}\Big). \label{bracket-eps}
\end{align}
To avoid any confusion over notation, we remind the reader that, here, $f_{\bm{m}_\ell}^{(\ell)}, g_{\bm{n}_j}^{(j)}$ are regarded as functions on $\Rd^N$, and therefore, for instance,
\begin{align}
\nabla_{x_i}f_{\bm{m}_\ell}^{(\ell)}(\uz_N) &= \lim_{h\rightarrow 0} \frac{f_{\bm{m}_\ell}^{(\ell)}(\uz_N + h e_{i,x}) - f_{\bm{m}_\ell}^{(\ell)}(\uz_N)}{h},
\end{align}
where $e_{i,x}$ denotes the basis vector in $\Rd^N$ for the $x_i$ variable. Note that if $i\notin \{m_1,\ldots,m_\ell\}\cap \{n_1,\ldots,n_j\}$, then $\nabla_{z_i}f_{\bm{m}_\ell}^{(\ell)} = 0$ or $\nabla_{z_i}g_{\bm{n}_j}^{(j)}=0$. Thus,
\begin{equation}
\sum_{i=1}^N (\cdots) = \sum_{ \substack{ i \in \{m_1,\dots,m_\ell\} \cap \{n_1,\dots n_j\}}} (\cdots).
\end{equation}
In particular, if the cardinality of the intersection equals $r$, then there are only $r$ indices $i$ for which the expression inside the parentheses is possibly nonzero.

Let $ 1 \leq r \leq \min(\ell, j)$. Let us count the number of elements in the set
\begin{equation}\label{eq:PlNPjN}
\{(\bm{m}_\ell,\bm{n}_j)\in P_{\ell}^N \times P_j^N : |\{m_1,\ldots,m_\ell\}\cap \{n_1,\ldots,n_j\}| = r\}.
\end{equation}
For a positive integer $q$, let $\mathscr{P}_{q}^N$ denote the subsets of $\{1,\ldots,N\}$ with cardinality $q$. As the reader may check, there is a bijection between the set \eqref{eq:PlNPjN} and the set of tuples
\begin{multline}
\mathcal{T}=\paren*{A_{o,r}, A_{opos,m}, A_{opos,n}, A_{no,m},A_{no,n}, \pi_{r,m}, \pi_{r,n}, \tau_{\ell-r,m}, \tau_{j-r,n}} \\
\in \mathscr{P}_r^N \times \mathscr{P}_{r}^\ell \times \mathscr{P}_{r}^j \times\mathscr{P}_{\ell-r}^{N-r} \times \mathscr{P}_{j-r}^{N-r} \times \Ss_{r}^2 \times \Ss_{\ell-r}\times\Ss_{j-r}.
\end{multline}
In words, $A_{o,r}$ is the set of $r$ overlaps between the elements of $\bm{m}_\ell$ and $\bm{n}_j$; $A_{opos,m}, A_{opos,n}$ are the sets of indices corresponding to the placements of the overlaps in $\bm{m}_\ell,\bm{n}_j$, respectively; $A_{no,m},A_{no,n}$ are the remaining sets of elements in $\bm{m}_\ell,\bm{n}_j$, respectively, which do not overlap (and therefore which are disjoint from $A_{o,r}$); $\pi_{r,m},\pi_{r,n}$ are permutations of the sets $A_{opos,m}, A_{opos,n}$; and $\tau_{\ell-r,m},\tau_{j-r,n}$ are permutations of the sets $\{1,\ldots,N\}\setminus A_{opos,m}, \{1,\ldots,N\}\setminus A_{opos,n}$, respectively. We note that
\begin{align}
(A_{o,r}, A_{opos,m}, A_{no,m}, \pi_{r,m},\tau_{\ell-r,m}) \mapsto \bm{m}_\ell \\
(A_{o,r}, A_{opos,n}, A_{no,n}, \pi_{r,n}, \tau_{j-r,n}) \mapsto \bm{n}_j
\end{align}
define bijections. 

Fix an overlap set $A_{o,r}$ and fix nonoverlap sets $A_{no,m},A_{no,n}$. If
\begin{align}
\bm{m}_\ell \leftrightarrow (A_{o,r},A_{opos,m}, A_{no,m},\pi_{r,m},\tau_{\ell-r,m}),\\
\bm{m}_\ell' \leftrightarrow (A_{o,r},A_{opos,m}', A_{no,m},\pi_{r,m}',\tau_{\ell-r,m}'),
\end{align}
where $\leftrightarrow$ denotes the bijection, then the symmetry of $f^{(\ell)}$ implies $f_{\bm{m}_\ell}^{(\ell)} = f_{\bm{m}_\ell'}^{(\ell)}$. Similarly, if
\begin{align}
\bm{n}_j  \leftrightarrow (A_{o,r}, A_{opos,n},A_{no,n},\pi_{r,n},\tau_{j-r,n}),\\
\bm{n}_j' \leftrightarrow (A_{o,r},A_{opos,n}',A_{no,n},\pi_{r,n}',\tau_{j-r,n}'),
\end{align}
then the symmetry of $g^{(j)}$ implies $g_{\bm{n}_j}^{(j)} = g_{\bm{n}_j'}^{(j)}$. Since $|\mathscr{P}_{r}^\ell| = {\ell\choose r}$, $|\mathscr{P}_{r}^j| = {j\choose r}$, and $|\Ss_{r}|=r!$, it follows that
\begin{multline}
\sum_{\substack{(\bm{m}_\ell,\bm{n}_j) \in P_\ell^N\times P_j^N \\  |\{m_1,\dots,m_\ell\} \cap \{n_1,\dots n_j\}|=r}} \sum_{i\in \{m_1,\ldots,m_\ell\}\cap \{n_1,\ldots,n_j\}}
     \Big( \nabla_{x_i} f_{\bm{m}_\ell}^{(\ell)} \cdot\nabla_{v_i} g_{\bm{n}_j}^{(j)}      - \nabla_{x_i} g_{\bm{n}_j}^{(j)}  \cdot\nabla_{v_i} f_{\bm{m}_\ell}^{(\ell)}\Big)\\
     = {j\choose r}{\ell\choose r}r!\sum_{\substack{\mathcal{T}\\ A_{opos,m}=A_{opos,n} = \{1,\ldots,r\} \\ \pi_{r,n}=\pi_{r,m}}} \sum_{i=1}^r\Big( \nabla_{x_i} f_{\bm{m}_\ell}^{(\ell)} \cdot\nabla_{v_i} g_{\bm{n}_j}^{(j)}      - \nabla_{x_i} g_{\bm{n}_j}^{(j)}  \cdot\nabla_{v_i} f_{\bm{m}_\ell}^{(\ell)}\Big).
     \end{multline}
Since there is a bijection between tuples $\bm{p}_{\ell+j-r} \in P_{\ell+j-r}^N$ and tuples
\begin{equation}
\paren*{A_{o,r},A_{no,m}, A_{no,n},\pi_{r,m},\tau_{\ell-r,m},\tau_{j-r,n}} \in \mathscr{P}_r^N \times \mathscr{P}_{\ell-r}^{N-r} \times \mathscr{P}_{j-r}^{N-\ell} \times \Ss_{r} \times \Ss_{\ell-r}\times\Ss_{j-r},
\end{equation}
we conclude upon relabeling that \eqref{bracket-eps} equals
\begin{multline}\label{eq:rsumvac}
\frac{N}{|P_\ell^N| |P_j^N|} \sum_{r = 1}^{\min( \ell,j)} {j\choose r}{\ell\choose r}r! \sum_{\bm{p}_{\ell+j-r}} \sum_{i=1}^r \Big(\nabla_{x_{p_i}} f_{\bm{p}_\ell}^{(\ell)} \cdot\nabla_{v_{p_i}} g_{(\bm{p}_r,\bm{p}_{\ell+1;\ell+j-r})}^{(j)} -  
\nabla_{x_{p_i}} g_{\bm{p}_j}^{(j)} \cdot\nabla_{v_{p_i}} f_{(\bm{p}_r,\bm{p}_{j+1;j+\ell-r})}^{(\ell)}\Big),
\end{multline}
where we have used the shorthand $(\bm{p}_r,\bm{p}_{\ell+1;\ell+j-r}) = (p_1,\ldots,p_r,p_{\ell+1},\ldots,p_{\ell+j-r})$ (similarly when $\ell$ and $j$ are swapped). Note that for $1 \leq r < \max (1 , \ell+j-N) \eqqcolon r_0$, the preceding sum is vacuous. 

Using the $\wedge_r$ wedge product notation and the fact that the sum over $\mathbf{p}_{\ell+j-r}\in P_{\ell+j-r}^N$ is invariant under the $\Ss_{\ell+j-r}$ action, we can rewrite the expression \eqref{eq:rsumvac} as
\begin{align}
    &\frac{N}{|P_\ell^N| |P_j^N|} \sum_{r = r_0}^{\min( \ell,j)} \sum_{\bm{p}_{\ell +j -r} \in P_{\ell +j -r}^N} \paren*{ f^{(\ell)} \wedge_r g^{(j)}}_{\bm{p}_{\ell+j-r}} \nn\\
    & = \frac{N}{|P_\ell^N| |P_j^N|} \sum_{r = r_0}^{\min( \ell,j)} |P^N_{\ell+j -r}| \epsilon_{\ell+j - r,N}\paren*{{\Sym_{\ell+j-r}\paren*{f^{(\ell)} \wedge_r g^{(j)}}}} \nn\\
   & = \epsilon_{\ell+j - 1,N} \paren*{\frac{N}{|P_\ell^N| |P_j^N|} \sum_{r = r_0}^{\min( \ell,j)} \frac{N!}{(N - \ell - j +r)!}  \epsilon_{\ell+j - r, \ell + j -1}\paren*{{\Sym_{\ell+j-r}\paren*{f^{(\ell)} \wedge_r g^{(j)}}}}}.
\end{align}
where to obtain the final line, we have used \cref{lem:epabbN} with $a = \ell + j -r, b = \ell + j -1$ and have used the linearity of $\ep_{\ell+j-1,N}$. Observing
\begin{equation}
\frac{N}{|P_\ell^N| |P_j^N|} \frac{N!}{(N-\ell-j+r)!}  = \frac{(N-\ell)!(N-j)!}{(N-1)!(N-\ell-j+r)!},
\end{equation}
we arrive at the stated assertion of the lemma. This completes the proof in all cases of $\ell,j$.
\end{proof}

With \cref{lem:epfil} in hand, we can show that the expression \eqref{eq:GNLBdef} for $\comm{F}{G}_{\G_N}$ is well-defined. Indeed, fix $1\leq k<N$, given $F=(f^{(\ell)})_{\ell=1}^N, G=(g^{(j)})_{j=1}^N\in\G_N$, \cref{lem:epfil} yields, for any $1\leq \ell,j\leq N$ satisfying $\ell+j-1 = k$,
\begin{equation}
\comm{\epsilon_{\ell,N}(f^{(\ell)})}{\epsilon_{j,N}(g^{(j)})}_{\mathfrak{g}_N}^{(k)} =\epsilon_{k,N}\paren*{\sum_{r = r_0}^{\min( \ell,j)} \frac{(N-\ell)!(N-j)!}{(N-1)! (N-\ell -j +r)!} \epsilon_{\ell+j - r, k}\paren*{\Sym_{\ell+j-r}\paren*{f^{(\ell)} \wedge_r g^{(j)}}}}.
\end{equation}
Summing over $\ell,j$ such that $\ell+j-1=k$ and applying $\epsilon_{k,N}^{-1}$ to both sides, we arrive at \eqref{eq:GNLBdef}. In fact, we have also obtained an explicit formula for $\comm{F}{G}_{\G_N}$:
\begin{equation}
\comm{F}{G}_{\G_N}^{(k)} = \sum_{\substack{1\leq \ell,j\leq N \\ \ell+j-1=k}}\sum_{r = r_0}^{\min( \ell,j)} \frac{(N-\ell)!(N-j)!}{(N-1)! (N-\ell -j +r)!} \epsilon_{\ell+j - r, k}\paren*{\Sym_{\ell+j-r}\paren*{f^{(\ell)} \wedge_r g^{(j)}}}.
\end{equation}
If $k=N$, then we should modify the preceding reasoning to sum over $\ell+j-1\geq N$. In all cases of $k$, we obtain
\begin{equation}\label{eq:GNLBform}
\comm{F}{G}_{\G_N}^{(k)} = \sum_{\substack{1\leq \ell,j\leq N \\ \min(\ell+j-1,N) = k}}\sum_{r = r_0}^{\min( \ell,j)} \frac{(N-\ell)!(N-j)!}{(N-1)! (N-\ell -j +r)!} \epsilon_{\ell+j - r, k}\paren*{\Sym_{\ell+j-r}\paren*{f^{(\ell)} \wedge_r g^{(j)}}}
\end{equation}

\begin{remark}\label{rem:Nconlim}
Note that the constant 
\begin{equation}
C_{j\ell N r} \coloneqq \frac{(N-\ell)!(N-j)!}{(N-1)! (N-\ell -j +r)!}
\end{equation}
satisfies $\lim_{N \rightarrow \infty }C_{j \ell N r}= \lim_{N \rightarrow \infty }N^{1-r} = \indic_{r = 1}$. This observation will be used in \Cref{ssec:GinfLA,ssec:GinfLP} to evaluate $N\rightarrow\infty$ limits of $N$-particle Lie and Lie-Poisson brackets.
\end{remark}

We now have all the ingredients to prove \cref{prop:GNLA}.

\begin{proof}[Proof of \cref{prop:GNLA}]
We first consider the algebraic part, which amounts to checking properties \ref{LA1}-\ref{LA3} from \cref{def:LA}. This part is similar to the algebraic portion of the proof of \cite[Proposition 2.1]{MNPRS2020}, but we present again the computations in an effort to make the present article self-contained. 

The first two properties are a ready consequence of the definition of $\comm{\cdot}{\cdot}_{\mathfrak{G}_N}$. For the third property, let $F,G,H\in\mathfrak{G}_N$. We need to show that
\begin{equation}
\comm{F}{\comm{G}{H}_{\G_N}}_{\G_N}+ \comm{H}{\comm{F}{G}_{\G_N}}_{\G_N}+ \comm{G}{\comm{H}{F}_{\G_N}}_{\G_N} =0.
\end{equation}
Since $\epsilon_{k,N}$ is injective, it suffices to show that $\epsilon_{k,N}$ applied to the $k$-th component of the left-hand side of the preceding identity equals the zero element of $\g_N$. We only consider the case $1\leq k<N$ and leave the $k=N$ case as an exercise for the reader. Using the definition of the Lie bracket and bilinearity, we have the identities
\begin{align}
\ep_{k,N}\left(\comm{F}{\comm{G}{H}_{\G_N}}_{\G_N}^{(k)}\right) &= \sum_{j_1+j_2-1=k} \comm{\ep_{j_1,N}(F^{(j_1)})}{\ep_{j_2,N}(\comm{G}{H}_{\G_N}^{(j_2)})}
_{\g_N} \nonumber\\
&=\sum_{j_1+j_2-1=k} \sum_{j_3+j_4-1=j_2} \comm{\ep_{j_1,N}(F^{(j_1)})}{\comm{\ep_{j_3,N}(G^{(j_3)})}{\ep_{j_4,N}(H^{(j_4)})}_{\g_N}}_{\g_N} \nonumber \\
&= \sum_{\ell_1+\ell_2+\ell_3=k+2} \comm{\ep_{\ell_1,N}(F^{(\ell_1)})}{\comm{\ep_{\ell_2,N}(G^{(\ell_2)})}{ 
\ep_{\ell_3,N}(H^{(\ell_3)})}_{\g_N}}_{\g_N},
\end{align}
\begin{align}
\ep_{k,N}\left(\comm{H}{\comm{F}{G}_{\G_N}}_{\G_N}^{(k)}\right) &= \sum_{j_1+j_2-1=k} \comm{\ep_{j_1,N}(H^{(j_1)})}{\ep_{j_2,N}(\comm{F}{G}_{\G_N}^{(j_2)})}_{\g_N} \nonumber\\
&=\sum_{j_1+j_2-1=k} \sum_{j_3+j_4-1=j_2} \comm{\ep_{j_1,N}(H^{(j_1)})}{\comm{\ep_{j_3,N}(F^{(j_3)})}{\ep_{j_4,N}(G^{(j_4)})}_{\g_N}}_{\g_N} \nonumber \\
&=\sum_{\ell_1+\ell_2+\ell_3=k+2} \comm{\ep_{\ell_3,N}(H^{(\ell_3)})}{\comm{\ep_{\ell_1,N}(F^{(\ell_1)})}{\ep_{\ell_2,N}(G^{(\ell_2)})}_{\g_N}}_{\g_N},
\end{align}
\begin{align}
\ep_{k,N}\left(\comm{G}{\comm{H}{F}_{\G_N}}_{\G_N}^{(k)}\right) &= \sum_{j_1+j_2-1=k} \comm{\ep_{j_1,N}(G^{(j_1)})}{\ep_{j_2,N}(\comm{H}{F}_{\G_N}^{(j_2)})}_{\g_N} \nonumber\\
&=\sum_{j_1+j_2-1=k} \sum_{j_3+j_4-1=j_2} \comm{\ep_{j_1,N}(G^{(j_1)})}{\comm{\ep_{j_3,N}(H^{(j_3)})}{\ep_{j_4,N}(F^{(j_4)})}_{\g_N}}_{\g_N} \nonumber \\
&=\sum_{\ell_1+\ell_2+\ell_3=k+2} \comm{\ep_{\ell_2,N}(G^{(\ell_2)})}{\comm{\ep_{\ell_3,N}(H^{(\ell_3)})}{\ep_{\ell_1,N}(F^{(\ell_1)})}_{\g_N}}_{\g_N}.
\end{align}
Since $[\cdot,\cdot]_{\g_N}$ is a Lie bracket and therefore satisfies the Jacobi identity, it follows that for fixed integers $1\leq \ell_1,\ell_2,\ell_3\leq N$,
\begin{equation}
\begin{split}
0 &=\comm{\ep_{\ell_1,N}(F^{(\ell_1)})}{\comm{\ep_{\ell_2,N}(G^{(\ell_2)})}{\ep_{\ell_3,N}(H^{(\ell_3)})}_{\g_N}}_{\g_N} \\
&\phantom{=}+ \comm{\ep_{\ell_3,N}(H^{(\ell_3)})}{\comm{\ep_{\ell_1,N}(F^{(\ell_1)})}{\ep_{\ell_2,N}(G^{(\ell_2)})}_{\g_N}}_{\g_N} \\
&\phantom{=}\phantom{=} + \comm{\ep_{\ell_2,N}(G^{(\ell_2)})}{\comm{\ep_{\ell_3,N}(H^{(\ell_3)})}{\ep_{\ell_1,N}(
F^{(\ell_1)})}_{\g_N}}_{\g_N}.
\end{split}
\end{equation}
Hence,
\begin{equation}
\ep_{k,N}\left(\comm{F}{\comm{G}{H}_{\G_N}}_{\G_N}^{(k)} + \comm{H}{\comm{F}{G}_{\G_N}}_{\G_N}^{(k)} + \comm{G}{\comm{H}{F}_{\G_N}}_{\G_N}^{(k)}\right) = 0\in\g_N.
\end{equation}

We now consider the analytic part, which amounts to checking the continuity of $[\cdot,\cdot]_{\G_N}$.  We wish to show that
\begin{equation}
\G_N \times \G_N \rightarrow \G_N, \qquad (F,G)\mapsto \comm{F}{G}_{\G_N}
\end{equation}
is continuous, for which it suffices to show that for each $k\in \{1,\ldots,N\}$, the map
\[
\G_N \times \G_N \rightarrow \g_k, \qquad (F, G)\mapsto \comm{F}{G}_{\G_N}^{(k)}
\]
is continuous. By the $\G_N$ Lie bracket formula given in \eqref{eq:GNLBform} and the continuity of $\epsilon_{k,N}$ from \cref{lem:epcont}, the continuity of $(F,G)\mapsto [F,G]_{\G_N}$ is then reduced to proving continuity of the map
\begin{equation}
\Cc^\infty(\Rd^\ell) \times \Cc^\infty(\Rd^j) \rightarrow \Cc^\infty(\Rd^{\ell+j-r}), \qquad (f^{(\ell)},g^{(j)})\mapsto f^{(\ell)} \wedge_r g^{(j)},
\end{equation}
where $\wedge_r$ is defined in \eqref{r wedge}. Since $\wedge_r$ is a linear combination of products of derivatives, it is continuous by a similar argument to the analytic part of the proof of \cref{prop:gkLA}.
\end{proof}

We conclude this subsection with what we believe is an interesting fact relating the mappings $\epsilon_{k,N}$ with the notion of taking the marginal of a distribution, the latter notion being important to obtaining Hamiltonian vector field formulae in \Cref{ssec:NgeomLP,ssec:GinfLP}. For each $1 \leq k < N$, define the $k$-particle marginal mapping
\begin{equation}
\int_{\Rd^{N-k}} d\uz_{k+1;N} : \mathfrak{g}_N^* \rightarrow \mathfrak{g}_k^*,
\end{equation}
where, for $\ga\in \g_N^*$, $\int_{\Rd^{N-k}}d\uz_{k+1;N}\ga$ is the unique element in $\g_k^*$ satisfying
\begin{equation}\label{eq:mardef}
\forall\phi \in\g_k, \qquad \ipp*{\phi,\int_{\Rd^{N-k}}d\uz_{k+1;N}\ga}_{\g_k-\g_k^*} \coloneqq \ipp*{\Sym_N\paren*{\phi\otimes 1^{\otimes N-k}}, \ga}_{\g_N-\g_N^*}.
\end{equation}
We additionally define the mapping $\int dz_{N+1, N} : \mathfrak{g}_N^* \rightarrow \mathfrak{g}_N^*$ to be the identity map. Sometimes, we use the alternative notation $\int_{\Rd^{N-k}}d\ga(\cdot,\uz_{k+1;N})$. Our duality result for the maps $\ep_{k,N}$ and $\int_{\Rd^{N-k}}d\uz_{k+1;N}$ is the following proposition.

\begin{proposition}\label{prop:eptrdual}
For $1 \leq k \leq N$, the map $\int dz_{k+1,N}: \g_N^*\rightarrow \g_{k}^*$ is a continuous linear operator and we have the operator equality
\begin{equation}
\epsilon_{k,N}^*  = \int d\uz_{k+1;N}.
\end{equation}
\end{proposition}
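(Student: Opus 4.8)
The plan is to reduce the entire statement to one elementary identity relating $\epsilon_{k,N}$ to symmetrized trivial extension, after which the operator equality is read off by duality and the topological claims come for free. First I would invoke \cref{lem:epcont}: the map $\epsilon_{k,N}:\g_k\to\g_N$ is continuous and linear, so by \cref{prop:contadj} its adjoint $\epsilon_{k,N}^*:\g_N^*\to\g_k^*$ is a continuous linear operator. Consequently, once the identity $\epsilon_{k,N}^* = \int d\uz_{k+1;N}$ is established, the asserted continuity and linearity of $\int d\uz_{k+1;N}$ follow immediately.

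The only genuine computation is to show that for every $\phi\in\g_k$,
\[
\epsilon_{k,N}(\phi) \;=\; \Sym_N\!\bigl(\phi\otimes 1^{\otimes(N-k)}\bigr)\qquad\text{in }\g_N .
\]
Unwinding \eqref{eq:Symdef}, the right-hand side evaluated at $\uz_N$ equals $\tfrac{1}{N!}\sum_{\pi\in\Ss_N}\phi(z_{\pi(1)},\dots,z_{\pi(k)})$. I would partition $\Ss_N$ according to the value of the ordered tuple $(\pi(1),\dots,\pi(k))$: as $\pi$ ranges over $\Ss_N$ this tuple ranges over exactly $P_k^N$ (ordered $k$-tuples of distinct indices), and each value is attained by precisely $(N-k)!$ permutations, namely the free choices of $\pi$ on the complementary $N-k$ slots. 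Hence the sum equals $\tfrac{(N-k)!}{N!}\sum_{\bm{j}_k\in P_k^N}\phi(z_{j_1},\dots,z_{j_k})$, and since $|P_k^N| = N!/(N-k)!$ this is precisely $\epsilon_{k,N}(\phi)(\uz_N)$ by \eqref{eq:epdef}. (Symmetry of $\phi$ is not even used for the identity itself, only to guarantee that both sides lie in $\g_N$.)

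Combining this with the definition \eqref{eq:mardef} of the $k$-particle marginal, I then compute, for arbitrary $\ga\in\g_N^*$ and $\phi\in\g_k$,
\begin{align*}
\ipp*{\phi,\epsilon_{k,N}^*\ga}_{\g_k-\g_k^*}
&= \ipp*{\epsilon_{k,N}(\phi),\ga}_{\g_N-\g_N^*}
 = \ipp*{\Sym_N\!\bigl(\phi\otimes 1^{\otimes(N-k)}\bigr),\ga}_{\g_N-\g_N^*} \\
&= \ipp*{\phi,\int d\uz_{k+1;N}\,\ga}_{\g_k-\g_k^*}.
\end{align*}
Since elements of $\g_k^*$ are determined by their action on $\g_k$, this forces $\epsilon_{k,N}^*\ga = \int d\uz_{k+1;N}\,\ga$, and as $\ga$ was arbitrary we obtain the operator equality; the remaining continuity and linearity assertions then follow from Step 1. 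The degenerate case $k=N$ is immediate: $\epsilon_{N,N}=\mathrm{id}_{\g_N}$, hence $\epsilon_{N,N}^*=\mathrm{id}_{\g_N^*}$, consistent with the convention that $\int dz_{N+1,N}$ is the identity.

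I do not expect any substantial obstacle here. The only point requiring a moment's care is the multiplicity bookkeeping in the second step — one must work consistently with \emph{ordered} tuples of distinct indices and not conflate $P_k^N$ with the family of $k$-element subsets — together with the well-definedness of the right-hand side of \eqref{eq:mardef} as an element of $\g_k^*$, which is already asserted in the text preceding the proposition and in any case also follows from the displayed identity and the continuity of $\epsilon_{k,N}$.
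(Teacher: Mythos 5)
Your proposal is correct and follows essentially the same route as the paper: the identical permutation-counting argument (each ordered $k$-tuple $(\sigma(1),\dots,\sigma(k))$ arising from exactly $(N-k)!$ permutations, combined with $|P_k^N|=N!/(N-k)!$), with the only cosmetic difference that you establish $\epsilon_{k,N}(\phi)=\Sym_N(\phi\otimes 1^{\otimes(N-k)})$ as an identity of functions before pairing with $\ga$, whereas the paper performs the same count directly inside the duality pairing. The continuity claim is likewise obtained exactly as in the paper, via \cref{lem:epcont} and \cref{prop:contadj}.
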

\begin{proof}
Let $\ga \in \mathfrak{g}_N^*$ and $\phi \in \mathfrak{g}_k$. We calculate 
\begin{equation}\label{eq:epstar}
\ipp*{\phi, \epsilon_{k,N}^*(\ga)}_{\g_k-\g_k^*} = \ipp*{\ep_{k,N}(\phi), \ga}_{\g_N-\g_N^*} = \ipp*{\frac{1}{|P_k^N|} \sum_{\mathbf{m}_k \in P_k^N} \phi_{\mathbf{m}_k}, \ga}_{\g_N-\g_N^*}
\end{equation}
and
\begin{align}
\ipp*{\phi,\int_{\Rd^{N-k}}d\uz_{k+1;N}\ga}_{\g_k-\g_k^*} &=\ipp*{\Sym_N\paren*{\phi \otimes 1^{\otimes N-k}}, \ga}_{\g_N-\g_N^*} \nn\\
&= \ipp*{\frac{1}{N!} \sum_{\sigma \in\Ss_N} (\phi \otimes 1^{\otimes (N-k)})_{\sigma}, \ga}_{\g_N-\g_N^*},\label{eq:pteval}
\end{align}
where $(\phi\otimes 1^{\otimes N-k})_{\sigma}(\uz_N) \coloneqq (\phi\otimes 1^{\otimes N-k})(z_{\sigma(1)},\ldots,z_{\sigma(N)})$. For each $k$-tuple $\mathbf{m}_k \in P_k^N$, define the set 
\begin{equation}
        A(\bm{m}_k) \coloneqq \{ \sigma \in \Ss_N : (\sigma(1),\ldots,\sigma(k)) = \mathbf{m}_k\},
    \end{equation}
which has cardinality $(N-k)!$. If $\sigma \in A(\mathbf{m}_k) \cap A(\mathbf{m}_k')$, then $\mathbf{m}_k' = (\sigma(1),\ldots,\sigma(k)) = \mathbf{m}_k$, which implies that the sets $A(\mathbf{m}_k)$ are pairwise disjoint. Given a permutation $\sigma \in \Ss_N$, set $\mathbf{m}_k = (\sigma(1),\ldots,\sigma(k))$. Then trivially, $\sigma \in A(\bm{m}_k)$ and we have shown the partition 
\begin{equation}
    \bigsqcup_{\mathbf{m}_k \in P_k^N} A(\mathbf{m}_k)  = \Ss_N. 
\end{equation}
Hence, we have 
\begin{align}
\sum_{\sigma \in \Ss_N } (\phi \otimes 1^{\otimes (N-k)})_\sigma  &= \sum_{\mathbf{m}_k \in P_k^N} \sum_{\sigma \in A(\mathbf{m}_k)}(\phi \otimes 1^{\otimes (N-k)})_\sigma \nn\\
&=  \sum_{\mathbf{m}_k \in P_k^N} \sum_{\sigma \in A(\mathbf{m}_k)} \phi_{\mathbf{m}_k} \nn\\
&=(N-k)! \sum_{\mathbf{m}_k \in P_k^N}  \phi_{\mathbf{m}_k}.
\end{align}
Recalling that $|P_k^N| = N!/(N-k)!$, we have shown that \eqref{eq:epstar} equals \eqref{eq:pteval}. This equality of operators proves the continuity of $\int_{\Rd^{N-k}} d\uz_{k+1;N}$ by \cref{prop:contadj}  and \cref{lem:epcont}.
\end{proof}

\begin{remark}
One may also obtain \cref{prop:eptrdual} stated below as a corollary of \Cref{prop:LAhomsum,prop:marPo}; but we feel the argument presented above is more direct.
\end{remark}

\subsection{Lie-Poisson space $\G_N^*$ of $N$-hierarchies of states}\label{ssec:NgeomLP}
We turn to proving \cref{prop:GNLP}, which asserts that there is a well-defined Lie-Poisson structure on the dual space $\G_N^*$ of $N$-hierarchies of states. Later, in \cref{ssec:HamBBGKY}, we will use this Lie-Poisson structure to demonstrate a Hamiltonian formulation of the BBGKY hierarchy.

We start with a technical lemma, which is a straightforward consequence of \cref{lem:gkremon}.

\begin{lemma}\label{lem:GNrefl}
For each $N \in \N$, the space $\mathfrak{G}_N$ is reflexive.
\end{lemma}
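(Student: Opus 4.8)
The statement to prove is that $\G_N = \bigoplus_{k=1}^N \g_k$ is reflexive, and the claim is that this follows straightforwardly from \cref{lem:gkremon}, which tells us each $\g_k$ is reflexive (in fact Fr\'echet and Montel). The strategy is to reduce reflexivity of a finite direct sum to reflexivity of each summand, using the standard functional-analytic fact that reflexivity is preserved under finite products (equivalently, finite direct sums, since these coincide when there are finitely many factors).

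\textbf{Key steps.} First, I would recall that $\G_N$ carries the product topology and that, being a finite product, the algebraic direct sum and the product agree as topological vector spaces; this is already noted in \eqref{eq:GNdef}. Second, I would invoke \cref{lem:gkremon} to get that each $\g_k$ ($1 \le k \le N$) is reflexive. Third, I would cite the standard result that a finite product $\prod_{k=1}^N E_k$ of reflexive locally convex spaces is reflexive --- this is classical and can be referenced, e.g., to \cite[\S 23.5]{Kothe1969I} or deduced from the fact that the strong dual of a finite product is the finite product of the strong duals (which holds because finite products and finite direct sums coincide and the strong dual of a direct sum is the product of strong duals, as already used in \eqref{eq:GN*def}), together with the bidual commuting with finite products. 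Concretely: $\G_N^* \cong \prod_{k=1}^N \g_k^*$ by \eqref{eq:GN*def}, and dualizing again gives $\G_N^{**} \cong \left(\prod_{k=1}^N \g_k^*\right)^* \cong \bigoplus_{k=1}^N \g_k^{**} \cong \bigoplus_{k=1}^N \g_k = \G_N$, where the middle isomorphism is again the duality between finite direct sums and products, and the third uses reflexivity of each $\g_k$. One should also check that the canonical embedding $\G_N \hookrightarrow \G_N^{**}$ is exactly this isomorphism (not merely that the spaces are abstractly isomorphic), but this is immediate from naturality of the canonical map with respect to the product/coproduct decomposition.

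\textbf{Main obstacle.} Honestly, there is no serious obstacle here --- this is a ``technical lemma'' whose entire content is bookkeeping with duals of finite products. The only point requiring a modicum of care is to make sure one is tracking the \emph{canonical} bidual embedding throughout (so that one genuinely verifies reflexivity in the sense of \cref{def:barsp}'s accompanying definition, i.e. that $X \hookrightarrow X^{**}$ is a topological isomorphism), rather than just producing an abstract isomorphism $\G_N \cong \G_N^{**}$. Since the canonical maps commute with the finite direct sum/product identifications, this causes no trouble. Thus I expect the proof to be a short paragraph: state the topology on $\G_N$, apply \cref{lem:gkremon} to each summand, and cite the permanence of reflexivity under finite products (with the duality-of-direct-sums-and-products identification from \cite[Proposition 2, \S 14, Chapter 3]{Kothe1969I} already invoked for \eqref{eq:GN*def}).
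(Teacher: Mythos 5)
Your proposal is correct and follows essentially the same route as the paper: it invokes \cref{lem:gkremon} for reflexivity of each $\g_k$ and then uses the duality of finite direct sums and products from \cite[Proposition 2, \S 14, Chapter 3]{Kothe1969I} to obtain exactly the chain $\G_N^{**}\cong\bigl(\prod_{k=1}^N\g_k^*\bigr)^*\cong\bigoplus_{k=1}^N\g_k^{**}\cong\G_N$ appearing in the paper's proof. Your remark about verifying that the canonical bidual embedding realizes this isomorphism is a reasonable extra point of care, though the paper does not dwell on it.
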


\begin{proof}
Since each $\mathfrak{g}_k$ is reflexive by \cref{lem:gkremon}, $\mathfrak{G}_N$ is also reflexive since, by using once again  \cite[Proposition 2 \S 14, Chapter 3]{Kothe1969I}, we have the chain of isomorphisms 
\begin{equation}
\mathfrak{G}_N^{**} = \left(\bigoplus_{k=1}^N \mathfrak{g}_k \right)^{**} \cong \left( \prod_{k=1}^N \mathfrak{g}_k^* \right)^{*} \cong \bigoplus_{k=1}^N \mathfrak{g}_k^{**} \cong \bigoplus_{k=1}^N \mathfrak{g}_k \cong \mathfrak{G}_N.
\end{equation}
\end{proof}

\begin{proof}[Proof of \cref{prop:GNLP}]
The proof is similar to that of \cref{prop:gkLP} for $\g_k^*$, except now we will apply \cref{thm:Glockner} with $E = \mathfrak{G}_N^*$. First, note that by \cref{lem:GNrefl}, \cref{prop:GNLA}, and \eqref{eq:brid}, the bracket on $E^* = \mathfrak{G}_N^{**} $ is continuous, \textit{a fortiori} hypocontinuous. It therefore suffices to show that $E$ is a $k^\infty$-space in the sense of \cref{def:kinfty}. To show this, note that for $n \in \N$,
\begin{equation}
    (\mathfrak{G}_N^*)^n \cong \prod_{j=1}^n \prod_{k=1}^N \mathfrak{g}_k^*,
\end{equation}
which shows that $(\mathfrak{G}_N^*)^n$ is a finite product of (DF) Montel spaces $\g_k^*$ (recall \cref{lem:gkremon}), hence $(DF)$ Montel itself. Therefore, \cref{thm:Webb} implies that $(\mathfrak{G}_N^*)^n$ is a sequential space, which in turn implies that $(\mathfrak{G}_N^*)^n$ is a $k$-space by \cref{prop:seqk}. Since $n \in \N$ was arbitrary, it follows that $\mathfrak{G}_N^*$ is a $k^\infty$-space. 
\end{proof}

Abstractly, \cref{prop:GNLP} tells us that for any functional $\Gc\in \Cc^\infty(\G_N^*)$, there exists a unique Hamiltonian vector field
$X_{\Gc} \in \Cc^\infty(\G_N^*, \G_N^*)$ characterized by the property that
\begin{equation}
\forall \Fc\in \Cc^\infty(\G_N^*), \qquad \Big(X_{\Gc}(\Fc)\Big)(\Ga_N)  = \ds\Fc[\Ga_N](X_{\Gc}(\Ga_N)) = \pb{\Gc}{\Fc}_{\G_N^*}(\Ga_N).
\end{equation}
For applications, in particular as it pertains to the BBGKY and Vlasov hierarchies (see \Cref{ssec:HamBBGKY,ssec:HamVH}) and evaluating limits as $N\rightarrow\infty$ (see \cref{ssec:GinfLA}), it is useful to have an explicit formula for the Hamiltonian vector field $X_{\Gc}$. We provide such a formula with the next proposition. 

\begin{proposition}\label{prop:NhamVF}
If $\Gc\in \mathcal{C}^\infty(\G_N^*)$, then we have the following formula for the Hamiltonian vector field $X_{\Gc}$ with respect to the bracket $\pb{\cdot}{\cdot}_{\G_N^*}$: for $1\leq \ell\leq N$ and any $\Ga=(\ga^{(k)})_{k=1}^N\in \G_N^*$,
\begin{equation}\label{eq:NhamVF}
X_{\Gc}(\Ga)^{(\ell)} = \sum_{j=1}^N \sum_{r=r_0}^{\min(\ell,j)}C_{\ell j Nr}{j\choose r}\int_{(\R^{2d})^{k-\ell}}d\uz_{\ell+1;k}\pb{\sum_{\ab_r\in P_r^\ell} \ds\Gc[\Ga]_{(\ab_r,\ell+1,\ldots,\ell+j-r)}^{(j)}}{\gamma^{(k)}}_{(\R^{2d})^k},
\end{equation}
where $C_{\ell j Nr}\coloneqq  \frac{(N-\ell)!(N-j)!}{(N-1)!(N-\ell-j+r)!}$,  $k \coloneqq \min ( \ell + j -1 , N)$, and $r_0 \coloneqq \max (1, \ell + j -N)$.
\end{proposition}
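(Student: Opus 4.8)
The strategy is to compute the Lie--Poisson bracket $\pb{\Gc}{\Fc}_{\G_N^*}(\Ga)$ explicitly, manipulate it into the form $\ipp{\ds\Fc[\Ga],\, V(\Ga)}_{\G_N-\G_N^*}$ for a suitable $V(\Ga)\in\G_N$-dual-paired object, and then invoke uniqueness of the Hamiltonian vector field (granted by \cref{prop:GNLP} via \cref{thm:Glockner}) to conclude $X_{\Gc}(\Ga)=V(\Ga)$. First I would start from the definition \eqref{eq:GN*LPdef}, writing
\begin{equation*}
\pb{\Gc}{\Fc}_{\G_N^*}(\Ga) = \sum_{k=1}^N \ipp*{\comm{\ds\Gc[\Ga]}{\ds\Fc[\Ga]}_{\G_N}^{(k)},\ga^{(k)}}_{\g_k-\g_k^*},
\end{equation*}
and substitute the explicit Lie bracket formula \eqref{eq:GNLBform}, which expresses $\comm{\ds\Gc[\Ga]}{\ds\Fc[\Ga]}_{\G_N}^{(k)}$ as a sum over $\ell,j$ with $\min(\ell+j-1,N)=k$ and over $r$ of $C_{\ell j N r}\,\epsilon_{\ell+j-r,k}(\Sym_{\ell+j-r}(g^{(\ell)}\wedge_r f^{(j)}))$, where $g^{(\ell)}\coloneqq\ds\Gc[\Ga]^{(\ell)}$ and $f^{(j)}\coloneqq\ds\Fc[\Ga]^{(j)}$. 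Using \cref{prop:eptrdual}, the pairing against $\ga^{(k)}$ of the $\epsilon_{\ell+j-r,k}$-image turns into a pairing against the marginal $\int d\uz_{\ell+j-r+1;k}\,\ga^{(k)}$, and since $\Sym$ is self-adjoint under the pairing and the marginal is already symmetric, the $\Sym$ can be moved onto the state or simply dropped.

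The heart of the computation is the reindexing step: one needs to isolate the dependence on $f^{(j)}=\ds\Fc[\Ga]^{(j)}$ so that the whole expression reads $\sum_j \ipp{f^{(j)}, W^{(j)}(\Ga)}_{\g_j-\g_j^*}$ for some $W^{(j)}(\Ga)\in\g_j^*$; then $W(\Ga)=(W^{(j)}(\Ga))_{j=1}^N$ is the candidate Hamiltonian vector field. The wedge product $g^{(\ell)}\wedge_r f^{(j)}$ from \eqref{r wedge} is a sum of two groups of terms, one with $\nabla_x g^{(\ell)}\cdot\nabla_v f^{(j)}$ and one with $\nabla_x f^{(j)}\cdot\nabla_v g^{(\ell)}$; in both, one integrates by parts in the $z$-variables on which $f^{(j)}$ depends (against the paired state) to move all derivatives off $f^{(j)}$ and onto $g^{(\ell)}$ and the state, which is exactly what produces the inner Poisson bracket $\pb{\,\cdot\,}{\ga^{(k)}}_{(\R^{2d})^k}$ appearing in \eqref{eq:NhamVF}. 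The combinatorial bookkeeping — accounting for the $\binom{\ell}{r}\binom{j}{r}r!$ factor in \eqref{r wedge}, the sum over $\ab_r\in P_r^\ell$ that appears in \eqref{eq:NhamVF}, and correctly tracking which tuple of indices $(\ab_r,\ell+1,\ldots,\ell+j-r)$ the shifted copy of $\ds\Gc[\Ga]^{(j)}$ carries — is the main obstacle; it is entirely parallel to (and no harder than) the analogous computation for the quantum Gross--Pitaevskii/BBGKY hierarchies in \cite[Section 6]{MNPRS2020}, but the bookkeeping must be done carefully and the roles of $\ell$ (the output slot) and $j$ (the summation index coming from $\ds\Gc$) are not symmetric, which is a frequent source of sign/index errors.

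Once the identity $\pb{\Gc}{\Fc}_{\G_N^*}(\Ga)=\ipp{\ds\Fc[\Ga], W(\Ga)}_{\G_N-\G_N^*}$ is established for all $\Fc\in\Cc^\infty(\G_N^*)$, I would conclude by two observations. First, the right-hand side equals $\ds\Fc[\Ga](W(\Ga))$, so $W(\Ga)$ satisfies the defining property of $X_{\Gc}(\Ga)$; by the uniqueness part of \cref{prop:GNLP} (cf. \cref{rem:HamVFuniq}), $X_{\Gc}(\Ga)=W(\Ga)$, giving formula \eqref{eq:NhamVF}. Second, one should check that $W(\Ga)$ genuinely lies in $\G_N^*$ and depends smoothly on $\Ga$ — but this is automatic, since \cref{prop:GNLP} already guarantees $X_{\Gc}\in\Cc^\infty(\G_N^*,\G_N^*)$, so no independent regularity argument is needed; the proposition is purely the extraction of an explicit formula. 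A minor point to address along the way is the convergence/finiteness of all sums, which is trivial here because every sum over $\ell,j,r$ is finite (bounded by $N$), in contrast to the infinite-particle setting of \cref{ssec:GinfLP}.
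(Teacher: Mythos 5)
Your plan follows the paper's proof in all essentials: expand the bracket \eqref{eq:GN*LPdef} using the explicit formula \eqref{eq:GNLBform}, integrate by parts against the state to strip every derivative off the component of $\ds\Fc[\Ga]$, reassemble the result as a pairing of $\ds\Fc[\Ga]$ against a candidate field, and conclude by the uniqueness of the Hamiltonian vector field, with smoothness already supplied by \cref{prop:GNLP} so that only the extraction of the formula remains. Your use of \cref{prop:eptrdual} to convert the pairing of the $\ep_{\ell+j-r,k}$-image against $\ga^{(k)}$ into a pairing against a marginal is a harmless repackaging of what the paper does by unpacking $\ep_{\ell+j-r,k}$ directly and then changing variables; either way the relabeling that places the undifferentiated copy of $\ds\Fc[\Ga]$'s component on its own block of coordinates still has to be carried out, and that is exactly where the factors $C_{\ell j N r}$, $\binom{j}{r}$ and the sum over $\ab_r\in P_r^\ell$ in \eqref{eq:NhamVF} arise; you flag this bookkeeping but do not execute it, which is acceptable for a plan.

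The one concrete slip is the bracket order. The defining relation actually used in this paper (\cref{def:PVS}, \eqref{eq:HamVFuniq}, and the proofs of \cref{prop:LioHam,prop:Vlas}) is $\ds\Fc[\Ga]\big(X_{\Gc}(\Ga)\big)=\pb{\Fc}{\Gc}_{\G_N^*}(\Ga)$, so the quantity to compute and match is $\pb{\Fc}{\Gc}$, not $\pb{\Gc}{\Fc}$. Starting from $\pb{\Gc}{\Fc}$ and writing it as $\ds\Fc[\Ga](W(\Ga))$ produces $W=-X_{\Gc}$ by antisymmetry: already in the case $N=\ell=j=r=1$ your procedure yields $W(\ga)=-\pb{\ds\Gc[\ga]}{\ga}_{\R^{2d}}$, the negative of \eqref{eq:NhamVF}. (The confusion is understandable, since the sentence following \cref{prop:GNLP} states the reversed convention, but that sentence is inconsistent with \cref{def:PVS} and with how the vector field is used everywhere else.) Swapping the order at the outset removes the sign discrepancy, and the rest of your argument then coincides with the paper's.
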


Before proceeding to the proof of \cref{prop:NhamVF}, some explanation regarding the well-definedness of the expression \eqref{eq:NhamVF} is in order. First, thanks to the identification $\g_j\cong \g_j^{**}$, each $\ds\Gc[\Ga]^{(j)} \in\g_j$ is a symmetric element of the test function space $\mathcal{C}^\infty((\R^{2d})^j)$. Thus, the symmetrized function $\sum_{\ab_r\in P_r^\ell} \ds\Gc[\Ga]_{(\ab_r,\ell+1,\ldots,\ell+j-r)}^{(j)}$ is an element of $\g_k$, i.e. a symmetric element of $\mathcal{C}^\infty((\R^{2d})^k)$. Although $\ga^{(k)}\in \g_k^*$, i.e. a symmetric distribution on $\mathcal{C}^\infty((\R^{2d})^k)$, is not a function, the usual rules for multiplication of a distribution by a test function and differentiation of a distribution show that the Poisson bracket above is again a well-defined element of $\g_k^*$. Hence by \cref{prop:eptrdual}, we can take its $\ell$-particle marginal, which is symbolically denoted in \eqref{eq:NhamVF} by the integration over the last $k-\ell$ coordinates, to obtain an element of $\g_\ell^*$. Thus, the right-hand side in \eqref{eq:NhamVF} is nothing but a linear combination of elements in $\g_\ell^*$, hence itself an element of $\g_\ell^*$. With these clarifications, we turn to the proof of \cref{prop:NhamVF}.

\begin{remark}
There are some exceptional cases concerning our notation in the right-hand side of \eqref{eq:NhamVF}, which, out of convenience, we do not separate out. When $k=\ell$ the integration is vacuous. When $j=1$ (and therefore $r_0=1$), the tuple $(\ab_r,\ell+1,\ldots,\ell+j-r)$ should be replaced by $\ab_1$. When $j=2$, the tuple $(\ab_r,\ell+1,\ldots,\ell+j-r)$ should be replaced by $(\ab_1,\ell+1)$ if $r=1$ and $\ab_2$ if $r=2$.
\end{remark}

\begin{proof}[Proof of \cref{prop:NhamVF}]
To increase the transparency of our computations, it is convenient to use integral notation, instead of distributional pairings, throughout the proof. By definition of the $\G_N^*$ Poisson bracket \eqref{eq:GN*LPdef},
\begin{equation}
\pb{\Fc}{\Gc}_{\G_N^*}(\Ga) = \sum_{k=1}^N \int_{(\R^{2d})^k}d\ga^{(k)}(\uz_k) \comm{\ds\Fc[\Ga]}{\ds\Gc[\Ga]}_{\G_N}^{(k)}(\uz_k).
\end{equation}
By the formula \eqref{eq:GNLBform} for the $\G_N$ Lie bracket,
\begin{equation}
\comm{\ds\Fc[\Ga]}{\ds\Gc[\Ga]}_{\G_N}^{(k)} = \sum_{\substack{1\leq \ell,j\leq N \\ \min(\ell+j-1,N)=k}} \sum_{r=r_0}^{\min(\ell,j)} C_{\ell j Nr} \ep_{\ell+j-r,k}\paren*{\Sym_{\ell+j-r}\paren*{\ds\Fc[\Ga]^{(\ell)} \wedge_r \ds\Gc[\Ga]^{(j)}}},
\end{equation}
where $C_{\ell jNr}$ is as above. To compactify the notation, let us set $f^{(\ell)} \coloneqq \ds\Fc[\Ga]^{(\ell)}$ and $g^{(j)}\coloneqq \ds\Gc[\Ga]^{(j)}$, the dependence on $\Ga$ being implicit. By definition of $\ep_{\ell+j-r,k}$ and using the invariance of the summation $\sum_{\bm{p}_{\ell+j-r}\in P_{\ell+j-r}^k}$ under the $\Ss_{\ell+j-r}$ action, we have that
\begin{align}
&\ep_{\ell+j-r,k}\paren*{\Sym_{\ell+j-r}\paren*{f^{(\ell)}\wedge_r g^{(j)}}} \nonumber\\
&= \frac{1}{|P_{\ell+j-r}^{k}|} \sum_{\bp_{\ell+j-r}\in P_{\ell+j-r}^{k}} \Big(f^{(\ell)}\wedge_r g^{(j)}\Big)_{\bm{p}_{\ell+j-r}} \nonumber\\
&=\frac{1}{|P_{\ell+j-r}^{k}|} \sum_{\bp_{\ell+j-r}\in P_{\ell+j-r}^{k}}{\ell\choose r}{j\choose r}{r!}\sum_{i=1}^r \Bigg(\nabla_{x_{p_i}}f_{\bp_\ell}^{(\ell)}\cdot \nabla_{v_{p_i}}g^{(j)}_{(\bp_r,\bp_{\ell+1;\ell+j-r})}  \nonumber\\
&\quad - \nabla_{x_{p_i}}g_{\bp_j}^{(j)}\cdot\nabla_{v_{p_i}}f_{(\bp_r,\bp_{j+1;j+\ell-r})}^{(\ell)}\Bigg),
\end{align}
where the ultimate line follows from the definition \eqref{r wedge} of $\wedge_r$. Thus, setting $C_{\ell j N rk} \coloneqq \frac{C_{\ell j Nr}{\ell\choose r}{j\choose r}{r!}}{|P_{\ell+j-r}^{k}|}$, we arrive at the identity
\begin{multline}\label{eq:rhssumk}
\pb{\Fc}{\Gc}_{\G_N^*}(\Ga) = \sum_{k=1}^N \sum_{\substack{1\leq \ell,j\leq N \\ \min(\ell+j-1,N)=k}}\sum_{r=r_0}^{\min(\ell,j)}C_{\ell jNrk}\sum_{\bp_{\ell+j-r}\in P_{\ell+j-r}^{k}}\sum_{i=1}^r\Bigg(\int_{(\R^{2d})^k}d\ga^{(k)}(\uz_k) \\
\Big(\nabla_{x_{p_i}}f_{\bp_\ell}^{(\ell)}(\uz_{k})\cdot \nabla_{v_{p_i}}g_{(\bp_r,\bp_{\ell+1;\ell+j-r})}^{(j)}(\uz_k) -\nabla_{x_{p_i}}g_{\bp_j}^{(j)}(\uz_k)\cdot \nabla_{v_{p_i}}f_{(\bp_r,\bp_{j+1;j+\ell-r})}^{(\ell)}(\uz_k) \Big)\Bigg).
\end{multline}
By introducing in the second term of the last line the relabeling $\bm{q}_{\ell+j-r}$ of $\bp_{\ell+j-r}$ according to
\begin{equation}
\bm{q}_r \coloneqq \bp_r, \quad \bm{q}_{r+1;\ell} \coloneqq \bp_{j+1;j+\ell-r}, \quad \bm{q}_{\ell+1;\ell+j-r} \coloneqq \bp_{r+1;j},
\end{equation}
we see that
\begin{multline}
\sum_{\bp_{\ell+j-r}\in P_{\ell+j-r}^{k}}\sum_{i=1}^r\int_{(\R^{2d})^k}d\ga^{(k)}(\uz_k) \Big(\nabla_{x_{p_i}}f_{\bp_\ell}^{(\ell)}(\uz_{k})\cdot \nabla_{v_{p_i}}g_{(\bp_r,\bp_{\ell+1;\ell+j-r})}^{(j)}(\uz_k) \\
-\nabla_{x_{p_i}}g_{\bp_j}^{(j)}(\uz_k)\cdot \nabla_{v_{p_i}}f_{(\bp_r,\bp_{j+1;j+\ell-r})}^{(\ell)}(\uz_k) \Big)\\
= \sum_{\bp_{\ell+j-r}\in P_{\ell+j-r}^{k}}\sum_{i=1}^r\int_{(\R^{2d})^k}d\ga^{(k)}(\uz_k) \Big(\nabla_{x_{p_i}}f_{\bp_\ell}^{(\ell)}(\uz_{k})\cdot \nabla_{v_{p_i}}g_{(\bp_r,\bp_{\ell+1;\ell+j-r})}^{(j)}(\uz_k) \\
-\nabla_{x_{p_i}}g_{(\bp_r,\bp_{\ell+1;\ell+j-r})}^{(j)}(\uz_k)\cdot \nabla_{v_{p_i}}f_{\bp_\ell}^{(\ell)}(\uz_k)\Big).
\end{multline}

Integrating by parts (in the distributional sense) the $\nabla_{x_{p_i}}$, we have that
\begin{multline}
\int_{(\R^{2d})^k} d\ga^{(k)}(\uz_k)\nabla_{x_{p_i}}f_{\bp_\ell}^{(\ell)}(\uz_k)\cdot \nabla_{v_{p_i}}g_{(\bp_r,\bp_{\ell+1;\ell+j-r})}^{(j)}(\uz_k) \\
= - \int_{(\R^{2d})^k}d\ga^{(k)}(\uz_k) f_{\bp_\ell}^{(\ell)}(\uz_k) \div_{x_{p_i}}\nabla_{v_{p_i}}g_{(\bp_r,\bp_{\ell+1;\ell+j-r})}^{(j)}(\uz_k) \\
-\int_{(\R^{2d})^k}d\nabla_{x_{p_i}}\ga^{(k)}(\uz_k)\cdot f_{\bp_\ell}^{(\ell)}(\uz_k)\nabla_{v_{p_i}} g_{(\bp_r,\bp_{\ell+1;\ell+j-r})}^{(j)}(\uz_k).
\end{multline}
Similarly integrating by parts the $\nabla_{v_{p_i}}$,
\begin{multline}
-\int_{(\R^{2d})^k} d\ga^{(k)}(\uz_k)\nabla_{v_{p_i}}f_{\bp_\ell}^{(\ell)}(\uz_k)\cdot \nabla_{x_{p_i}}g_{(\bp_r,\bp_{\ell+1;\ell+j-r})}^{(j)}(\uz_k) \\
= \int_{(\R^{2d})^k} d\ga^{(k)}(\uz_k)f_{\bp_\ell}^{(\ell)}(\uz_k)\div_{v_{p_i}}\nabla_{x_{p_i}}g_{(\bp_r,\bp_{\ell+1;\ell+j-r})}^{(j)}(\uz_k)\\
+ \int_{(\R^{2d})^k} d\nabla_{v_{p_i}}\ga^{(k)}(\uz_k) \cdot f_{\bp_\ell}^{(\ell)}(\uz_k) \nabla_{x_{p_i}}g_{(\bp_r,\bp_{\ell+1;\ell+j-r})}^{(j)}(\uz_k).
\end{multline}
Since $g^{(j)}$ is locally $\Cc^\infty$, we have the equality $\div_{x_{p_i}}\nabla_{v_{p_i}}g_{(\bp_r,\bp_{\ell+1;\ell+j-r})}^{(j)} = \div_{v_{p_i}}\nabla_{x_{p_i}}g_{(\bp_r,\bp_{\ell+1;\ell+j-r})}^{(j)}$. Therefore,
\begin{multline}
\int_{(\R^{2d})^k}d\ga^{(k)}(\uz_k) \Big(\nabla_{x_{p_i}}f_{\bp_\ell}^{(\ell)}(\uz_k)\cdot \nabla_{v_{p_i}}g_{(\bp_r,\bp_{\ell+1;\ell+j-r})}^{(j)}(\uz_k) - \nabla_{v_{p_i}}f_{\bp_\ell}^{(\ell)}(\uz_k)\cdot \nabla_{x_{p_i}}g_{(\bp_r,\bp_{\ell+1;\ell+j-r})}^{(j)}(\uz_k)\Big)\\
=-\int_{(\R^{2d})^k}d\nabla_{x_{p_i}}\ga^{(k)}(\uz_k)\cdot f_{\bp_\ell}^{(\ell)}(\uz_k)\nabla_{v_{p_i}} g_{(\bp_r,\bp_{\ell+1;\ell+j-r})}^{(j)}(\uz_k) \\
+ \int_{(\R^{2d})^k} d\nabla_{v_{p_i}}\ga^{(k)}(\uz_k) \cdot f_{\bp_\ell}^{(\ell)}(\uz_k) \cdot \nabla_{x_{p_i}}g_{(\bp_r,\bp_{\ell+1;\ell+j-r})}^{(j)}(\uz_k).
\end{multline}
Let us make the change of variable $\uz_k\mapsto \ul{w}_k$ according to
\begin{equation}
\ul{w}_k = (w_1,\ldots,w_k) = (z_{p_1},\ldots,z_{p_{\ell+j-r}}, z_{m_1},\ldots,z_{m_{k-(\ell+j-r)}}),
\end{equation}
%Let $\tau: \{1,\ldots,k\} \rightarrow \{1,\ldots,k\}$ be the permutation defined by
%\begin{equation}
%\tau(\alpha) \coloneqq \begin{cases} i, & {\alpha = p_i \ \text{for some} \ 1\leq i\leq \ell+j-r} \\ \ell+j-r+i, & {\alpha = m_i \ %\text{for some} \ 1\leq i\leq k-\ell-j+r},\end{cases}
%\end{equation}
where $m_1<\cdots<m_{k-(\ell+j-r)}$ is the increasing ordering of the set $\{1,\ldots,k\} \setminus\{p_1,\ldots,p_{\ell+j-r}\}$. Write $w_i=(y_i,u_i)$. Since $\ga^{(k)}$ is symmetric with respect to exchange of particle labels,
\begin{multline}
-\int_{(\R^{2d})^k}d\nabla_{x_{p_i}}\ga^{(k)}(\uz_k)\cdot f_{\bp_\ell}^{(\ell)}(\uz_k)\nabla_{v_{p_i}} g_{(\bp_r,\bp_{\ell+1;\ell+j-r})}^{(j)}(\uz_k) \\
+ \int_{(\R^{2d})^k} d\nabla_{v_{p_i}}\ga^{(k)}(\uz_k) \cdot f_{\bp_\ell}^{(\ell)}(\uz_k) \nabla_{x_{p_i}}g_{(\bp_r,\bp_{\ell+1;\ell+j-r})}^{(j)}(\uz_k) \\
= - \int_{(\R^{2d})^k}d\nabla_{y_i}\ga^{(k)}(\ul{w}_k)\cdot f_{1;\ell}^{(\ell)}(\ul{w}_k)\nabla_{u_i}g_{(1;r, \ell+1;\ell+j-r)}^{(j)}(\ul{w}_k) \\
+ \int_{(\R^{2d})^k} d\nabla_{u_i}\ga^{(k)}(\ul{w}_k)\cdot f_{1;\ell}^{(\ell)}(\ul{w}_k)\nabla_{y_i}g_{(1;r,\ell+1;\ell+j-r)}^{(j)}(\ul{w}_k).
\end{multline}

Next, since we have the partition
\begin{equation}
\bigsqcup_{k=1}^N \{(\ell,j)\in\{1,\ldots,N\}^2 : \min(\ell+j-1,N) = k\} = \{1,\ldots,N\}^2,
\end{equation}
we can interchange the order of summation over $k$ and summation over $\ell,j$ in the right-hand side of \eqref{eq:rhssumk} to obtain
\begin{multline}
\pb{\Fc}{\Gc}_{\G_N^*}(\Ga) = \sum_{\ell=1}^N \sum_{j=1}^N C_{\ell j Nr}\sum_{r=r_0}^{\min(\ell,j)}{\ell\choose r}{j\choose r}r!\sum_{i=1}^r\Bigg(\\
- \int_{(\R^{2d})^{k}}d\nabla_{y_i}\ga^{(k)}(\ul{w}_k)\cdot f_{1;\ell}^{(\ell)}(\ul{w}_k)\nabla_{u_i}g_{(1;r,\ell+1;\ell+j-r)}^{(j)}(\ul{w}_k) \\
+ \int_{(\R^{2d})^{k}} d\nabla_{u_i}\ga^{(k)}(\uz_k)\cdot f_{1;\ell}^{(\ell)}(\ul{w}_k)\nabla_{y_i}g_{(1;r,\ell+1;\ell+j-r)}^{(j)}(\ul{w}_k)\Bigg).
\end{multline}
By the distributional Fubini-Tonelli theorem, the sum of the integrals in the right-hand side may be re-expressed as
\begin{multline}
\int_{(\R^{2d})^{\ell}}d\ul{w}_\ell f^{(\ell)}(\ul{w}_\ell)\Bigg(\int_{(\R^{2d})^{k-\ell}} d\ul{w}_{\ell+1;k}\Big(\nabla_{u_i}\ga^{(k)}(\uw_k)\cdot\nabla_{y_i}g_{(1;r,\ell+1;\ell+j-r)}^{(j)}(\ul{w}_k)\\
-\nabla_{y_i}\ga^{(k)}(\ul{w}_k)\cdot \nabla_{u_i}g_{(1;r,\ell+1;\ell+j-r)}^{(j)}(\ul{w}_k)\Big) \Bigg),
\end{multline}
where the inner integral should be understood as the $\ell$-particle marginal (recall \cref{prop:eptrdual}) of the distribution given by the integrand. Going forward, we return to the original $z,x,v$ notation.

We claim that we can rewrite the preceding right-hand side more concisely in terms of the canonical Poisson bracket $\pb{\cdot}{\cdot}_{\Rd^k}$ on $(\R^{2d})^k$. Indeed, consider the expression
\begin{multline}
\int_{(\R^{2d})^{k-\ell}}d\uz_{\ell+1;k}\pb{\sum_{\ab_r\in P_r^\ell} g_{(\ab_r,\ell+1,\ldots,\ell+j-r)}^{(j)}}{\gamma^{(k)}}_{(\R^{2d})^k} \\
=\sum_{\ab_r\in P_r^\ell}\sum_{\beta=1}^k \int_{(\R^{2d})^{k-\ell}}d\uz_{\ell+1;k}\Big(\nabla_{x_\beta}g_{(\ab_r,\ell+1,\ldots,\ell+j-r)}^{(j)}\cdot\nabla_{v_\beta}\gamma^{(k)} - \nabla_{v_\beta}g_{(\ab_r,\ell+1,\ldots,\ell+j-r)}^{(j)}\cdot\nabla_{x_\beta}\gamma^{(k)}\Big).
\end{multline}
If $\beta\in\{\ell+1,\ldots,k\}$, then integrating by parts to swap $\nabla_{x_\beta}$ and $\nabla_{v_{\beta}}$, we see that the resulting summand is zero. Therefore, only coordinates $\beta\in\{1,\ldots,\ell\}$ produce a nonzero contribution. Similarly, for $\beta\in\{1,\ldots,\ell\} \setminus\{a_1,\ldots,a_r\}$,
\begin{equation}
\nabla_{x_\beta}g_{(\ab_r,\ell+1,\ldots,\ell+j-r)}^{(j)} = \nabla_{v_\beta}g_{(\ab_r,\ell+1,\ldots,\ell+j-r)}^{(j)} = 0.
\end{equation}
Therefore, by relabeling the sum over $\beta$, we have the equality
\begin{multline}
\sum_{\beta=1}^k \int_{(\R^{2d})^{k-\ell}}d\uz_{\ell+1;k}\Big(\nabla_{x_\beta}g_{(\ab_r,\ell+1,\ldots,\ell+j-r)}^{(j)}\cdot\nabla_{v_\beta}\gamma^{(k)} - \nabla_{v_\beta}g_{(\ab_r,\ell+1,\ldots,\ell+j-r)}^{(j)}\cdot\nabla_{x_\beta}\gamma^{(k)}\Big)\\
=\sum_{i=1}^r \int_{(\R^{2d})^{k-\ell}}d\uz_{\ell+1;k}\Big(\nabla_{x_{a_i}}g_{(\ab_r,\ell+1,\ldots,\ell+j-r)}^{(j)}\cdot\nabla_{v_{a_i}}\gamma^{(k)} - \nabla_{v_{a_i}}g_{(\ab_r,\ell+1,\ldots,\ell+j-r)}^{(j)}\cdot\nabla_{x_{a_i}}\gamma^{(k)}\Big).
\end{multline}
We claim that
\begin{multline}\label{eq:CoVpre}
\sum_{\ab_r\in P_r^\ell}\sum_{i=1}^r \int_{(\R^{2d})^\ell}d\uz_\ell f^{(\ell)}(\uz_{\ell})\int_{(\R^{2d})^{k-\ell}}d\uz_{\ell+1;k}\Big(\nabla_{x_{a_i}}g_{(\ab_r,\ell+1,\ldots,\ell+j-r)}^{(j)}\cdot\nabla_{v_{a_i}}\gamma^{(k)} \\
- \nabla_{v_{a_i}}g_{(\ab_r,\ell+1,\ldots,\ell+j-r)}^{(j)}\cdot\nabla_{x_{a_i}}\gamma^{(k)}\Big)(\uz_k)\\
={\ell\choose r}r! \sum_{i=1}^r\int_{(\R^{2d})^\ell}d\uz_\ell f^{(\ell)}(\uz_{\ell})\int_{(\R^{2d})^{k-\ell}}d\uz_{\ell+1;k}\Big(\nabla_{x_{i}}g_{(1,\ldots,r,\ell+1,\ldots,\ell+j-r)}^{(j)}\cdot\nabla_{v_{i}}\gamma^{(k)}\\
 - \nabla_{v_{i}}g_{(1,\ldots,r,\ell+1,\ldots,\ell+j-r)}^{(j)}\cdot\nabla_{x_{i}}\gamma^{(k)}\Big)(\uz_k).
\end{multline}
Indeed, let $m_1<\cdots<m_{\ell-r}$ denote the increasing ordering of the set $\{1,\ldots,\ell\}\setminus\{a_1,\ldots,a_r\}$, and consider the change of variable
\begin{equation}
\ul{w}_k = (w_1,\ldots,w_k) = (z_{a_1},\ldots,z_{a_r}, z_{m_1},\ldots,z_{m_{\ell-r}},z_{\ell+1},\ldots,z_{k}).
\end{equation}
Then since $f^{(\ell)}$ is symmetric with respect to permutation of particle labels,
\begin{equation}
f^{(\ell)}(z_1,\ldots,z_\ell) = f^{(\ell)}(w_1,\ldots,w_\ell).
\end{equation}
Similarly,
\begin{align}
\nabla_{v_{a_i}}\gamma^{(k)}(\uz_k) &= \nabla_{u_i}\gamma^{(k)}(\ul{w}_k),\\
\nabla_{x_{a_i}}\gamma^{(k)}(\uz_k) &= \nabla_{y_i}\gamma^{(k)}(\ul{w}_k), \\
\nabla_{x_{a_i}}g_{(\ab_r,\ell+1,\ldots,\ell+j-r)}^{(j)}(\uz_k) &= \nabla_{y_i}g_{(1,\ldots,r,\ell+1,\ldots,\ell+j-r)}^{(j)}(\ul{w}_k),\\
\nabla_{v_{a_i}}g_{(\ab_r,\ell+1,\ldots,\ell+j-r)}^{(j)}(\uz_k) &= \nabla_{u_i}g_{(1,\ldots,r,\ell+1,\ldots,\ell+j-r)}^{(j)}(\ul{w}_k).
\end{align}
Making the change of variable $\uz_k\mapsto \ul{w}_k$ in the first two lines of \eqref{eq:CoVpre} and recalling that $|P_{r}^\ell| = {\ell\choose r}r!$, we arrive at the desired conclusion. Therefore, we have shown that
\begin{multline}
\pb{\Fc}{\Gc}_{\G_N^*}(\Ga) = \sum_{\ell=1}^N \sum_{j=1}^N C_{\ell j Nr}\sum_{r=r_0}^{\min(\ell,j)}{j\choose r}\int_{(\R^{2d})^\ell}d\uz_\ell f^{(\ell)}(\uz_\ell)\\
\int_{(\R^{2d})^{k-\ell}}d\uz_{\ell+1;k}\pb{\sum_{\ab_r\in P_r^\ell} g_{(\ab_r,\ell+1,\ldots,\ell+j-r)}^{(j)}}{\gamma^{(k)}}_{(\R^{2d})^k}(\uz_k),
\end{multline}
which, upon recalling that $f^{(\ell)} = \ds\Fc[\Ga]^{(\ell)}$ and $g^{(j)} = \ds\Gc[\Ga]^{(j)}$, implies that
\begin{multline}
X_{\Gc}(\Ga)^{(\ell)} = \sum_{j=1}^N C_{\ell j Nr}\sum_{r=r_0}^{\min(\ell,j)}{j\choose r}\int_{(\R^{2d})^{k-\ell}}d\uz_{\ell+1;k}\pb{\sum_{\ab_r\in P_r^\ell} \ds\Gc[\Ga]_{(\ab_r,\ell+1,\ldots,\ell+j-r)}^{(j)}}{\gamma^{(k)}}_{(\R^{2d})^k},
\end{multline}
completing the proof of the lemma.
\end{proof}

\subsection{Marginals}\label{ssec:Ngeommor}
We close this section with the observation that the operation of forming an $N$-hierarchy of marginals from an $N$-particle distribution defines a map which is a Poisson morphism. The material in this subsection closely parallels that of \cite[Section 5.3]{MNPRS2020} for the quantum setting, as the underlying algebraic structure is the same. Therefore, we shall be brief in our remarks.

\cref{prop:LAhomsum} below states that there is a linear homomorphism of Lie algebras $\G_{N}\rightarrow \g_{N}$ induced by the embeddings $\{\epsilon_{k,N}\}_{k=1}^N$. The proof carries over \textit{verbatim} from \cite[Proposition 5.28]{MNPRS2020}.

\begin{proposition}\label{prop:LAhomsum}
For any $N\in \N$, the map $\iota_{\ep}: \G_{N} \rightarrow \g_{N}$ defined by
\begin{equation}
\forall F=(f^{(k)})_{k=1}^N\in\G_N, \qquad \iota_{\ep}(F) \coloneqq \sum_{k=1}^{N} \epsilon_{k,N}(f^{(k)}),
\end{equation}
is a continuous linear homomorphism of Lie algebras.
\end{proposition}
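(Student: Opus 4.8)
The plan is to verify the three defining properties of a continuous homomorphism of Lie algebras: linearity, continuity, and compatibility with the brackets. Linearity of $\iota_{\ep}$ is immediate from the linearity of each $\ep_{k,N}$ recorded in \cref{lem:epcont}. For continuity, I would use that $\G_N = \bigoplus_{k=1}^N \g_k$ carries the product topology (the direct sum being finite), so that a linear map out of $\G_N$ is continuous precisely when its restriction to each summand $\g_k$ is; that restriction is exactly $\ep_{k,N}$, which is continuous by \cref{lem:epcont}. Hence $\iota_{\ep}$ is continuous, and a fortiori $\iota_{\ep}\in\Cc^\infty(\G_N,\g_N)$.

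The substantive point is the bracket identity, and here the plan is a short bookkeeping computation. Fix $F=(f^{(\ell)})_{\ell=1}^N$ and $G=(g^{(j)})_{j=1}^N$ in $\G_N$. By the definition \eqref{eq:GNLBdef} of $\comm{\cdot}{\cdot}_{\G_N}$ together with the injectivity of $\ep_{k,N}$ (\cref{lem:epinj}) and the filtration property (\cref{lem:epfil}) guaranteeing well-definedness, applying $\ep_{k,N}$ to the $k$-th component gives
\begin{equation}
\ep_{k,N}\paren*{\comm{F}{G}_{\G_N}^{(k)}} = \sum_{\substack{1\leq \ell,j\leq N \\ \min(\ell+j-1,N)=k}} \comm{\ep_{\ell,N}(f^{(\ell)})}{\ep_{j,N}(g^{(j)})}_{\g_N}.
\end{equation}
Summing over $k\in\{1,\dots,N\}$ and invoking the partition $\bigsqcup_{k=1}^N\{(\ell,j):\min(\ell+j-1,N)=k\}=\{1,\dots,N\}^2$ to reindex, I obtain
\begin{equation}
\iota_{\ep}\paren*{\comm{F}{G}_{\G_N}} = \sum_{k=1}^N\ep_{k,N}\paren*{\comm{F}{G}_{\G_N}^{(k)}} = \sum_{\ell=1}^N\sum_{j=1}^N\comm{\ep_{\ell,N}(f^{(\ell)})}{\ep_{j,N}(g^{(j)})}_{\g_N}.
\end{equation}
Finally, the bilinearity of $\comm{\cdot}{\cdot}_{\g_N}$ lets me factor the double sum as $\comm{\sum_{\ell=1}^N\ep_{\ell,N}(f^{(\ell)})}{\sum_{j=1}^N\ep_{j,N}(g^{(j)})}_{\g_N}=\comm{\iota_{\ep}(F)}{\iota_{\ep}(G)}_{\g_N}$, which is the asserted homomorphism identity.

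I do not expect a genuine obstacle here: the proof is essentially identical to that of \cite[Proposition 5.28]{MNPRS2020}, as noted in the excerpt. The only step requiring a moment's care is recognizing that applying $\ep_{k,N}$ to the $k$-th component cancels the $\ep_{k,N}^{-1}$ appearing in the definition \eqref{eq:GNLBdef} — which is exactly where injectivity of $\ep_{k,N}$ and the filtration lemma are used — and that summing the constraint $\min(\ell+j-1,N)=k$ over $k$ exhausts all pairs $(\ell,j)\in\{1,\dots,N\}^2$ without overcounting.
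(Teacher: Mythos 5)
Your proof is correct and is exactly the argument the paper has in mind: it invokes \cite[Proposition 5.28]{MNPRS2020} and states the proof carries over verbatim, which is precisely your computation — cancel $\ep_{k,N}^{-1}$ in \eqref{eq:GNLBdef} via injectivity and the filtration lemma, reindex using the partition $\bigsqcup_{k=1}^N\{(\ell,j):\min(\ell+j-1,N)=k\}=\{1,\dots,N\}^2$, and conclude by bilinearity of $\comm{\cdot}{\cdot}_{\g_N}$, with linearity and continuity coming from \cref{lem:epcont} and the finite product topology on $\G_N$. No gaps.
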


The dual of a Lie algebra homomorphism is automatically a Poisson morphism between the induced Lie-Poisson structures \cite[Proposition 10.7.2]{MR2013}.\footnote{The dual of a Lie algebra homomorphism is, in fact, a \emph{momentum map} (also called \emph{moment map}) and therefore is a Poisson morphism (see \cite{GS1980,MRW1984}).} Therefore, by showing that the map $\iota_{mar} : \g_N^* \rightarrow \G_N^*$ defined by
\begin{equation}\label{homsum formula}
\forall \ga\in\g_N^*, \ k\in\{1,\ldots,N\}, \qquad \iota_{mar}(\ga)^{(k)} \coloneqq \int_{(\R^{2d})^{N-k}}d\uz_{k+1;N}\ga
\end{equation}
is the dual of the map $\iota_{\ep}$, it follows that $\iota_{mar}$ is a Poisson morphism. The proof is completely analogous to that of \cite[Proposition 5.29]{MNPRS2020}, replacing the trace pairing $i\Tr_{1,\ldots,N}(\cdot)$ by the duality pairing $\ipp{\cdot,\cdot}_{\G_N-\G_N^*}$, therefore we omit the details. 

\begin{proposition}\label{prop:marPo}
The map $\iota_{mar}: \g_{N}^{*} \rightarrow \G_{N}^{*}$ defined above is a morphism of Poisson vector spaces in the sense of \cref{def:PM}.
\end{proposition}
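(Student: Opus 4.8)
\textbf{Proof plan for Proposition \ref{prop:marPo}.} The strategy is to identify $\iota_{mar}$ as the adjoint of the Lie algebra homomorphism $\iota_{\ep}:\G_N\to\g_N$ from \cref{prop:LAhomsum}, and then invoke the general principle that the dual of a Lie algebra homomorphism is a Poisson morphism between the induced Lie-Poisson spaces. Concretely, the first step is to verify that $\iota_{mar} = \iota_{\ep}^*$. Recall $\iota_{\ep}(F) = \sum_{k=1}^N \ep_{k,N}(f^{(k)})$, so for $\ga\in\g_N^*$ and $F = (f^{(k)})_{k=1}^N\in\G_N$ we have
\begin{equation}
\ipp{\iota_{\ep}^*(\ga), F}_{\G_N-\G_N^*} = \ipp{\ga, \iota_{\ep}(F)}_{\g_N-\g_N^*} = \sum_{k=1}^N \ipp{\ep_{k,N}(f^{(k)}),\ga}_{\g_N-\g_N^*} = \sum_{k=1}^N \ipp*{f^{(k)}, \ep_{k,N}^*(\ga)}_{\g_k-\g_k^*}.
\end{equation}
By \cref{prop:eptrdual}, $\ep_{k,N}^* = \int d\uz_{k+1;N}$, which is exactly the $k$-th component of $\iota_{mar}(\ga)$ as defined in \eqref{homsum formula}; since the pairing on $\G_N^* \cong \prod_k \g_k^*$ against $\G_N = \bigoplus_k \g_k$ decomposes componentwise, this shows $\iota_{\ep}^*(\ga) = \iota_{mar}(\ga)$. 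In particular $\iota_{mar}$ is continuous and linear (hence $\Cc^\infty$) by \cref{prop:contadj} and \cref{prop:LAhomsum}.

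The second step is the Poisson morphism property itself. Since $\iota_{mar}$ is linear and continuous, for any $\Fc\in\Cc^\infty(\G_N^*)$ the pullback $\iota_{mar}^*\Fc = \Fc\circ\iota_{mar}$ is $\Cc^\infty$ on $\g_N^*$ by the chain rule, and its G\^ateaux derivative satisfies $\ds(\iota_{mar}^*\Fc)[\ga] = \iota_{mar}^*\big(\ds\Fc[\iota_{mar}(\ga)]\big)$; under the reflexivity identifications $\g_N^{**}\cong\g_N$ and $\G_N^{**}\cong\G_N$ (\cref{lem:gkremon}, \cref{lem:GNrefl}), the adjoint of $\iota_{mar} = \iota_{\ep}^*$ is $\iota_{\ep}$ itself, so $\ds(\iota_{mar}^*\Fc)[\ga] = \iota_{\ep}\big(\ds\Fc[\iota_{mar}(\ga)]\big) \in \g_N$. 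Now compute, for $\Fc,\Gc\in\Cc^\infty(\G_N^*)$ and $\ga\in\g_N^*$,
\begin{align}
\pb{\iota_{mar}^*\Fc}{\iota_{mar}^*\Gc}_{\g_N^*}(\ga)
&= \ipp*{\comm{\ds(\iota_{mar}^*\Fc)[\ga]}{\ds(\iota_{mar}^*\Gc)[\ga]}_{\g_N},\ga}_{\g_N-\g_N^*} \nn\\
&= \ipp*{\comm{\iota_{\ep}\big(\ds\Fc[\iota_{mar}(\ga)]\big)}{\iota_{\ep}\big(\ds\Gc[\iota_{mar}(\ga)]\big)}_{\g_N},\ga}_{\g_N-\g_N^*} \nn\\
&= \ipp*{\iota_{\ep}\Big(\comm{\ds\Fc[\iota_{mar}(\ga)]}{\ds\Gc[\iota_{mar}(\ga)]}_{\G_N}\Big),\ga}_{\g_N-\g_N^*} \nn\\
&= \ipp*{\comm{\ds\Fc[\iota_{mar}(\ga)]}{\ds\Gc[\iota_{mar}(\ga)]}_{\G_N},\iota_{mar}(\ga)}_{\G_N-\G_N^*} \nn\\
&= \pb{\Fc}{\Gc}_{\G_N^*}(\iota_{mar}(\ga)) = \big(\iota_{mar}^*\pb{\Fc}{\Gc}_{\G_N^*}\big)(\ga),
\end{align}
where the third equality uses that $\iota_{\ep}$ is a \emph{Lie algebra} homomorphism (\cref{prop:LAhomsum}) and the fourth uses $\iota_{mar} = \iota_{\ep}^*$ together with the definition of the adjoint. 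This is precisely condition \eqref{eq:PM} for $T = \iota_{mar}$, establishing that $\iota_{mar}$ is a morphism of Poisson vector spaces.

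The only point requiring genuine care — and the natural candidate for the ``main obstacle'' — is the bookkeeping of the reflexivity identifications: one must check that under $\G_N^{**}\cong\G_N$ and $\g_N^{**}\cong\g_N$ the double adjoint $(\iota_{\ep}^*)^*$ really is $\iota_{\ep}$, so that the G\^ateaux derivative of the pullback, which a priori lives in $\g_N^{**}$, is correctly identified with $\iota_{\ep}$ applied to the derivative of the original functional. This is routine but is the step where the argument could silently go wrong, and it relies on the Montel/reflexivity statements of \cref{lem:gkremon} and \cref{lem:GNrefl}. Everything else is a direct computation using the definitions of the brackets \eqref{eq:gk*LPdef}, \eqref{eq:GN*LPdef} and the homomorphism property from \cref{prop:LAhomsum}; as noted in the text, this parallels \cite[Proposition 5.29]{MNPRS2020} with the trace pairing replaced by the duality pairing, so the details may be compressed.
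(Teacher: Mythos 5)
Your proposal is correct and follows essentially the same route as the paper: identify $\iota_{mar}$ as the adjoint of the Lie algebra homomorphism $\iota_{\ep}$ (which you do cleanly via \cref{prop:eptrdual} and the componentwise decomposition of the $\G_N$--$\G_N^*$ pairing) and then run the standard computation showing that the dual of a Lie algebra homomorphism intertwines the Lie-Poisson brackets, which the paper delegates to \cite[Proposition 10.7.2]{MR2013} and the analogy with \cite[Proposition 5.29]{MNPRS2020}. Your explicit handling of the reflexivity identifications and of $\ds(\iota_{mar}^*\Fc)[\ga]=\iota_{\ep}\big(\ds\Fc[\iota_{mar}(\ga)]\big)$ is exactly the bookkeeping the paper omits, and it is carried out correctly.
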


\section{$\infty$-particle geometric structure}\label{sec:infgeom}
We now turn to the geometric structure at the infinite-particle level and proving the results announced in \cref{ssec:Outinf}.

\subsection{The Lie algebra $\G_\infty$ of observable $\infty$-hierarchies}\label{ssec:GinfLA}
We recall from \eqref{eq:Ginfdef} that $\G_\infty= \bigoplus_{k=1}^\infty \g_k$ equipped with the locally convex direct sum topology, and we can identify $\G_N$ as a subspace via inclusion. We also recall that any element $F\in \G_\infty$ belongs to $\G_N$ for all $N$ sufficiently large. The goal of this subsection is to prove \cref{prop:GinfLA}, asserting that for any $F,G\in \G_\infty$, $\lim_{N\rightarrow\infty}\comm{F}{G}_{\G_N} \eqqcolon \comm{F}{G}_{\G_\infty}$ exists and defines a Lie bracket for $\G_\infty$. In contrast to the quantum setting (see \cite[Section 6.2]{MNPRS2020}), the limit $\G_\infty$ of the spaces $\G_N$ is large enough to contain all $\infty$-hierarchies of observables of interest. This is a technical advantage of the classical setting vs. the quantum setting.

\begin{proof}[Proof of \cref{prop:GinfLA}]
We first show the limit \eqref{eq:GinfLB}. Fix $F=(f^{(\ell)})_{\ell=1}^\infty, G=(g^{(j)})_{j=1}^\infty\in\G_\infty$ and $k\in\N$. Let $M_0 \in \N$ be such that $f^{(\ell)}=g^{(j)}=0$ for $\min(\ell,j)>M_0$. Note that for $k=\ell+j-1\geq 2M_0$, we must have $\max(\ell,j)>M_0$, implying
\begin{equation}
f^{(\ell)}\wedge_r g^{(j)} = 0, \qquad 1\leq r\leq \min(\ell,j).
\end{equation}
For any $N\geq M_0$, $F,G$ can be identified as elements of $\G_N$ by projection onto the first $N$ components, without any loss of information. For $1\leq k\leq N$, the formula \eqref{eq:GNLBform} gives
\begin{align}\label{eq:FGGN}
\comm{F}{G}_{\G_N}^{(k)} &=\sum_{\substack{\ell,j\geq 1 \\ \ell+j-1=k}}\sum_{r = r_0}^{\min( \ell,j)} C_{\ell jNr} \epsilon_{\ell+j - r, k}\paren*{\Sym_{\ell+j-r}\Big(f^{(\ell)} \wedge_r g^{(j)}\Big)},
\end{align}
where we recall that $C_{\ell jNr} = \frac{(N-\ell)!(N-j)!}{(N-1)! (N-\ell -j +r)!}$ and $r_0 =\max(1,\ell+j-N)$. Suppose $N\geq 2M_0$. Then if $\ell+j-N>1$, we have $\max(\ell,j)>M_0+1$, implying either $f^{(\ell)}=0$ or $g^{(j)}=0$, hence $\Sym_{\ell+j-r}(f^{(\ell)}\wedge_r g^{(j)})=0$. So there is no harm in starting the summation at $r=1$ in the right-hand side of \eqref{eq:FGGN}. More importantly, we note that for any $N\geq 2M_0$ and $2M_0\leq k\leq N$, we have $\comm{F}{G}_{\G_N}^{(k)} = 0$. By \cref{rem:Nconlim}, we have $\lim_{N\rightarrow\infty} C_{\ell j Nr} = \indic_{r=1}$. Using that there are only finitely many terms in the right-hand side of \eqref{eq:FGGN} independent of $N$, we compute, for fixed $k$, 
\begin{equation}
\lim_{N\rightarrow\infty} \comm{F}{G}_{\G_N}^{(k)} = \sum_{\substack{\ell,j\geq 1 \\ \ell+j-1=k}} \epsilon_{\ell+j-1,k}\paren*{\Sym_{\ell+j-1}\Big(f^{(\ell)} \wedge_1 g^{(j)}\Big)}.
\end{equation}
Observing that $\ell+j-1=k$ and therefore $\ep_{\ell+j-1,k}$ is the identity map $\g_k\rightarrow\g_k$, we arrive at \eqref{eq:GinfLB}.
 
Next, we turn to showing that $(\G_\infty,\comm{\cdot}{\cdot}_{\G_\infty})$ is a Lie algebra in the sense of \cref{def:LA}. This step, which is algebraic, consists of the verification that the bracket $\comm{\cdot}{\cdot}_{\G_\infty}$ satisfies properties \ref{LA1}-\ref{LA3}. The argument is essentially identical to that for the quantum case (cf. \cite[proof of Proposition 2.7]{MNPRS2020}), therefore we omit the details.

Finally, we turn to the analytic matter of showing that $\comm{\cdot}{\cdot}_{\G_\infty}$ is boundedly hypocontinuous, which is unique to this work. First, note that the sum defining $\comm{F}{G}_{\mathfrak{G}_\infty}^{(k)}$ is finite: for $k$ fixed, there are $k$ pairs $(\ell,j) \in \N^2$ satisfying $\ell+j-1=k$. Furthermore, our remarks at the beginning of the proof give that $\comm{F}{G}_{\G_\infty}^{(k)}=0$ for $k>2M_0$. Let $\mathcal{B} \subset \mathfrak{G}_\infty$ be bounded, and let $\mathsf{P}_{\ell}$ be the projection onto the $\ell$-th component of $\mathfrak{G}_{\infty}$. Then there exists an $N \in \N$ such that $\mathsf{P}_\ell (\mathcal{B}) = \{ 0 \}$ for $\ell > N$ and $\mathsf{P}_\ell(\mathcal{B})$ is bounded in $\mathfrak{g}_\ell$ for every $1\leq \ell \leq N$ (see \cite[(4), p. 213]{Kothe1969I}) . Equicontinuity of $\{ \comm{\cdot}{G}_{\mathfrak{G}_\infty} : G \in \mathcal{B} \}$ now follows from the the equicontinuity of $ \wedge_1$.
\end{proof}

\subsection{The weak Lie-Poisson space $\G_\infty^*$ of state $\infty$-hierarchies}\label{ssec:GinfLP}
The objective of this subsection is to prove \cref{prop:GinfLP}, asserting that the Lie bracket $\comm{\cdot}{\cdot}_{\G_\infty}$ constructed in \cref{prop:GinfLA} induces a well-defined weak Lie-Poisson structure on $\G_\infty^*$ in the sense of \cref{def:WPVS}, if we choose \eqref{eq:Ainfdef} as our unital subalgebra $\A_\infty\subset\Cc^\infty(\G_\infty^*)$,.

The reader will recall \cref{rem:cgd} that any expectation in $\A_\infty$ has constant G\^ateaux derivative. We record below the following observation on the structure of elements of $\A_\infty$, which will be crucial to verification of the weak Poisson properties \ref{assWP1}-\ref{assWP3} from \cref{def:WPVS}.

\begin{remark}\label{rem:Ainfform}
By definition, any element $\Fc\in \A_\infty$ takes the form
\begin{equation}\label{eq:Ainfform}
\Fc = \sum_{m=0}^\infty C_m\sum_{a=0}^{n_m}\Fc_{m1a}\cdots \Fc_{mma}
\end{equation}
where, for each $m\in\N_0$, $n_m\in\N_0$, $\Fc_{m1a} = \ipp{F_{m1a},\cdot}_{\G_\infty-\G_\infty^*},\ldots,\Fc_{mma}=\ipp{F_{mma},\cdot}_{\G_\infty-\G_\infty^*}$ for some $F_{1ma},\ldots,F_{mma}\in\G_\infty$ and $(C_m)_{m=0}^\infty$ are real coefficients such that there exists $M\in\N$ for which $C_m = 0$ if $m>M$. In words, $\Fc$ is a linear combination of finite products of expectations. This observation will be quite useful in the sequel, as invocation of some form of linearity will allow us to verify certain identities under the assumption that $\Fc$ is just a finite product of expectations. %Moreover, by \cref{rem:cgd} and the Leibniz rule for the G\^ateaux derivative, it will often suffice to check identities simply on $\Fc$ is an expectation.
\end{remark}

We break the proof of \cref{prop:GinfLP}, which entails the verification of the properties \ref{assWP1}-\ref{assWP3}, into a series of lemmas. We begin with the following technical lemma for the G\^ateaux derivative of $\pb{\cdot}{\cdot}_{\G_\infty^*}$.

\begin{lemma}\label{lem:trgen}
If $\Gc_1=\Gc_{1,1}\cdots\Gc_{1,n_1}$ and $\Gc_2 = \Gc_{2,1}\cdots\Gc_{2,n_2}$ are the product of $n_1$ and $n_2$ expectations in $\A_{\infty}$, respectively, then through the isomorphism $\G_\infty^{**} \cong \G_\infty$, the G\^ateaux derivative $\ds\pb{\Gc_1}{\Gc_2}_{\G_\infty^*}[\Gamma]$ at the point $\Ga\in\G_\infty^*$ may be identified with
\begin{equation}
\sum_{i_1=1}^{n_1}\sum_{i_2=1}^{n_2} \Big(\prod_{\substack{1\leq q\leq n_1 \\ q\neq i_1}} \Gc_{1,q}(\Ga)\Big)\Big(\prod_{\substack{1\leq q\leq n_2 \\ q\neq i_2}} \Gc_{2,q}(\Ga)\Big)\comm{\ds\Gc_{1,i_1}[0]}{\ds\Gc_{2,i_2}[0]}_{\G_\infty}^{(k)} \in\G_\infty.
\end{equation}
In particular, if $\Gc_1,\Gc_2$ are expectations, that is $\Gc_q(\Gamma) = \ipp{\ds\Gc_q[0],\Ga}_{\G_\infty-\G_\infty^*}$, then $\ds\pb{\Gc_1}{\Gc_2}_{\G_\infty^*}[\Gamma]$ may be identifed with the element
\begin{equation}
\comm{\ds\Gc_1[0]}{\ds\Gc_2[0]}_{\G_\infty} \in \G_\infty.
\end{equation}
\end{lemma}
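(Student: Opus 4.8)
The statement concerns the Gâteaux derivative of the Poisson bracket $\pb{\cdot}{\cdot}_{\G_\infty^*}$ evaluated on products of expectation functionals. The key structural fact to exploit is \cref{rem:cgd}: each expectation $\Gc_{q,i}$ has \emph{constant} Gâteaux derivative, so $\ds\Gc_{q,i}[\Ga] = \ds\Gc_{q,i}[0]$ for all $\Ga$, and moreover $\Gc_{q,i}(\Ga) = \ipp{\ds\Gc_{q,i}[0],\Ga}_{\G_\infty-\G_\infty^*}$ is linear in $\Ga$. First I would compute $\pb{\Gc_1}{\Gc_2}_{\G_\infty^*}(\Ga)$ itself as a function of $\Ga$ using the definition \eqref{eq:GinfPBdef}. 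By the Leibniz rule (property \ref{assWP1}, which we may assume holds; alternatively one can verify it directly here via the product rule for Gâteaux derivatives) together with bilinearity of $\comm{\cdot}{\cdot}_{\G_\infty}$ in the identification $\ds\Fc[\Ga]\in\G_\infty$, one expands
\begin{equation}
\ds(\Gc_{1,1}\cdots\Gc_{1,n_1})[\Ga] = \sum_{i_1=1}^{n_1}\Big(\prod_{\substack{1\le q\le n_1\\ q\ne i_1}}\Gc_{1,q}(\Ga)\Big)\ds\Gc_{1,i_1}[0],
\end{equation}
and similarly for $\Gc_2$, since differentiating a product of linear functionals leaves one factor differentiated (contributing its constant derivative) and the rest evaluated. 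Substituting into \eqref{eq:GinfPBdef} and using bilinearity of the bracket gives
\begin{equation}
\pb{\Gc_1}{\Gc_2}_{\G_\infty^*}(\Ga) = \sum_{i_1=1}^{n_1}\sum_{i_2=1}^{n_2}\Big(\prod_{\substack{q\ne i_1}}\Gc_{1,q}(\Ga)\Big)\Big(\prod_{\substack{q\ne i_2}}\Gc_{2,q}(\Ga)\Big)\ipp*{\comm{\ds\Gc_{1,i_1}[0]}{\ds\Gc_{2,i_2}[0]}_{\G_\infty},\Ga}_{\G_\infty-\G_\infty^*}.
\end{equation}

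\textbf{Second step.} Now I would differentiate this expression in $\Ga$. Each summand is a product of $(n_1-1)+(n_2-1)$ linear factors $\Gc_{1,q}(\Ga)$, $\Gc_{2,q}(\Ga)$ times the linear factor $\ipp{\comm{\ds\Gc_{1,i_1}[0]}{\ds\Gc_{2,i_2}[0]}_{\G_\infty},\Ga}_{\G_\infty-\G_\infty^*}$ — so each summand is a homogeneous polynomial of degree $n_1+n_2-1$ in $\Ga$, and $\pb{\Gc_1}{\Gc_2}_{\G_\infty^*}$ lies in $\A_\infty$ (this is the point of \cref{rem:Ainfform}). Applying the Leibniz/product rule for Gâteaux derivatives once more: differentiating each such product, the derivative of the $\ipp{\comm{\cdots}{\cdots},\cdot}$ factor is the constant functional $\comm{\ds\Gc_{1,i_1}[0]}{\ds\Gc_{2,i_2}[0]}_{\G_\infty}\in\G_\infty^{**}\cong\G_\infty$, while the derivatives of the $\Gc_{q}(\Ga)$ factors produce lower-degree terms that, upon reorganizing, must vanish or recombine. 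Here one should be careful: the claimed answer only contains the term where the \emph{bracket} factor is differentiated, so I need to verify that the contributions where a $\Gc_{1,q}$ or $\Gc_{2,q}$ factor is differentiated actually cancel or are absent. This is not automatic from the naive product rule — the resolution is that $\ds\pb{\Gc_1}{\Gc_2}_{\G_\infty^*}[\Ga](v)$, as a directional derivative in direction $v$, should be computed, and by antisymmetry/re-indexing of the double sum combined with bilinearity of $\comm{\cdot}{\cdot}_{\G_\infty}$, the would-be extra terms pair up. Alternatively — and this is cleaner — one observes that $\pb{\Gc_1}{\Gc_2}_{\G_\infty^*}$ is literally of the form (sum over $i_1,i_2$) of a product of the \emph{expectation} associated to $\comm{\ds\Gc_{1,i_1}[0]}{\ds\Gc_{2,i_2}[0]}_{\G_\infty}$ with the other expectations $\Gc_{1,q},\Gc_{2,q}$; this is again a product of expectations, and its Gâteaux derivative at $\Ga$ in direction $v$ is a sum over which factor is differentiated. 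The stated formula is then exactly the special case obtained after the algebra settles — so the honest computation is to push through the product rule carefully and collect terms indexed by the factor being differentiated.

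\textbf{Third step (the special case).} When $n_1=n_2=1$, i.e. $\Gc_1,\Gc_2$ are themselves expectations, the double sum collapses to the single term with $i_1=i_2=1$, the empty products equal $1$, and we get $\pb{\Gc_1}{\Gc_2}_{\G_\infty^*}(\Ga) = \ipp{\comm{\ds\Gc_1[0]}{\ds\Gc_2[0]}_{\G_\infty},\Ga}_{\G_\infty-\G_\infty^*}$, which is linear in $\Ga$, hence has constant Gâteaux derivative equal to $\comm{\ds\Gc_1[0]}{\ds\Gc_2[0]}_{\G_\infty}\in\G_\infty$ under the reflexivity isomorphism $\G_\infty^{**}\cong\G_\infty$ (recall $\G_\infty$ is reflexive as an arbitrary direct sum of reflexive spaces, via the same $\mathrm{K\ddot othe}$ duality used in \cref{lem:GNrefl}). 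This also confirms that $\pb{\Gc_1}{\Gc_2}_{\G_\infty^*}$ is again an expectation, consistent with \cref{rem:Ainfform}.

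\textbf{Main obstacle.} The one genuinely delicate point is the bookkeeping in the second step: verifying that differentiating the product $\pb{\Gc_1}{\Gc_2}_{\G_\infty^*}$ yields \emph{only} the term in which the commutator factor is differentiated, and that the remaining terms (where one of the $\Gc_{1,q}$ or $\Gc_{2,q}$ is differentiated) do not appear in the final identity. The resolution is that the lemma as stated is computing $\ds\pb{\Gc_1}{\Gc_2}_{\G_\infty^*}[\Ga]$ \emph{as the constant part}, i.e. really the statement packages the full derivative; re-reading the claim, the formula displayed is already a function of $\Ga$ (through the $\Gc_{q}(\Ga)$ factors), so in fact \emph{all} product-rule terms are present, simply organized by which expectation-factor of the resulting product-of-expectations is differentiated — and the displayed expression is exactly that organized sum with the commutator-bearing factor singled out because the others are renamed copies of the original $\Gc_{1,q},\Gc_{2,q}$. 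Once one recognizes this, the proof is a mechanical application of bilinearity of $\comm{\cdot}{\cdot}_{\G_\infty}$, the constancy of Gâteaux derivatives of expectations, and the Leibniz rule; no analytic estimates are needed, only the finiteness (for fixed $k$) of all the sums involved, which was already established in \cref{prop:GinfLA}.
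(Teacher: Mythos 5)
Your first step is, in substance, exactly the paper's proof. Since each factor is an expectation with constant G\^ateaux derivative (\cref{rem:cgd}), the Leibniz rule gives $\ds\Gc_1[\Ga]=\sum_{i_1}\big(\prod_{q\neq i_1}\Gc_{1,q}(\Ga)\big)\ds\Gc_{1,i_1}[0]$ and likewise for $\Gc_2$, and then bilinearity (the paper phrases this componentwise through $\wedge_1$ and linearity of $\Sym_k$ via \cref{prop:GinfLA}, you phrase it directly through bilinearity of $\comm{\cdot}{\cdot}_{\G_\infty}$ --- the same computation) yields
\begin{equation*}
\comm{\ds\Gc_1[\Ga]}{\ds\Gc_2[\Ga]}_{\G_\infty}=\sum_{i_1=1}^{n_1}\sum_{i_2=1}^{n_2}\Big(\prod_{\substack{1\leq q\leq n_1\\ q\neq i_1}}\Gc_{1,q}(\Ga)\Big)\Big(\prod_{\substack{1\leq q\leq n_2\\ q\neq i_2}}\Gc_{2,q}(\Ga)\Big)\comm{\ds\Gc_{1,i_1}[0]}{\ds\Gc_{2,i_2}[0]}_{\G_\infty}.
\end{equation*}
The paper's proof stops here: this identity for the bracket of the two derivatives (equivalently, for the kernel whose pairing with $\Ga$ gives $\pb{\Gc_1}{\Gc_2}_{\G_\infty^*}(\Ga)$) is the entire content that is proved and that is used later, and it coincides with the literal ``G\^ateaux derivative of the Poisson bracket'' reading precisely when $n_1=n_2=1$, which is your (correct) third step.

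The gap is your second step, and both resolutions you offer there are wrong. If $\max(n_1,n_2)>1$, the product-rule terms in which a scalar factor $\Gc_{1,q}(\Ga)$ or $\Gc_{2,q}(\Ga)$ is differentiated neither cancel by antisymmetry or re-indexing, nor are they ``already packaged'' in the displayed formula: under $\G_\infty^{**}\cong\G_\infty$ they contribute elements proportional to $\ds\Gc_{1,p}[0]$ and $\ds\Gc_{2,p}[0]$ themselves (not commutators), with coefficients of the form $\ipp{\comm{\ds\Gc_{1,i_1}[0]}{\ds\Gc_{2,i_2}[0]}_{\G_\infty},\Ga}_{\G_\infty-\G_\infty^*}$, and these are generically nonzero (try $n_1=2$, $n_2=1$). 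Indeed, the paper itself records the genuine derivative $\ds\pb{\Gc}{\Hc}_{\G_\infty^*}[\Ga]$ with all three groups of terms inside the proof of \cref{lem:GinfWP1}, where those extra terms are essential to the Jacobi-identity bookkeeping. So the correct reading of the lemma is that the displayed expression identifies $\comm{\ds\Gc_1[\Ga]}{\ds\Gc_2[\Ga]}_{\G_\infty}$, with the ``derivative of the bracket'' identification exact only in the linear case singled out in the second assertion; your attempt to obtain the stronger literal statement by a second differentiation is the step that fails and should simply be deleted --- your first step already constitutes the proof.
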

\begin{proof}
Observe from the Leibniz rule for the G\^ateaux derivative and the bilinearity of the wedge product $\wedge_1$ (remember that for fixed $\Ga$, $\Gc_{1,q}(\Ga),\Gc_{2,q}(\Ga)\in\R$),
\begin{equation}
\ds\Gc_{1}[\Gamma]^{(\ell)} \wedge_1 \ds\Gc_{2}[\Gamma]^{(j)} = \sum_{i_1=1}^{n_1}\sum_{i_2=1}^{n_2} \Big(\prod_{\substack{1\leq q\leq n_1 \\ q\neq i_1}} \Gc_{1,q}(\Ga)\Big)\Big(\prod_{\substack{1\leq q\leq n_2 \\ q\neq i_2}} \Gc_{2,q}(\Ga)\Big) \Big(\ds\Gc_{1,i_1}[0]^{(\ell)} \wedge_1 \ds\Gc_{2,i_2}[0]^{(j)}\Big).
\end{equation}
Hence using \cref{prop:GinfLA} and the linearity of the $\Sym_k$ operator, we find that
\begin{align}
&\comm{\ds\Gc_{1}[\Gamma]}{\ds\Gc_{2}[\Gamma]}_{\G_\infty}^{(k)} \nn\\
&= \sum_{\substack{1\leq \ell,j\leq N \\ \min(\ell+j-1,N)=k}} \Sym_k\Big(\ds\Gc_1[\Gamma]^{(\ell)}\wedge_1 \ds\Gc_2[\Gamma]^{(j)}\Big)\nonumber\\
&=\sum_{i_1=1}^{n_1}\sum_{i_2=1}^{n_2} \Big(\prod_{\substack{1\leq q\leq n_1 \\ q\neq i_1}} \Gc_{1,q}(\Ga)\Big)\Big(\prod_{\substack{1\leq q\leq n_2 \\ q\neq i_2}} \Gc_{2,q}(\Ga)\Big)\sum_{\substack{1\leq \ell,j\leq N \\ \min(\ell+j-1,N)=k}} \Sym_k\Big(\ds\Gc_{1,i_1}[0]^{(\ell)} \wedge_1 \ds\Gc_{2,i_2}[0]^{(j)}\Big)  \nonumber\\
&=\sum_{i_1=1}^{n_1}\sum_{i_2=1}^{n_2} \Big(\prod_{\substack{1\leq q\leq n_1 \\ q\neq i_1}} \Gc_{1,q}(\Ga)\Big)\Big(\prod_{\substack{1\leq q\leq n_2 \\ q\neq i_2}} \Gc_{2,q}(\Ga)\Big)\comm{\ds\Gc_{1,i_1}[0]}{\ds\Gc_{2,i_2}[0]}_{\G_\infty}^{(k)},
\end{align}
where the ultimate equality follows from the definition of the $\G_\infty$ Lie bracket. This completes the proof.
\end{proof}

Recall from the previous subsection that if $M_0$ is the maximal nonzero component index for $F,G\in\G_\infty$, then $\comm{F}{G}_{\G_\infty}^{(k)} = 0$ for $k>2M_0$. Moreover, there are only $k$ terms in the sum defining $\comm{F}{G}_{\G_\infty}^{(k)}$. Since for any $\Fc,\Gc\in\Cc^\infty(\G_\infty^*)$, $\ds\Fc[\Ga],\ds\Gc[\Ga]\in \G_\infty^{**}\cong \G_\infty$, it follows that the Poisson bracket $\pb{\Fc}{\Gc}_{\G_\infty^*}$ is well-defined pointwise. The next lemma shows that for $\Fc,\Gc\in\A_\infty$, the bracket $\pb{\Fc}{\Gc}_{\G_\infty^*}$ in fact belongs to $\A_\infty$ and the pair $(\A_\infty,\pb{\cdot}{\cdot}_{\G_\infty^*})$ is a Lie algebra obeying the Leibniz rule.

\begin{lemma}\label{lem:GinfWP1}
The formula \eqref{eq:GinfPBdef} defines a map $\A_{\infty}\times\A_{\infty}\rightarrow \A_{\infty}$ which satisfies property \ref{assWP1} in \cref{def:WPVS}.
\end{lemma}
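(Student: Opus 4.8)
\textbf{Plan of proof of \cref{lem:GinfWP1}.} The strategy is to exploit the structural description of $\A_\infty$ in \cref{rem:Ainfform} together with the explicit G\^ateaux derivative formula of \cref{lem:trgen}, reducing everything to bilinearity in the arguments $\Fc,\Gc$. First I would observe that by \cref{rem:Ainfform} every $\Fc\in\A_\infty$ is a finite linear combination of finite products of expectations, and the same for $\Gc$; since the bracket $\pb{\cdot}{\cdot}_{\G_\infty^*}$ is bilinear in $\Fc,\Gc$ (being a composition of the bilinear $\comm{\cdot}{\cdot}_{\G_\infty}$ with the G\^ateaux derivatives, which are additive and commute with scalars), it suffices to show $\pb{\Gc_1}{\Gc_2}_{\G_\infty^*}\in\A_\infty$ whenever $\Gc_1 = \Gc_{1,1}\cdots\Gc_{1,n_1}$ and $\Gc_2 = \Gc_{2,1}\cdots\Gc_{2,n_2}$ are products of expectations. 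For such $\Gc_1,\Gc_2$, I apply \cref{lem:trgen} to identify $\ds\pb{\Gc_1}{\Gc_2}_{\G_\infty^*}[\Ga]$ as a finite sum over $(i_1,i_2)$ of products of the expectation functionals $\Gc_{1,q}(\Ga),\Gc_{2,q}(\Ga)$ (with $q\ne i_1, q\ne i_2$) times the fixed element $\comm{\ds\Gc_{1,i_1}[0]}{\ds\Gc_{2,i_2}[0]}_{\G_\infty}\in\G_\infty$. Pairing this against $\Ga$ via the duality pairing $\ipp{\cdot,\cdot}_{\G_\infty-\G_\infty^*}$, which is itself an expectation functional in $\Ga$ since the Lie-bracket element lies in $\G_\infty$, exhibits $\pb{\Gc_1}{\Gc_2}_{\G_\infty^*}$ as a finite linear combination of products of expectations, hence an element of $\A_\infty$; along the way one uses \cref{prop:GinfLA} to know $\pb{\Fc}{\Gc}_{\G_\infty^*}\in\Cc^\infty(\G_\infty^*)$ (each summand is a finite product of continuous linear functionals, hence $\Cc^\infty$). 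This proves $\A_\infty$ is closed under the bracket.

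Next I would verify that $(\A_\infty,\pb{\cdot}{\cdot}_{\G_\infty^*})$ is a Lie algebra in the sense of \cref{def:LA}. Bilinearity is immediate from the bilinearity of $\comm{\cdot}{\cdot}_{\G_\infty}$ and linearity of the G\^ateaux derivative. Antisymmetry follows from the antisymmetry of $\comm{\cdot}{\cdot}_{\G_\infty}$ established in \cref{prop:GinfLA}. For the Jacobi identity, the natural route is to use the fact that the Lie-Poisson bracket on the whole space $\Cc^\infty(\G_\infty^*)$ satisfies Jacobi at the pointwise level---this is a standard consequence of the Jacobi identity for $\comm{\cdot}{\cdot}_{\G_\infty}$ together with the chain rule for second G\^ateaux derivatives (the "cyclic sum of double-derivative terms cancels" computation familiar from Lie-Poisson theory, cf.\ the analogous step in \cite{MNPRS2020}). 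Since we have just shown iterated brackets of elements of $\A_\infty$ stay in $\A_\infty$, the pointwise Jacobi identity restricts to give Jacobi on $\A_\infty$. The Leibniz rule \eqref{eq:LR} is checked directly: for $\Fc,\Gc,\Hc\in\A_\infty$, the G\^ateaux derivative satisfies $\ds(\Gc\Hc)[\Ga] = \Hc(\Ga)\,\ds\Gc[\Ga] + \Gc(\Ga)\,\ds\Hc[\Ga]$ by the product rule, and substituting this into the defining formula \eqref{eq:GinfPBdef} and using bilinearity of $\comm{\cdot}{\cdot}_{\G_\infty}$ in the second slot immediately yields $\pb{\Fc}{\Gc\Hc}_{\G_\infty^*} = \Hc\pb{\Fc}{\Gc}_{\G_\infty^*} + \Gc\pb{\Fc}{\Hc}_{\G_\infty^*}$.

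The one point requiring genuine care---and what I expect to be the main obstacle---is the smoothness claim: showing $\pb{\Fc}{\Gc}_{\G_\infty^*}\in\Cc^\infty(\G_\infty^*)$ rather than merely being a well-defined function, \emph{and} the proof of the Jacobi identity, which needs $\pb{\cdot}{\cdot}_{\G_\infty^*}$ to be a bona fide operation on $\Cc^\infty(\G_\infty^*)$ (or at least on $\A_\infty$) with well-behaved second derivatives. This is exactly the subtlety flagged in \cref{ssec:Outinf}: for general $\Fc,\Gc\in\Cc^\infty(\G_\infty^*)$ one cannot conclude the bracket is $\Cc^\infty$ because the number of nonzero components of $\ds\Fc[\Ga]$ may grow with $\Ga$. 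The resolution, which I would make explicit, is that for $\Fc,\Gc\in\A_\infty$ the G\^ateaux derivatives $\ds\Fc[\Ga],\ds\Gc[\Ga]$ have nonzero components only up to some index $M$ that is \emph{uniform in }$\Ga$ (this is the content of \cref{rem:cgd} and the remark preceding \cref{lem:trgen}: expectations have constant derivatives). Consequently $\comm{\ds\Fc[\Ga]}{\ds\Gc[\Ga]}_{\G_\infty}$ has uniformly bounded support in $k$, the bracket reduces to a finite sum of compositions of $\Cc^\infty$ maps ($\Ga\mapsto\Gc_{i,q}(\Ga)$ continuous linear, times fixed vectors, paired against $\Ga$), and smoothness follows. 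I would state this reduction carefully and then invoke \cref{lem:trgen} to conclude. The remaining computations (Jacobi, Leibniz) are then routine algebra on these finite sums, and I would present them compactly rather than in full.
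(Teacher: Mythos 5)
Your closure argument and your Leibniz computation coincide with the paper's: reduce via \cref{rem:Ainfform} and bilinearity to the case of finite products of expectations, use \cref{lem:trgen} (equivalently, the Leibniz rule for $\ds$ together with the constancy of derivatives of expectations, \cref{rem:cgd}) to exhibit $\pb{\Fc}{\Gc}_{\G_\infty^*}$ as a finite linear combination of products of expectations, hence an element of $\A_\infty$, and verify \eqref{eq:LR} directly from $\ds(\Gc\Hc)[\Ga]=\Gc(\Ga)\ds\Hc[\Ga]+\Hc(\Ga)\ds\Gc[\Ga]$. Bilinearity and antisymmetry are likewise handled as in the paper.

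The one place you diverge is the Jacobi identity, and there your stated route has a gap. You propose to establish Jacobi ``on the whole space $\Cc^\infty(\G_\infty^*)$ at the pointwise level'' and then restrict to $\A_\infty$. But for general $\Gc,\Hc\in\Cc^\infty(\G_\infty^*)$ the bracket $\pb{\Gc}{\Hc}_{\G_\infty^*}$ is only known to be a well-defined function of $\Ga$; its G\^ateaux differentiability is exactly what the paper cannot establish outside $\A_\infty$ (this is the reason $\A_\infty$ is introduced at all), so the iterated bracket $\pb{\Fc}{\pb{\Gc}{\Hc}_{\G_\infty^*}}_{\G_\infty^*}$ is not even defined in that generality, and the standard ``second-derivative terms cancel in the cyclic sum'' argument additionally needs to identify second-derivative objects with elements of $\G_\infty$, which is not available for arbitrary smooth functionals on $\G_\infty^*$. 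Your final paragraph essentially concedes this and contains the correct fix: run the argument on $\A_\infty$ only, where by your own closure step $\pb{\Gc}{\Hc}_{\G_\infty^*}\in\A_\infty$, its derivative is explicitly computable (\cref{lem:trgen} plus the Leibniz rule for $\ds$ applied to products of expectations), and all sums are finite with a bound uniform in $\Ga$. That restricted computation is precisely what the paper carries out: it expands the three cyclic terms for products of expectations, cancels the cross terms pairwise using antisymmetry of $\comm{\cdot}{\cdot}_{\G_\infty}$ and an index swap, and is left with the cyclic sum that vanishes by the Jacobi identity for $\comm{\cdot}{\cdot}_{\G_\infty}$. So your cancellation argument is sound in substance, but it must be formulated and justified on $\A_\infty$ (equivalently, on products of expectations) from the outset, not deduced from a Jacobi identity on all of $\Cc^\infty(\G_\infty^*)$ that is not available in this framework.
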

\begin{proof}
We first show that for $\Fc,\Gc\in\A_{\infty}$, one has $\pb{\Fc}{\Gc}_{\G_\infty^{*}}\in\A_{\infty}$. Recalling \cref{rem:Ainfform}, the Leibniz rule, bilinearity of the bracket $\comm{\cdot}{\cdot}_{\G_\infty}$, and the bilinearity of the duality pairing $\ipp{\cdot,\cdot}_{\G_\infty-\G_\infty^*}$ allow us to consider only the case where $\Fc=\Fc_{1}\cdots\Fc_{n},\Gc=\Gc_{1}\cdots\Gc_m$ are both finite products of expectations. Unpacking the definition of the Poisson bracket $\pb{\cdot}{\cdot}_{\G_\infty^*}$ and appealing to \cref{lem:trgen}, we find
\begin{multline}
\forall \Ga\in\G_\infty^*, \qquad \pb{\Fc}{\Gc}_{\G_\infty^{*}}(\Gamma) \\
= \sum_{i_1=1}^n\sum_{i_2=1}^m \Big(\prod_{\substack{1\leq j\leq n\\ j\neq i_1}}\Fc_j(\Ga) \Big)\Big(\prod_{\substack{1\leq j\leq m\\ j\neq i_2}}\Gc_j(\Ga) \Big) \ipp*{\comm{\ds\Fc_{i_1}[0]}{\ds\Gc_{i_2}[0]}_{\G_\infty}, \Ga}_{\G_\infty-\G_\infty^*}.
\end{multline}
So, we only need to show that for each pair of indices $1\leq i_1\leq n$ and $1\leq i_2\leq m$, the functional
\begin{equation}\label{eq:wtsexp}
\Ga\mapsto \ipp*{\comm{\ds\Fc_{i_1}[0]}{\ds\Gc_{i_2}[0]}_{\G_\infty},\Ga}_{\G_\infty-\G_\infty^*}
\end{equation}
defines an element of $\A_\infty$. But this follows from the fact that $\ds\Fc_{i_1}[0]$ and $\ds\Gc_{i_2}[0]$ are both identifiable as elements of $\G_\infty$, and therefore $\comm{\ds\Fc_{i_1}[0]}{\ds\Gc_{i_2}[0]}_{\G_\infty}\in\G_\infty$, implying \eqref{eq:wtsexp} is the expectation of $\comm{\ds\Fc_{i_1}[0]}{\ds\Gc_{i_2}[0]}_{\G_\infty}$.

Bilinearity and anti-symmetry of $\pb{\cdot}{\cdot}_{\G_\infty^*}$ are immediate from the bilinearity and anti-symmetry of $\comm{\cdot}{\cdot}_{\G_\infty}$ and the bilinearity of the duality pairing $\ipp{\cdot,\cdot}_{\G_\infty-\G_\infty^*}$, so it remains to verify the Jacobi identity. Let $\Fc,\Gc,\Hc\in \A_{\infty}$. As we argued above, it suffices to consider the case where
\begin{equation}
\Fc = \Fc_{1}\cdots\Fc_n, \quad \Gc = \Gc_1\cdots\Gc_m, \quad \Hc=\Hc_1\cdots\Hc_q
\end{equation}
are all finite products of expectations. By multiplying by the constant functional 1, we may assume without loss of generality that $n=m=q$.
Thus, we need to show that for every $\Gamma\in \G_\infty^{*}$,
\begin{align}\label{eq:JIwts}
0&=\pb{\Fc}{\pb{\Gc}{\Hc}_{\G_\infty^{*}}}_{\G_\infty^{*}}(\Gamma) + \pb{\Gc}{\pb{\Hc}{\Fc}_{\G_\infty^{*}}}_{\G_\infty^{*}}(\Gamma) + \pb{\Hc}{\pb{\Fc}{\Gc}_{\G_\infty^{*}}}_{\G_\infty^{*}}(\Gamma) \nonumber\\
&=\ipp*{\comm{\ds\Fc[\Gamma]}{\ds\pb{\Gc}{\Hc}_{\G_\infty^{*}}[\Gamma]}_{\G_\infty},\Ga}_{\G_\infty-\G_\infty^*} + \ipp*{\comm{\ds\Gc[\Gamma]}{\ds\pb{\Hc}{\Fc}_{\G_\infty^{*}}[\Gamma]}_{\G_\infty},\Gamma}_{\G_\infty-\G_\infty^*} \nonumber\\
&\phantom{=} +\ipp*{\comm{\ds\Hc[\Gamma]}{\ds\pb{\Fc}{\Gc}_{\G_\infty^{*}}[\Gamma]}_{\G_\infty},\Gamma}_{\G_\infty-\G_\infty^*},
\end{align}
which we do by direct computation.

First, since
\begin{equation}\label{eq:dsFc}
\ds\Fc[\Ga] = \sum_{i=1}^n \Big(\prod_{\substack{1\leq j\leq n \\ j\neq i}} \Fc_{j}(\Ga) \Big)\ds\Fc_{i}[0]
\end{equation}
(here we are implicitly using \cref{rem:cgd} for $\Fc_{i}$), it follows from the bilinearity of the duality pairing that
\begin{multline}\label{eq:FGHpre}
\ipp*{\comm{\ds\Fc[\Gamma]}{\ds\pb{\Gc}{\Hc}_{\G_\infty^{*}}[\Gamma]}_{\G_\infty},\Gamma}_{\G_\infty-\G_\infty^*}  \\
=  \sum_{i_1=1}^n \Big(\prod_{\substack{1\leq j\leq n \\ j\neq i_1}} \Fc_{j}(\Ga) \Big) \ipp*{ \comm{\ds\Fc_{i_1}[0]}{\ds\pb{\Gc}{\Hc}_{\G_\infty^*}[\Ga]}_{\G_\infty}, \Ga}_{\G_\infty-\G_\infty^*}.
\end{multline}
Since the identity \eqref{eq:dsFc} also holds with $\Fc$ replaced by $\Gc$ or $\Hc$ in both sides, we also have by \cref{lem:trgen} that
\begin{equation}\label{eq:pbGcHc}
\pb{\Gc}{\Hc}_{\G_\infty^*}(\Ga) = \sum_{i_2,i_3=1}^n \Big(\prod_{\substack{1\leq j\leq n \\ j\neq i_2}}\Gc_j(\Ga) \Big)\Big(\prod_{\substack{1\leq j\leq n \\ j\neq i_3}}\Hc_j(\Ga)\Big) \ipp*{\comm{\ds\Gc_{i_2}[0]}{\ds\Hc_{i_3}[0]}_{\G_\infty},\Ga}.
\end{equation}
For each pair $1\leq i_2,i_3\leq n$, define the expectation $\Ec_{i_2i_3}^{\Gc\Hc}(\Ga)\coloneqq \ipp*{\comm{\ds\Gc_{i_2}[0]}{\ds\Hc_{i_3}[0]}_{\G_\infty},\Ga}_{\G_\infty-\G_\infty^*}$. Evidently, $\ds\Ec_{i_2i_3}^{\Gc\Hc}[\Ga] = \comm{\ds\Gc_{i_2}[0]}{\ds\Hc_{i_3}[0]}_{\G_\infty}$. Therefore, it follows from an application of the Leibniz rule to the right-hand side of \eqref{eq:pbGcHc}, and using that the $\Gc_j,\Hc_j$ are expectations, that
\begin{multline}
\forall\Ga\in\G_\infty^*, \qquad \ds\pb{\Gc}{\Hc}_{\G_\infty^*}[\Ga] =\sum_{i_2,i_3=1}^n \sum_{\substack{1\leq p\leq n \\ p \neq i_2}} \Big(\prod_{\substack{1\leq j\leq n \\ j\neq i_2,p}} \Gc_j(\Ga)\Big)\Big(\prod_{\substack{1\leq j\leq n \\ j\neq i_3}} \Hc_j(\Ga)\Big) \Ec_{i_2i_3}^{\Gc\Hc}(\Ga)\ds\Gc_{p}[0] \\
+ \sum_{i_2,i_3=1}^n \sum_{\substack{1\leq p\leq n \\ p \neq i_3}} \Big(\prod_{\substack{1\leq j\leq n \\ j\neq i_2}} \Gc_j(\Ga)\Big)\Big(\prod_{\substack{1\leq j\leq n \\ j\neq i_3,p}} \Hc_j(\Ga)\Big) \Ec_{i_2i_3}^{\Gc\Hc}(\Ga)\ds\Hc_{p}[0] \\
+ \sum_{i_2,i_3=1}^n\Big(\prod_{\substack{1\leq j\leq n \\ j\neq i_2}} \Gc_j(\Ga)\Big)\Big(\prod_{\substack{1\leq j\leq n \\ j\neq i_3}} \Hc_j(\Ga)\Big)\comm{\ds\Gc_{i_2}[0]}{\ds\Hc_{i_3}[0]}_{\G_\infty}.
\end{multline}
Substituting this identity into the right-hand side of \eqref{eq:FGHpre} and using the bilinearity of the Lie bracket and the duality pairing, we obtain
\begin{multline}
\ipp*{\comm{\ds\Fc[\Gamma]}{\ds\pb{\Gc}{\Hc}_{\G_\infty^{*}}[\Gamma]}_{\G_\infty},\Gamma}_{\G_\infty-\G_\infty^*} \\
=\sum_{i_1,i_2,i_3=1}^n\sum_{\substack{1\leq p\leq n \\ p \neq i_2}}  \Big(\prod_{\substack{1\leq j\leq n \\ j\neq i_1}}\Fc_{j}(\Ga)\Big)\Big(\prod_{\substack{1\leq j\leq n \\ j\neq i_2,p}} \Gc_j(\Ga)\Big)\Big(\prod_{\substack{1\leq j\leq n \\ j\neq i_3}} \Hc_j(\Ga)\Big) \Ec_{i_2i_3}^{\Gc\Hc}(\Ga)\ipp*{\comm{\ds\Fc_{i_1}[0]}{\ds\Gc_{p}[0]}_{\G_\infty},\Ga}_{\G_\infty-\G_\infty^*} \\
+ \sum_{i_1,i_2,i_3=1}^n\sum_{\substack{1\leq p\leq n \\ p \neq i_3}} \Big(\prod_{\substack{1\leq j\leq n \\ j\neq i_1}}\Fc_{j}(\Ga)\Big) \Big(\prod_{\substack{1\leq j\leq n \\ j\neq i_2}} \Gc_j(\Ga)\Big)\Big(\prod_{\substack{1\leq j\leq n \\ j\neq i_3,p}} \Hc_j(\Ga)\Big) \Ec_{i_2i_3}^{\Gc\Hc}(\Ga)\ipp*{\comm{\ds\Fc_{i_1}[0]}{\ds\Hc_{p}[0] }_{\G_\infty},\Ga}_{\G_\infty-\G_\infty^*}\\
+ \sum_{i_1,i_2,i_3=1}^n\Big(\prod_{\substack{1\leq j\leq n \\ j\neq i_1}}\Fc_{j}(\Ga)\Big)\Big(\prod_{\substack{1\leq j\leq n \\ j\neq i_2}} \Gc_j(\Ga)\Big)\Big(\prod_{\substack{1\leq j\leq n \\ j\neq i_3}} \Hc_j(\Ga)\Big)\ipp*{\comm{\ds\Fc_{i_1}[0]}{\comm{\ds\Gc_{i_2}[0]}{\ds\Hc_{i_3}[0]}_{\G_\infty}}_{\G_\infty},\Ga}_{\G_\infty-\G_\infty^*}.
\end{multline}
To further compactify the notation, let us set $\Ec_{i_1p}^{\Fc\Gc}(\Ga)\coloneqq \ipp{\comm{\ds\Fc_{i_1}[0]}{\ds\Gc_{p}[0]}_{\G_\infty},\Ga}_{\G_\infty-\G_\infty^*}$ and similarly for $\Ec_{i_1p}^{\Fc\Hc}(\Ga)$. By the same reasoning, we also obtain
\begin{multline}
\ipp*{\comm{\ds\Gc[\Gamma]}{\ds\pb{\Hc}{\Fc}_{\G_\infty^{*}}[\Gamma]}_{\G_\infty},\Gamma}_{\G_\infty-\G_\infty^*}\\
=\sum_{i_1,i_2,i_3=1}^n \sum_{\substack{1\leq p\leq n \\ p \neq i_3}} \Big(\prod_{\substack{1\leq j\leq n \\ j\neq i_2}}\Gc_{j}(\Ga)\Big) \Big(\prod_{\substack{1\leq j\leq n \\ j\neq i_3,p}} \Hc_j(\Ga)\Big)\Big(\prod_{\substack{1\leq j\leq n \\ j\neq i_1}} \Fc_j(\Ga)\Big) \Ec_{i_3i_1}^{\Hc\Fc}(\Ga)\Ec_{i_2p}^{\Gc\Hc}(\Ga)\\
+\sum_{i_1,i_2,i_3=1}^n \sum_{\substack{1\leq p\leq n \\ p \neq i_1}} \Big(\prod_{\substack{1\leq j\leq n \\ j\neq i_2}}\Gc_{j}(\Ga)\Big) \Big(\prod_{\substack{1\leq j\leq n \\ j\neq i_3}} \Hc_j(\Ga)\Big)\Big(\prod_{\substack{1\leq j\leq n \\ j\neq i_1,p}} \Fc_j(\Ga)\Big) \Ec_{i_3i_1}^{\Hc\Fc}(\Ga)\Ec_{i_2p}^{\Gc\Fc}(\Ga) \\
+\sum_{i_1,i_2,i_3=1}^n \Big(\prod_{\substack{1\leq j\leq n \\ j\neq i_2}}\Gc_{j}(\Ga)\Big)\Big(\prod_{\substack{1\leq j\leq n \\ j\neq i_3}} \Hc_j(\Ga)\Big)\Big(\prod_{\substack{1\leq j\leq n \\ j\neq i_1}} \Fc_j(\Ga)\Big)\ipp*{\comm{\ds\Gc_{i_2}[0]}{\comm{\ds\Hc_{i_3}[0]}{\ds\Fc_{i_1}[0]}_{\G_\infty}}_{\G_\infty},\Ga}_{\G_\infty-\G_\infty^*},
\end{multline}
\begin{multline}
\ipp*{\comm{\ds\Hc[\Gamma]}{\ds\pb{\Fc}{\Gc}_{\G_\infty^{*}}[\Gamma]}_{\G_\infty},\Gamma}_{\G_\infty-\G_\infty^*}\\
=\sum_{i_1,i_2,i_3=1}^n\sum_{\substack{1\leq p\leq n \\ p \neq i_1}}  \Big(\prod_{\substack{1\leq j\leq n \\ j\neq i_3}}\Hc_{j}(\Ga)\Big) \Big(\prod_{\substack{1\leq j\leq n \\ j\neq i_1,p}} \Fc_j(\Ga)\Big)\Big(\prod_{\substack{1\leq j\leq n \\ j\neq i_2}} \Gc_j(\Ga)\Big) \Ec_{i_1i_2}^{\Fc\Gc}(\Ga)\Ec_{i_3p}^{\Hc\Fc}(\Ga)\\
+\sum_{i_1,i_2,i_3=1}^n \sum_{\substack{1\leq p\leq n \\ p \neq i_2}} \Big(\prod_{\substack{1\leq j\leq n \\ j\neq i_3}}\Hc_{j}(\Ga)\Big) \Big(\prod_{\substack{1\leq j\leq n \\ j\neq i_1}} \Fc_j(\Ga)\Big)\Big(\prod_{\substack{1\leq j\leq n \\ j\neq i_2,p}} \Gc_j(\Ga)\Big) \Ec_{i_1i_2}^{\Fc\Gc}(\Ga)\Ec_{i_3p}^{\Hc\Gc}(\Ga) \\
+\sum_{i_1,i_2,i_3=1}^n \Big(\prod_{\substack{1\leq j\leq n \\ j\neq i_3}}\Hc_{j}(\Ga)\Big)\Big(\prod_{\substack{1\leq j\leq n \\ j\neq i_1}} \Fc_j(\Ga)\Big)\Big(\prod_{\substack{1\leq j\leq n \\ j\neq i_2}} \Gc_j(\Ga)\Big)\ipp*{\comm{\ds\Hc_{i_3}[0]}{\comm{\ds\Fc_{i_1}[0]}{\ds\Gc_{i_2}[0]}_{\G_\infty}}_{\G_\infty},\Ga}_{\G_\infty-\G_\infty^*},
\end{multline}
where $\Ec_{i_1i_2}^{\Fc\Gc}, \Ec_{i_3p}^{\Hc\Fc},\Ec_{i_3p}^{\Hc\Gc}$ are defined analogously to above.

Since the Lie bracket $\comm{\cdot}{\cdot}_{\G_\infty}$ satisfies the Jacobi identity, we have
\begin{multline}
0 = \comm{\ds\Fc_{i_1}[0]}{\comm{\ds\Gc_{i_2}[0]}{\ds\Hc_{i_3}[0]}_{\G_\infty}}_{\G_\infty} + \comm{\ds\Gc_{i_2}[0]}{\comm{\ds\Hc_{i_3}[0]}{\ds\Fc_{i_1}[0]}_{\G_\infty}}_{\G_\infty}\\
 + \comm{\ds\Hc_{i_3}[0]}{\comm{\ds\Fc_{i_1}[0]}{\ds\Gc_{i_2}[0]}_{\G_\infty}}_{\G_\infty},
\end{multline}
implying
\begin{multline}
0 = \sum_{i_1,i_2,i_3=1}^n\Bigg(\Big(\prod_{\substack{1\leq j\leq n \\ j\neq i_1}}\Fc_{j}(\Ga)\Big)\Big(\prod_{\substack{1\leq j\leq n \\ j\neq i_2}} \Gc_j(\Ga)\Big)\Big(\prod_{\substack{1\leq j\leq n \\ j\neq i_3}} \Hc_j(\Ga)\Big)\\
\times\ipp*{\comm{\ds\Fc_{i_1}[0]}{\comm{\ds\Gc_{i_2}[0]}{\ds\Hc_{i_3}[0]}_{\G_\infty}}_{\G_\infty},\Ga}_{\G_\infty-\G_\infty^*}\\
+\Big(\prod_{\substack{1\leq j\leq n \\ j\neq i_2}}\Gc_{j}(\Ga)\Big)\Big(\prod_{\substack{1\leq j\leq n \\ j\neq i_3}} \Hc_j(\Ga)\Big)\Big(\prod_{\substack{1\leq j\leq n \\ j\neq i_1}} \Fc_j(\Ga)\Big)\ipp*{\comm{\ds\Gc_{i_2}[0]}{\comm{\ds\Hc_{i_3}[0]}{\ds\Fc_{i_1}[0]}_{\G_\infty}}_{\G_\infty},\Ga}_{\G_\infty-\G_\infty^*}\\
+\Big(\prod_{\substack{1\leq j\leq n \\ j\neq i_3}}\Hc_{j}(\Ga)\Big)\Big(\prod_{\substack{1\leq j\leq n \\ j\neq i_1}} \Fc_j(\Ga)\Big)\Big(\prod_{\substack{1\leq j\leq n \\ j\neq i_2}} \Gc_j(\Ga)\Big)\ipp*{\comm{\ds\Hc_{i_3}[0]}{\comm{\ds\Fc_{i_1}[0]}{\ds\Gc_{i_2}[0]}_{\G_\infty}}_{\G_\infty},\Ga}_{\G_\infty-\G_\infty^*}\Bigg).
\end{multline}
Since $\Ec_{i_1i_2}^{\Fc\Gc}(\Ga) = -\Ec_{i_2i_1}^{\Gc\Fc}(\Ga)$ by antisymmetry of the Lie bracket, it follows from swapping $i_1\leftrightarrow p$ that
\begin{multline}
0=\sum_{i_1,i_2,i_3=1}^n\sum_{\substack{1\leq p\leq n \\ p \neq i_1}}  \Big(\prod_{\substack{1\leq j\leq n \\ j\neq i_2}}\Gc_{j}(\Ga)\Big) \Big(\prod_{\substack{1\leq j\leq n \\ j\neq i_3}} \Hc_j(\Ga)\Big)\Big(\prod_{\substack{1\leq j\leq n \\ j\neq i_1,p}} \Fc_j(\Ga)\Big) \Ec_{i_3i_1}^{\Hc\Fc}(\Ga)\Ec_{i_2p}^{\Gc\Fc}(\Ga)\\
+\sum_{i_1,i_2,i_3=1}^n\sum_{\substack{1\leq p\leq n \\ p \neq i_1}}  \Big(\prod_{\substack{1\leq j\leq n \\ j\neq i_3}}\Hc_{j}(\Ga)\Big) \Big(\prod_{\substack{1\leq j\leq n \\ j\neq i_1,p}} \Fc_j(\Ga)\Big)\Big(\prod_{\substack{1\leq j\leq n \\ j\neq i_2}} \Gc_j(\Ga)\Big) \Ec_{i_1i_2}^{\Fc\Gc}(\Ga)\Ec_{i_3p}^{\Hc\Fc}(\Ga).
\end{multline}
Similarly, by swapping $i_2\leftrightarrow p$, we see that
\begin{multline}
0=\sum_{i_1,i_2,i_3=1}^n \sum_{\substack{1\leq p\leq n \\ p \neq i_2}}\Big(\prod_{\substack{1\leq j\leq n \\ j\neq i_1}}\Fc_{j}(\Ga)\Big) \Big(\prod_{\substack{1\leq j\leq n \\ j\neq i_2,p}} \Gc_j(\Ga)\Big)\Big(\prod_{\substack{1\leq j\leq n \\ j\neq i_3}} \Hc_j(\Ga)\Big) \Ec_{i_2i_3}^{\Gc\Hc}(\Ga)\Ec_{i_1p}^{\Fc\Gc}(\Ga) \\
+\sum_{i_1,i_2,i_3=1}^n \sum_{\substack{1\leq p\leq n \\ p \neq i_2}} \Big(\prod_{\substack{1\leq j\leq n \\ j\neq i_3}}\Hc_{j}(\Ga)\Big) \Big(\prod_{\substack{1\leq j\leq n \\ j\neq i_1}} \Fc_j(\Ga)\Big)\Big(\prod_{\substack{1\leq j\leq n \\ j\neq i_2,p}} \Gc_j(\Ga)\Big) \Ec_{i_1i_2}^{\Fc\Gc}(\Ga)\Ec_{i_3p}^{\Hc\Gc}(\Ga)
\end{multline}
and by swapping $i_3\leftrightarrow p$,
\begin{multline}
0=\sum_{i_1,i_2,i_3=1}^n\sum_{\substack{1\leq p\leq n \\ p \neq i_3}}  \Big(\prod_{\substack{1\leq j\leq n \\ j\neq i_2}}\Gc_{j}(\Ga)\Big) \Big(\prod_{\substack{1\leq j\leq n \\ j\neq i_3,p}} \Hc_j(\Ga)\Big)\Big(\prod_{\substack{1\leq j\leq n \\ j\neq i_1}} \Fc_j(\Ga)\Big) \Ec_{i_3i_1}^{\Hc\Fc}(\Ga)\Ec_{i_2p}^{\Gc\Hc}(\Ga)\\
+ \sum_{i_1,i_2,i_3=1}^n\sum_{\substack{1\leq p\leq n \\ p \neq i_3}}   \Big(\prod_{\substack{1\leq j\leq n \\ j\neq i_1}}\Fc_{j}(\Ga)\Big) \Big(\prod_{\substack{1\leq j\leq n \\ j\neq i_2}} \Gc_j(\Ga)\Big)\Big(\prod_{\substack{1\leq j\leq n \\ j\neq i_3,p}} \Hc_j(\Ga)\Big) \Ec_{i_2i_3}^{\Gc\Hc}(\Ga)\Ec_{i_1p}^{\Fc\Hc}(\Ga).
\end{multline}
After a little bookkeeping, we realize the Jacobi identity \eqref{eq:JIwts} has been shown.

Finally, we claim that $\pb{\cdot}{\cdot}_{\G_\infty^{*}}$ satisfies the Leibniz rule \eqref{eq:LR}. Since $\ds(\Fc\Gc)[\Gamma]=\Fc(\Gamma)\ds\Gc[\Gamma] + \Gc(\Gamma)\ds\Fc[\Gamma]$ by the Leibniz rule for the G\^ateaux derivative, we see that
\begin{align}
\pb{\Fc\Gc}{\Hc}_{\G_\infty^{*}}(\Gamma) &=\ipp*{\comm{\ds(\Fc\Gc)[\Gamma]}{\ds\Hc[\Gamma]}_{\G_\infty},\Gamma}_{\G_\infty-\G_\infty^*} \nonumber\\
&=\Fc(\Gamma)\ipp*{\comm{\ds\Gc[\Gamma]}{\ds\Hc[\Gamma]}_{\G_\infty},\Gamma}_{\G_\infty-\G_\infty^*} + \Gc(\Gamma)\ipp*{\comm{\ds\Fc[\Gamma]}{\ds\Hc[\Gamma]}_{\G_\infty},\Gamma}_{\G_\infty-\G_\infty^*} \nonumber\\
&=\Fc(\Gamma)\pb{\Gc}{\Hc}_{\G_\infty^{*}}(\Gamma) + \Gc(\Gamma)\pb{\Fc}{\Hc}_{\G_\infty^{*}}(\Gamma),
\end{align}
where the penultimate equality follows by bilinearity of the Lie bracket and duality pairing and the ultimate equality follows from the definition of the Poisson bracket.
\end{proof}

We next verify that $\A_{\infty}$ satisfies the non-degeneracy property \ref{assWP2}.

\begin{lemma}\label{lem:GinfWP2}
$\A_{\infty}$ satisfies property \ref{assWP2} in \cref{def:WPVS}.
\end{lemma}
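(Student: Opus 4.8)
The plan is a direct verification of property \ref{assWP2}, exploiting the fact that $\A_{\infty}$ contains \emph{all} expectation functionals. Fix $\Ga,\Psi\in \G_\infty^*$ and suppose that $\ds\Fc[\Ga](\Psi) = 0$ for every $\Fc\in\A_\infty$. The idea is to test only against the generators of $\A_\infty$ of the form $\Fc(\cdot) = \ipp{F,\cdot}_{\G_\infty-\G_\infty^*}$ with $F\in\G_\infty$. By \cref{rem:cgd}, such an expectation functional has constant G\^ateaux derivative, and since $\Fc$ is linear and continuous one has $\ds\Fc[\Ga](\Psi) = \ipp{F,\Psi}_{\G_\infty-\G_\infty^*}$. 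Thus the hypothesis yields
\begin{equation}
\ipp{F,\Psi}_{\G_\infty-\G_\infty^*} = 0, \qquad \forall F\in\G_\infty.
\end{equation}

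Since $\G_\infty^*$ is the strong dual of $\G_\infty$, the element $\Psi$ is a continuous linear functional on $\G_\infty$ that vanishes identically, hence $\Psi = 0$. (Equivalently, one may invoke the reflexivity relation $\G_\infty^{**}\cong\G_\infty$: the pairing $\ipp{\cdot,\cdot}_{\G_\infty-\G_\infty^*}$ is nondegenerate in the second argument.) This establishes \ref{assWP2}.

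There is no real obstacle here; the only point requiring care is the identification, via \cref{rem:cgd}, of the G\^ateaux derivative of an expectation functional with the functional itself, which reduces the nondegeneracy condition for the whole algebra $\A_\infty$ to nondegeneracy of the duality pairing. The constant functionals in the generating set play no role, as their G\^ateaux derivatives vanish.
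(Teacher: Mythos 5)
Your proof is correct and follows essentially the same route as the paper: test against expectation functionals, use linearity to identify the G\^ateaux derivative with the functional itself, and conclude $\Psi=0$ from nondegeneracy of the duality pairing. The paper merely spells this out component-wise, choosing expectations $F$ supported on a single component $k_0$ to read off $v^{(k_0)}=0$ for each $k_0$, which is a cosmetic rather than substantive difference.
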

\begin{proof}
Let $\Gamma \in\G_\infty^{*}$ and $v \in \G_\infty^{*}$. Suppose that $\ds\Fc[\Gamma](v) = 0$ for every $\Fc\in\A_\infty$. We will show that $v = 0$.

Consider functionals of the form $\Fc_{f,k_0}(\cdot ) \coloneqq \ipp{F_{k_0},\cdot}_{\G_\infty-\G_\infty^*}$, where
\begin{equation}
F_{k_0}^{(k)} \coloneqq
\begin{cases}
f^{(k_0)}, & {k=k_0} \\
0, & {\text{otherwise}}
\end{cases},
\end{equation}
for $k_0\in\N$ and $f^{(k_0)}\in \g_{k_0}$. $\Fc_{f,k_0}$ is an expectation, hence in $\A_\infty$. Since $\Fc_{f,k_0}$ is linear, we have $\ds\Fc_{f,k_0}[\Gamma](\cdot) = \Fc_{f,k_0}(\cdot)$, so if $v = (v^{(k)})_{k=1}^\infty \in \G_\infty^{*} $ is as above, we have by definition of $\Fc_{f,k_0}$ that
\begin{equation}
\Fc_{f,k_0}(v) = \ipp*{f^{(k_0)}, v^{(k_0)}}_{\g_k-\g_k^*} = 0.
\end{equation}
Since $f^{(k_0)}\in\g_{k_0}$ was arbitrary, it follows that $v^{(k_0)} =0$; and since $k_0\in\N$ was arbitrary, it follows that $v=0$. 
\end{proof}

We now turn to verifying property \ref{assWP3} concerning the Hamiltonian vector field. Unlike the $N$-particle situation, we do not \emph{a priori} know that $X_{\Gc}$ exists for an element $\Gc\in\A_\infty$, let alone have an explicit formula for $X_{\Gc}$. To show \ref{assWP3}, we will find a candidate vector field $X_{\Gc}$, for any $\Gc\in\Cc^\infty(\G_\infty^*)$, with the property that
\begin{equation}\label{eq:hamVFchar}
\forall \Fc\in\Cc^\infty(\G_\infty^*), \ \Ga\in\G_\infty^*, \qquad \pb{\Fc}{\Gc}_{\G_\infty^*}(\Ga) = \ds\Fc[\Ga](X_{\Gc}(\Ga)).
\end{equation}
Unfortunately, $\Gc\in \Cc^\infty(\G_\infty^*)$ is not enough information for us to prove that $X_{\Gc}$ is $\Cc^\infty$; however, we are able to show by direct computation that if $\Gc\in\A_\infty$, then it is $\Cc^\infty$. As previously commented, this issue is a primary reason for the introduction of the algebra $\A_\infty$.

Just as with the $N$-particle case, having an explicit formula for $X_{\Gc}$ is advantageous (cf. \cite[Lemma 6.15]{MNPRS2020} for the quantum case). Indeed, we will use such a formula to show in \cref{ssec:HamVH} that the Vlasov hierarchy can be interpreted as a Hamiltonian equation of motion on the weak Poisson vector space $(\G_\infty^*, \A_\infty, \pb{\cdot}{\cdot}_{\G_\infty^*})$. %Given our method of proof for \cref{prop:GinfLA}, it will not surprise the reader to learn that we obtain an explicit formula for $X_{\Gc}$ by taking the $N\rightarrow\infty$ limit of the $N$-particle formula \eqref{eq:HVFform} given by \cref{lem:NhamVF}.

\begin{proposition}\label{lem:infhamVF}
If $\Gc\in \Cc^\infty(\G_\infty^*)$, then there exists a unique vector field $X_{\Gc}:\G_\infty^*\rightarrow\G_\infty^*$ satisfying \eqref{eq:hamVFchar}, which is given as follows: for $\ell\in\N$ and any $\Ga=(\ga^{(k)})_{k=1}^\infty\in \G_\infty^*$,
\begin{equation}\label{eq:infhamVF}
X_{\Gc}(\Ga)^{(\ell)} = \sum_{j=1}^\infty j\int_{(\R^{2d})^{j-1}}d\uz_{\ell+1;\ell+j-1}\pb{\sum_{\al=1}^\ell \ds\Gc[\Ga]_{(\al,\ell+1,\ldots,\ell+j-1)}^{(j)}}{\gamma^{(\ell+j-1)}}_{(\R^{2d})^{\ell+j-1}}.
\end{equation}
If $\Gc\in\A_{\infty}$, then $X_{\Gc}$ as defined in \eqref{eq:infhamVF} belongs to $\Cc^\infty(\G_\infty^*,\G_\infty^*)$. 
\end{proposition}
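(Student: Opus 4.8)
The strategy is to first establish the existence and uniqueness of a vector field satisfying the characterizing identity \eqref{eq:hamVFchar} by ``unfolding'' the definition of the Poisson bracket $\pb{\cdot}{\cdot}_{\G_\infty^*}$ exactly as in the $N$-particle case, and then—separately and more delicately—to verify the $\Cc^\infty$ regularity of the resulting formula \eqref{eq:infhamVF} under the hypothesis $\Gc\in\A_\infty$.

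\textbf{Step 1: Deriving the formula.} I would begin from \eqref{eq:GinfPBdef}, writing
\begin{equation}
\pb{\Fc}{\Gc}_{\G_\infty^*}(\Ga) = \ipp{\comm{\ds\Fc[\Ga]}{\ds\Gc[\Ga]}_{\G_\infty},\Ga}_{\G_\infty-\G_\infty^*} = \sum_{k=1}^\infty \ipp*{\comm{\ds\Fc[\Ga]}{\ds\Gc[\Ga]}_{\G_\infty}^{(k)},\ga^{(k)}}_{\g_k-\g_k^*},
\end{equation}
which is a finite sum since $\ds\Fc[\Ga],\ds\Gc[\Ga]\in\G_\infty$ have only finitely many nonzero components. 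Then I would insert the explicit Lie bracket formula \eqref{eq:GinfLB}, namely $\comm{F}{G}_{\G_\infty}^{(k)}=\sum_{\ell+j-1=k}\Sym_k(f^{(\ell)}\wedge_1 g^{(j)})$, expand the $\wedge_1$ product using its definition, and integrate by parts distributionally to move all derivatives off of $\ds\Fc[\Ga]^{(\ell)}$ and onto $\ga^{(k)}$ and $\ds\Gc[\Ga]^{(j)}$, using the local smoothness of the latter to commute the mixed $x$–$v$ derivatives. This is the $N\to\infty$ analogue of the computation already carried out in the proof of \cref{prop:NhamVF}; in fact \eqref{eq:infhamVF} is simply the $N\to\infty$ limit of \eqref{eq:NhamVF}, obtained by sending $C_{\ell j Nr}\to\indic_{r=1}$ (\cref{rem:Nconlim}), keeping only the $r=1$ term, and noting $k=\ell+j-1$. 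After the integrations by parts and a change of variables (exchanging particle labels using symmetry of $\ga^{(k)}$, as in the $N$-particle proof), one reads off that $\pb{\Fc}{\Gc}_{\G_\infty^*}(\Ga)=\ipp{\ds\Fc[\Ga],X_{\Gc}(\Ga)}$ with $X_{\Gc}(\Ga)^{(\ell)}$ given by the stated expression; arbitrariness of $\ds\Fc[\Ga]\in\G_\infty$ (via the expectation functionals, which realize every element of $\G_\infty$ as a G\^ateaux derivative) and the nondegeneracy \ref{assWP2} from \cref{lem:GinfWP2} force uniqueness. One must also check that each $X_{\Gc}(\Ga)^{(\ell)}$ genuinely lies in $\g_\ell^*$: the inner symmetrized function $\sum_{\al=1}^\ell \ds\Gc[\Ga]^{(j)}_{(\al,\ell+1,\ldots,\ell+j-1)}$ is a symmetric test function on $(\R^{2d})^{\ell+j-1}$, the canonical Poisson bracket of a test function with a compactly supported distribution is again a compactly supported distribution, and the marginal (recall \cref{prop:eptrdual}) lands in $\g_\ell^*$; since the sum over $j$ is finite (only $j\le M_0$ where $M_0$ bounds the nonzero components of $\ds\Gc[\Ga]$), the whole expression is a finite sum in $\g_\ell^*$.

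\textbf{Step 2: Smoothness when $\Gc\in\A_\infty$.} This is the main obstacle and the reason $\A_\infty$ is introduced. By \cref{rem:Ainfform}, an element $\Gc\in\A_\infty$ is a finite linear combination of finite products of expectations, so by multilinearity of the map $\Gc\mapsto X_{\Gc}$ (which follows from bilinearity of $\comm{\cdot}{\cdot}_{\G_\infty}$ and of the duality pairing) it suffices to treat $\Gc=\Gc_1\cdots\Gc_n$ with each $\Gc_q$ an expectation. The crucial point, already exploited in \cref{lem:trgen} and \cref{lem:GinfWP1}, is \cref{rem:cgd}: each $\ds\Gc_q[\Ga]=\ds\Gc_q[0]$ is a \emph{constant} element of $\G_\infty$, \emph{uniformly in $\Ga$}, so by the Leibniz rule $\ds\Gc[\Ga]=\sum_{q=1}^n\big(\prod_{q'\neq q}\Gc_{q'}(\Ga)\big)\ds\Gc_q[0]$, and the only $\Ga$-dependence in \eqref{eq:infhamVF} is polynomial in $\Ga$ through the scalar coefficients $\prod_{q'\neq q}\Gc_{q'}(\Ga)$ (which are smooth, being finite products of continuous linear functionals) times fixed continuous linear operations—tensoring with a fixed test function, taking a canonical Poisson bracket against $\ga^{(\ell+j-1)}$, and taking a marginal. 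Concretely, one writes $X_{\Gc}(\Ga)^{(\ell)}$ as a finite sum of terms of the form (polynomial in $\Ga$) $\times$ (continuous linear map applied to $\Ga$), and since the number of nonzero components $j$ is bounded independently of $\Ga$, there is no issue of infinite sums. Each such term is manifestly $\Cc^\infty$ as a map $\G_\infty^*\to\G_\infty^*$: the polynomial factors are smooth scalar functions, and the linear-in-$\Ga$ operation $\ga^{(\ell+j-1)}\mapsto \int d\uz_{\ell+1;\ell+j-1}\pb{\varphi}{\ga^{(\ell+j-1)}}$ is continuous linear from $\g^*_{\ell+j-1}$ to $\g_\ell^*$ (continuity of multiplication by a test function and of differentiation on $\g_k^*$, plus continuity of the marginal map from \cref{prop:eptrdual}). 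A product of a smooth scalar function and a continuous linear map is $\Cc^\infty$ by the product rule for the G\^ateaux derivative, and finite sums of $\Cc^\infty$ maps are $\Cc^\infty$; hence $X_{\Gc}\in\Cc^\infty(\G_\infty^*,\G_\infty^*)$. I expect the bookkeeping—keeping track of which factors are scalar, which components are nonzero, and checking continuity of each elementary operation on the $\g_k^*$ scale—to be the only real work; the conceptual content is entirely that constancy of the G\^ateaux derivatives of expectations collapses \eqref{eq:infhamVF} into a finite expression built from smooth scalar functions and fixed continuous linear maps.
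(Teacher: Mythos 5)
Your proposal is correct and follows essentially the same route as the paper: the formula is obtained from the $N$-particle computation of \cref{prop:NhamVF} with only the $r=1$ terms surviving (the paper phrases this as an $N\to\infty$ limit using $C_{\ell jNr}\to\indic_{r=1}$, but explicitly notes that redoing the integration-by-parts argument directly, as you propose, also works), with uniqueness from the nondegeneracy of $\A_\infty$. Your smoothness argument for $\Gc\in\A_\infty$ — reduction to finite products of expectations, constancy of their G\^ateaux derivatives, and writing $X_{\Gc}$ as a finite sum of smooth scalar factors times fixed continuous linear operations (bracket with a fixed test function, marginal via \cref{prop:eptrdual}) — is exactly the paper's proof.
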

\begin{proof}
One can prove the proposition by repeating the argument for the analogous $N$-particle result \cref{prop:NhamVF}, focusing only on the case $r=1$. Instead, we show how to obtain the result as an $N\rightarrow\infty$ limiting consequence of \cref{prop:NhamVF}.

Recall that $r_0(N) = \max\{1,\ell+j-N\}$. It is evident that $r_0(N)\xrightarrow[]{N\rightarrow\infty} 1$ uniformly over fixed finite subsets of $(\ell,j)\in\{1,\ldots,N\}^2$. On the other hand, from \cref{rem:Nconlim} we have that $C_{\ell j Nr}\rightarrow\indic_{r=1}$ as $N\rightarrow\infty$. So at least formally, we expect from letting $N\rightarrow\infty$ in the $N$-particle Hamiltonian vector field formula \eqref{eq:NhamVF} that the Hamiltonian vector field associated to the functional $\Gc \in \Cc^\infty(\G_\infty^*)$ with respect to the Poisson bracket $\pb{\cdot}{\cdot}_{\G_\infty^*}$ is given by the right-hand side of \eqref{eq:infhamVF}. We then need to check that
\begin{equation}\label{eq:infhamWTS}
\forall \Fc\in\Cc^\infty(\G_\infty^*), \ \Ga\in\G_\infty^*, \qquad \ds\Fc[\Ga](X_{\Gc}(\Ga))= \pb{\Fc}{\Gc}_{\G_\infty^*}(\Ga).
\end{equation}

Fix $\Ga_0=(\ga_0^{(k)})_{k=1}^\infty\in\G_\infty^*$, and let $\Fc_{\Ga_0}, \Gc_{\Ga_0}$ be the expectations generated by $\ds\Fc[\Ga_0], \ds\Gc[\Ga_0]$, respectively. Let $M_0$ denote the maximal component index $k$ such that $\ds\Fc[\Ga_0]^{(k)},\ds\Gc[\Ga_0]^{(k)}$ are nonzero. 

By definition of the $\G_\infty^*$ Poisson bracket, for any $\Ga\in\G_\infty^*$,
\begin{equation}
\pb{\Fc}{\Gc}_{\G_\infty^*}(\Ga) = \ipp{\comm{\ds\Fc[\Ga]}{\ds\Gc[\Ga]}_{\G_\infty},\Ga}_{\G_\infty-\G_\infty^*} = \sum_{k=1}^{\infty} \ipp{\comm{\ds\Fc[\Ga]}{\ds\Gc[\Ga]}_{\G_\infty}^{(k)}, \ga^{(k)}}_{\g_k-\g_k^*}.
\end{equation}
Since $\ds\Fc[\Ga_0]^{(k)} = \ds\Gc[\Ga_0]^{(k)} = 0$ for $k>M_0$, we see from \eqref{eq:GinfLB} that $\comm{\ds\Fc[\Ga_0]}{\ds\Gc[\Ga_0]}_{\G_\infty}^{(k)} = 0$ for $k> 2M_0$. Indeed, $k=\ell+j-1>2M_0$ implies $\min(\ell,j)>M_0$, therefore
\begin{equation}
\Sym_k\paren*{\ds\Fc[\Ga_0]^{(\ell)} \wedge_1 \ds\Gc[\Ga_0]^{(j)}} = 0.
\end{equation}
By projection onto the first $N$ components, $\ds\Fc[\Ga_0],\ds\Gc[\Ga_0] \in \G_{N}$ for any $N>M_0$; and by examination of \eqref{eq:GNLBform}, we also have $\comm{\ds\Fc[\Ga_0]}{\ds\Gc[\Ga_0]}_{\G_N}^{(k)}=0$ for any $2M_0< k\leq N$. Furthermore, $\Fc_{\Ga_0},\Gc_{\Ga_0}$ are linear functionals on $\G_N^*$ for any $N>M_0$. By separate continuity of the distributional pairing and \cref{prop:GinfLA},
\begin{align}
\ipp{\comm{\ds\Fc[\Ga_0]}{\ds\Gc[\Ga_0]}_{\G_\infty}^{(k)}, \ga_0^{(k)}}_{\g_k-\g_k^*} &= \lim_{N\rightarrow\infty} \ipp{\comm{\ds\Fc[\Ga_0]}{\ds\Gc[\Ga_0]}_{\G_N}^{(k)}, \ga_0^{(k)}}_{\g_k-\g_k^*} \nn\\
&=\lim_{N\rightarrow\infty}\ipp{\comm{\ds\Fc_{\Ga_0}[\Ga_0]}{\ds\Gc_{\Ga_0}[\Ga_0]}_{\G_N}^{(k)}, \ga_0^{(k)}}_{\g_k-\g_k^*} .
\end{align}
Now introducing the notation $\Ga_{0,M}$ to denote the projection of $\Ga_{0}$ onto the first $M$ components, we have for $N\geq 2M_0+1$,
\begin{align}
\sum_{k=1}^{2M_0+1} \ipp*{\comm{\ds\Fc_{\Ga_0}[\Ga_0]}{\ds\Gc_{\Ga_0}[\Ga_0]}_{\G_N}^{(k)}, \ga_0^{(k)}}_{\g_k-\g_k^*}  &=\sum_{k=1}^{N} \ipp*{\comm{\ds\Fc_{\Ga_0}[\Ga_0]}{\ds\Gc_{\Ga_0}[\Ga_0]}_{\G_N}^{(k)}, \ga_0^{(k)}}_{\g_k-\g_k^*} \nn\\
&=\pb{\Fc_{\Ga_0}}{\Gc_{\Ga_0}}_{\G_N^*}(\Ga_{0,N}) \nn\\
&=\ds\Fc_{\Ga_0}[\Ga_0]\paren*{X_{\Gc_{\Ga_0},N}(\Ga_{0,N})} \nn\\
&=\sum_{\ell=1}^{M_0} \ipp*{\ds\Fc_{\Ga_0}[\Ga_0]^{(\ell)},X_{\Gc_{\Ga_0,N}}(\Ga_{0,N})^{(\ell)}}_{\g_\ell-\g_\ell^*},
\end{align}
where the penultimate line follows from \cref{prop:NhamVF} and using that $\ds\Fc_{\Ga_0}[\Ga_0]^{(\ell)}=0$ for $\ell>M_0$. Here, the subscript $N$ in $X_{\Gc_{\Ga_0},N}$ signifies that the Hamiltonian vector field is computed with respect to the bracket $\pb{\cdot}{\cdot}_{\G_N^*}$. By definition, $\ds\Fc_{\Ga_0}[\Ga_0] = \ds\Fc[\Ga_0]$ and from \cref{prop:NhamVF} again, using that $\ds\Gc[\Ga_0]^{(j)} = 0$ for $j>M_0$, we see that for $1\leq \ell\leq M_0$, 
\begin{multline}
X_{\Gc_{\Ga_0},N}(\Ga_{0,N})^{(\ell)}  \\
 = \sum_{j=1}^{M_0} \sum_{r=1}^{\min\{\ell,j\}}C_{\ell j Nr}{j\choose r}\int_{(\R^{2d})^{j-1}}d\uz_{\ell+1;\ell+j-1}\pb{\sum_{\ab_r\in P_r^\ell} \ds\Gc[\Ga_0]_{(\ab_r,\ell+1,\ldots,\ell+j-r)}^{(j)}}{\gamma^{(\ell+j-1)}}_{(\R^{2d})^{\ell+j-1}},
\end{multline}
provided $N$ is sufficiently large. By our previous remarks, the preceding right-hand side converges in $\g_\ell^*$ as $N\rightarrow\infty$ to $X_{\Gc}(\Ga_0)^{(\ell)}$ as defined by \eqref{eq:infhamVF} uniformly over finite subsets of indices $\ell$. After a little bookkeeping, we realize we have shown that
\begin{align}
\pb{\Fc}{\Gc}_{\G_\infty^*}(\Ga_0) &= \sum_{\ell=1}^{M_0} \lim_{N\rightarrow\infty}\ipp*{\ds\Fc[\Ga_0]^{(\ell)},X_{\Gc_{\Ga_0,N}}(\Ga_{0,N})^{(\ell)}}_{\g_\ell-\g_\ell^*} \nn\\
&=\sum_{\ell=1}^{M_0} \ipp*{\ds\Fc[\Ga_0]^{(\ell)}, X_{\Gc}(\Ga_0)^{(\ell)}}_{\g_\ell-\g_\ell^*},
\end{align}
where to obtain the penultimate line we use the separate continuity of the distributional pairing. Comparing this expression with \eqref{eq:infhamWTS}, we are done.

We now verify that $X_{\Gc}\in \Cc^\infty(\G_\infty^*,\G_\infty^*)$, assuming $\Gc\in\A_\infty$. By the observation \eqref{eq:Ainfform} for the structure of elements in $\A_\infty$ and using linearity, we can reduce to the case where $\Gc=\Gc_1\cdots\Gc_n$ is a finite product of expectations. 

By the Leibniz rule for the operator $\ds$,
\begin{equation}
\forall\Ga\in\G_\infty^*, \qquad \ds\Gc[\Ga] = \sum_{i=1}^n \Big(\prod_{\substack{1\leq q\leq n \\ q\neq i}}\Gc_q(\Ga)\Big)\ds\Gc_i[0].
\end{equation}
Since the $\Gc_q(\Ga)$ are just real numbers, we can use the bilinearity of the Poisson bracket $\pb{\cdot}{\cdot}_{(\R^{2d})^{\ell+j-1}}$ to write
\begin{multline}\label{eq:Gc12pb}
\pb{\sum_{\al=1}^\ell \ds\Gc[\Ga]_{(\al,\ell+1,\ldots,\ell+j-r)}^{(j)}}{\gamma^{(\ell+j-1)}}_{(\R^{2d})^{\ell+j-1}} \\
=\sum_{i=1}^N \Big(\prod_{\substack{1\leq q\leq n \\ q\neq i}}\Gc_q(\Ga)\Big)\pb{\sum_{\al=1}^\ell \ds\Gc_i[0]_{(\al,\ell+1,\ldots,\ell+j-r)}^{(j)}}{\gamma^{(\ell+j-1)}}_{(\R^{2d})^{\ell+j-1}}.
\end{multline}
Being a linear combination of products of derivatives, the expression corresponding to the Poisson bracket in the second line defines a map in the variable $\ga^{(\ell+j-1)}$ which belongs to $\Cc^\infty(\g_{\ell+j-1}^*, \g_{\ell+j-1}^*)$. Since the map $\G_\infty^*\rightarrow\g_{\ell+j-1}^*, \ \Ga\mapsto \ga^{(\ell+j-1)}$ is also $\Cc^\infty$ and $\Gc_1,\ldots,\Gc_n$ are $\Cc^\infty$ real-valued maps, it follows that the second line of \eqref{eq:Gc12pb} is in $\Cc^{\infty}(\G_\infty^*, \g_{\ell+j-1}^*)$. Now,
\begin{multline}
\int_{\Rd^{j-1}}d\uz_{\ell+1;\ell+j-1}\pb{\sum_{\al=1}^\ell \ds\Gc[\Ga]_{(\al,\ell+1,\ldots,\ell+j-r)}^{(j)}}{\gamma^{(\ell+j-1)}}_{(\R^{2d})^{\ell+j-1}}  \\
= \sum_{i=1}^n \Big(\prod_{\substack{1\leq q\leq n \\ q\neq i}}\Gc_q(\Ga)\Big)\int_{\Rd^{j-1}}d\uz_{\ell+1;\ell+j-1}\pb{\sum_{\al=1}^\ell \ds\Gc_i[0]_{(\al,\ell+1,\ldots,\ell+j-r)}^{(j)}}{\gamma^{(\ell+j-1)}}_{(\R^{2d})^{\ell+j-1}}.
\end{multline}
By \cref{prop:eptrdual}, we see that each of the summands in the right-hand side is in $\Cc^\infty(\G_\infty^*,\g_\ell^*)$, hence their sum is as well. Finally, multiplying by $j$ and summing over $1\leq j\leq M_0$, where $M_0$ is the maximum of all $k$ such that $\ds\Gc_1[0]^{(k)},\ds\Gc_2[0]^{(k)}$ are nonzero, we also obtain a map in $\Cc^\infty(\G_\infty^*,\g_{\ell}^*)$. Since $\ell$ was arbitrary, we conclude that $X_{\Gc}\in \Cc^\infty(\G_\infty^*,\G_\infty^*)$.
\end{proof}

\subsection{The Poisson morphism $\iota:\g_1^*\rightarrow \G_\infty^*$}\label{ssec:infgeomPM}
We close \cref{sec:infgeom} by proving \cref{thm:PM}, which asserts that the trivial embedding $\iota:\g_1^*\rightarrow\G_\infty^*$ introduced in \eqref{eq:iotadef} is a morphism of Poisson vector spaces in the sense of \cref{def:PM}.

To prove \cref{thm:PM}, we first need a formula for the the G\^ateaux derivative of $\iota$. The proof of the following lemma is a simple application of the product rule, which we leave for the reader to check.

\begin{lemma}\label{lem:diota}
The map $\iota\in\Cc^\infty(\g_1^*,\G_\infty^*)$. Moreover, for any $n\in\N$, $\mu,\nu_1,\ldots,\nu_n \in\g_1^*$, we have
\begin{equation}
\ds^n\iota[\mu]^{(k)}(\nu_1,\ldots,\nu_n) = \begin{cases} 0 , & {n>k} \\ \sum_{(i_1,\ldots,i_n)\in P_n^k}\mathcal{I}_{(i_1,\ldots,i_n)} , & {k\leq n}.\end{cases}
\end{equation}
where
\begin{equation}
\mathcal{I}_{(i_1,\cdots,i_n)} \coloneqq \bigotimes_{i=1}^n \zeta_i, \qquad \text{with} \ \zeta_i = \begin{cases}\mu, & {i\notin\{i_1,\ldots,i_n\}} \\ \nu_k, & {i=i_k}. \end{cases}
\end{equation}
\end{lemma}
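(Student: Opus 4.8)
\textbf{Proof plan for \cref{lem:diota}.}
The plan is to treat $\iota$ componentwise and recognize each component as a homogeneous polynomial map of finite degree, for which the Gâteaux calculus is completely explicit. Since $\g_1=\Cc^\infty_s(\R^{2d})=\Cc^\infty(\R^{2d})$ (permutations of one letter being trivial), we have $\g_1^*=\Ec'(\R^{2d})$, and the $k$-th component of $\iota$ is the map
\begin{equation}
\iota^{(k)}\colon \g_1^*\to\g_k^*, \qquad \iota^{(k)}(\ga)=\ga^{\otimes k},
\end{equation}
which factors as $\iota^{(k)}=\tilde T_k\circ\Delta_k$, where $\Delta_k\colon\g_1^*\to(\g_1^*)^k$ is the diagonal embedding and $\tilde T_k\colon(\g_1^*)^k\to\Ec'((\R^{2d})^k)$, $(\mu_1,\dots,\mu_k)\mapsto\mu_1\otimes\cdots\otimes\mu_k$, is the canonical $k$-linear tensor map. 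The first step is to record that $\tilde T_k$ is \emph{jointly} continuous: since $\Cc^\infty(\R^{2d})$ is a nuclear Fréchet space (cf.\ \cite{Schwartz1966}), the kernel theorem gives $\Ec'((\R^{2d})^k)\cong(\g_1^*)^{\widehat\otimes_\pi k}$, and the canonical multilinear map into a projective tensor product is jointly continuous by construction of the projective topology. Moreover $\g_k^*$, being the symmetric subspace $\Ec'((\R^{2d})^k)^{\Ss_k}$, is a closed subspace of $\Ec'((\R^{2d})^k)$ (it is the joint kernel of the continuous maps $\ga\mapsto\pi\#\ga-\ga$, $\pi\in\Ss_k$), so it suffices to work with $\tilde T_k$ into $\Ec'((\R^{2d})^k)$ and observe afterward that the relevant values are symmetric.

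The second step is the Taylor expansion in the scalar parameter. For $\mu,\nu\in\g_1^*$, $k$-linearity of $\tilde T_k$ gives the \emph{finite} expansion
\begin{equation}
\iota^{(k)}(\mu+h\nu)=\tilde T_k(\mu+h\nu,\dots,\mu+h\nu)=\sum_{S\subseteq\{1,\dots,k\}}h^{|S|}\,\tilde T_k\big(\zeta_1^S,\dots,\zeta_k^S\big),
\end{equation}
with $\zeta_i^S=\nu$ if $i\in S$ and $\zeta_i^S=\mu$ otherwise. Hence $h^{-1}[\iota^{(k)}(\mu+h\nu)-\iota^{(k)}(\mu)]\to\sum_{i=1}^{k}\mu^{\otimes(i-1)}\otimes\nu\otimes\mu^{\otimes(k-i)}=\sum_{(i_1)\in P_1^k}\mathcal I_{(i_1)}$ in $\g_k^*$ as $h\to0$, and joint continuity of $\ds\iota^{(k)}$ on $\g_1^*\times\g_1^*$ follows from that of $\tilde T_k$. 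Iterating (a short induction on $n$, each step an instance of the $n=1$ computation applied to the multilinear maps produced at the previous stage), differentiating $\iota^{(k)}$ $n$ times amounts to choosing $n$ distinct tensor slots $i_1,\dots,i_n$, placing $\nu_a$ in slot $i_a$, and filling the remaining slots with $\mu$; this is exactly $\sum_{(i_1,\dots,i_n)\in P_n^k}\mathcal I_{(i_1,\dots,i_n)}$, which vanishes when $n>k$ since then $P_n^k=\emptyset$ (equivalently, $\iota^{(k)}$ is polynomial of degree $k$). The full sum is invariant under permuting the $k$ factors, so it lies in $\g_k^*$, confirming that $\iota^{(k)}\in\Cc^\infty(\g_1^*,\g_k^*)$ with the stated derivatives.

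The third step assembles the components. Because $\G_\infty^*=\prod_{k=1}^\infty\g_k^*$ carries the product topology, a map into it is $\Cc^n$ in the sense of the paper's G\^ateaux calculus precisely when each coordinate map is $\Cc^n$, and the G\^ateaux derivatives are then computed coordinatewise; applying this with the formulas from the second step yields $\iota\in\Cc^\infty(\g_1^*,\G_\infty^*)$ together with the asserted expression for $\ds^n\iota[\mu]^{(k)}(\nu_1,\dots,\nu_n)$.

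The only point requiring genuine input rather than bookkeeping is the joint continuity of the iterated tensor maps $\tilde T_k$ used in the first step, which is where nuclearity of $\Cc^\infty(\R^{2d})$ (via the kernel theorem) enters; once that is in hand, the rest is the product/Leibniz rule applied to a polynomial map together with the elementary fact that smoothness into a product is checked coordinatewise. I therefore expect no substantive obstacle, which is consistent with the authors' remark that the lemma "is a simple application of the product rule."
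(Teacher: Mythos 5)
Your proof is correct and is essentially the argument the paper has in mind: the paper leaves \cref{lem:diota} to the reader as ``a simple application of the product rule,'' and your componentwise treatment of the polynomial maps $\gamma\mapsto\gamma^{\otimes k}$ via the finite binomial expansion, together with the observation that smoothness into the product $\G_\infty^*$ is checked coordinatewise, fills in exactly that computation. The one extra ingredient you supply, joint continuity of the $k$-fold tensor map via nuclearity and the kernel theorem, correctly secures the continuity demanded by the paper's definition of $\Cc^n$, though it is slightly heavier than necessary, since the same joint continuity also follows from separate continuity combined with the barrelled/(DF) Montel properties of $\g_1^*$ already established in \cref{lem:gkremon}.
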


\begin{proof}[Proof of \cref{thm:PM}]
Since $\iota$ is a $\Cc^\infty$ map and composition of $\Cc^\infty$ maps is again $\Cc^\infty$, we have that $\iota^*\Cc^\infty(\G_\infty^*) \subset \Cc^\infty(\g_1^*)$, implying \emph{a fortiori} that $\iota^*\A_\infty \subset \Cc^\infty(\g_1^*)$. To verify that $\iota$ is a morphism of Poisson vector spaces, we need to check that
\begin{equation}
\iota^*\pb{\cdot}{\cdot}_{\G_\infty^*} = \pb{\iota^*\cdot}{\iota^*\cdot}_{\g_1^*}.
\end{equation}

To this end, let $\Fc_\infty,\Gc_\infty\in \Cc^\infty(\G_\infty^*)$, and set $\Fc \coloneqq \Fc_\infty\circ\iota, \Gc\coloneqq \Gc_\infty\circ\iota$. For any $\mu\in\g_1^*$, we compute
\begin{align}
\pb{\Fc_\infty}{\Gc_\infty}_{\G_\infty^*}(\iota(\mu)) &= \ds\Fc_\infty[\iota(\mu)]\Big(X_{\Gc_\infty}(\iota(\mu))\Big) \nn\\
&= \sum_{\ell=1}^\infty \int_{\Rd^\ell}d\uz_\ell\ds\Fc_\infty[\iota(\mu)]^{(\ell)}(\uz_\ell)\Bigg(\sum_{j=1}^\infty j \int_{\Rd^{j-1}}d\uz_{\ell+1;\ell+j-1}\nn\\
&\qquad \pb{\sum_{\alpha=1}^\ell\ds\Gc_\infty[\iota(\mu)]_{(\alpha,\ell+1,\ldots,\ell+j-1)}^{(j)}}{\mu^{\otimes \ell+j-1}}_{\Rd^{\ell+j-1}}(\uz_{\ell+j-1})\Bigg).\label{eq:pbrhsub}
\end{align}
Above, we have implicitly used the Hamiltonian vector field formula \eqref{eq:infhamVF} on $X_{\Gc_\infty}$. Let us analyze the inner integral, which after unpacking the Poisson bracket and using the linearity of the marginal, equals
\begin{multline}
\sum_{\be=1}^{\ell+j-1}\sum_{\al=1}^{\ell}\Bigg(\int_{\Rd^{j-1}}d\uz_{\ell+1;\ell+j-1}\nabla_{x_\beta}\ds\Gc_\infty[\iota(\mu)]_{(\alpha,\ell+1,\ldots,\ell+j-1)}^{(j)}(\uz_{\ell+j-1})\cdot\nabla_{v_\beta}\mu^{\otimes \ell+j-1}(\uz_{\ell+j-1}) \\
-\int_{\Rd^{j-1}}d\uz_{\ell+1;\ell+j-1}\nabla_{v_\beta}\ds\Gc_\infty[\iota(\mu)]_{(\alpha,\ell+1,\ldots,\ell+j-1)}^{(j)}(\uz_{\ell+j-1})\cdot\nabla_{x_\beta}\mu^{\otimes \ell+j-1}(\uz_{\ell+j-1})\Bigg).
\end{multline}
Observe that for every $1\leq \alpha\leq \ell$ and $1\leq \be\leq \ell+j-1$,
\begin{multline}
\int_{\Rd^{j-1}}d\uz_{\ell+1;\ell+j-1}\nabla_{x_\beta}\ds\Gc_\infty[\iota(\mu)]_{(\alpha,\ell+1,\ldots,\ell+j-1)}^{(j)}(\uz_{\ell+j-1})\cdot\nabla_{v_\beta}\mu^{\otimes \ell+j-1}(\uz_{\ell+j-1}) \\
=\mu^{\otimes \alpha-1} \otimes \Big(\nabla_x\phi_{\Gc,j} \cdot \nabla_v\mu\Big)\otimes \mu^{\otimes \ell-\alpha},
\end{multline}
if $\beta =\alpha$ and zero otherwise, where $\phi_{\Gc,j}$ is the unique test function in $\g_1$ with the property that
\begin{equation}\label{eq:phiGjdef}
\forall \nu\in\g_1^*, \qquad \ipp*{\phi_{\Gc,j},\nu}_{\g_1-\g_1^*} = \ipp*{\ds\Gc_\infty[\iota(\mu)]^{(j)}, \nu\otimes \mu^{\otimes j-1}}_{\Cc^\infty(\Rd^j)-\Ec'(\Rd^j)}.
\end{equation}
We note that since $\ds\Gc_{\infty}[\iota(\mu)]^{(j)} = 0$ for all but finitely many $j$, we have $\phi_{\Gc,j}=0$ for all but finitely many $j$. Similarly, we have that
\begin{multline}
\int_{\Rd^{j-1}}d\uz_{\ell+1;\ell+j-1}\nabla_{v_\beta}\ds\Gc_\infty[\iota(\mu)]_{(\alpha,\ell+1,\ldots,\ell+j-1)}^{(j)}(\uz_{\ell+j-1})\cdot\nabla_{x_\beta}\mu^{\otimes \ell+j-1}(\uz_{\ell+j-1}) \\
=\mu^{\otimes \alpha-1} \otimes \Big(\nabla_v\phi_{\Gc,j} \cdot \nabla_x\mu\Big)\otimes \mu^{\otimes \ell-\alpha}.
\end{multline}
Therefore, recalling \cref{lem:diota} specialized to $n=1$,
\begin{multline}
\sum_{j=1}^\infty j \int_{\Rd^{j-1}}d\uz_{\ell+1;\ell+j-1} \pb{\sum_{\alpha=1}^\ell\ds\Gc_\infty[\iota(\mu)]_{(\alpha,\ell+1,\ldots,\ell+j-1)}^{(j)}}{\mu^{\otimes \ell+j-1}}_{\Rd^{\ell+j-1}}\\
=\ds\iota[\mu]^{(\ell)}\paren*{\sum_{j=1}^\infty j (\nabla_x\phi_{\Gc,j} \cdot \nabla_v\mu-\nabla_v\phi_{\Gc,j} \cdot \nabla_x\mu)}.
\end{multline}
Substituting this identity into the right-hand side of \eqref{eq:pbrhsub}, we arrive at
\begin{equation}\label{eq:crulesub}
\pb{\Fc_\infty}{\Gc_\infty}_{\G_\infty^*}(\iota(\mu)) =\sum_{\ell=1}^\infty \ipp*{\ds\Fc_\infty[\iota(\mu)]^{(\ell)}, \ds\iota[\mu]^{(\ell)}\Big(\sum_{j=1}^\infty j (\nabla_x\phi_{\Gc,j} \cdot \nabla_v\mu-\nabla_v\phi_{\Gc,j} \cdot \nabla_x\mu)\Big)}_{\g_\ell-\g_\ell^*}.
\end{equation}
By the chain rule and the definition of the functional $\Fc$,
\begin{equation}
\forall \mu,\nu\in\g_1^*, \qquad \ds\Fc[\mu](\nu) = \ds\Fc_\infty[\iota(\mu)]\Big(\ds\iota[\mu](\nu)\Big).
\end{equation}
Applying this identity to the right-hand side of \eqref{eq:crulesub} with  $\nu= \sum_{j=1}^\infty j (\nabla_x\phi_{\Gc,j} \cdot \nabla_v\mu-\nabla_v\phi_{\Gc,j} \cdot \nabla_x\mu)$, we obtain that
\begin{align}
\pb{\Fc_\infty}{\Gc_\infty}_{\G_\infty^*}(\iota(\mu)) &= \ds\Fc[\mu]\paren*{\sum_{j=1}^\infty j(\nabla_x\phi_{\Gc,j} \cdot \nabla_v\mu-\nabla_v\phi_{\Gc,j} \cdot \nabla_x\mu)} \nn\\
&= \ipp*{\ds\Fc[\mu], \sum_{j=1}^\infty j(\nabla_x\phi_{\Gc,j} \cdot \nabla_v\mu-\nabla_v\phi_{\Gc,j} \cdot \nabla_x\mu)}_{\g_1-\g_1^*} \nn\\
&=\ipp*{\nabla_x\ds\Fc[\mu]\cdot\nabla_v\sum_{j=1}^\infty j\phi_{\Gc,j}-\nabla_v\ds\Fc[\mu]\cdot\nabla_x\sum_{j=1}^\infty j\phi_{\Gc,j},\mu}_{\g_1-\g_1^*} \nn\\
&=\ipp*{\comm{\ds\Fc[\mu]}{\sum_{j=1}^\infty j\phi_{\Gc,j}}_{\g_1},\mu}_{\g_1-\g_1^*},\label{eq:pbret}
\end{align}
where the second line comes from identification of $\ds\Fc[\mu]$ as an element of $\g_1$, the third line comes from integration by parts, and the fourth line is by definition of the Lie bracket $\comm{\cdot}{\cdot}_{\g_1}$.

In order to conclude the proof, we need to analyze the functions $\phi_{\Gc,j}$. More precisely, returning to the definition \eqref{eq:phiGjdef}, we see from the $\Ss_j$-symmetry of $\ds\Gc_\infty[\iota(\mu)]^{(j)}$ that
\begin{align}
\forall \nu\in \g_1^*, \qquad j\ipp{\phi_{\Gc,j},\nu}_{\g_1-\g_1^*} &= \sum_{\alpha=1}^j\ipp*{\ds\Gc_\infty[\iota(\mu)]^{(j)},  \mu^{\otimes\alpha-1}\otimes \nu\otimes \mu^{\otimes j-\alpha}}_{\Cc^\infty(\Rd^j)-\Ec'(\Rd^j)} \nn\\
&=\ipp*{\ds\Gc_\infty[\iota(\mu)]^{(j)}, \ds\iota[\mu]^{(j)}(\nu)}_{\g_j-\g_j^*}.
\end{align}
Hence,
\begin{equation}
\ipp*{\sum_{j=1}^\infty j\phi_{\Gc,j}, \nu}_{\g_1-\g_1^*} = \sum_{j=1}^\infty \ipp*{\ds\Gc_\infty[\iota(\mu)]^{(j)}, \ds\iota[\mu]^{(j)}(\nu)}_{\g_j-\g_j^*} = \ds\Gc[\mu](\nu),
\end{equation}
where the ultimate equality follows from the chain rule and the definition of $\Gc$. Thus, $\sum_{j=1}^\infty j\phi_{\Gc,j}$ is the unique element of $\g_1$ identifiable with the G\^ateaux derivative $\ds\Gc[\mu]$. Returning to \eqref{eq:pbret}, we have shown that
\begin{equation}
\pb{\Fc_\infty}{\Gc_\infty}_{\G_\infty^*}(\iota(\mu))  =\ipp*{\comm{\ds\Fc[\mu]}{\ds\Gc[\mu]}_{\g_1},\mu}_{\g_1-\g_1^*} = \pb{\Fc}{\Gc}_{\g_1^*}(\mu),
\end{equation}
where the ultimate equality is tautological. This is precisely what we needed to show, and therefore the proof of \cref{thm:PM} is complete.
\end{proof}

\section{Hamiltonian Flows}\label{sec:Ham}
This last section of the article is devoted to the proofs of our Hamiltonian flows results, \cref{prop:Vlas} and \Cref{thm:bbgky,thm:Vlh}, announced in \cref{ssec:OutN}. These results respectively show that the Vlasov equation \eqref{eq:Vl}, BBGKY hierarchy \eqref{eq:BBGKY}, and Vlasov hierarchy \eqref{eq:VlH} each admits a Hamiltonian formulation. We mention again that while it has been known for some time that both the Vlasov equation and BBGKY hierarchy are Hamiltonian, the fact that the Vlasov hierarchy is also Hamiltonian appears to be a new observation.

\subsection{Vlasov}\label{ssec:HamVl}
We start with \cref{prop:Vlas} for the Vlasov equation, which one should view as putting the formal calculations of \cite{IT1976,Morrison1980,Gibbons1981,MW1982} on firm functional-analytic footing.

\begin{proof}[Proof of \cref{prop:Vlas}]
Recall the definition \eqref{eq:HVl} of $\Hc_{Vl}$. We compute the Hamiltonian vector field of $\Hvl$ with respect to the Poisson bracket $\pb{\cdot}{\cdot}_{\g_1^*}$, denoted by $X_{\Hvl}$, as follows. First, we compute the G\^ateaux derivative of $\Hvl$. Observe that for any $\ga,\d\ga\in \g_1^*$, it follows from the linearity of the kinetic energy and bilinearity of the potential energy that
\begin{align}
\lim_{\ep\rightarrow 0} \frac{\Hvl(\ga+\ep\d\ga)-\Hvl(\ga)}{\ep} &= \lim_{\ep\rightarrow 0}\Bigg(\frac{1}{2} \ipp*{|v|^2, \d\ga}_{\g_1-\g_1^*}+ 2\ipp*{W\ast \rho, \d\rho}_{\g_1-\g_1^*} \nn\\
&\qquad+ \ep\ipp{W\ast\d\rho,\d\rho}_{\g_1-\g_1^*}\Bigg) \nn\\
&=\frac{1}{2} \ipp*{|v|^2, \d\ga}_{\g_1-\g_1^*}+ 2\ipp*{W\ast \rho, \d\rho}_{\g_1-\g_1^*},
\end{align}
where above we have introduced the notation $\d\rho\coloneqq \int_{\R^d}d\d\ga(\cdot,v)$ for the density of $\d\ga$. Thus, we can identify the G\^ateaux derivative $\ds\Hvl[\ga]$ as the element of $\g_1$ given by
\begin{equation}\label{eq:Hvlid}
\ds\Hvl[\ga] = \frac{1}{2}|v|^2 + 2W\ast\rho,
\end{equation}
where the convolution $W\ast\rho$ is taken in the distributional sense.\footnote{Here, we are using the well-known fact that the convolution of an element of $\Cc^\infty(\R^{2d})$ with a distribution of compact support is again in $\Cc^\infty(\R^{2d})$.} For any functional $\Fc \in C^\infty(\g_1^*)$, we have by definition of the Poisson bracket $\pb{\cdot}{\cdot}_{\g_1^*}$ that
\begin{align}
\pb{\Fc}{\Hvl}_{\g_1^*}(\ga) = \ipp{\comm{\ds\Fc[\ga]}{\ds\Hvl[\ga]}_{\g_1},\ga}_{\g_1-\g_1^*}.
\end{align}
To compactify the notation, let us set $f\coloneqq \ds\Fc[\ga]$ and $h\coloneqq \ds\Hvl[\ga]$, the dependence on $\ga$ being implicit. Note that $h$ equals the right-hand side of \eqref{eq:Hvlid}. Unpacking the definition of the Lie bracket $\comm{f}{h}_{\g_1}$, we have
\begin{align}
\pb{\Fc}{\Hvl}_{\g_1^*}(\ga) &= \ipp*{(\nabla_x f\cdot \nabla_v h - \nabla_v f\cdot \nabla_x h),\ga}_{\g_1-\g_1^*} \nn\\
&=\ipp*{f,-(\nabla_v h\cdot\nabla_x\ga-\nabla_x h\cdot\nabla_v \ga)}_{\g_1-\g_1^*} \nn\\
&= \ds\Fc[\mu]\paren*{-\paren*{\nabla_v h\cdot\nabla_x\ga-\nabla_x h\cdot\nabla_v \ga}},
\end{align}
where the penultimate line follows from integration by parts (i.e., the definition of the distributional derivative) together with the fact that $\nabla_x\nabla_v h= \nabla_v\nabla_x h$ by the smoothness of $h$ and the ultimate line follows from the definition of $f$. Substituting in the right-hand side of \eqref{eq:Hvlid} for $h$, we arrive at the identity
\begin{equation}
\pb{\Fc}{\Hvl}_{\g_1^*}(\ga)= \ds\Fc[\ga]\paren*{- \paren*{v\cdot\nabla_x\ga -2(\nabla W\ast\rho)\cdot\nabla_v\ga}},
\end{equation}
which, by the characterizing property of the Hamiltonian vector field, implies the identity
\begin{equation}
X_{\Hvl}(\ga) =  - \paren*{v\cdot\nabla_x\ga -2(\nabla_xW\ast\rho)\cdot\nabla_v\ga}.
\end{equation}
Thus, the Vlasov equation \eqref{eq:Vl} is equivalently to the infinite-dimensional ODE
\begin{equation}
\dot{\ga} = X_{\Hvl}(\ga),
\end{equation}
as originally claimed.
\end{proof}

\begin{remark}\label{rem:EMVlHam}
We can use the Hamiltonian formulation to show that the empirical measure map $\iota_{EM}: \Rd^N\rightarrow \g_1^*$ introduced in \eqref{eq:iotaEMdef} sends solutions of the Newtonian system \eqref{eq:New} to (weak) solutions of the Vlasov equation. We compute
\begin{align}
(\iota_{EM}^*\Hvl)(\uz_N) &= \frac{1}{2}\int_{\Rd}d\Big(\iota_{EM}(\uz_N)\Big)(z)|v|^2  + \int_{\Rd^2}d\Big(\iota_{EM}(\uz_N)\Big)^{\otimes 2}(z,z')W(x-x') \nn\\
&= \frac{1}{2N}\sum_{i=1}^N |v_i|^2 + \frac{1}{N^2}\sum_{i,j=1}^N W(x_i-x_j) \nn\\
&=\Hc_{New}(\uz_N).
\end{align}
Since $\iota_{EM}$ is a Poisson morphism by \cref{prop:EMpm}, it follows that
\begin{align}
\forall \Fc\in\Cc^\infty(\g_1^*), \qquad \paren*{\iota_{EM}^*\pb{\Fc}{\Hvl}_{\g_1^*}}(\uz_N) &= \pb{\iota_{EM}^*\Fc}{\iota_{EM}^*\Hvl}_{N}(\uz_N) \nn\\
&= \pb{\iota_{EM}^*\Fc}{\Hc_{New}}_N(\uz_N).
\end{align}
Now if $\uz_N^t$ is a solution to \eqref{eq:New}, then
\begin{align}
\frac{d}{dt} \paren*{\iota_{EM}^*\Fc}(\uz_N^t) = \pb{\iota_{EM}^*\Fc}{\Hc_{New}}_{N}(\uz_N^t) = \pb{\Fc}{\Hvl}_{\g_1^*}(\iota_{EM}(\uz_N^t)).
\end{align}
Since $\Fc\in\Cc^\infty(\g_1^*)$ was arbitrary, the claim follows.
\end{remark}

\subsection{BBGKY hierarchy}\label{ssec:HamBBGKY}
We next turn to \cref{thm:bbgky} for the BBGKY hierarchy. As mentioned in \cref{ssec:OutN}, this result is the classical analogue of \cite[Theorem 2.3]{MNPRS2020} asserting that the quantum BBGKY hierarchy is Hamiltonian, which was not known prior to that work. Interestingly, the proof of the cited result was inspired by the formal computations of \cite{MMW1984} for the classical BBGKY hierarchy. It will not surprise the reader then to learn that the proof of \cref{thm:bbgky} here is algebraically similar to that of \cite[Theorem 2.3]{MNPRS2020} and the core is the calculation of the Hamiltonian vector field $X_{\mathcal{H}_{BBGKY}}$.

\begin{proof}[Proof of \cref{thm:bbgky}]
As $N$ will be fixed throughout the proof, we drop $N$ in our subscripts when there is no ambiguity. We recall from \cref{prop:NhamVF} that given any $\mathcal{G} \in \mathcal{C}^\infty(\G_N^*)$, the Hamiltonian vector field $X_{\Gc}$ is given by formula
\begin{multline}\label{eq:BBGKYGvf}
\forall 1\leq \ell \leq N, \qquad X_{\mathcal{G}}(\Ga)^{(\ell)}= \sum_{j=1}^N C_{\ell j N r }\sum_{r= r_0}^{\min(\ell,j)}\binom{j}{r}  \int_{(\R^{2d})^{k-\ell}}  d \underline{z}_{\ell+1;k}\\
\pb{\sum_{\ab_r \in P_r^\ell} \ds\mathcal{G} [\Ga]^{(j)}_{(\ab_r, \ell+1, \dots, \ell+j - r) }}{\ga^{(k)}}_{(\R^{2d})^k},
\end{multline}
where $C_{\ell j N r} = \frac{(N-\ell)! (N-j)!}{(N-1)! (N- \ell - j +r)!}$, $k = \min(\ell + j -1, N)$, and $r_0 = \max( 1, \ell + j -N)$. Note that the bracket in formula \eqref{eq:BBGKYGvf} is well-defined for $g^{(j)} \in \mathfrak{g}_j$ and $\ga^{(k)} \in \mathfrak{g}_k^*$, as explained in the paragraph after the statement of \cref{prop:NhamVF}.
%\begin{equation}
%    \left \{  g^{(j)}, f^{(k)} \right \}_{\R^{2dk}}(\varphi^{(k)}) := \sum_{i=1}^k \langle \nabla_{v_i} f^{(k)} , \varphi^{(k)}%\nabla_{x_i}g^{(j)} \otimes Id_{k-j} \rangle_{\mathfrak{g}_k^*, \mathfrak{g}_k} -\langle \nabla_{x_i} f^{(k)}, \varphi^{(k)}\nabla_{v_i}%g^{(j)} \otimes Id_{k-j} \rangle_{\mathfrak{g}_k^*, \mathfrak{g}_k}, \qquad \varphi^{(k)} \in \mathfrak{g}_k.
%\end{equation}
Recall the definition \eqref{eq:BBGKYham} of $\Hc_{BBGKY}$. Now note that by the linearity of $\mathcal{H}_{BBGKY}$ we may identify
\begin{equation}
\ds\mathcal{H}_{BBGKY} [\Ga]= \W_{BBGKY} = \paren*{\frac{1}{2}|v|^2, \frac{(N-1)}{N}W(x_1-x_2) + \frac{W(0)}{N}, 0,\ldots,}.
\end{equation}
Consequently, $\ds\Hc_{BBGKY}[\Ga]$ is constant in $\Ga$ and $\ds\mathcal{H}_{BBGKY} [\Ga]^{(j)} = 0$ for $3 \leq j \leq N$. For $j=1$, we have 
\begin{equation} 
r_0 = \max(1, \ell + 1 -N ) = 1 \qquad \text{ and }\qquad  k = \min( \ell ,N ) = \ell,
\end{equation}
implying
\begin{equation}
\ds\Hc_{BBGKY}[\Ga]_{(\ab_r,\ell+1,\ldots,\ell+j-r)}^{(j)} = (\W_{BBGKY}^{(1)})_{(a)}, \qquad 1\leq a\leq \ell.
\end{equation}
For $j = 2$, we have 
\begin{equation}
    r_0 = \max( 1, \ell + 2 - N ) = \begin{cases}
    1, & \ell \leq N -1\\
    2,  & \ell = N, 
    \end{cases}\quad  \text{ and } \quad k = \min( \ell + 1, N ) = \begin{cases} 
    \ell + 1 , & \ell \leq N-1  \\
    N, & \ell = N,
    \end{cases}
\end{equation}
implying
\begin{equation}
\ds\Hc_{BBGKY}[\Ga]_{(\ab_r,\ell+1,\ldots,\ell+j-r)}^{(j)} = \begin{cases} (\W_{BBGKY}^{(2)})_{(a,\ell+1)}, & r=1 \\ (\W_{BBGKY}^{(2)})_{(\ab_2)}, & {r=2}.\end{cases}
\end{equation}
So our vector field reduces to 
\begin{multline}\label{eq:BBGKYpre}
X_{\mathcal{H}_{BBGKY}}(\Ga)^{(\ell)} =  C_{\ell 1 N 1 }\pb{\sum_{a=1}^\ell{(\bm{W}_{BBGKY}^{(1)})}_{(a)}}{\ga^{(\ell)}}_{(\R^{2d})^\ell} \\
+ C_{\ell 2 N r }\sum_{r= r_0}^{\min(\ell,2)}\binom{2}{r}  \int_{\Rd^{k-\ell}}  d\underline{z}_{\ell+1;k}\pb{\sum_{\ab_r\in P_r^\ell} {\bm{W}_{BBGKY}^{(2)}}_{(\ab_r, \ell+1)}}{\ga^{(k)}}_{(\R^{2d})^k},
\end{multline}
where $(\ab_r,\ell+1)$ should really be replaced by $(\ab_r)$ if $r=2$ and the integration is understood as vacuous if $k-\ell\leq 0$. The relevant cases are when $\ell =1$, when $2\leq \ell \leq N-1$, and when $\ell = N$.\footnote{The case $\ell =1$ is singled out to take care of $\min(\ell,2)$ in the second sum.} We proceed to consider each of these cases individually.
\begin{enumerate}
    \item The case $\ell =1$. The formula \eqref{eq:BBGKYpre} further simplifies to 
  \begin{align}
    X_{\mathcal{H}_{BBGKY}}(\Ga)^{(1)} & =  C_{1 1 N 1 } \pb{\bm{W}_{BBGKY}^{(1)}}{\ga^{(1)}}_{\R^{2d}}  \nonumber\\
    &\phantom{=} + C_{1 2 N 1 }\binom{2}{1}  \int_{\R^{2d}}  d z_2 \pb{{(\bm{W}_{BBGKY}}^{(2)}_{(1, 2)}}{\ga^{(2)}}_{\Rd^2} \nonumber\\
    & = \pb{\frac{1}{2} |v_1|^2}{\ga^{(1)}}_{\R^{2d}}  +2\frac{(N-1)}{N}\int_{\R^{2d}}  d z_2\pb{W(x_1-x_2)}{\ga^{(2)}}_{(\R^{2d})^2}.
\end{align}

\item The case $2 \leq \ell \leq N-1$. The formula  \eqref{eq:BBGKYpre} in this case becomes
\begin{align}
    X_{\mathcal{H}_{BBGKY}}(\Ga)^{(\ell)} & =  C_{\ell 1 N 1 } \binom{1}{1}  \pb{\sum_{a = 1}^\ell {\bm{W}_{BBGKY}^{(1)}}_{(a) }}{\ga^{(\ell)}}_{(\R^{2d})^\ell}   \nonumber\\
    &\phantom{=} + \sum_{r= 1}^{2}C_{\ell 2 N r }\binom{2}{r}  \int_{\R^{2d}}  d {z}_{\ell+1}\pb{\sum_{\ab_r \in P_r^\ell} {(\bm{W}_{BBGKY}^{(2)})}_{(\ab_r, \ell+1) }}{\ga^{(\ell+1)}}_{(\R^{2d})^{\ell+1}}  \nonumber\\
    & = C_{\ell 1 N 1 }   \pb{\sum_{a = 1}^\ell {(\bm{W}_{BBGKY}^{(1)})}_{(a) }}{\ga^{(\ell)}}_{(\R^{2d})^\ell} \nonumber  \\
    &\phantom{=} + 2C_{\ell 2 N 1 } \int_{\R^{2d}}  d {z}_{\ell+1}\pb{\sum_{a =1}^\ell {(\bm{W}_{BBGKY}^{(2)})}_{(a, \ell+1)}}{ \ga^{(\ell+1)} }_{(\R^{2d})^{\ell+1}}  \nonumber\\
    &\phantom{=} + C_{\ell 2 N 2 }  \int_{\R^{2d}}  d {z}_{\ell+1}\pb{\sum_{\ab_2 \in P_2^\ell} {(\bm{W}_{BBGKY}^{(2)})}_{(\ab_2)}}{ \ga^{(\ell+1)}}_{(\R^{2d})^{\ell+1}}.
\end{align}
We can calculate the constants explicitly as 
\begin{equation}
    C_{\ell 1 N 1} = \frac{ (N- \ell)!(N- 1)!}{(N-1)! ( N- \ell)!} = 1,
\end{equation}
\begin{equation}
  C_{\ell 2 N 1} = \frac{(N-\ell)! (N-2)!}{(N-1)! (N- \ell - 1)!} =   \frac{N-\ell}{N-1},
\end{equation}
\begin{equation}
 C_{\ell 2 N 2} = \frac{(N-\ell)! (N-2)!}{(N-1)! (N- \ell)!} = \frac{1}{N-1}.
\end{equation}
Moreover, by definition of $\W_{BBGKY}^{(2)}$ we have that 
\begin{align}
\sum_{a=1}^\ell (\bm{W}_{BBGKY}^{(2)})_{(a, \ell+1) } &= \frac{(N-1)}{N}\sum_{a=1}^\ell  W(x_{a} - x_{\ell+1}) + \frac{\ell W(0)}{N}, \\
\sum_{\ab_2 \in P_2^\ell} (\bm{W}_{BBGKY}^{(2)})_{(\ab_2) } &= \frac{(N-1)}{N}\sum_{1\leq i\neq j\leq \ell} W(x_i - x_j) + \frac{\ell(\ell-1)}{N}W(0).
\end{align}
Putting all of these simplifications together and using that the Poisson bracket with a constant is zero, we arrive at 
\begin{multline}
     X_{\mathcal{H}_{BBGKY}}(\Ga)^{(\ell)} = \pb{\sum_{a = 1}^\ell \frac{1}{2} |v_a|^2}{\ga^{(\ell)}}_{(\R^{2d})^\ell}   \\
      + \frac{2(N-\ell)}{N} \int_{\R^{2d}}  d {z}_{\ell+1}\pb{\sum_{a=1}^\ell  W(x_{a} - x_{\ell+1})}{\ga^{(\ell+1)}}_{\R^{2d(\ell+1)}}\\
    + \frac{1}{N}\int_{\R^{2d}}  d {z}_{\ell+1}\pb{\sum_{1\leq i\neq j\leq \ell} W(x_i - x_j)}{\ga^{(\ell+1)}}_{(\R^{2d})^{\ell+1}}.
\end{multline}
One can replace the sum $\sum_{1\leq i\neq j\leq \ell} W(x_i-x_j)$ with $\sum_{i,j=1}^\ell W(x_i-x_j)$ in the third term since $W$ is continuous at the origin and $\pb{W(0)}{\ga^{(\ell+1)}}_{\Rd^{\ell+1}}=0$.

\item The case $\ell = N$. In this case,  \eqref{eq:BBGKYpre} becomes
\begin{align}
      X_{\mathcal{H}_{BBGKY}}(\Ga)^{(N)} & =  C_{N 1 N 1 } \pb{\sum_{a=1}^N (\bm{W}_{BBGKY}^{(1)})_{(a)}}{\ga^{(N)}}_{(\R^{2d})^N} \nonumber\\
    &\phantom{=} +C_{N2N2} \pb{\sum_{\ab_2 \in P_2^N} (\bm{W}_{BBGKY}^{(2)})_{(\ab_2)}}{\ga^{(N)}}_{(\R^{2d})^N} \nonumber \\
    &= \pb{\sum_{a=1}^N\frac{1}{2} |v_a|^2 +\frac1N\sum_{i,j=1}^N W(x_i - x_j) } {\ga^{(N)}}_{(\R^{2d})^N},
\end{align}
where to obtain the ultimate line we have used the bilinearity of the Poisson bracket to combine both terms in the penultimate line. 
\end{enumerate}

Evaluating the Poisson brackets and comparing the resulting expressions with \eqref{eq:BBGKY} (remember that $\ga^{(N)}$ by convention satisfies the Liouville equation \eqref{eq:Lio}), we see that $\Ga^t = (\ga^{(\ell),t})_{\ell=1}^N$ is a solution to the BBGKY hierarchy if and only if $\dot{\Ga}^t = X_{\Hc_{BBGKY}}(\Ga^t)$. Hence, the proof is complete.
\end{proof}

\begin{remark}
Similar to \cref{rem:EMVlHam}, we can use the Hamiltonian formulation to show that the $N$-hierarchy of marginals of a solution to the Liouville equation \eqref{eq:Lio} is a solution to the BBGKY hierarchy \eqref{eq:BBGKY}. Indeed, since $\iota_{mar}$ is a Poisson morphism by \cref{prop:marPo},
\begin{equation}\label{eq:marPMid}
\forall \Fc\in \Cc^\infty(\G_N^*), \qquad \iota_{mar}^*\pb{\Fc}{\Hc_{BBGKY}}_{\G_N^*} = \pb{\iota_{mar}^*\Fc}{\iota_{mar}^*\Hc_{BBGKY}}_{\g_N^*}.
\end{equation}
By definition of $\iota_{mar}$ and $\Hc_{BBGKY}$,
\begin{align}
\forall \ga \in\g_N^*,\qquad (\iota_{mar}^*\Hc_{BBGKY})(\ga) &= \ipp*{\frac{1}{2}|v|^2,\ga^{(1)}}_{\g_1-\g_1^*} + \ipp*{\frac{(N-1)}{N}W(x_1-x_2) + \frac{W(0)}{N}, \ga^{(2)}}_{\g_2-\g_2^*} \nn\\
&= \ipp*{\Sym_{N}\paren*{\frac{1}{2}|v_1|^2 + \frac{(N-1)}{N}W(x_1-x_2)+\frac{W(0)}{N}},\ga}_{\g_N-\g_N^*}.\label{eq:BSymsub}
\end{align}
Given any distinct integers $1\leq j_1,\ldots,j_k\leq N$,
\begin{align}
|\{\sigma\in\Ss_N : (\sigma(1),\ldots,\sigma(k)) = (j_1,\ldots,j_k)\}| = (N-k)!.
\end{align}
Hence,
\begin{align}
\Sym_{N}\paren*{\frac{1}{2}|v_1|^2 + \frac{(N-1)}{N}W(x_1-x_2)+\frac{W(0)}{N}} &= \frac{1}{2N}\sum_{i=1}^N |v_i|^2 + \frac{1}{N^2}\sum_{1\leq i\neq j\leq N} W(x_i-x_j) + \frac{W(0)}{N}\nn\\
&=\Hc_{New},
\end{align}
which, upon substitution into \eqref{eq:BSymsub}, implies
\begin{align}
\iota_{mar}^*\Hc_{BBGKY}(\ga) = \ipp*{\Hc_{New}, \ga}_{\g_N-\g_N^*} = \Hc_{Lio}(\ga).
\end{align}
Combining this identity with \eqref{eq:marPMid} and using the fact (shown in \cref{ssec:NgeomLio}) that the Liouville equation is Hamiltonian, we see that if $\ga^t$ is a solution to the Liouville equation, then
\begin{align}
\forall \Fc\in\Cc^\infty(\G_N^*), \qquad \frac{d}{dt}\Fc(\iota_{mar}(\ga^t)) = \frac{d}{dt}(\iota_{mar}^*\Fc)(\ga^t) &= \pb{\iota_{mar}^*\Fc}{\Hc_{Lio}}_{\g_N^*}(\ga^t) \nn\\
&= \pb{\Fc}{\Hc_{BBGKY}}_{\G_N^*}(\iota_{mar}(\ga^t)).
\end{align}
Since $\Fc$ was arbitrary, we conclude that $\frac{d}{dt}\iota_{mar}(\ga^t) = X_{\Hc_{BBGKY}}(\iota_{mar}(\ga^t))$, that is $\iota_{mar}(\ga^t)$ is a solution of the BBGKY hierarchy as claimed.
\end{remark}

\subsection{Vlasov hierarchy}\label{ssec:HamVH}
We close out \cref{sec:Ham} with the proof of \cref{thm:Vlh} for the Vlasov hierarchy. As commented in \cref{ssec:Outinf}, \cref{thm:Vlh} is the classical analogue of \cite[Theorem 2.10]{MNPRS2020} demonstrating a Hamiltonian formulation for the Gross-Pitaevskii hierarchy,\footnote{This result is not just aesthetically pleasing: it was subsequently used in \cite{MNPRS2_2019} to investigate the origins of the 1D cubic NLS as an integrable classical field theory from an integrable quantum field theory.}  which again was a new observation. As for the $N$-particle level, the proof of \cref{thm:Vlh} proceeds algebraically similarly to that of \cite[Theorem 2.10]{MNPRS2020}; and as with the proof of \cref{thm:bbgky} carried out in the previous subsection, the main step is the computation of the Hamiltonian vector field $X_{\Hvlh}$.

\begin{proof}[Proof of \cref{thm:Vlh}]
Applying the formula \eqref{eq:infhamVF} of \cref{lem:infhamVF}, we have the identity
\begin{equation}\label{eq:XVHform}
X_{\Hvlh}(\Ga)^{(\ell)} = \sum_{j=1}^\infty j \int_{(\R^{2d})^{j-1}}d\uz_{\ell+1;\ell+j-1}\pb{\sum_{a=1}^\ell\ds\Hvlh[\Ga]_{(a,\ell+1,\ldots,\ell+j-1)}^{(j)}}{\ga^{(\ell+j-1)}}_{\Rd^{\ell+j-1}},
\end{equation}
so we are reduced to computing the bracket in the integrand. We remind the reader of the following notation conventions: if $j=1$, the integration is vacuous and $(a,\ell+1,\ldots,\ell+j-1)=a$; if $j=2$, then $(a,\ell+1,\dots,\ell+j-1)=(a,\ell+1)$.

Since $\Hvlh$ is linear, it is evident, upon recalling the definition \eqref{eq:VlHham}, that $\ds\Hvlh[\Ga]$ is identifiable with $\W_{VlH}$ through the pairing $\ipp{\cdot,\cdot}_{\G_\infty-\G_\infty^*}$. In particular, $\ds\Hvlh[\Ga]$ is constant in $\Ga$ and $\ds\Hvlh[\Ga]^{(j)} = 0$ for $j\geq 3$. For $j=1$, we have that
\begin{equation}
\ds\Hvlh[\Ga]_{(a,\ell+1,\ldots,\ell+j-1)}^{(j)} = (\W_{VlH}^{(1)})_{(a)}= \frac{1}{2}|v_a|^2,
\end{equation}
and for $j=2$, we have that
\begin{equation}
\ds\Hvlh[\Ga]_{(a,\ell+1,\ldots,\ell+j-1)}^{(j)} = (\W_{VlH}^{(2)})_{(a,\ell+1)}  =W(x_a-x_{\ell+1}).
\end{equation}
Now for each $1\leq a\leq \ell$, it follows that
\begin{align}
\pb{(\W_{VlH}^{(1)})_{(a)}}{\ga^{(\ell)}}_{\Rd^\ell} &= \sum_{\beta=1}^\ell \Big(\nabla_{x_\beta}(\W_{VlH}^{(1)})_{(a)}\cdot\nabla_{v_\beta}\ga^{(\ell)} - \nabla_{v_\beta}(\W_{VlH}^{(1)})_{(a)}\cdot\nabla_{x_\beta}\ga^{(\ell)}\Big) \nn\\
&=\nabla_{x_a}(\W_{VlH}^{(1)})_{(a)}\cdot\nabla_{v_a}\ga^{(\ell)} - \nabla_{v_a}(\W_{VlH}^{(1)})_{(a)}\cdot\nabla_{x_a}\ga^{(\ell)}\nn\\
&=-v_a\cdot\nabla_{x_a}\ga^{(\ell)}. \label{eq:VlH1p}
\end{align}
Similarly,
\begin{align}
\pb{(\W_{VlH}^{(2)})_{(a,\ell+1)}}{\ga^{(\ell+1)}}_{\Rd^{\ell+1}} &=\sum_{\beta=1}^\ell \Big(\nabla_{x_\beta}(\W_{VlH}^{(2)})_{(a,\ell+1)}\cdot\nabla_{v_\beta}\ga^{(\ell+1)} \nn\\
&\qquad- \nabla_{v_\beta}(\W_{VlH}^{(2)})_{(a,\ell+1)}\cdot\nabla_{x_\beta}\ga^{(\ell+1)}\Big) \nn\\
&=\nabla_{x_a}(\W_{VlH}^{(2)})_{(a,\ell+1)}\cdot\nabla_{v_a}\ga^{(\ell+1)} - \nabla_{v_a}(\W_{VlH}^{(2)})_{(a,\ell+1)}\cdot\nabla_{x_a}\ga^{(\ell+1)}\nn\\
&=\nabla W(x_a-x_{\ell+1})\cdot\nabla_{v_a}\ga^{(\ell+1)}.\label{eq:VlH2p}
\end{align}
Substituting the identities \eqref{eq:VlH1p}, \eqref{eq:VlH2p} into \eqref{eq:XVHform}, we arrive at
\begin{multline}
\sum_{j=1}^\infty j\int_{(\R^{2d})^{j-1}}d\uz_{\ell+1;\ell+j-1}\pb{\sum_{a=1}^\ell\ds\Hvlh[\Ga]_{(a,\ell+1,\ldots,\ell+j-1)}^{(j)}}{\ga^{(\ell+j-1)}}_{\Rd^{\ell+j-1}}\\
= \sum_{a=1}^\ell \Big(-v_a\cdot\nabla_{x_a}\ga^{(\ell)}+2\int_{\R^{2d}}dz_{\ell+1}\nabla W(x_a-x_{\ell+1})\cdot\nabla_{v_a}\ga^{(\ell+1)}\Big).
\end{multline}
Comparing this expression to \eqref{eq:VlH}, we see that $\Ga^t=(\ga^{(\ell),t})_{\ell=1}^\infty$ is a solution to the Vlasov hierarchy if and only if $\dot{\Ga}^t = X_{\Hvlh}(\Ga^t)$, hence the proof of the theorem is complete.
\end{proof}

\begin{remark}\label{rem:VlHVl}
We end this paper by using the Hamiltonian formulation to show that the factorization map $\iota:\g_1^*\rightarrow\G_\infty^*$, introduced in \eqref{eq:iotadef}, maps solutions of the Vlasov equation to solutions of the Vlasov hierarchy.

Observe that
\begin{align}
\forall\ga \in \g_1^*, \qquad (\iota^*\Hvlh)(\ga) &= \ipp*{\frac12|v|^2,\ga}_{\g_1-\g_1^*} + \ipp*{W(x_1-x_2), \ga^{\otimes 2}}_{\g_2-\g_2^*} \nn\\
&=\ipp*{\frac12|v|^2,\ga}_{\g_1-\g_1^*} + \ipp*{W\ast\rho,\rho}_{\g_1-\g_q^*} \nn\\
&= \Hvl(\ga). \label{eq:VlHampback}
\end{align}
In other words, the pullback of the Vlasov hierarchy Hamiltonian equals the Vlasov Hamiltonian, as originally announced in \cref{ssec:intromr}. Since $\iota$ is a Poisson morphism by \cref{thm:PM}, it follows from \eqref{eq:VlHampback} and the Hamiltonian formulation of the Vlasov equation proven in \cref{ssec:HamVl} that if $\ga^t$ is a solution to the Vlasov equation,
\begin{align}
\forall \Fc\in\Cc^\infty(\G_\infty^*), \qquad \frac{d}{dt}\Fc(\iota(\ga^t)) = \frac{d}{dt}(\iota^*\Fc)(\ga^t) &= \pb{\iota^*\Fc}{\Hvl}_{\g_1^*}(\ga^t) \nn\\
&=\pb{\Fc}{\Hvlh}_{\G_\infty^*}(\ga^t).
\end{align}
Since $\Fc$ was arbitrary, we conclude that $\frac{d}{dt}\iota(\ga^t) = X_{\Hvlh}(\iota(\ga^t))$, that is $\iota(\ga^t)$ is a solution of the Vlasov hierarchy.
\end{remark}

\newpage
\begin{table}[H]
   \small % text size of table content
   \centering % center the table
   \begin{tabular}{ll} % alignment of each column data
   \textbf{Symbol} & \textbf{Definition} \\ 
      \hline
%   \midrule
	$z, z_i$ & $(x,v)$, $(x_i,v_i)$\\
   $\ul{z}_{k}$ & $(z_1, \ldots, z_k)$ \\
   $\ul{z}_{m_1; m_k}$ & $(z_{m_1}, \ldots, z_{m_k})$ \\
   $\ul{z}_{i;i+k}$ & $(z_i, \ldots, z_{i+k})$ \\
      $d\ul{z}_{k}$ & $ dz_1 \cdots dz_k$ \\
         $d\ul{z}_{i;i+k}$ & $dz_i \cdots dz_{i+k}$ \\
      $\N$, $\N_0$ & natural numbers exclusive, inclusive of $0$\\   
      $\Ss_{k}$ & symmetric group on $k$ elements \\
      $P_{k}^N$ & set of $k$-tuples $(i_1,\ldots,i_k)$ drawn from $\{1,\ldots,N\}$, \eqref{eq:PkNdef}\\
      $\bm{j}_k$ & $k$-tuple $(j_1,\ldots,j_k)$\\
      $x^{\times k}$ & $k$-fold Cartesian product of $x$ with itself \\ 
             $\phi^{\otimes k}$ & $k$-fold tensor of $\phi$ with itself\\
      $\Cc^\infty(\R^{k}), \Ec'(\R^{k})$ & smooth functions on $\R^k$ and distributions on $\R^k$ with compact support \\
      $\ipp{\cdot,\cdot}$ & duality pairing \\
      $\ds\Fc$ & G\^ateaux derivative of $\Fc$, \eqref{eq:gddef} \\
      $X_{\Fc}$ & Hamiltonian vector field associated to $\Fc$, \eqref{eq:HamVFuniq} \\
       $f_{(j_1,\ldots, j_k)}^{(k)}$, $f_{\bm{j}_k}^{(k)}$ & $N$-particle extension of $k$-particle observable acting on $j_1,\ldots,j_k$ coordinates, \eqref{eq:fj1jkdef}\\
       $\Sym_k(f^{(k)})$ & $k$-particle symmetrization operator, \eqref{eq:Symdef}  \\
       $\pb{\cdot}{\cdot}_{\Rd^N}$, $\pb{\cdot}{\cdot}_{N}$ & standard Poisson bracket on $\Rd^N$, \eqref{eq:pbN}/rescaled standard Poisson bracket, \eqref{eq:NewHamdef}\\
	$\g_k, \g_k^*$ & spaces of $k$-particle observables/states, \eqref{eq:gkdef}/\eqref{eq:gk*def} \\
	$\G_N, \G_N^*$ & space of $N$-hierarchies of observables/states, \eqref{eq:GNdef}/\eqref{eq:GN*def} \\
	$\G_\infty, \G_\infty^*$ & space of $\infty$-hierarchies of observables/states, \eqref{eq:Ginfdef}/\eqref{eq:Ginf*def} \\
	$\comm{\cdot}{\cdot}_{\g_k}, \pb{\cdot}{\cdot}_{\g_k^*}$ & Lie bracket/Lie-Poisson bracket for $k$-particle observables/states, \eqref{eq:gkLBdef}/\eqref{eq:gk*LPdef} \\
	$\comm{\cdot}{\cdot}_{\G_N}, \pb{\cdot}{\cdot}_{\G_N^*}$ & Lie bracket/Lie-Poisson bracket for $N$-hierarchies of obervables/states, \eqref{eq:GNLBdef},\eqref{eq:GNLBform}/\eqref{eq:GN*LPdef}\\
	$\comm{\cdot}{\cdot}_{\G_\infty},\pb{\cdot}{\cdot}_{\G_\infty^*}$ & Lie bracket/Lie-Poisson bracket for $\infty$-hierarchies of observables/states, \eqref{eq:GinfLB}/\eqref{eq:Ginf*LPdef} \\
	$\A_\infty$ & Unital subalgebra of $\Cc^\infty(\G_\infty^*)$ generated by constants and expectations, \eqref{eq:Ainfdef} \\
	$\wedge_r$ & r-fold contraction, \eqref{r wedge}\\
	$\ep_{k,N}$ & embedding of $k$-particle observable in space of $N$-particle observables, \eqref{eq:epkNdef} \\
        $\int_{\Rd^{N-k}}d\uz_{k+1;N}$ & $k$-particle marginal, \eqref{eq:mardef} \\
        $\Hc_{New}$ & Newton Hamiltonian functional, \eqref{eq:NewHamdef} \\
        $\Hc_{Lio}$ & Liouville Hamiltonian functional, \eqref{eq:LioHam}\\
        $\Hc_{BBGKY}$, $\W_{BBGKY}$ & BBGKY Hamiltonian functional/generator, \eqref{eq:BBGKYham}/\eqref{eq:BBGKYW} \\
        $\Hc_{VlH}$, $\W_{VlH}$ & Vlasov hierarchy Hamiltonian functional/generator, \eqref{eq:VlHham}/\eqref{eq:VlHW} \\
        $\Hc_{Vl}$ & Vlasov Hamiltonian functional, \eqref{eq:HVl} \\
        $\iota_{EM}$ & empirical measure map, \eqref{eq:iotaEMdef}\\
        $\iota_{Lio}$ & Liouville map, \eqref{eq:iotaLiodef}\\
        $\iota_{\ep}$ & Lie algebra homomorphism induced by $\{\ep_{k,N}\}_{k=1}^N$, \cref{eq:iotaepNdef}\\
        $\iota_{mar}$ & marginals map, \eqref{homsum formula}\\
        $\iota$ & factorization map, \eqref{eq:iotadef}\\
   \end{tabular}
   \caption{Notation}
    \label{tab:not}
\end{table}

\newpage
\bibliographystyle{alpha}
\bibliography{GPHam}
\end{document}